 \let\mathscr\relax% just so we can load this and rsfs
\theoremstyle{plain}
\newtheorem{theorem}{Theorem}[section]
\newtheorem{corollary}{Corollary}[section]
\newtheorem{lemma}{Lemma}[section]
\newtheorem{proposition}{Proposition}[section]
\theoremstyle{definition}
\newtheorem{definition}{Definition}[section]
\newcommand{\N}{\mathbb{N}}
\newcommand{\Z}{\mathbb{Z}}
\newcommand{\E}{\mathcal E}
\newcommand{\G}{\mathbf G}
\newcommand{\F}{\mathbf F}
\newcommand{\M}{\mathbf M}
\DeclareMathOperator{\id}{id}
\DeclareMathOperator{\Cov}{Cov}
\DeclareMathOperator{\Eqv} {Eqv}
\title[Generalized metric spaces]{Generalized metric spaces. Relations with graphs, ordered sets and automata : A survey.}
\author{Mustapha KABIL}
\address{Laboratory of Mathematics, Computer Science and Applications,\\ Department of Mathematics, Faculty of Sciences and Technologies Mohammedia, Hassan II University, Casablanca, Morocco,\\  E-mail : kabilfstm@gmail.com}
 \author{Maurice POUZET}
 \address{ ICJ, Math\'ematiques, Universit\'e Claude-Bernard Lyon1,\\ France and Mathematics \& Statistics Department, University of Calgary, Calgary, Canada,\\   E-mail : pouzet@univ-lyon1.fr}
\begin{document}

\begin{abstract}

 In this survey we present a generalization of the notion of metric space  and some applications to discrete structures as graphs, ordered sets and transition systems.  Results in that direction started in the middle eighties based on the  impulse given by Quilliot (1983). Graphs and ordered sets were considered as  kind of  metric spaces, where - instead of real numbers - the values of the distance functions $d$ belong  to an ordered semigroup  equipped with an involution. In this frame, maps preserving graphs  or posets  are exactly the nonexpansive mappings  (that is the maps $f$ such that $d(f(x),f(y))\leq d(x,y)$, for all $x,y$). It was observed that many known results on retractions  and fixed point property for classical metric spaces (whose morphisms are the nonexpansive mappings) are also valid for these spaces. For  example, the characterization of absolute retracts, by  Aronszajn and Panitchpakdi (1956), the construction of the injective envelope  by  Isbell (1965) and the fixed point theorem of Sine and Soardi (1979) translate into the  Banaschewski-Bruns theorem (1967), the MacNeille completion of a poset (1933) and the famous Tarski fixed point theorem (1955). This prompted an analysis of several classes of discrete structures from a metric point of view. In this paper, we report the results obtained over the years with a particular emphasis on the fixed point property.
\end{abstract}

\maketitle
\section{Introduction}
This survey delves into a generalization of metric spaces and its applications to discrete structures as graphs, ordered sets and transition systems. 
The results presented here originate in a paper by the second author \cite{pouzet}, motivated by the work of Quilliot \cite{Qu1, Qu2}. The genesis of this topic is to be found in two theses  \cite{jawhari},  \cite{misane} and a paper  \cite{JaMiPo}. The theme was subsequently developped in \cite{Ka}, \cite{PR}, \cite{KP1}, \cite{Sa}, \cite{kabil}, \cite{KP2},  \cite{KPR}, \cite{bandelt-pouzet}, \cite{bandelt-pouzet-saidane} and \cite{khamsi-pouzet}.

Since its introduction by Fr\'echet  (1906), the notion of metric space has motivated many extensions (cf.  the encyclopedia \cite{deza-deza},  also \cite{lawvere, blumenthal, blumenthal1, blumenthal-menger}, and recently \cite{conant}). In the sequel,  a \emph{generalized metric space} (see \cite{deza-deza} p. 82) is a set $E$ equipped with a \emph{distance}, that is, with a  map $d$ from the direct product $E\times E$ into an ordered monoid,  say $\mathcal H$,  equipped with an involution $-$ preserving the order and reversing the monoid operation.This operation will be  denoted by $\oplus$  (despite that  it is  not necessarily commutative) and its neutral element will be denoted by 0.

The conditions on $d$ are the following
\begin{enumerate}
\item [(i)]
\begin{equation}\label{defdistance}
 d(x,y)\;\leq \;0\;   \text{if and only if}\;  x=y; d(x,y)\leq d(x,z)\oplus d(z,y); \end{equation}\\
 \item [(ii)]
\begin{equation} d(x,y)=\overline{d(y,x)}\;  \text{for all}\;  x,y, z\in E.
\end{equation}
\end{enumerate}

The  focus in this survey will be on the special case in which the following assumptions are imposed on $\mathcal H$.

\begin{enumerate}
\item $0$ is the least element  of $\mathcal H$;  in which case,  condition $(i)$ for $d$ reduces to
$d(x,y)=0$ if and only if $x\;=\;y$.

\item $\mathcal H$ is a complete lattice and
 the following distributivity condition holds:
\[\bigwedge  _{\alpha \in A, \beta \in B} (p_\alpha \oplus q_\beta) =
\bigwedge _{\alpha \in A} p_\alpha  \oplus \bigwedge _{\beta \in B} q_\beta \]

for all $p_\alpha \in {\mathcal H}$ $(\alpha \in A)$ and
$p_{\beta} \in {\mathcal H}$ $(\beta \in B)$.

\end{enumerate}

In previous papers (e.g.  \cite{pouzet}) such  a structure has been called a \emph {Heyting algebra},  or an  \emph{involutive Heyting algebra}. This terminology will be retained in this survey despite the fact that a more  appropriate term could be \emph {dual of an  integrale involutive quantale}, to refer to the notion of quantale introduced by Mulvey \cite {mulvey} in 1985. Indeed, according to the terminology of  \cite{kaarli-radeleczki} (see also \cite{eklund-al, rosenthal}),  a  \emph{quantale} is an ordered monoid  satisfying the dual of the distributivity  condition stated in $(2)$, it is \emph{involutive} if it is equipped with an involution and it is \emph{integral} if the largest  element is the neutral element of the monoid.

Besides ordinary metric  spaces, there are plenty of examples of this generalized structure. Reflexive graphs, undirected as well as directed, ordered sets, involutive and reflexive transition systems  are the basic ones. Due to the conditions imposed on $\mathcal H$ there are important classes of objects that fall beyond this framework. For examples, metric space with distances in Boolean algebras, as introduced in \cite {blumenthal}  (except if the Boolean algebra  is the power set of a set); ultrametric spaces  with values in an arbitrary poset; 
graphs which are not necessarily reflexive, or arbitrary transition systems.  Attempts to capture these situations have been made in  \cite{PR}; the case of generalized metric space over a Heyting algebra for which the least element is not necessarily the neutral element (cf.  condition $(1)$ above) being particularly studied.

We have restricted the scope of our approach  to generalized metric spaces over a Heyting algebra  because there are significant  results, easy to present and with  the potential to be extended to more general situations.

The emphasis of this presentation is on retracts and on the fixed point property. Considering the class of generalized metric spaces over a Heyting algebra $\mathcal H$,  we introduce the \emph{nonexpansive maps}  as maps $f$ from a metric space $\mathbf E:=(E,d)$ into an other, say $\mathbf E':=(E',d')$, such that
\begin{equation}
d'(f(x), f(y)) \leq d(x,y) \; \text {for all}\; x,y\in E.
\end{equation}
From this,  we derive the notions of \emph{isometry}, \emph{retraction}   and \emph{coretraction}. Since the Heyting algebra under consideration  is a complete lattice, arbitrary products of spaces can be defined, hence, as Duffus and Rival did \cite{DuRi} 1981 for graphs and posets,  we may introduce \emph{varieties} of metric spaces as classes of metric spaces closed under products and retracts. Among generalized metric spaces,  those having the \emph{fixed point property} (fpp), that is, those spaces such that every nonexpansive  map  $f
$ has a fixed point, i.e.,  some $x$ such that $f(x)=x$, have a particular interest. As in any category, (fpp) is preserved under retractions.  This elementary fact has  a significant consequence. Indeed, observing that coretractions are isometric embeddings, those generalized metric spaces for which  this necessary condition is sufficient, spaces called \emph{absolute retracts},  have a special role. If there are \emph{enough absolute retracts}, meaning that every generalized metric space isometrically embeds into an absolute retract, then absolute retracts are the natural candidates to look for spaces with the fixed point property.  Indeed, it suffices that they embed into some space with the fixed point property. This point of view is illustrated by the fact that in the category of ordered sets with ordered maps as morphisms, absolute retracts coincide with complete lattices   (Banaschewski,  Bruns \cite{banaschewski-bruns}) and  according to  the famous theorem of Tarski \cite{tarski}, these lattices have the fixed point property.  In the category of (ordinary) metric spaces with nonexpansive mappings as morphisms, the absolute retracts are the  hyperconvex metric spaces  introduced by  Aronszajn and  Panitchpakdi \cite{ArPa} and according to Sine-Soardi theorem \cite{sine, soardi}, the bounded ones have the fixed point property.

These results being expressible  in terms of generalized metric spaces, it was natural  to look at absolute retracts  in the category of generalized metric spaces over a Heyting algebra. Four basic facts obtained in \cite{JaMiPo} are presented in this paper. First, we show that on the  Heyting algebra $\mathcal H$, there is a distance $d_{\mathcal H}$ and that every metric space over $\mathcal H$ embeds isometrically into some power of the space $\mathbf H:= (\mathcal H, d_{\mathcal H})$,  equipped with the sup-distance (cf. Theorem \ref{thm:embedding}). Next, we show that the notion of absolute retract is much simpler than in other categories. It coincides with three other notions:  \emph{extension property}, \emph{injectivity} and   \emph{hyperconvexity} (cf Theorem \ref{thm: caracterisation-hyperconvexity}). This yields a straightforward extension of the characterization of absolute retracts, due to Aronszajn and  Panitchpakdi \cite{ArPa} for ordinary metric spaces.  The latter in conjunction with the fact that $\mathbf H:=(\mathcal H, d_{\mathcal H})$ is hyperconvex (Theorem \ref{thm: space values-hyperconvex}), implies that every generalized metric space embeds isometrically into an absolute retract (cf. (4) of Theorem \ref{thm: caracterisation-hyperconvexity}). The third fact is the existence of an \emph{injective envelope},  that is the existence of a minimal injective space extending  an arbitrary space isometrically (cf Theorem \ref{thm: env injective}).  For ordinary metric spaces,  this was  done by Isbell \cite{isbell}, while for posets,  Banaschewski and Bruns \cite{banaschewski-bruns} showed that  the injective enveloppe of a poset is  its  MacNeille completion. This last fact is based on the observation that, in general, coretractions  are more than isometries. Coretractions preserve \emph{holes}, that is, families of balls with empty intersection. Considering the holes preserving maps, introduced by  Duffus and Rival  for posets under the name of \emph{gap preserving maps}  \cite{DuRi}, and then by Hell an Rival for graphs \cite{hell-rival}, we show that for the hole preserving maps,  the absolute retracts  and the injectives coincide, that every generalized metric space embeds in one of them -by a hole preserving map- and consequently, that they form a variety (Theorem \ref{thm:hole-preserving}).

We  illustrate the  results about generalized metric spaces presented above   with  metric spaces,  graphs, posets and transition systems.  We start with absolute retracts.  We mention that the Aronszjan-Panitchpakdi  characterization of absolute retracts was extended to ultrametric spaces  by Bayod and Martinez \cite{bayod-martinez}. We also refer the reader to some developments in Ackerman \cite{ackerman}  Considering  reflexive and symmetric graphs, with the usual distance of the shortest path, paths are absolute retracts and every graph isometrically embeds into a product of paths (a result due independently to  Quilliot \cite{Qu1},  Nowakowski and  Rival \cite{NoRi}). Furthermore, it has   a minimal retract of product of paths (this last fact has been obtained independently by Pesch \cite{pesch}). This extends to directed graphs: Quilliot \cite{Qu1} introduced a new kind of distance,  the  \emph{zigzag distance},  on  a  directed graph $\G$. It takes into account all oriented paths joining two vertices  of $\G$.  The values of this distance are final segments of the monoid  $\Lambda^{*}$ of words over the two-letter alphabet $\Lambda:= \{+, -\}$.   The set $\F(\Lambda^{*})$ of these final segments can be viewed as a Heyting algebra.  It turns out that this Heyting algebra has not only a metric structure, but also a graph structure, rendering it an absolute retract into the category of graphs. Every directed graph embeds isometrically into a power of that graph, and the absolute retracts are retracts of products of that  graph. The notion of injective envelope  of two-element metric spaces was used to produce  a family  of finite directed graphs  generating the variety of absolute retracts. A specialization to posets of the zigzag  distance yields the notion of \emph{fence distance}  (Quilliot \cite{Qu1}); in this case, absolute retracts of posets  are retracts of product of fences (Nevermann, Rival \cite{nevermann-rival}).  A graph is a \emph{zigzag}  if it symmetrisation is a path. Oriented zigzag graphs  are absolute retracts in the variety of directed graphs,  but are too simple to generate all absolute retracts in the variety of directed graphs. The full description was given in \cite{KP2}. As shown in \cite{bandelt-pouzet-saidane},  zigzags generate the variety of absolute retracts in the category of oriented graphs. Considering  the hole preserving maps,  posets that are absolute retracts are  those with the \emph{strong selection property} (notion introduced by  Rival and  Wille \cite{rival-wille} for lattices and extended to posets by  Nevermann and Wille \cite{nevermann-wille}). For posets and graphs considered with the fence  distance  and the graph distance,  Theorem \ref{thm:hole-preserving}  is due to  Nevermann,  Rival \cite{nevermann-rival} and  Hell,  Rival \cite{hell-rival}, respectively. Of course,  it applies to directed graphs equipped with the zigzag distance  and to classical metric spaces as well.

It appears that the zigzag distance between two vertices $x$ and $y$ of a directed graph $\G:= (V, \mathcal E)$ is the language accepted by the automaton having $V$ as set of states, $T:= \{(p, +, q):  (p,q)\in \mathcal E\}\cup \{(p,-,q): (q,p)\in \mathcal E\}$ as set of transitions,  $x$ as initial state and $y$ as final state. This fact leads to the consideration of transition systems   over an arbitrary alphabet $\Lambda$ as a kind of metric spaces, the distance between two states being the language accepted between these two states. If the alphabet is equipped with an involution, we may consider \emph{reflexive} and \emph{involutive} transition systems. The distance function takes values in the set $\F(\Lambda^*)$ of final segments of the set $\Lambda^*$ of words over the alphabet $\Lambda$.  As for the two-letter  alphabet, $\F(\Lambda^*)$ is a Heyting algebra,  and our transition systems are generalized metric spaces, thus the above results apply. The existence of the injective envelope  of a two- element metric space was used to prove that $\F(\Lambda^*)$ is a free monoid \cite{KPR}. A presentation of this result is given in Section \ref{section-illustration}.

Turning to the fixed property, we might say that over the years fixed point results for discrete of for continuous structures have proliferated. The theorem by Sine-Soardi has been extended to  metric spaces endowed with a compact normal structure in the sense of Penot (Kirk's Theorem, \cite{kirk}). It has also been extended to bounded hyperconvex generalized metric spaces, with an appropriate notion of boundedness \cite{JaMiPo}. Baillon \cite{baillon} proved that arbitrary sets of commuting maps on a bounded hyperconvex metric space  has a common fixed point. Khamsi \cite{khamsi} extended the conclusion  to metric spaces with a compact normal structure. Quite recently, Khamsi and the second author (\cite {khamsi-pouzet}) extended it to generalized metric spaces endowed with a compact normal structure.  As  a consequence,   every set of commuting order-preserving maps  on a retract of a power of a finite fence, has a fixed point (the case of one map followed  from a result due to I. Rival \cite{Rival} 1976  for finite posets and  Baclawski and Bj\"orner \cite{baclawski-bjorner} for infinite posets). This applies in the same way to directed graphs (reflexive and antisymmetric) equipped with the zigzag distance  and substantially completes the results of Quilliot \cite{Qu1} (Theorem \ref{thm:cor4}).

We left  untouched some aspects of generalized metric spaces. A notable one concerns homogeneity and amalgamation. In 1927,  Urysohn \cite {urysohn} discovered  a separable metric space such that every isometry from a finite subset to an other one extends to some isometry on the whole space and, furthermore, every finite metric space embeds into it. Later on, Fra\"{\i}ss\'e \cite{fraisse} and then J\'onsson \cite {jonsson}, identified the notion of homogeneity and the test of  amalgamation,  showing that several classes of structures, now called Fra\"{\i}ss\'e classes,  and  including the class of metric spaces, had   an homogeneous structure, from which the existence of the Urysohn space was a special case. Then,  in 2005, Kechris, Pestov and Todorcevic \cite{KPT} characterized Fra\"{\i}ss\'e's classes with the Ramsey property. This characterisation led to numerous  papers on  homogeneity and particularly on (ordinary) homogeneous metric and ultrametric spaces  \cite{DLPS, DLPS2, DLPS3, vanthe}.  As indicated in \cite{JaMiPo} (Fact 4 of page 181), the class of metric space over a Heyting algebra  has the amalgamation property, thus it may have homogeneous structures (e.g. when the algebra is countable).
Independently of our work, some research has been devoted to  generalized metric spaces which are also homogeneous \cite{braunfeld, hubicka-all, sauer}.

 \section{Metric space over a Heyting algebra.} 
 In what follows,  we introduce the basic terminology, see \cite{birkhoff, bondy-murty, Fra}. 
 Let $\mathcal H$ be a complete lattice, with a least element $0$ and a 
 greatest element 1, equipped with a monoid operation $\oplus$ and an involution $-$
 satisfying the following properties:\\
 (i) The monoid operation is compatible with the ordering, that is
 $p\leq p'$ and $q \leq q'$ imply $p\oplus q\leq p'\oplus q'$ for every $p,p',q,q' \in \mathcal H$.\\
 (ii) The involution is order-preserving and reverses the monoid
 operation, that is $$\overline {p\oplus q} = \bar {q} \oplus \bar {p}\;\;{\rm  holds}
 \;\;{\rm for\; every\;}\; p, q \in \mathcal H.$$
 We say that $\mathcal H$ is a \emph{Heyting algebra} if it satisfies the
 following distributivity condition:
 
 \begin{equation}\label{heyting}
 	\bigwedge  _{\alpha \in A, \beta \in B} (p_\alpha \oplus q_\beta) =
 	\bigwedge _{\alpha \in A} p_\alpha  \oplus \bigwedge _{\beta \in B} q_\beta
 \end{equation} 
 for all $p_\alpha \in {\mathcal H}$ $(\alpha \in A)$ and
 $p_{\beta} \in {\mathcal H}$ $(\beta \in B)$ or equivalently, (because of the
 involution)
 \begin{equation}
 	\bigwedge _{\alpha \in A} (p_\alpha \oplus q)  =
 	\bigwedge _{\alpha \in A} p_\alpha\; \oplus \;  q \
 \end{equation}
 for all $p_\alpha \in {\mathcal H}$ $(\alpha \in A)$ and $q\in {\mathcal H}$. 
 
 Note
 that this distributivity condition contains the fact that the  monoid
 operation and the ordering are compatible. 
 
 In the sequel, the following assumption is made:
 \begin{equation}\label{roleof0}
 	\text{The least element}\;  0\; \text{of}\; \mathcal H \; \text{is the  neutral element of the operation}\; \oplus.
 \end{equation}
 Let E be a set. A  \emph{distance} on $E$ is a map
 $d : E\times E \rightarrow {\mathcal H}$ satisfying the following properties
 for all $x,y,z \in E$:\\
 d1) $d(x,y)\;=\;0$ if and only if $x\;=\;y$;\\
 d2) $d(x,y)\; \leq \;d(x,z)\;\oplus \;d(z,y)$;\\
 d3) $d(x,y)\;=\;\overline{d(y,x)}$.\\
 The pair $\mathbf E:=(E,d)$ is a {\it metric space} over $\mathcal H$. If there is no
 danger we will denote it $E$. If we replace the monoid operation $\oplus$
 by its reverse, that is by the
 operation $(x,y) \mapsto y\oplus x$, and we leave unchanged the ordering and 
 the involution, then the new structure ${\mathcal H}^{'}$ satisfies the same
 properties as $\mathcal H$  and so we can define distances over ${\mathcal 
 	H}^{'}$. For example, if $d : E\times E \rightarrow {\mathcal H}$ is a distance then
 $\bar {d} : E\times E \rightarrow {\mathcal H}^{'}$ defined by
 $\bar {d}(x,y) = d(y,x)$ is a distance over ${\mathcal H}'$, the
 {\it dual distance}. We denote $\bar {\mathbf E}:=(E,\bar {d})$ or simply $\bar {E}$ the
 corresponding space. For typographical reasons, we will use
 $\bar {d}(x,y)$ instead of $\overline {d(x,y)}$.
 This causes no confusion. 
 
 Let $\mathbf E:= (E,d)$ be a metric space over $\mathcal H$. For all $x\in E$ and
 $r\in \mathcal H$, we define the {\it ball with center $x$ and radius r}, as
 the set $B_{\mathbf E}(x,r)=\{y \in E:d(x,y)\leq r\}$; if there is no danger of
 confusion we will denote it $B(x,r)$ instead of $B_{\mathbf E}(x,r)$.
 
 If $\mathbf E:= (E,d)$ and $\mathbf E':= (E',d')$ are two metric spaces over $\mathcal H$,
 then a map
 $f : E \rightarrow E^{'}$ is \emph{nonexpansive} (or \emph{contracting})
 provided that 
 \begin{equation} \label{eq:non-expansive} 
 	d'\left(f(x),f(y)\right) \leq d(x,y)\;\text{for all}\;x,y\in E.
 \end{equation}
 If in inequality (\ref{eq:non-expansive}) the equality holds for all $x,y\in E$, then $f$ is an
 {\it isometry} of $\mathbf E$ in $\mathbf E'$. Hence, in our terminology, an isometry is not necessarily
 surjective. We say that $\mathbf E$ and $\mathbf E'$ are {\it isomorphic}, a fact we 
 denote $\mathbf E\cong \mathbf E'$, if there is a surjective isometry from $\mathbf E$ onto $\mathbf E'$.
 If  $E$ is a subset of $E'$ and the identity map $\id: E \rightarrow E'$ is nonexpansive, we
 say that $\mathbf E$ is a {\it subspace} of $\mathbf E'$, or that
 $\mathbf E'$ is an {\it extension } of $\mathbf E$. If, moreover, this map 
 is an isometry (that is $d$ is the restriction of $d'$ to $E' \times E'$),
 then we call $\mathbf E$ an {\it isometric subspace} of $\mathbf E'$ and
 $\mathbf E'$  an {\it isometric extension} of $\mathbf E$. The \emph{restriction of $d'$ to $\mathbf E$}, denoted by $d'_{\restriction E}$,   is the restriction of the map $d'$ to $E\times E$. This is a distance, the resulting space, denoted by  $\mathbf E'_{\restriction E}:= (E, d'_{\restriction E})$,
 is the \emph{restriction of $\mathbf E'$ to $\mathbf E$}; this is an isometric subspace of $\mathbf E'$.  As usual in categories, $Hom(\mathbf E, \mathbf E')$ denote  the set of all nonexpansive maps from $\mathbf E$ to $\mathbf E'$

 The fact that $\mathcal H$ is a complete lattice allows to define arbitrary product of metric spaces.
 If $(\mathbf  E_i)_{i\in I}$, where  $\mathbf E_i:=(E_i,d_i)$,  is a family of metric spaces over
 $\mathcal H$, the direct product $\mathbf E:=
 \displaystyle \prod_{i\in I}\mathbf E_i$, is the cartesian product
 $E:= \displaystyle \prod_{i\in I} E_i$, equipped with the "sup"
 (or $\ell^{\infty}$) distance $d : E\times E \to {\mathcal H}$ defined by:
 \[d\Big((x_i)_{i\in I},(y_i)_{i\in I} \Big) :=
 \bigvee_{i \in I} d_i(x_i,y_i). \]\vskip2mm
 The distributivity condition on $\mathcal H$ allows to define a distance on the space of values $\mathcal H$. This fact relies on the classical notion of {\it residuation} (see \cite{ward-dilworth, blyth-janowitz}).\\
 Let $v\in \mathcal H$. Given $\gamma \in {\mathcal H}$, the sets
 $\{r \in {\mathcal H}: v \leq r \oplus \gamma\}$ and
 $\{r \in {\mathcal H}: v \leq  \gamma \oplus r \}$ have least elements, that we 
 denote respectively $\lceil v - \gamma \rceil$ and $\lceil -\gamma \oplus  v  \rceil$
 (where, in fact, $\overline {\lceil- \gamma \oplus  v \rceil} =
 \lceil \bar v - \bar \gamma \rceil$). It follows that for all
 $p, q \in \mathcal H$, the set
 $$D(p,q):=\{r \in {\mathcal H}: p\leq q \oplus \bar r\;\;{\rm and}\; \; q\leq 
 p\oplus r\}$$
 has a least element, namely $\lceil \bar p - \bar q \rceil \vee
 \lceil - p \oplus q \rceil$. We set 
 \begin{equation}\label{eq:distance}
 	d_{\mathcal H}(p,q):= Min D(p,q).
 \end{equation}
 
 As shown in \cite{JaMiPo}:
 \begin{theorem}\label{thm:embedding}
 	The map $(p,q) \mapsto d_{\mathcal H}(p,q)$ is a distance on $\mathcal H$ and every metric space over $\mathcal H$  embeds isometrically into a power of the space $\mathbf H:=({\mathcal H},d_{\mathcal H})$.
 \end{theorem}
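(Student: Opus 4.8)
The plan is to split the statement into two independent tasks: verifying that $d_{\mathcal H}$ satisfies the three distance axioms $d1)$--$d3)$, and then producing an explicit isometric embedding of an arbitrary space into a power of $\mathbf H$. Throughout I will work directly from the definition $d_{\mathcal H}(p,q) = \Min D(p,q)$ with $D(p,q) = \{r : p \le q \oplus \bar r \text{ and } q \le p \oplus r\}$, taking for granted, as established just before the statement via residuation, that each $D(p,q)$ has a least element. For $d1)$ I would note that since $0$ is the neutral element and $\bar 0 = 0$ (the involution being an order-preserving bijection, it fixes the least element), we have $0 \in D(p,q)$ exactly when $p \le q$ and $q \le p$; as $0$ is the least element of $\mathcal H$ this yields $d_{\mathcal H}(p,q)=0$ if and only if $p=q$. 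For $d3)$ the key observation is that $r \in D(p,q)$ holds if and only if $\bar r \in D(q,p)$, because the two defining inequalities simply swap roles under $r \mapsto \bar r$; since the involution is an order-preserving bijection it carries the least element of $D(p,q)$ to that of $D(q,p)$, giving $d_{\mathcal H}(p,q) = \overline{d_{\mathcal H}(q,p)}$.

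The triangle inequality $d2)$ is where the algebraic structure is really used. Writing $a := d_{\mathcal H}(p,s)$ and $b := d_{\mathcal H}(s,q)$, the memberships $a \in D(p,s)$ and $b \in D(s,q)$ give $p \le s \oplus \bar a$, $s \le p \oplus a$, $s \le q \oplus \bar b$ and $q \le s \oplus b$. I would then show $a \oplus b \in D(p,q)$: chaining the inequalities and using monotonicity of $\oplus$ gives $q \le s \oplus b \le p \oplus (a \oplus b)$, while $p \le s \oplus \bar a \le q \oplus (\bar b \oplus \bar a) = q \oplus \overline{a \oplus b}$, the last equality being precisely the fact that the involution reverses $\oplus$. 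Minimality then gives $d_{\mathcal H}(p,q) \le a \oplus b$, which is $d2)$.

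For the embedding, fix a space $\mathbf E = (E,d)$ and define $\Phi \colon E \to \mathbf H^{E}$ by $\Phi(x) := (d(a,x))_{a \in E}$; the goal is to prove $\bigvee_{a\in E} d_{\mathcal H}(d(a,x),d(a,y)) = d(x,y)$, which both shows that $\Phi$ is isometric and, since an isometry is injective, that it is an embedding. For the inequality $\le$, I would check that $d(x,y) \in D(d(a,x),d(a,y))$ for every $a$: using $\overline{d(x,y)} = d(y,x)$ this amounts to the two triangle inequalities $d(a,x) \le d(a,y) \oplus d(y,x)$ and $d(a,y) \le d(a,x) \oplus d(x,y)$, both instances of $d2)$ for $d$, and minimality then bounds each coordinate by $d(x,y)$. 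For the reverse inequality I would simply evaluate the supremum at the index $a = x$: there $d(x,x)=0$, so the coordinate is $d_{\mathcal H}(0,d(x,y))$, and computing $D(0,d(x,y)) = \{r : r \ge d(x,y)\}$ shows this coordinate already equals $d(x,y)$.

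The one step that demands genuine care is the triangle inequality, since the non-commutativity of $\oplus$ means the order of factors cannot be permuted; the argument goes through only because the involution sends $\overline{a \oplus b}$ to $\bar b \oplus \bar a$, matching exactly the order in which the intermediate estimates accumulate. Everything else is bookkeeping with the defining inequalities of $D(p,q)$ and the fact that $0$ is simultaneously the least and the neutral element. The choice of orientation $\Phi(x) = (d(a,x))_{a}$ rather than $(d(x,a))_{a}$ is also not cosmetic: it is what makes the two required inequalities line up with the triangle axiom as written, and the evaluation at $a=x$ is the device that forces the supremum to be attained.
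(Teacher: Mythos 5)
Your proof is correct and takes essentially the same route as the paper: the same residuation-based definition $d_{\mathcal H}(p,q)=\Min D(p,q)$, the same embedding $x \mapsto (d(z,x))_{z\in E}$ into the sup-distance power $\mathbf H^{E}$, and the same key identity $d(x,y) = \bigvee_{z\in E} d_{\mathcal H}\left(d(z,x),d(z,y)\right)$ (equation (\ref{eq:sup-distance}) of the paper), which you establish exactly as intended -- the triangle inequalities of $d$ giving one direction and evaluation at $z=x$ giving the other. The survey merely cites Jawhari--Misane--Pouzet for the axiom checks and this identity; your write-up supplies those omitted details correctly, including the non-commutative bookkeeping in the triangle inequality where the order-reversal of the involution is what makes $a\oplus b \in D(p,q)$.
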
  
 This result follows from the fact that for
 every metric space $\mathbf E:=(E,d)$ over $\mathcal H$, and for all $x, y\in E$,
 the following equality holds:
 \begin{equation}\label{eq:sup-distance}
 	d(x,y) =  \bigvee_{z\in E} d_{\mathcal H}\left(d(z,x),d(z,y)\right).
 \end{equation}
 Indeed, for each $x\in E$, let
 $\bar \delta(x) : E \to \mathcal H$ be the map defined by
 $\bar \delta(x)(z) = d(z,x)$ for all $z \in E$; the equality above expresses
 that the map from $\mathbf E$ into the power  $\mathbf H^{E}$
 is an isometric embedding (on an other hand,  this equality expresses that
 $\bar \delta (x)$ is a nonexpansive map from $\mathbf E$ into
 $\bar{\mathcal H}:=({\mathcal H}',d_{{\mathcal H'}})$). 
  \section{Examples}
 \subsection{Ordinary metric and ultrametric spaces}
 Let  $\mathcal H:=\mathbb{R}^{+} \cup \{ +\infty\}$ with the  addition on the non-negative reals extended to $\mathcal H$ in the obvious way, the involution being the identity. 
 The metric spaces  we get are just unions of disjoint copies of ordinary metric spaces. The fact that we add to $\mathbb{R}^{+}$ an infinite value is an inessential difference.
 We do it  to make $\mathcal H$  a complete poset  and have infinite products, this  avoiding   $\ell^{\infty}$ type constructions. On $\mathbf H:= (\mathcal H, d_{\mathcal H})$, the distance is the absolute value, except that the distance from $\infty$ to any other element is $\infty$. 
 Every space in our sense embeds isometrically into a power of $\mathbf H$ and in fact in a power of  $\mathbb{R}^{+}$ equipped with the absolute value. On the other hand, every ordinary metric space embeds isometrically into some $\ell ^{^{\infty}}_{\mathbb{R}}(I)$, the space of bounded families  $(x_i)_{i\in I}$ of real numbers, endowed with the sup-distance.   
 
 If the monoid operation on  $\mathbb{R}^{+} \cup \{ +\infty\}$ is the join and the involution is the identity,  distances are called \emph{ultrametric distances} and metric spaces are called \emph{ultrametric spaces} (see \cite{bayod-martinez}). The notion of ultrametric spaces has been generalized by several authors (see \cite{priess-crampe-ribenboim1, priess-crampe-ribenboim2}, \cite{ackerman},  \cite{braunfeld}). The general setting for the space of values is a join semilattice with a least element.
 
 A \emph{join-semilattice} is an ordered set in which two arbitrary elements $x$ and $y$  have  a join, denoted by $x\vee y$,  defined as the least element of the set of common upper bounds of $x$ and $y$.
 
 Let $\mathcal H$ be  a join-semilattice with a least element, denoted by $0$.
 A \emph{pre-ultrametric space} over $\mathcal H$ is a pair $\mathbf D:=(E,d)$ where $d$ is a map from $E\times E$ into $\mathcal H$ such that for all $x,y,z \in E$:
 \begin{equation} \label{eq:ultra1} 
 	d(x,x)=0,\; d(x,y)=d(y,x)  \text{~and } d(x,y)\leq d(x,z)\vee d(z, y).
 \end{equation}
 The map $d$ is an \emph{ultrametric distance} over $\mathcal H$ and $\mathbf D$ is an \emph{ultrametric space}  over $\mathcal H$ if $\mathbf D$ is a pre-ultrametric space and $d$ satisfies \emph{the separation axiom}:
 \begin{equation} \label{eq:ultra2}
 	d(x,y)=0\; \text{implies} \;  x=y .
 \end{equation}
 
 Any binary relational structure  $\mathbf M:=(E, (\mathcal E_i)_{i\in
 	I})$  in which each $\mathcal E_i$ is an equivalence relation  on  the  set $E$ can be viewed as a pre-ultrametric space on $E$. Indeed, 
 given a set $I$,  let $\powerset (I)$ be the power set of $I$. Then $\powerset (I)$, ordered by inclusion, is a join-semilattice (in fact a complete Boolean algebra)  in which the join is the union, and  $0$  the empty set.
 For $x,y\in E$, set $d_{\mathbf M}(x,y):=\{i\in I: (x,y)\not \in \mathcal E_i\}$. Then   the pair $\mathbf D_{\mathbf M}:=(E, d_{\mathbf  M})$ is a pre-ultrametric space over $\powerset (I)$.
 Conversely, let $\mathbf D:=(E,d)$ be a pre-ultrametric space over $\powerset (I)$. For every $i\in I$ set $\mathcal E_i:=\{(x,y)\in E\times E: i\not \in d(x,y) \}$ and let  $\mathbf M:=(E, (\mathcal E_i)_{i\in
 	I})$. Then $\mathcal E_i$ is an equivalence relation on $E$ and $d_{\mathbf M}=d$.
 Furthermore,  $\mathbf D_{\mathbf M}$ is an ultrametric space if and only if $\bigcap_{i\in I} \mathcal E_i= \Delta_E:=\{(x,x): x\in E\}.$  
 
 The congruences of an algebra form an important class of equivalence relations; they can be studied in terms of ultrametric spaces  (see Section \ref{section:further development} for an example).  If we suppose that our distributivity condition holds, which is for example the case if the set of values  is a finite distributive lattice, the study of these ultrametric spaces  fits in the study of metric spaces over a Heyting algebra. This case was particularly studied in \cite{PR}  and more recently in   \cite{ ackerman, braunfeld, pouzet2}. 
 \vskip2mm

 \subsection{Graphs and  digraphs}         
 A \emph{binary relation} on a set $E$ is a subset $\mathcal E$ of $E\times E$,  the set of ordered pairs $(x,y)$ of elements of $E$. The \emph{inverse} of $\mathcal E$ is the binary relation $\mathcal E^{-1}:= \{(x,y): (y,x)\in \mathcal E\}$. 
 The \emph{diagonal} of $E$ is the set $\Delta_{E}=\{(x,x):x\in E\}$. A \emph{directed graph} $\G$ is a pair $(E, \mathcal E)$ where $\mathcal E$ is a binary relation on $E$. We say that  $\G$ is  \emph{reflexive} if $\mathcal E$ is \emph{reflexive}, that is, contains the diagonal $\Delta_{E}$,  and that $G$ is \emph{oriented} if $\mathcal E$ is \emph{antisymmetric}, that is, $(x,y)$ and $(y,x)$ cannot be in $\mathcal E$ simultaneously except if $x =y$. If $\mathcal E$ is \emph{symmetric}, that is $\mathcal E=\mathcal E^{-1}$,  we identify it with a subset of pairs of $E$ and we say that the graph is \emph{undirected}. If  $\G:= (E, \mathcal E)$ and $\G':= (E', \mathcal E')$ are two directed graphs,  a \emph{homomorphism from $\G$ to $\G'$} is a map $h: E\rightarrow  E'$ such that $(h(x), h(y)) \in \mathcal E'$ whenever $(x,y)\in \mathcal E$ for every $(x,y)\in E\times E$.
 
 In the sequel, all graphs we consider will be reflexive.  Hence, graph-homomorphisms  can send edges or arcs on loops.  We refer to \cite{bondy-murty} for the terminology on graphs.
 
 \subsubsection{Reflexive graphs} 
 Let $\mathcal H$ be the complete lattice on three elements such that \lq\lq$0< \frac{1}{2}< 1$\rq\rq.
 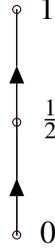
\begin{figure}[H]
 	\centering
 	\definecolor{ttqqqq}{rgb}{0.2,0.,0.}
 	\begin{tikzpicture}[line cap=round,line join=round,>=triangle 45,x=0.75cm,y=0.75cm]
 	\clip(1.,-1.5) rectangle (3.,3.5);
 	\draw [->,line width=0.5pt] (2.,-1.) -- (2.,0.);
 	\draw [->,line width=0.5pt] (2.,1.) -- (2.,2.);
 	\draw [line width=0.5pt] (2.,0.)-- (2.,1.);
 	\draw [line width=0.5pt] (2.,2.)-- (2.,3.);
 	\draw [line width=0.5pt] (2.,-1.)-- (2.,1.);
 	\draw (2.22,-0.56) node[anchor=north west] {$0$};
 	\draw (2.24,1.6) node[anchor=north west] {$\frac{1}{2}$};
 	\draw (2.2,3.4) node[anchor=north west] {$1$};
 	\begin{scriptsize}
 	\draw [color=ttqqqq] (2.,-1.) circle (1.5pt);
 	\draw [color=ttqqqq] (2.,1.) circle (1.5pt);
 	\draw [color=ttqqqq] (2.,3.) circle (1.5pt);
 	\end{scriptsize}
 	\end{tikzpicture}
 	\caption{The ordered monoid $\mathcal{H}$. }
 \end{figure}

 The monoid operation is defined by $ x\oplus y=\min\{x+y,1\}$ and  the involution is the identity.\\ 
 Every symmetric reflexive graph $\G:=(E,\mathcal{E})$ is a metric space over  $\mathcal{H}$. The distance $d:E\times E\longrightarrow \mathcal H$ is defined by:
 \begin{enumerate}
 	\item $d(x,y)=1 \;\;\text{if} \;\;(x,y) \notin \mathcal{E};$
 	\item   $d(x,y)= \frac{1}{2} \;\;\text{if} \;\;  (x,y)\in \mathcal{E}\; \text{and}\; x\neq y;$
 	\item $ d(x,y)=0 \;\;\text{if} \;\; x=y.$     
 \end{enumerate}    
 Conversely every metric space $\mathbf E:= (E,d)$ over $\mathcal H$ can be viewed as a symmetric reflexive graph; the vertices are the elements of $E$ and the set of edges $\mathcal{E}$ (including the loops) is defined as follows: 
 \begin{displaymath} (x,y)\in \mathcal{E} \Longleftrightarrow d(x,y)\leq \frac{1}{2}. 
 \end{displaymath}    
 Nonexpansive maps correspond to graph-homomorphisms (provided that  edges are  sended on edges or loops). 
 \begin{sloppypar}
 The distance $d_{\mathcal H}$ on the Heyting algebra takes value $\frac {1}{2}$ on the pairs $(x,y)\in \{ (0,\frac {1}{2}), (\frac {1}{2}, 0), (\frac {1}{2}, 1), (1,  \frac {1}{2})\}$, value $1$  on the pairs $(0,1)$ and $(1,0)$, and $0$ on the diagonal. The corresponding graph $\mathbf G_{\mathcal H}$ is the  path  $P_3$ on three vertices with $\frac {1}{2}$ as a middle  point.  
 \end{sloppypar}
 \subsubsection{Reflexive digraphs}
 Let $\mathcal H$ be the complete lattice on five elements $\{0,\frac{1}{2} , + , - , 1\}$, represented below:
 \begin{figure}[H]
 	\centering
 	\definecolor{ttqqqq}{rgb}{0.2,0.,0.}
 	\begin{tikzpicture}[line cap=round,line join=round,>=triangle 45,x=1cm,y=1cm]
 	\clip(0.,-3.5) rectangle (6.,3.75);
 	\draw [line width=0.5pt,color=ttqqqq] (3.,-3.)-- (3.,-1.);
 	\draw [line width=0.5pt,color=ttqqqq] (3.,-1.)-- (5.,1.);
 	\draw [line width=0.5pt,color=ttqqqq] (3.,-1.)-- (1.,1.);
 	\draw [line width=0.5pt,color=ttqqqq] (1.,1.)-- (3.,3.);
 	\draw [line width=0.5pt,color=ttqqqq] (5.,1.)-- (3.,3.);
 	\draw [->,line width=0.5pt,color=ttqqqq] (3.,-3.) -- (3.,-2.);
 	\draw [->,line width=0.5pt,color=ttqqqq] (3.,-1.) -- (2,0);
 	\draw [->,line width=0.5pt,color=ttqqqq] (3.,-1.) -- (4,0);
 	\draw [->,line width=0.5pt,color=ttqqqq] (1.,1.) -- (2,2);
 	\draw [->,line width=0.5pt,color=ttqqqq] (5.,1.) -- (4,2);
 	\draw (2.8,-3.1) node[anchor=north west] {$0$};
 	\draw (2.84,3.5) node[anchor=north west] {$1$};
 	\draw (0.4,1.2) node[anchor=north west] {$+$};
 	\draw (5.1,1.2) node[anchor=north west] {$-$};
 	\draw (3.26,-0.64) node[anchor=north west] {$\frac{1}{2}$};
 	\begin{scriptsize}
 	\draw [color=ttqqqq] (3.,-3.) circle (1.5pt);
 	\draw [color=ttqqqq] (3.,-1.) circle (1.5pt);
 	\draw [color=ttqqqq] (5.,1.) circle (1.5pt);
 	\draw [color=ttqqqq] (1.,1.) circle (1.5pt);
 	\draw [color=ttqqqq] (3.,3.) circle (1.5pt);
 	\end{scriptsize}
 	\end{tikzpicture}
 	\caption{The ordered monoid  $\mathcal{H}$. }
 \end{figure}
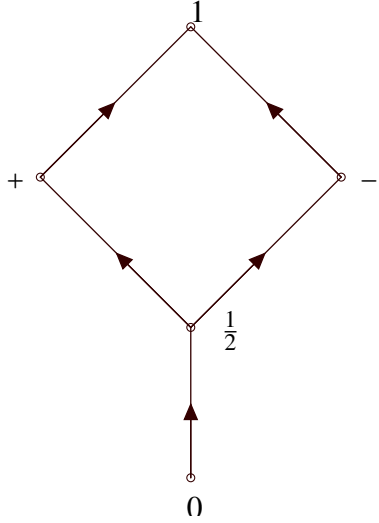
 
 The monoid operation is defined by
 $$\left\{
 \begin{array}{ll}
 x\oplus y=1  \;\;\text{if} \; x,y\geq \frac{1}{2};\\
 x\oplus y=\max(x,y)  \;\;\text{if not}.
 \end{array}  \right.$$
 The  involution exchanges $+$ and $-$  and fixes $0$, $\frac{1}{2}$ and $1$.\\
 If $\G:=(E,\mathcal{E})$ is a reflexive directed graph, the application   $d:E\times E\longrightarrow \mathcal{H}$ defined by 
 \begin{enumerate}
 	\item $d(x,y)=1  \;\;\text{if}\; (x,y)\notin \mathcal{E} \cup \mathcal{E}^{-1};$
 	\item  $ d(x,y)=+    \;\;\text{if}\;   (x,y)\in \mathcal{E} \backslash \mathcal{E}^{-1};$
 	\item $ d(x,y)=-    \;\;\text{if}\;  (x,y)\in \mathcal{E}^{-1}\backslash \mathcal{E};$
 	\item  $d(x,y)=\frac{1}{2}   \;\;\text{if}\;   (x,y)\in \mathcal{E} \bigcap \mathcal{E}^{-1} \backslash \Delta_{E};$
 	\item $d(x,y)=0    \;\;\text{if}\;   (x,y)\in  \Delta_{E},$     
 \end{enumerate}
 is a distance  on $E$.\\
 Conversely every metric space $(E,d)$ over $\mathcal H$ can be viewed as a  reflexive digraph; the vertices are the elements of $E$ and the set of arcs $\mathcal{E}$ is defined as follows:  \begin{displaymath} (x,y)\in \mathcal{E} \Longleftrightarrow d(x,y)\leq +. \end{displaymath}

 \subsubsection{The graphic distance}
 A graph $\mathbf P$ is a {\it path} if we can enumerate the vertices in a
 non-repetitive sequence
 $(x_{i})_{i \in  I}$ where $I$ is of the form $\{0,1,...,n\}$ or $\N$,
 the set of non-negative integers, or $\Z$,
 the set of relative integers, such that $(x_{i},x_{j})$ forms an edge
 if and only if $|j - i| \leq 1$;
 the path $\mathbf P$ is {\it finite} if $I = \{0,1,...,n\}$ and in this case $n$
 is its {\it length} whereas $\mathbf P$
 is {\it infinite} if $I = \N$, and  {\it doubly infinite} if  $I = \Z$. If $\G:=(V,\mathcal{E})$ is an (undirected) graph, the  {\it graphic distance} is the map $d_{G}:V\times V\longrightarrow \N\cup\{+\infty\} $ for which  $d_{G}(x,y)$ is the length of the shortest path connecting
 $x$ to $y$ if there is a  such a path and $+\infty$ otherwise. This is a distance on $\mathcal{H}:= (\N\cup\{+\infty\}, \oplus)$ where $\oplus$ is the ordinary sum. The distance on $\mathcal{H}$ defined by means of  Formula (\ref{eq:distance})   is the graphic distance associated with the graph $\G_{\mathcal H}$ made of a one way infinite path and an isolated vertex. Not every metric space over $\mathcal H$ comes from a graph. Still, with the fact that $\G_{\mathcal H}$ embeds isometrically into an infinite  product of finite paths,  it follows from  Theorem \ref{thm:embedding} that   \emph{every graph embeds into a product of finite paths}, a result due to Nowakowski-Rival \cite{NoRi} and Quilliot \cite{Qu1}.

 \subsubsection{The zigzag distance} \label{subsection:the zigzag distance} 
 A \emph{reflexive zigzag} is a reflexive graph  $\mathbf L$ such that the symmetric hull is a path. If  $\mathbf L:= (L, \mathcal L)$ is a finite reflexive oriented zigzag, we may enumerate the vertices in a non-repeating sequence $v_0:= x, \dots,  v_{n}:= y$ and to this enumeration we may  associate the finite sequence $ev(\mathbf L):= \alpha_0\cdots \alpha_i \cdots\alpha_{n-1}$ of $+$ and $-$,   where $\alpha_i:= +$ if $(v_i,v_{i+1})\in \mathcal L$  and $\alpha_i:= -$ if $(v_{i+1},v_{i})\in \mathcal L$. We call  such a sequence a \emph{word} over the \emph{alphabet} $\Lambda:= \{+,-\}$.  If the path has just one vertex, the corresponding  word  is   the empty word, that we denote by $\Box$.  Conversely, to a finite word  $u:= \alpha_0\cdots \alpha_i \cdots\alpha_{n-1}$ over $\Lambda$ we may associate the reflexive oriented zigzag  $\mathbf L_u:= (\{0, \dots n\}, \mathcal L_{u})$ with end-points $0$ and $n$ (where $n$ is the length $\mid u\mid$ of $u$) such that  $$\mathcal L_{u}= \{(i,i+1): \alpha_i=+\}\cup \{(i+1, i): \alpha_i=-\}\cup \Delta_{\{0, \dots, n\}}.$$
 
 \begin{figure}[H]
 	\begin{center}
 		\unitlength=1cm
 		\begin{tikzpicture}
 		
 		\draw (-0.2,2.1) circle (.2);
 		\put(0,2){\circle*{.15}}
 		\put(-1,1){\circle*{.15}}
 		\draw (-1.1,1.2) circle (.2);
 		\put(1,3){\circle*{.15}}
 		\draw (0.9,3.2) circle (.2);
 		\put(2,4){\circle*{.15}}
 		\draw (1.9,4.2) circle (.2);
 		\put(3,3){\circle*{.15}}
 		\draw (3.1,3.2) circle (.2);
 		\put(4,2){\circle*{.15}}
 		\draw (4,1.8) circle (.2);
 		\put(5,3){\circle*{.15}}
 		\draw (5,3.2) circle (.2);
 		\put(6,2){\circle*{.15}}
 		\draw (6.1,2.2) circle (.2);
 		\put(7,1){\circle*{.15}}
 		\draw (7.1,1.2) circle (.2);
 		\put(0,2){\vector(1,1){.5}}
 		\put(-1,1){\vector(1,1){.5}}
 		\put(-0.5,1.5){\line(1,1){.5}}
 		\put(.5,2.5){\vector(1,1){1}}
 		\put(2,4){\line(-1,-1){.5}}
 		
 		\put(4,2){\vector(-1,1){.5}}
 		\put(3.5,2.5){\line(-1,1){.5}}
 		\put(3,3){\vector(-1,1){.5}}
 		\put(2.5,3.5){\line(-1,1){.5}}
 		\put(4,2){\vector(1,1){0.5}}
 		\put(7,1){\vector(-1,1){.5}}
 		\put(4.5,2.5){\line(1,1){0.5}}
 		\put(6.5,1.5){\line(-1,1){0.5}}
 		\put(6,2){\vector(-1,1){0.5}}
 		\put(5.5,2.5){\line(-1,1){0.5}}

 		\end{tikzpicture}
 		\caption{A reflexive oriented zigzag.}
 	\end{center}
 \end{figure}
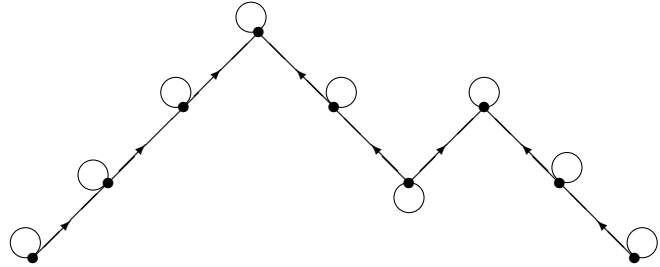

 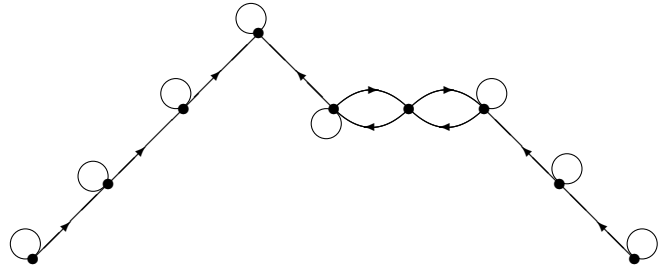
\begin{figure}[H]
 	\begin{center}
 		\unitlength=1cm
 		\begin{tikzpicture}
 		
 		\draw (-0.2,2.1) circle (.2);
 		\put(0,2){\circle*{.15}}
 		\put(-1,1){\circle*{.15}}
 		\draw (-1.1,1.2) circle (.2);
 		\put(1,3){\circle*{.15}}
 		\draw (0.9,3.2) circle (.2);
 		\put(2,4){\circle*{.15}}
 		\draw (1.9,4.2) circle (.2);
 		\put(3,3){\circle*{.15}}
 		\draw (2.9,2.8) circle (.2);
 		\put(4,3){\circle*{.15}}
 		\put(5,3){\circle*{.15}}
 		\draw (5.1,3.2) circle (.2);
 		\put(6,2){\circle*{.15}}
 		\draw (6.1,2.2) circle (.2);
 		\put(7,1){\circle*{.15}}
 		\draw (7.1,1.2) circle (.2);
 		\put(0,2){\vector(1,1){.5}}
 		\put(-1,1){\vector(1,1){.5}}
 		\put(-0.5,1.5){\line(1,1){.5}}
 		\put(.5,2.5){\vector(1,1){1}}
 		\put(2,4){\line(-1,-1){.5}}
 		\bezier{200}(3,3)(3.5,3.5)(4,3)
 		\put(3.5,3.25){\vector(1,0){.1}}
 		
 		\bezier{200}(4,3)(4.5,3.5)(5,3)
 		\put(4.5,3.25){\vector(1,0){.1}}
 		
 		\bezier{200}(3,3)(3.5,2.5)(4,3)
 		\put(3.5,2.75){\vector(-1,0){.1}}
 		
 		\bezier{200}(4,3)(4.5,2.5)(5,3)
 		\put(4.5,2.75){\vector(-1,0){.1}}
 		\put(3,3){\vector(-1,1){.5}}
 		\put(2.5,3.5){\line(-1,1){.5}}
 		
 		\put(7,1){\vector(-1,1){.5}}
 		
 		\put(6.5,1.5){\line(-1,1){0.5}}
 		\put(6,2){\vector(-1,1){0.5}}
 		\put(5.5,2.5){\line(-1,1){0.5}}

 		\end{tikzpicture}
 		\caption{A reflexive directed zigzag.}
 	\end{center}
 	
 \end{figure}

 Let $\G:= (E, \mathcal E)$ be a reflexive directed graph. For each pair $(x,y)\in E\times E$, the \emph{zigzag distance}  from $x$ to $y$ is the set $d_{\mathbf G}(x,y)$ of words $u$ such that there is a nonexpansive map $h$ from $\mathbf L_u$ into $\G$ which sends $0$ on $x$ and $\mid u\mid$ on $y$.
  
 Because of the reflexivity of $\G$, every word obtained from a word belonging to $d_{\G}(x,y)$ by inserting letters  will also be into $d_{\G}(x,y)$. This leads to  the following framework.
 
 Let $\Lambda^*$ be  collection of words over the alphabet  $\Lambda:= \{+,-\}$. Extend the involution on $\Lambda$ to $\Lambda^*$ by setting $\overline \Box:= \Box$ and $\overline {u_0\cdots u_{n-1}}:= \overline{u_{n-1}}\cdots \overline{u_{0}}$ for every word in $\Lambda^*$. Order  $\Lambda^{\ast}$ by the \emph{subword ordering}, denoted by $\leq$. If  $u: = \alpha_{1} \alpha_{2} \ldots \alpha_{m},  v: = \beta_{1}
 \beta_{2} \ldots \beta_{n}\in \Lambda^{*}$ set:
 \begin{equation}
 	u\leq v \;
 	\text{if and only if }\;
 	\alpha_{j}  =\beta_{i_{j}}\ {\rm for\ all}\ j = 1, \ldots m\; \text{with some}\; 1\leq j_1<\dots j_m\leq n.
 \end{equation}
 Let $\mathbf {F}(\Lambda^*)$ be  the set of final segments of $\Lambda^*$, that is subsets $F$ of $\Lambda^{\ast}$ such that $u\in F$ and $u\leq v$ imply $v\in F$.
 Setting $\overline X:= \{\overline u: u\in X\}$ for a set $X$ of words, we observe that $\overline X$ belongs to  $\mathbf {F}(\Lambda^*)$.
 Order $\mathbf {F}(\Lambda^*)$  by reverse of the inclusion, denote by   $0$ its least element (observe that it  is $\Lambda^*$, the final segment generated by the empty word), set $uv$ for the concatenation of two words $u,v \in \Lambda^*$, $X\oplus Y$ for the concatenation $XY:= \{uv: u\in X, v\in Y\}$.
 Then,  one  sees that $\mathcal H_{\Lambda}:= (\mathbf {F}(\Lambda^*), \oplus, \supseteq,  0, -)$ is an involutive Heyting algebra. This leads us to consider distances and metric spaces over $\mathcal H_{\Lambda}$.
 
 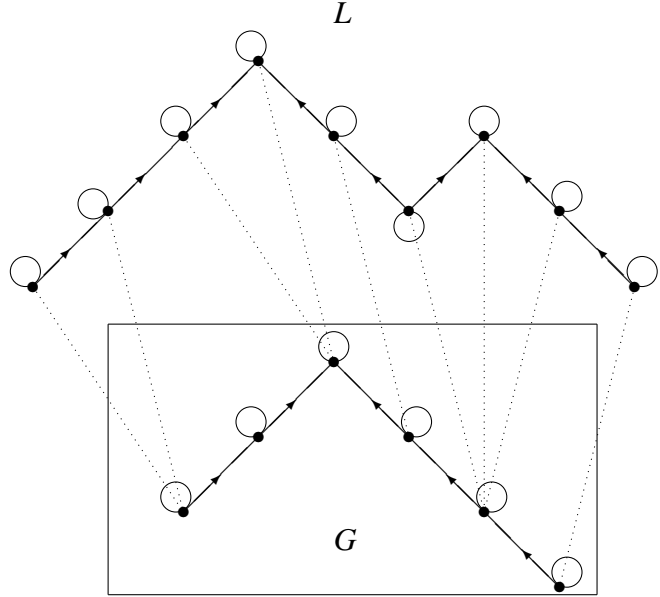
\begin{figure}[H]
 	\begin{center}
 		\unitlength=1cm
 		\begin{tikzpicture}
 		\draw (-0.2,2.1) circle (.2);
 		\put(0,2){\circle*{.15}}
 		\put(-1,1){\circle*{.15}}
 		\draw (-1.1,1.2) circle (.2);
 		\put(1,3){\circle*{.15}}
 		\draw (0.9,3.2) circle (.2);
 		\put(2,4){\circle*{.15}}
 		\draw (1.9,4.2) circle (.2);
 		\put(3,3){\circle*{.15}}
 		\draw (3.1,3.2) circle (.2);
 		\put(4,2){\circle*{.15}}
 		\draw (4,1.8) circle (.2);
 		\put(5,3){\circle*{.15}}
 		\draw (5,3.2) circle (.2);
 		\put(6,2){\circle*{.15}}
 		\draw (6.1,2.2) circle (.2);
 		\put(7,1){\circle*{.15}}
 		\draw (7.1,1.2) circle (.2);
 		\put(0,2){\vector(1,1){.5}}
 		\put(-1,1){\vector(1,1){.5}}
 		\put(-0.5,1.5){\line(1,1){.5}}
 		\put(.5,2.5){\vector(1,1){1}}
 		\put(2,4){\line(-1,-1){.5}}
 		
 		\put(4,2){\vector(-1,1){.5}}
 		\put(3.5,2.5){\line(-1,1){.5}}
 		\put(3,3){\vector(-1,1){.5}}
 		\put(2.5,3.5){\line(-1,1){.5}}
 		\put(4,2){\vector(1,1){0.5}}
 		\put(7,1){\vector(-1,1){.5}}
 		\put(4.5,2.5){\line(1,1){0.5}}
 		\put(6.5,1.5){\line(-1,1){0.5}}
 		\put(6,2){\vector(-1,1){0.5}}
 		\put(5.5,2.5){\line(-1,1){0.5}}
 		
 		\put(1,-2){\circle*{.15}}
 		\draw (0.9,-1.8) circle (.2);
 		\put(2,-1){\circle*{.15}}
 		\draw (1.9,-0.8) circle (.2);
 		\put(3,0){\circle*{.15}}
 		\draw (3,0.2) circle (.2);
 		\put(4,-1){\circle*{.15}}
 		\draw (4.1,-0.8) circle (.2);
 		\put(5,-2){\circle*{.15}}
 		\draw (5.1,-1.8) circle (.2);
 		\put(6,-3){\circle*{.15}}
 		
 		\draw (6.1,-2.8) circle (.2);
 		\put(1,-2){\vector(1,1){0.5}}
 		\put(1.5,-1.5){\line(1,1){0.5}}
 		\put(2,-1){\vector(1,1){0.5}}
 		\put(2.5,-0.5){\line(1,1){0.5}}
 		\put(6,-3){\vector(-1,1){0.5}}
 		\put(5.5,-2.5){\line(-1,1){0.5}}
 		\put(5,-2){\vector(-1,1){0.5}}
 		\put(4.5,-1.5){\line(-1,1){0.5}}
 		\put(4,-1){\vector(-1,1){0.5}}
 		\put(3.5,-0.5){\line(-1,1){0.5}}
 		\draw[dotted] (-1,1)to (1,-2);
 		\draw[dotted] (-0,2)to (1,-2);
 		\draw[dotted] (2,4)to (3,0);
 		\draw[dotted] (1,3)to (3,0);
 		\draw[dotted] (7,1)to (6,-3);
 		\draw[dotted] (3,3)to (4,-1);
 		\draw[dotted] (4,2)to (5,-2);
 		\draw[dotted] (5,3)to (5,-2);
 		\draw[dotted] (6,2)to (5,-2);
 		\put(0,-3.1){\line(0,1){3.6}}
 		\put(0,0.5){\line(1,0){6.5}}
 		\put(6.5,.5){\line(0,-1){3.6}}
 		\put(6.5,-3.1){\line(-1,0){6.5}}
 		\put(3,4.5){$L$}
 		\put(3,-2.5){$G$}
 		\end{tikzpicture}

 		\caption{A morphism of an oriented zigzag $L$ into a directed graph $G$. }
 	\end{center}
 \end{figure}
 
 There are two simple and crucial facts about the consideration of the zigzag distance  (see \cite {JaMiPo}).
 \begin{lemma}\label{graphmorphisms} A map from a reflexive directed graph $\G$ into an other is a graph-homomorphism iff it is nonexpansive.
 \end{lemma}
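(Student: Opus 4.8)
The plan is to prove both implications by translating everything into statements about the languages $d_{\G}(x,y)\subseteq \Lambda^*$. First I would fix the order convention: since $\mathcal H_{\Lambda}=\mathbf F(\Lambda^*)$ is ordered by \emph{reverse} inclusion, the defining inequality of nonexpansiveness, $d_{\G'}(f(x),f(y))\leq d_{\G}(x,y)$, unwinds to the language containment
\[
d_{\G}(x,y)\subseteq d_{\G'}(f(x),f(y))\quad\text{for all } x,y\in E.
\]
The second preliminary I would record is that the two length-one words detect the arcs of $\G$: namely $+\in d_{\G}(x,y)$ iff $(x,y)\in\mathcal E$, and $-\in d_{\G}(x,y)$ iff $(y,x)\in\mathcal E$. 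This is immediate from the definition of the zigzag distance applied to the two-vertex zigzag $\mathbf L_{+}$: a homomorphism $h\colon \mathbf L_{+}\to\G$ with $h(0)=x$ and $h(1)=y$ exists precisely when the single arc $(0,1)$ of $\mathbf L_{+}$ can be sent to an arc of $\G$, i.e.\ when $(x,y)\in\mathcal E$; the loops of $\mathbf L_{+}$ impose no constraint since $\G$ is reflexive. The case of $-$ is symmetric.

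For the forward implication (homomorphism $\Rightarrow$ nonexpansive) I would argue by composition. Suppose $f$ is a graph-homomorphism and let $u\in d_{\G}(x,y)$, witnessed by a homomorphism $h\colon \mathbf L_u\to\G$ with $h(0)=x$ and $h(\mid u\mid)=y$. Then $f\circ h\colon \mathbf L_u\to\G'$ is again a graph-homomorphism, being a composite of two homomorphisms, and it sends $0\mapsto f(x)$ and $\mid u\mid\mapsto f(y)$; hence $u\in d_{\G'}(f(x),f(y))$. This yields the containment $d_{\G}(x,y)\subseteq d_{\G'}(f(x),f(y))$ for all $x,y$, which is exactly nonexpansiveness. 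For the backward implication (nonexpansive $\Rightarrow$ homomorphism) I would use only the single-letter words. Assume $f$ is nonexpansive and let $(x,y)\in\mathcal E$. By the detection observation $+\in d_{\G}(x,y)$, and the containment above gives $+\in d_{\G'}(f(x),f(y))$, which by the same observation means $(f(x),f(y))\in\mathcal E'$. Thus $f$ preserves arcs and is a graph-homomorphism.

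The only genuine care needed lies in the two preliminaries: getting the reverse-inclusion order right so that nonexpansiveness becomes language containment in the correct direction, and reading the witnessing maps $h$ in the definition of $d_{\G}$ as arc-preserving maps, so that this class is closed under composition (this is what makes the forward direction immediate and keeps the argument non-circular). Once these are pinned down, both directions are short; in fact the backward direction uses nothing beyond the words $+$ and $-$, so the full strength of the zigzag distance is not even required there.
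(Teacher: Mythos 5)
Your proof is correct. The paper states this lemma without proof (deferring to \cite{JaMiPo}), and your argument — unwinding the reverse-inclusion order on $\mathbf F(\Lambda^*)$ so that nonexpansiveness becomes the containment $d_{\G}(x,y)\subseteq d_{\G'}(f(x),f(y))$, composing graph-homomorphisms for the forward direction, and using the single-letter words $+$ and $-$ to detect arcs for the converse — is the natural and complete way to establish it, with your observation that the witnessing maps $h\colon \mathbf L_u\to\G$ must be read as graph-homomorphisms (rather than nonexpansive maps, which would make the definition circular) being exactly the right point of care.
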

 \begin{lemma}\label{lem:connexity}
 	The distance $d$ of  a metric space $\mathbf E:= (E,d)$ over $\mathcal H_{\Lambda}$ is the zigzag distance  of a reflexive directed graph $\G:= (E, \mathcal E)$ iff  it satisfies the following property for all  $x,y,z \in E$, $u, v\in \Lambda^*$:
 	$uv \in d(x,y)$ implies $u\in d(x,z)$ and $v\in d(z,y)$ for some $z\in E$. When this condition holds, $(x,y)\in \mathcal E$ iff $+\in d(x,y)$.
 \end{lemma}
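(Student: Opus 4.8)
The plan is to translate both sides into a statement about \emph{realizing paths} and then prove the two implications. First I would set up the dictionary. By Lemma \ref{graphmorphisms} a map between reflexive directed graphs is nonexpansive exactly when it is a graph-homomorphism, so the zigzag distance can be unravelled directly: for $u=\alpha_0\cdots\alpha_{n-1}$ one has $u\in d_{\G}(x,y)$ if and only if there is a sequence $z_0=x,z_1,\dots,z_n=y$ in $E$ such that, for every $i$, $(z_i,z_{i+1})\in\mathcal E$ when $\alpha_i=+$ and $(z_{i+1},z_i)\in\mathcal E$ when $\alpha_i=-$. Indeed such a sequence is nothing but the list of images $z_i:=h(i)$ of a homomorphism $h\colon\mathbf L_u\to\G$ with $h(0)=x$ and $h(n)=y$, and the loops of $\mathbf L_u$ impose no condition because $\G$ is reflexive. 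I shall call such a sequence a realizing path for $u$ from $x$ to $y$.

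Assume first that $d=d_{\G}$ for some reflexive directed graph $\G=(E,\mathcal E)$. The edge characterization is the one-letter case of the dictionary: $+\in d(x,y)=d_{\G}(x,y)$ means there is a homomorphism $\mathbf L_+\to\G$ sending $0,1$ to $x,y$, that is $(x,y)\in\mathcal E$. For the factorization property, suppose $uv\in d(x,y)=d_{\G}(x,y)$ and take a realizing path $z_0=x,\dots,z_{|u|+|v|}=y$. Putting $z:=z_{|u|}$, its initial segment is a realizing path for $u$ from $x$ to $z$ and its final segment a realizing path for $v$ from $z$ to $y$, whence $u\in d_{\G}(x,z)=d(x,z)$ and $v\in d_{\G}(z,y)=d(z,y)$, as required.

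Conversely, assume $d$ satisfies the factorization property and define $\mathcal E:=\{(x,y):+\in d(x,y)\}$; then $\G:=(E,\mathcal E)$ is reflexive because $d(x,x)=0=\Lambda^*$ contains every word, in particular $+$. With this $\mathcal E$, axiom d3 together with $\overline{+}=-$ turns the realizing-path condition into a single uniform requirement: both the case $\alpha_i=+$ and the case $\alpha_i=-$ amount to $\alpha_i\in d(z_i,z_{i+1})$ (for $\alpha_i=-$ one uses $d(z_{i+1},z_i)=\overline{d(z_i,z_{i+1})}$, so that $+\in d(z_{i+1},z_i)$ iff $-\in d(z_i,z_{i+1})$). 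Now $d_{\G}(x,y)\subseteq d(x,y)$ follows from the triangle inequality alone: iterating d2 along a realizing path gives $d(x,y)\supseteq d(z_0,z_1)\oplus\cdots\oplus d(z_{n-1},z_n)$, and since each $\alpha_i$ lies in $d(z_i,z_{i+1})$ the concatenation $\alpha_0\cdots\alpha_{n-1}=u$ lies in the right-hand side. For the reverse inclusion $d(x,y)\subseteq d_{\G}(x,y)$ I would induct on $|u|$: if $u=\Box$ then $\Box\in d(x,y)$ forces $d(x,y)=\Lambda^*=0$ by d1, hence $x=y$ and the one-vertex path realizes $\Box$; if $u=\alpha_0 w$ has positive length, apply the factorization hypothesis to the splitting $\alpha_0\cdot w$ to get $z_1$ with $\alpha_0\in d(x,z_1)$ and $w\in d(z_1,y)$, then use the induction hypothesis on $w$ to build a realizing path $z_1,\dots,z_n=y$ and prepend $z_0=x$.

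The main obstacle is this last inclusion: it is the only place where one must actually produce the intermediate vertices of a realizing path rather than merely read them off, and the factorization property is exactly the device that lets one strip off one letter at a time while remaining inside the distance $d$. All the remaining steps — the dictionary, the forward implication, and the easy inclusion — rely only on the definitions, on axioms d1, d2, d3, and on the reflexivity of $\G$.
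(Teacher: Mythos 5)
Your proof is correct. Note that the survey itself states Lemma \ref{lem:connexity} without proof, deferring to \cite{JaMiPo}, so there is no in-paper argument to compare against; your write-up supplies exactly the natural argument that the citation covers. The two delicate points are handled properly: the converse inclusion $d(x,y)\subseteq d_{\G}(x,y)$ is indeed the only nontrivial step, and your induction on $|u|$ — stripping one letter via the factorization property, translating $\alpha_0=-$ through the involution ($-\in d(x,z_1)$ iff $+\in d(z_1,x)$ by d3), and settling the base case by observing that a final segment containing $\Box$ must equal $\Lambda^*=0$, whence $x=y$ by d1 — is sound. The easy inclusion via the iterated triangle inequality (recalling that $\leq$ is reverse inclusion, so d2 says $d(x,z)\oplus d(z,y)\subseteq d(x,y)$ as languages) and the forward direction by cutting a realizing sequence at position $|u|$ are also correct, as is the identification of $\mathcal E$ with $\{(x,y): +\in d(x,y)\}$ in both directions.
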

 On account of Lemma \ref{lem:connexity}, the various metric spaces mentioned in the introduction (injective, absolute retracts, etc.) are graphs equipped with the zigzag distance;  in particular, the distance $d_{\mathcal H_{\Lambda}}$  defined on $\mathcal H_{\Lambda}$ is the zigzag distance of some graph,  say $\mathbf G_{\mathcal H_{\Lambda}}$. According to  Theorem \ref{thm:embedding}, every graph embeds isometrically into some power of $\mathbf G_{\mathcal H_{\Lambda}}$. This graph   is countably infinite (this follows from Higman's theorem on words \cite {Hi}) but it is not easy to describe. From the study of hyperconvexity (see Section \ref{section:hyperconvex}) it follows that it embeds isometrically (w.r.t. the zigzag distance) into a product of its restrictions to principal initial segments of  $\mathcal H_{\Lambda}$. Hence every graph isometrically  embeds into a product of these finite graphs.  This later  fact leads to a fairly precise description of absolute retracts  in the category of reflexive directed graphs (see \cite{KP2}).\\
 The notion of zigzag distance  is due to Quilliot \cite{Qu1, Qu2}. He considered reflexive directed graphs, not necessarily oriented  and, in defining the distance, considered only  oriented paths. The consideration of the set of values of the distance, namely $\mathcal H_{\Lambda}$,  is in \cite {pouzet}. A general study is presented in \cite{JaMiPo}; some developments appear in \cite{Sa} and \cite{KP2}.
 \subsection{Ordered sets}  
 Let $\mathcal H$ be the following structure. The domain is the set  $\{0, +,-, 1\}$. The order is $0\leq +,-\leq 1$ with $+$ incomparable to $-$; the involution exchanges $+$ and $-$ and fixes $0$ and $1$; the operation $\oplus$ is defined by $p\oplus q:= p\vee q$ for every $p,q \in V$. As it is easy to check, $\mathcal H$ is an involutive Heyting algebra.
 If $(E, d)$ is metric space over $\mathcal H$, then $\mathbf P_{d}:=(E, \delta_{+})$,   where $\delta_{+}:= \{(x,y): d(x,y)\leq +\}$,  is an ordered set. Conversely,  if $\mathbf P:= (E, \leq )$ is an ordered set, then the map $d:E\times E\rightarrow \mathcal H$ defined by $d(x,y):= 0$ if $x=y$,  $d(x,y):= +$ if $x<y$, $d(x,y):=-$ if $y<x$ and $d(x,y):= 1$ if $x$ and $y$ are incomparable is a distance over $\mathcal H$. Clearly, if $\mathbf E:=(E,d)$ and $\mathbf E':= (E',d')$ are two metric spaces over $\mathcal H$, a map $f:E\rightarrow E'$ is nonexpansive from $\mathbf E$ into $\mathbf E'$ iff it is order-preserving from $\mathbf P_{d}$ into $\mathbf P_{d'}$.
 Depending on the value of their radius $v\in \mathcal H$, a  metric space  over $\mathcal{H}$ has four types of balls: singletons, corresponding to  $v=0$, the full space, corresponding to   $v=1$, the principal  final segments,  $\uparrow x:= \{y\in E: x\leq y\}$, corresponding to balls  $B(x, +)$, and principal initial segments, $\downarrow  x:= \{y\in E : y\leq x\}$, corresponding to  balls   $B(x, -)$.
 The set  $\mathcal H$ can be equipped with the distance $d_\mathcal H$ given  by means of the formula (\ref{eq:distance}). The corresponding poset is the four element lattice $\{-, 0, 1, +\}$ with $0< -, +< 1$. The  retracts of powers of this lattice are all complete lattices. 
 
 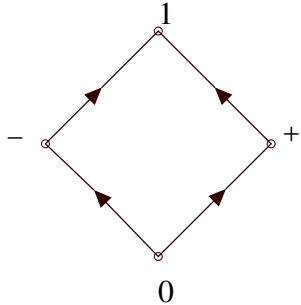
\begin{figure}[H]
 	\centering
 	\definecolor{ttqqqq}{rgb}{0.2,0.,0.}
 	\begin{tikzpicture}[line cap=round,line join=round,>=triangle 45,x=0.75cm,y=0.75cm]
 	\draw [->,line width=0.5pt,color=ttqqqq] (3.,-1.) -- (4.2,0.2);
 	\draw [line width=0.5pt,color=ttqqqq] (4.2,0.2)-- (5.,1.);
 	\draw [->,line width=0.5pt,color=ttqqqq] (5.,1.) -- (4,2);
 	\draw [line width=0.5pt,color=ttqqqq] (4,2)-- (3.,3.);
 	\draw [->,line width=0.5pt,color=ttqqqq] (1.,1.) -- (2.,2.);
 	\draw [line width=0.5pt,color=ttqqqq] (2,2)-- (3.,3.);
 	\draw [->,line width=0.5pt,color=ttqqqq] (3.,-1.) -- (1.86,0.18);
 	\draw [line width=0.5pt,color=ttqqqq] (1.86,0.18)-- (1.,1.);
 	\draw (2.8,-1.22) node[anchor=north west] {0};
 	\draw (2.8,3.7) node[anchor=north west] {1};
 	\draw (5,1.5) node[anchor=north west] {$+$};
 	\draw (0.1,1.44) node[anchor=north west] {$-$};
 	\begin{scriptsize}
 	\draw [color=ttqqqq] (3.,-1.) circle (1.5pt);
 	\draw [color=ttqqqq] (5.,1.) circle (1.5pt);
 	\draw [color=ttqqqq] (1.,1.) circle (1.5pt);
 	\draw [color=ttqqqq] (3.,3.) circle (1.5pt);
 	\end{scriptsize}
 	\end{tikzpicture}
 	\caption{The ordered monoid $\mathcal{H}$.}
 \end{figure}
 The fact, due to Birkhoff,  that every poset embeds into a power of the two-element chain $\underline{\mathbf {2}}:=\lbrace 0,1\rbrace $ is the translation in terms of posets of Theorem \ref{thm:embedding}.
 \vskip2mm
 \subsection{The fence distance on posets}\label{subsection:fencedistance}
 If we view an ordered set as a directed graph,  we may associate its  zigzag distance. 
 In this case, the reflexive oriented zizags defined at the begining of  Subsubsection \ref{subsection:the zigzag distance} reduce to \emph{fences}. Indeed, 
 a \emph{fence} is a poset whose comparability graph is a path. For example, a two-element chain is a fence. Each larger fence has two orientations, for example on the three vertices path, these orientations yield the $\bigvee$ and the $\bigwedge$.  The  $\bigvee$ is the  $3$-element poset consisting of $0, +, -$ with $0< +, -$ and $+$ incomparable to $-$. The $\bigwedge $ is its dual.  More generally, for each integer $n$, there are two fences of length $n$: the \emph{up-}  and the \emph{down-fence}. The first one starts with $x_0<x_1>...$, the second with $x_0>x_1<..$. For $n:=2$ one  get $\bigwedge $ and $\bigvee$ respectively.

 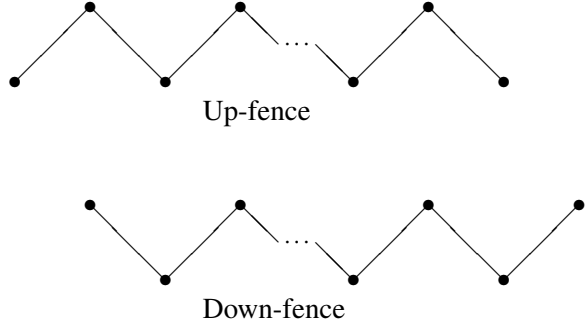
\begin{figure}[H]
 	\unitlength=1cm
 	\begin{center}
 		
 		\begin{picture}(8,2)
 		
 		\put(5.5,1){\circle*{.15}}
 		\put(6.5,0){\circle*{.15}}
 		\put(0,0){\circle*{.15}}
 		\put(0,0){\line(1,1){1}}
 		
 		\put(1,1){\circle*{.15}}
 		\put(2,0){\line(-1,1){1}}
 		
 		\put(2,0){\circle*{.15}}
 		\put(2,0){\line(1,1){1}}
 		
 		\put(3,1){\circle*{.15}}
 		\put(3,1){\line(1,-1){0.5}}
 		\put(3.625,0.5){\circle*{.05}}
 		\put(3.75,0.5){\circle*{.05}}
 		\put(3.875,0.5){\circle*{.05}}
 		\put(4,0.5){\line(1,-1){0.5}}
 		
 		\put(4.5,0){\circle*{.15}}
 		\put(4.5,0){\line(1,1){1}}
 		
 		\put(6.5,0){\line(-1,1){1}}
 		 		
 		\put(2.5,-0.5){\small{Up-fence}}
 		\end{picture}
 		\vskip6mm
 		
 		\begin{picture}(8,2)
 		
 		\put(5.5,1){\circle*{.15}}
 		\put(6.5,0){\circle*{.15}}

 		\put(1,1){\circle*{.15}}
 		\put(2,0){\line(-1,1){1}}
 		
 		\put(2,0){\circle*{.15}}
 		\put(2,0){\line(1,1){1}}
 		
 		\put(3,1){\circle*{.15}}
 		\put(3,1){\line(1,-1){0.5}}
 		\put(3.625,0.5){\circle*{.05}}
 		\put(3.75,0.5){\circle*{.05}}
 		\put(3.875,0.5){\circle*{.05}}
 		\put(4,0.5){\line(1,-1){0.5}}
 		
 		\put(4.5,0){\circle*{.15}}
 		\put(4.5,0){\line(1,1){1}}
 		
 		\put(6.5,0){\line(-1,1){1}}
 		\put(6.5,0){\line(1,1){1}}
 		\put(7.5,1){\circle*{.15}}

 		\put(2.5,-0.5){\small{Down-fence}}
 		\end{picture}
 		\vskip1cm
 		
 		\caption{ Up-fence and Down-fence.}
 	\end{center}
 \end{figure}

 Let  $\mathbf P:=(E, \leq)$ be  a poset. If two vertices $x$ and $y$ are  connected in the comparability graph of $\mathbf P$, one may map some fence  into $\mathbf P$ by an order-preserving map sending the extremities of the fence onto $x$ and $y$. One can then define the distance $d_{\mathbf P}(x,y)$ between  $x$ and $y$ as the pair $(n,m)$ of integers such that $n$, resp. $m$,  is the  shortest length of an up-fence, resp. a down fence,  whose extremities can be mapped onto  $x$  and  $y$. If $x$ and $y$ are not connected in the comparability graph of $\mathbf P$, one sets  $d_{\mathbf P}(x,y)=+\infty$. For example, if $x<y$ then $d_{\mathbf P}(x,y)= (1, 2)$. This distance is defined  in \cite{nevermann-rival}, an alternative definition is in \cite{JaMiPo}.    
 
 Let $\mathcal H:= \{(n,m)\in (\N\setminus \{0\})^2: \vert n-m\vert\leq 1\}\cup \{(0,0), +\infty\} \setminus \{(1,1)\}$,  the pairs being ordered componentwise and  $+\infty$ being at the top. The involution transforms $(n,m)$ into $(m,n)$. The sum $(n,m)\oplus (n',m')$ is $(n\oplus n', m\oplus m')$ where $n\oplus n'$ is $n+n'-1$ if $n$ is odd and $n+n'$ otherwise. With this operation,  $\mathcal H$ forms a Heyting algebra. If $\mathbf P:= (E, \leq)$ is a poset  then $d_{\mathbf P}: E\times E\rightarrow \mathcal H$ is a distance over $\mathcal H$.  According to Theorem \ref{thm:embedding},  this Heyting algebra has a metric structure $\mathbf H$ and every metric space over $\mathcal H$ embeds isometrically into a power of $\mathbf H$. It turns out that  $\mathbf H$ is the metric space associated to a poset $\mathbf P_{\mathcal H}$ (to see it, set  $x\leq y$ if $x=y$ or $1$ is the first component of $d_\mathcal H(x,y)$). This poset is represented below. Hence every poset embeds isometrically into a power of $\mathbf P_{\mathcal H}$. From the study of hyperconvexity in Section \ref{section:hyperconvex}, this poset embeds isometrically  into a product of fences, hence  every poset embeds isometrically  into a retract of fences (\cite {Qu1}). For more, see Nevermann-Rival, 1985 and Jawhari-al 1986. 
 
 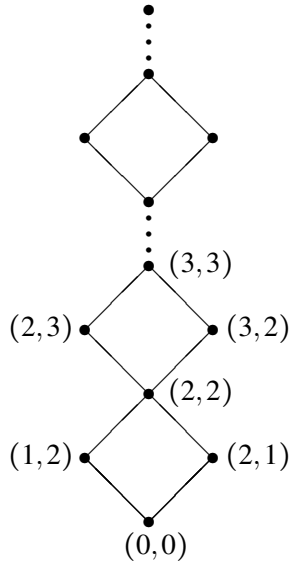
\begin{figure}[H]
 	\begin{center}
 		\unitlength=0.85cm
 		\begin{picture}(1,8)
 		\put(-1,1){\circle*{.15}}
 		\put(1,1){\circle*{.15}}
 		\put(0,0){\circle*{.15}}
 		\put(0,0){\line(1,1){1}}
 		\put(0,0){\line(-1,1){1}}
 		\put(-1,1){\line(1,1){1}}
 		\put(1,1){\line(-1,1){1}}
 		\put(0,2){\circle*{.15}}
 		\put(0,2){\line(1,1){1}}
 		\put(0,2){\line(-1,1){1}}
 		
 		\put(1,3){\circle*{.15}}
 		\put(1,3){\line(-1,1){1}}
 		\put(-1,3){\line(1,1){1}}
 		
 		\put(-1,3){\circle*{.15}}
 		
 		\put(0,4.25){\circle*{.08}}
 		\put(0,4.5){\circle*{.08}}
 		\put(0,4.75){\circle*{.08}}
 		
 		\put(0,4){\circle*{.15}}
 		\put(0,0){\line(1,1){1}}
 		\put(0,0){\line(-1,1){1}}
 		\put(0,5){\circle*{.15}}
 		\put(1,6){\circle*{.15}}
 		\put(-1,6){\circle*{.15}}
 		\put(0,7){\circle*{.15}}
 		\put(0,8){\circle*{.15}}
 		\put(0,5){\line(-1,1){1}}
 		\put(0,5){\line(1,1){1}}
 		
 		\put(1,6){\line(-1,1){1}}
 		\put(-1,6){\line(1,1){1}}
 		\put(0,7.25){\circle*{.08}}
 		\put(0,7.5){\circle*{.08}}
 		\put(0,7.75){\circle*{.08}}
 		\put(-0.4,-0.5){\small{$(0,0)$}}
 		
 		\put(1.2,0.9){\small{$(2,1)$}}
 		\put(-2.2,0.9){\small{$(1,2)$}}
 		\put(0.3,1.9){\small{$(2,2)$}}
 		
 		\put(1.2,2.9){\small{$(3,2)$}}
 		\put(-2.2,2.9){\small{$(2,3)$}}
 		\put(0.3,3.9){\small{$(3,3)$}}
 		\end{picture}\vskip0.5cm

 		\caption{The ordered monoid $\mathcal{H}$.}
 	\end{center}
 \end{figure}

 \begin{figure}[H]
 	\unitlength=1cm
 	\begin{center}
 		\begin{tikzpicture}
 		\put(0,0){\scriptsize{$(0,0)$}}
 		\put(1,0){\circle*{.10}}
 		\put(1.35,1.25){\scriptsize{$(1,2)$}}
 		\put(1.35,-1.4){\scriptsize{$(2,1)$}}
 		\put(1.75,1){\circle*{.10}}
 		\put(1.75,-1){\circle*{.10}}
 		\draw (1,0) to (1.75,1);
 		\draw (1,0) to (1.75,-1); 
 		\draw (2.5,0) to (1.75,1);
 		\draw (2.5,0) to (1.75,-1);
 		\put(2.10,0.56){\scriptsize{$(2,2)$}}
 		\put(2.5,0){\circle*{.10}}
 		\put(2.85,1.25){\scriptsize{$(3,2)$}}
 		\put(2.85,-1.4){\scriptsize{$(2,3)$}}
 		\put(3.25,1){\circle*{.10}}
 		\put(3.25,-1){\circle*{.10}}
 		\draw (2.5,0) to (3.25,1);
 		\draw (2.5,0) to (3.25,-1); 
 		\draw (4,0) to (3.25,1);
 		\draw (4,0) to (3.25,-1);
 		
 		\put(3.60,0.56){\scriptsize{$(3,3)$}}
 		\put(4,0){\circle*{.10}}
 		\put(4.35,1.25){\scriptsize{$(3,4)$}}
 		\put(4.35,-1.4){\scriptsize{$(4,3)$}}
 		\put(4.75,1){\circle*{.10}}
 		\put(4.75,-1){\circle*{.10}}
 		\draw (4,0) to (4.75,1);
 		\draw (4,0) to (4.75,-1); 
 		\draw (5.5,0) to (4.75,1);
 		\draw (5.5,0) to (4.75,-1);
 		\put(5.25,0.35){\scriptsize{$(4,4)$}}
 		\put(5.5,0){\circle*{.10}}
 		\put(5.7,0){\circle*{.08}}
 		\put(5.9,0){\circle*{.08}}
 		\put(6.1,0){\circle*{.08}}

 		\put(5.3,-0.9){\scriptsize{$(2n,2n)$}}
 		\draw[->] (5.9, -0.65) to (6.3,-0.1);
 		\put(6.3,0){\circle*{.10}}
 		\put(6.1,1.25){\scriptsize{$(2n+1,2n)$}}
 		\put(6.1,-1.4){\scriptsize{$(2n,2n+1)$}}
 		\put(7.05,1){\circle*{.10}}
 		\put(7.05,-1){\circle*{.10}}
 		\draw (6.3,0) to (7.05,1);
 		\draw (6.3,0) to (7.05,-1); 
 		\draw (7.8,0) to (7.05,1);
 		\draw (7.8,0) to (7.05,-1);
 		
 		\put(7,2){\scriptsize{$(2n+1,2n+1)$}}
 		\put(7.8,0){\circle*{.10}}
 		\put(8.2,1.25){\scriptsize{$(2n+1,2n+2)$}}
 		\put(8.2,-1.4){\scriptsize{$(2n+2,2n+1)$}}
 		\put(8.55,1){\circle*{.10}}
 		\put(8.55,-1){\circle*{.10}}
 		\draw[->] (8,1.9) to (7.8,0.1);
 		\draw (7.8,0) to (8.55,1);
 		\draw (7.8,0) to (8.55,-1); 
 		\draw (9.3,0) to (8.55,1);
 		\draw (9.3,0) to (8.55,-1);
 		\put(7.8,0){\circle*{.10}}
 		\put(10.2,0){\circle*{.20}}
 		\put(10.38,-0.07){\scriptsize{$\infty$}}
 		
 		\end{tikzpicture}
 	\end{center}\vskip3mm
 	\caption{The poset $P_{\mathcal{H}}$.}
 \end{figure}
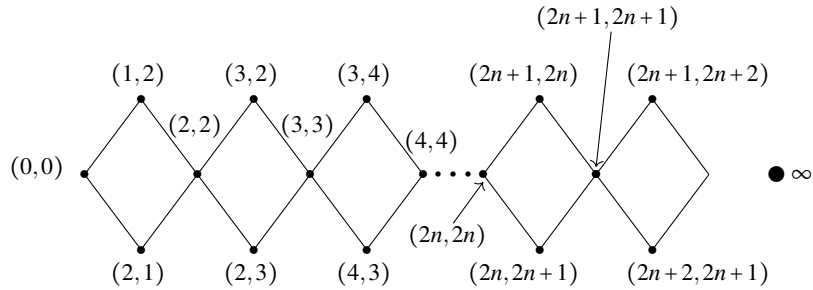
 \subsection{Transitions systems} 
 The zigzag distance  is a special case of distance  defined on transition systems. Indeed, it $\M$ is a transition system on an alphabet $\Lambda$, we may define the distance $d_{\M}(x,y)$ from a state $x$ to  a state $y$ as the language accepted by the automaton  $\mathcal A_{x,y}:=(\M, \{x\}, \{y\})$ whose initial state is $x$ and final state $y$. Once the alphabet is equipped with an involution,  this distance takes value in  a Heyting algebra in which the neutral element is no longer the least element  and satisfies conditions (\ref{defdistance}) of our introduction. As it turn out, if we  view   a reflexive graph as a transition system of a special form, the zigzag distance  is the distance on that transition system. Here are the details. 
 
 Let $\Lambda $ be a set. Consider  $\Lambda$ as an \textit{alphabet} whose
 members are \textit{letters} and extend to $\Lambda$ what we did for the two-letter alphabet.  We write a word $\alpha $ with a
 mere juxtaposition of its letters as $\alpha
 =a_{0}\dots a_{n-1}$ where $a_{i}$
 are letters from $\Lambda$ for 0 $\leq i\leq n-1.$ The integer $n$ is the \textit{%
 	length} of the word $\alpha $ and we denote it $\left| \alpha \right| $.
 Hence we identify letters with words of length 1. We denote by $\Box $ the
 empty word, which is the unique  word of length zero.
 The \emph{concatenation} of two word $\alpha:= a_{0}\cdots a_{n-1}$ and $\beta:=b_{0}\cdots b_{m-1}$ is the word $\alpha \beta:=a_{0}\cdots a_{n-1}b_{0}\cdots b_{m-1}$. We denote by $\Lambda^{\ast }$ the set of all words on the alphabet $\Lambda$. Once equipped with the
 concatenation of words, $\Lambda^{\ast }$ is a monoid, whose neutral element is the empty word, in fact $\Lambda^{\ast}$ is the \textit{free
 	monoid} on $\Lambda$.   A \emph{language} is any subset $X$ of $\Lambda^{\ast}$. We denote by $\powerset(\Lambda^{\ast})$ the set  of languages. We will use capital letters for languages. If $X, Y \in \powerset (\Lambda^{\ast})$ the \emph{concatenation} of $X$ and $Y$ is the set  $XY:= \{\alpha\beta: \alpha\in X, \beta\in Y\}$ (and we will use  $Xy$ and $xY$instead of $X\{y\}$ and $\{x\}Y$).  This  operation  extends the concatenation operation on $\Lambda^{\ast}$; with it, the set $\powerset (\Lambda^{\ast})$ is a monoid whose  neutral element is the set $\{ \Box \}$.

 Ordered by inclusion, this is a (join) lattice ordered monoid. Indeed, concatenation distributes over arbitrary union, namely:
 \begin{center}
 	$( \underset{i\in I}{\bigcup }X_{i})Y=\underset{i\in I}{ \bigcup }X_{i} Y.$
 \end{center}
 But concatenation does not distribute over intersection (for a simple example, let $\Lambda:= \{a,b,c\}$, $I:=\{1, 2\}$, $X_1:=\{ab\}$, $X_2:=\{a\}$, $Y:=\{c, bc\}$, then $\emptyset =(X_1\cap X_2) Y\not = X_1Y\cap X_2Y=\{abc\}$).
 Ordered by reverse of the inclusion, the monoid $\powerset ( \Lambda^{\ast})$ becomes a  Heyting algebra (while ordered by inclusion it is not) in the sense that it satisfies the distributivity condition (\ref{heyting}). If $-$ is an involution on $\Lambda$, it extends  to an involution on $\Lambda^*$, by setting $\overline \Box:= \Box$, and $\overline{\alpha}=\overline {a_{n-1}}\dots\overline{a_0}$ if $\alpha= a_{0}\dots a_{n-1}$.  This  involution reverses the concatenation of words. Extended  to $\powerset (\Lambda^{\ast})$ by setting $\overline X:= \{\overline {\alpha }: \alpha\in X\}$, it reverses the concatenation of languages and preserves the inclusion order on languages.
 The set  $\powerset (\Lambda^{\ast})$, with the concatenation of languages as  a monoid operation,  the reverse of the inclusion order  and the extension of the involution is  a Heyting algebra. But in this Heyting algebra,  the neutral element (namely $\{ \Box \}$), is not the least element.

 We suppose from now that the alphabet $\Lambda$ is ordered.

 We order  $\Lambda^{\ast }$ with the Higman ordering \cite{Hi} that is, if $\alpha $ and
 $\beta $ are two elements in $\Lambda^{\ast }$ such $\alpha: =a_{0}\cdots
 a_{n-1}$ and $\beta: =b_{0}\cdots  b_{m-1}$ then $\alpha \leq \beta$
 if there is an injective and increasing map $h$ from $\left\{
 0,...,n-1\right\} $ to $\left\{ 0,...,m-1\right\}$ such that for each $i$,  $
 0\leq i\leq n-1$, we have $a_{i}\leq b_{h\left( i\right) }$. Then
 $\Lambda^{\ast }$ is an ordered monoid with respect to the concatenation
 of words.  A \emph{final segment} of $\Lambda^{\ast}$ is any subset $F\subseteq \Lambda^{\ast}$ such that $\alpha \leq
 \beta,\alpha \in F$ implies $\beta \in F$.  Initial segments are defined dually.  

 Let $\F\left( \Lambda^{\ast }\right) $ be the collection of
 final segments of $\ \Lambda^{\ast }$.
 The set $\F\left( \Lambda^{\ast }\right)$ is stable w.r.t. the concatenation of languages:  if $ X,Y\in \F\left(\Lambda^{\ast }\right)$,  then $XY\in \F(\Lambda^{\ast})$ (indeed, if $u,v, w\in \Lambda^{\ast}$ with $uv\leq w$ then $w= u'v'$ with $u\leq u'$ and $v\leq v'$).   Clearly,  the neutral element
 is $\Lambda^{\ast }$.  The set $\F\left( \Lambda^{\ast }\right) $ ordered by
 inclusion is a complete lattice (the join is the union, the meet
 is the intersection).  Concatenation distributes over union.
 If we order $\F\left( \Lambda^{\ast }\right) $ by reverse of the
 inclusion, denoting $X\leq Y$ instead of $X\supseteq Y$, and we set ${\bf{1}}:= \Lambda^{\ast}$, we have the exact generalization obtained for a two-letter alphabet. 
 
 \begin{lemma}\label{fact:heyting}The set $\mathcal H_{\Lambda}:= (\mathbf {F}(\Lambda^*), \oplus, \supseteq,  \bf 1, -)$, where $\oplus$ denotes the concatenation of languages,  is a Heyting algebra and  ${\bf{1}}$ is the least element.
 \end{lemma}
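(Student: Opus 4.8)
The plan is to obtain $\mathcal H_\Lambda$ as a substructure of the family of \emph{all} languages, for which the required laws are essentially already in hand, and then to isolate the one feature that genuinely changes under the passage to final segments, namely the unit. Recall that $\mathcal P(\Lambda^*)$, ordered by the reverse of inclusion and equipped with concatenation as $\oplus$ and reversal as involution, satisfies all the clauses defining an involutive Heyting algebra except that its neutral element $\{\Box\}$ is not its least element: it is a complete lattice, concatenation is associative and order-compatible, reversal is an order-preserving involution that reverses $\oplus$, and concatenation distributes over arbitrary unions. First I would check that $\F(\Lambda^*)$ is closed under each of these operations, so that all these identities restrict to it.

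For the lattice structure, note that under the order $\leq\,:=\,\supseteq$ the meet of a family of final segments is their union and the join is their intersection; as a union or an intersection of final segments is again a final segment, $\F(\Lambda^*)$ is a complete sublattice. It is closed under $\oplus$: this is the stability of $\F(\Lambda^*)$ under concatenation already recorded, since if $uv\leq w$ with $u\in X$ and $v\in Y$ then $w=u'v'$ with $u\leq u'$ and $v\leq v'$, whence $w\in XY$ when $X,Y$ are final segments. It is closed under the involution because reversal is an order-automorphism of $(\Lambda^*,\leq)$ (the word-level form of the order-preservation already used for languages), and therefore carries final segments to final segments. Associativity of $\oplus$, the compatibility $X\supseteq X'$ and $Y\supseteq Y'\Rightarrow XY\supseteq X'Y'$, and the fact that reversal reverses $\oplus$ while preserving the order are then immediate restrictions of the corresponding statements on $\mathcal P(\Lambda^*)$.

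The two steps carrying real content are the identification of the unit and the distributive law, and the unit is the step I expect to be the crux. On all of $\mathcal P(\Lambda^*)$ the neutral element of concatenation is $\{\Box\}$, which is \emph{not} a final segment; on $\F(\Lambda^*)$ the neutral element is instead $\Lambda^*$ itself. I would prove $\Lambda^*X=X\Lambda^*=X$ for every final segment $X$ by noting that the inclusions $\supseteq$ hold by inserting the factor $\Box$, while the inclusions $\subseteq$ use upward closure, since $v\leq uv$ and $v\leq vu$ force each product back into $X$; this is exactly where the restriction to final segments does the work. Since moreover $\Lambda^*\supseteq X$ for every $X$, the element $\mathbf 1:=\Lambda^*$ satisfies $\mathbf 1\leq X$ for all $X$, so it is at once the neutral element and the least element, which is the second assertion. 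The distributivity condition (\ref{heyting}) is then the conceptual point: because meets in $\mathcal H_\Lambda$ are unions, (\ref{heyting}) reads $\bigcup_{\alpha,\beta}X_\alpha Y_\beta=\bigl(\bigcup_\alpha X_\alpha\bigr)\bigl(\bigcup_\beta Y_\beta\bigr)$, i.e. the distributivity of concatenation over unions noted above; it is precisely to reach this valid direction, rather than the distributivity over intersections which fails, that the reverse of the inclusion order is used. Collecting these facts shows that $\mathcal H_\Lambda$ is an involutive Heyting algebra whose least element is $\mathbf 1=\Lambda^*$.
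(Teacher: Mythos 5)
Correct, and essentially the paper's own route: the paper also obtains $\mathcal H_{\Lambda}$ by restricting the reverse-inclusion structure on the power set of $\Lambda^{\ast}$ to final segments, the substantive points being stability of $\F(\Lambda^{\ast})$ under concatenation, the identification of $\Lambda^{\ast}$ as the neutral (and least) element, and the fact that the distributivity condition, since meets are unions under reverse inclusion, reduces to concatenation distributing over arbitrary unions. Your write-up just makes explicit two details the paper leaves as assertions, namely the upward-closure argument giving $\Lambda^{\ast}X=X\Lambda^{\ast}=X$ and the closure of $\F(\Lambda^{\ast})$ under the involution.
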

 Contrarily to the case of the power set, in $\F( \Lambda^{\ast})$ concatenation  distributes over intersection:
 \begin{lemma}
 	$( \underset{i\in I}{\bigcap }X_{i})Y=\underset{i\in I}{ \bigcap }X_{i} Y$ for all final segments $X_i$ and $Y$ of $\Lambda^{\ast}$.
 \end{lemma}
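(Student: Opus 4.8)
The plan is to prove the two inclusions separately, the nontrivial content lying entirely in one of them.

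First I would dispose of the inclusion $(\bigcap_{i\in I} X_i)Y \subseteq \bigcap_{i\in I} X_iY$, which holds for arbitrary languages and uses neither that the $X_i$ nor that $Y$ are final segments: any $w$ in the left-hand side factors as $w=\alpha\beta$ with $\alpha\in\bigcap_i X_i$ and $\beta\in Y$, and since $\alpha\in X_i$ for every $i$ we get $w\in X_iY$ for every $i$. This is just monotonicity of concatenation. The real work is the reverse inclusion $\bigcap_{i\in I} X_iY \subseteq (\bigcap_{i\in I} X_i)Y$; here I would exploit that each $X_i$ is a final segment. Fix $w=a_0\cdots a_{n-1}\in\bigcap_i X_iY$. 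For $0\le k\le n$ write $p_k:=a_0\cdots a_{k-1}$ and $s_k:=a_k\cdots a_{n-1}$, so that $w=p_ks_k$ and $p_k$ is a prefix of $p_\ell$ whenever $k\le\ell$. Because the concatenation of languages is the literal concatenation of words, the only factorizations of the single word $w$ are the $n+1$ cuts $w=p_ks_k$; hence the hypothesis $w\in X_iY$ means precisely that there is a cut position $k_i\in\{0,\dots,n\}$ with $p_{k_i}\in X_i$ and $s_{k_i}\in Y$.

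The crux is to replace these possibly different cut positions by a single one. I would set $k^\ast:=\max_i k_i$; this maximum is attained (even when $I$ is infinite) because the $k_i$ lie in the finite set $\{0,\dots,n\}$, say $k^\ast=k_{i^\ast}$. On one hand $s_{k^\ast}=s_{k_{i^\ast}}\in Y$. On the other hand, for each $i$ the word $p_{k_i}$ is a prefix of $p_{k^\ast}$ (as $k_i\le k^\ast$), hence $p_{k_i}\le p_{k^\ast}$ in the Higman ordering via the identity embedding of positions with equal letters; since $X_i$ is a final segment and $p_{k_i}\in X_i$, this forces $p_{k^\ast}\in X_i$. As this holds for every $i$, we obtain $p_{k^\ast}\in\bigcap_i X_i$, and therefore $w=p_{k^\ast}s_{k^\ast}\in(\bigcap_i X_i)Y$, completing the inclusion.

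I expect the only delicate points to be bookkeeping: verifying that a prefix is $\le$ a word in the Higman order (immediate, from the inclusion of initial positions with equal, hence $\le$, letters) and checking that the supremum of cut positions is attained regardless of the cardinality of $I$ (immediate, from the finiteness of $\{0,\dots,n\}$). It is worth recording that the hard direction invokes only the upward-closure of the $X_i$; the property of $Y$ is never used there, and the degenerate case $w=\Box$ (forcing every $k_i=0$) needs no separate treatment, while an empty index set would be excluded or handled by the usual convention. This also pinpoints the contrast flagged just before the statement: the failure of the same identity for the power set of $\Lambda^\ast$ is exactly that arbitrary languages need not be upward closed, so the promotion of $p_{k_i}$ to $p_{k^\ast}$ has no analogue there.
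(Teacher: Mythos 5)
Your proof is correct and follows essentially the same route as the paper: you choose a factorization $w=p_{k_i}s_{k_i}$ for each $i$ and pass to the extremal cut, and your maximal prefix $p_{k^\ast}$ is exactly the paper's ``shortest suffix $y_{i_0}$'' (longest prefix $x$), after which both arguments conclude via prefix $\leq$ word in the Higman order plus upward-closure of the $X_i$. The extra remarks (attainment of the maximum over an infinite $I$, non-use of the hypothesis on $Y$, the contrast with the power-set counterexample) are accurate but do not change the substance.
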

 \begin{proof}
 	The inclusion $( \underset{i\in I}{\bigcap }X_{i})Y\subseteq \underset{i\in I}{ \bigcap }X_{i} Y$ is obvious. For the proof of the reverse inclusion, let $z\in \underset{i\in I}{ \bigcap }X_{i} Y.$ For every $i\in I$ there are $x_i\in X_{i}$ and $y_i\in Y$ such that $z=x_iy_i$. Let $y$ be the shortest  suffix of $z$ such that $y=y_{i_0}$ for some $i_0\in I$ and let $x\in \Lambda^{\ast}$ such that $z= xy$. We claim that $x\in \underset{i\in I}{ \bigcap }X_{i}$. Indeed, let $j\in I$. We have  $z= x_jy_j$ and $z=x_{i_0}y_{i_0}$. By minimality of $y_{i_0}$, we have $x_j\leq x_{i_0}=x$, hence $x\in X_j$ since $X_j$ is a final segment of $\Lambda^{\ast}$. This proves our claim. Since $z=xy$, $z\in  (\underset{i\in I}{\bigcap }X_{i})Y $,  as required.
 \end{proof}
 
 We refer to \cite{sakarovitch} for the terminology about transition systems.
 A \textit{transition system} on the \emph{alphabet} $\Lambda$ is a pair $\M:=(Q$,
 $T)$ where $T\subseteq Q\times \Lambda\times Q.$ The elements of $Q$ are
 called \textit{states} and those of $T$ \textit{transitions}. Let
 $\M:=\left(Q,T\right) $ and $\M^\prime:=\left( Q^{\prime
 },T^{\prime}\right) $ be two transition systems on the alphabet
 $\Lambda$. A map $f:Q\longrightarrow Q^{\prime }$ is a
 \textit{morphism} of transition systems if for every transition
 $(p,\alpha,q)\in T$, we have $(f(p),\alpha
 , f\left( q\right)) \in T^{\prime}$. When $f$ is bijective and $%
 f^{-1} $ is a morphism from $\M^{\prime }$ to
 $\M$, we say that $f$ is an \textit{isomorphism}.

 An \textit{automaton} $\mathcal A$ on the alphabet $\Lambda$ is given by a transition system
 $\M:=\left( Q,T\right)$ and two subsets $I,$ $F$ of $Q$ called the set
 of \textit{initial} and \textit{final states}. We denote the
 automaton  as a triple $\left(\M,I,F\right)$.  A \textit{path} in
 the automaton  $\mathcal{A}:=\left(\M,I,F\right)$ is a sequence
 $c:=\left( e_{i}\right) _{i<n}$ of consecutive
 transitions, that is  of transitions $e_{i}:=(q_{i},a _{i},q_{i+1})$.
 The  word $\alpha:=a_{0}\cdots a_{n-1}$ is the \textit{label} of the path, the
 state $q_{0}$ is its \textit{origin} and the state $q_{n}$ its \textit{end}.
 One agrees to define for each state $q$ in $Q$ a unique null path
 of length $0$ with origin and end $q$. Its label is the empty word
 $\Box$. A path is \textit{successful} if its origin is in $I$ and
 its end is in $F$. Finally,  a word $\alpha$ on the alphabet $\Lambda$ is
 \textit{accepted} by the automaton 
 $\mathcal{A}$ if it is the label of some successful path. The \textit{%
 	language accepted} by the automaton  $\mathcal{A}$,  denoted by
 $L_{\mathcal{A}}$,  is the set of all words accepted by
 $\mathcal{A}$. Let $\mathcal{A}:=\left(\M,I,F\right) $ and
 $\mathcal{A}^{\prime }:= \left(\M^{\prime },I^{\prime },F^{\prime
 }\right) $ be two automata. A {\it morphism} from $\mathcal{A}$ to $\mathcal{A}'$ is a map $f:Q\longrightarrow Q^{\prime }
 $ satisfying the two conditions:
 \begin{enumerate}
 	\item  $f$ is  morphism from $\M$ to $\M^{\prime }$;
 	\item  $f$ $(I)$ $\subseteq I^{\prime }$ and $f(F)\subseteq F^{\prime}$.
 \end{enumerate}
 If, moreover, $f$ is bijective, $f(I)=I^{\prime },f(F)=F^{\prime }$ and $%
 f^{-1}$ is also a  morphism from $\mathcal{A}^{\prime }$ to $%
 \mathcal{A}$, we say that $f$ is an  \textit{isomorphism} and that
 the two automata $\mathcal{A}$ and $\mathcal{A}^{\prime }$ are \textit{%
 	isomorphic}.

 To a metric space $\mathbf E:= \left( E,d\right) $ over
 $\mathcal{H}_{\Lambda}:= \F(\Lambda^{\ast })$, we may associate the transition system
 $\M:=\left( E,T\right) $ having $E$ as   set of states and
 $T:=\left\{ \left( x,a,y\right) :a\in d\left( x,y\right) \cap
 \Lambda\right\} $ as   set of transitions. Notice that such a
 transition system has the following properties: for all $x,y\in E$
 and every $a,b\in \Lambda$ with $b\geq a$:\\
 1) $\left( x,a,x\right) \in T$; \\
 2) $\left( x,a,y\right) \in T$ implies $\left(
 y,\overline{a},x\right) \in T$; \\
 3) $\left( x,a,y\right) \in T$ implies  $\left( x,b,y\right) \in
 T.$\\
 We say that a transition system satisfying these properties is \textit{%
 	reflexive} and \textit{involutive} (cf. \cite{Sa}, \cite{KP2}). Clearly \ if $%
 \M:=\left( Q,T\right) $ is such a transition system, the map
 $d_{\M}:Q\times Q\longrightarrow \mathcal{H}_{\Lambda}$, where $d_{\M}\left(
 x,y\right) $ is the language accepted by the automaton  $\left(
 \M,\left\{ x\right\},\left\{ y\right\} \right) $, is a distance.  We have the following:
 \begin{lemma}\label{lem:metric-transition}
 	Let $\mathbf E:= ( E,d)$ be a metric space over
 	$\mathcal{H}_{\Lambda}:= \F\left( \Lambda^{\ast }\right)$.  The following properties are
 	equivalent:
 	\begin{enumerate}
 		\item The map $d$ is of the form $d_{\M}$ for some
 		reflexive and involutive transition system $\M:=(E,T)$;
 		\item  For all $\alpha,\beta \in \Lambda^{\ast }$ and  $x$,
 		$y$ $\in E$, if $\alpha\beta \in d\left( x,y\right)$, then there
 		is some $z\in E$ such that $\alpha \in d\left( x,z\right)$ and
 		$\beta \in d\left( z,y\right)$.
 	\end{enumerate}
 \end{lemma}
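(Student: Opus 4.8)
The plan is to establish the two implications separately, using throughout the canonical transition system $\M := (E, T)$ with $T := \{(x,a,y) : a \in d(x,y) \cap \Lambda\}$ described just before the statement, which has already been observed to be reflexive and involutive.

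First I would treat $(1) \Rightarrow (2)$, the purely combinatorial direction. Assume $d = d_{\M}$ for a reflexive and involutive $\M$, and suppose $\alpha\beta \in d(x,y) = d_{\M}(x,y)$. Then $\alpha\beta$ is the label of a successful path of $(\M, \{x\}, \{y\})$; reading this path up to the instant at which the prefix $\alpha$ has been consumed singles out an intermediate state $z$, after which $\alpha$ labels a path from $x$ to $z$ and $\beta$ labels a path from $z$ to $y$. Hence $\alpha \in d_{\M}(x,z) = d(x,z)$ and $\beta \in d_{\M}(z,y) = d(z,y)$, which is exactly condition $(2)$; the degenerate cases $\alpha = \Box$ or $\beta = \Box$ are covered by taking $z = x$ or $z = y$.

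For $(2) \Rightarrow (1)$ I would take the canonical $\M$ above and prove $d_{\M} = d$ by a double inclusion. The inclusion $d_{\M}(x,y) \subseteq d(x,y)$ comes from d2): if $a_0 \cdots a_{n-1}$ labels a path $x = q_0, \ldots, q_n = y$ in $\M$, then $a_i \in d(q_i,q_{i+1})$ for every $i$, and iterating d2) gives $d(x,y) \supseteq d(q_0,q_1) \oplus \cdots \oplus d(q_{n-1},q_n)$; since $a_0 \cdots a_{n-1}$ lies in this concatenation it lies in $d(x,y)$. For the empty word one uses that $\Box \in d(x,y)$ forces $d(x,y) = \Lambda^*$ (as $\Box$ is the least word), hence $x = y$ by d1), so the null path accounts for it.

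The reverse inclusion $d(x,y) \subseteq d_{\M}(x,y)$ is the heart of the argument and the only place where hypothesis $(2)$ enters; this is the step I expect to require the most care. I would prove $\alpha \in d(x,y) \Rightarrow \alpha \in d_{\M}(x,y)$ by induction on the length $n = |\alpha|$. For $n = 0$ the word $\Box$ belongs to $d(x,y)$ only when $x = y$, as noted above, and then the null path witnesses $\Box \in d_{\M}(x,y)$. For $n \geq 1$, factor $\alpha = a_0 \beta$ with $a_0 \in \Lambda$ and $|\beta| = n - 1$; applying $(2)$ to this factorization produces a state $z$ with $a_0 \in d(x,z)$ and $\beta \in d(z,y)$. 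The first membership gives $(x, a_0, z) \in T$ by the very definition of $T$, while the induction hypothesis applied to $\beta \in d(z,y)$ yields a path from $z$ to $y$ labelled $\beta$; prepending the transition $(x,a_0,z)$ gives a path from $x$ to $y$ labelled $a_0\beta = \alpha$, so $\alpha \in d_{\M}(x,y)$. Once this induction is set up correctly no further difficulty should arise, the reflexivity and involutivity of $\M$ having already been secured by the construction of $T$.
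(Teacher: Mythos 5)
Your proof is correct. Note that the paper states Lemma \ref{lem:metric-transition} without giving any proof (it is a survey, deferring to the original references), so there is no in-paper argument to compare against; your route --- path-splitting at the prefix boundary for $(1)\Rightarrow(2)$, and for $(2)\Rightarrow(1)$ the canonical system $T:=\{(x,a,y): a\in d(x,y)\cap\Lambda\}$ with the triangle inequality d2) giving $d_{\M}(x,y)\subseteq d(x,y)$ and induction on word length, powered by hypothesis $(2)$, giving $d(x,y)\subseteq d_{\M}(x,y)$ --- is exactly the natural argument, and every step checks out. One small expositional slip: in the inclusion $d_{\M}(x,y)\subseteq d(x,y)$, the empty word should be handled by observing that a null path forces $x=y$, whence $\Box\in d(x,x)=\Lambda^{\ast}$ by d1); the implication you actually state there ($\Box\in d(x,y)$ forces $x=y$) runs in the opposite direction --- it is the fact needed, and correctly used, in the base case of your induction for the reverse inclusion. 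This is harmless, since both directions of the equivalence $\Box\in d(x,y)\Leftrightarrow x=y\Leftrightarrow\Box\in d_{\M}(x,y)$ are established somewhere in your write-up.
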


 \begin{lemma}\label{fact:morphism}  Let $\M_{i}:=\left( Q_{i},T_{i}\right) \left( i=1,2\right) $
 	be two reflexive and involutive transition systems. A map $ f:Q_{1}\longrightarrow Q_{2}$ is a morphism from $\M_{1}$ to $\M_{2}$ if only if $f$ is a nonexpansive map  from $(Q_{1},d_{\M_{1}})$ to $(Q_{2},d_{\M_{2}}).$
 \end{lemma}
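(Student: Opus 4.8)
The plan is to unwind both notions down to the level of paths and their labels, while keeping in mind that the order on $\mathcal{H}_{\Lambda}=\F(\Lambda^{*})$ is the \emph{reverse} of inclusion. First I would record the translation: since $p\leq q$ in $\mathcal{H}_{\Lambda}$ means $p\supseteq q$, the nonexpansiveness inequality $d_{\M_2}(f(x),f(y))\leq d_{\M_1}(x,y)$ is equivalent to the language inclusion
\[
d_{\M_1}(x,y)\subseteq d_{\M_2}(f(x),f(y)) \quad\text{for all } x,y\in Q_{1}.
\]
Recall that $d_{\M_i}(x,y)$ is by definition the set of labels of successful paths from $x$ to $y$ in $\M_i$, and that a path $(q_j,a_j,q_{j+1})_{j<n}$ carries the label $a_0\cdots a_{n-1}$ of length $n$; in particular, a word of length one is the label of a path consisting of a single transition. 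These two observations carry the whole argument.

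For the direction ``morphism $\Rightarrow$ nonexpansive'', I would take an arbitrary word $\alpha=a_0\cdots a_{n-1}\in d_{\M_1}(x,y)$, witnessed by a path $(q_j,a_j,q_{j+1})_{j<n}$ in $\M_1$ with $q_0=x$ and $q_n=y$. Applying the morphism condition to each transition $(q_j,a_j,q_{j+1})\in T_1$ yields $(f(q_j),a_j,f(q_{j+1}))\in T_2$, so $(f(q_j),a_j,f(q_{j+1}))_{j<n}$ is a path in $\M_2$ from $f(x)$ to $f(y)$ whose label is again $\alpha$. Hence $\alpha\in d_{\M_2}(f(x),f(y))$, which gives the required inclusion and thus nonexpansiveness. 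The null path together with the empty word, corresponding to $x=y$, is handled by the very same pushing-forward, so no separate case is needed.

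For the converse ``nonexpansive $\Rightarrow$ morphism'', I would start from a transition $(p,a,q)\in T_1$ with $a\in\Lambda$. This single transition is itself a successful path from $p$ to $q$ labelled by the one-letter word $a$, so $a\in d_{\M_1}(p,q)$. Nonexpansiveness then gives $a\in d_{\M_2}(f(p),f(q))$, that is, $a$ is the label of some path in $\M_2$ from $f(p)$ to $f(q)$. Since $|a|=1$, that path consists of exactly one transition, which must be $(f(p),a,f(q))$; hence $(f(p),a,f(q))\in T_2$, which is precisely the morphism condition.

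I do not expect a genuine obstacle here: the statement is in essence a dictionary between transitions and paths on one side and one-letter words and arbitrary words of the distance on the other. The only points requiring care are the inversion of the order on $\mathcal{H}_{\Lambda}$ (so that ``nonexpansive'' reads as language inclusion rather than reverse inclusion) and the elementary length bookkeeping that a label of length one forces a path of length one. The reflexive and involutive hypotheses on $\M_1,\M_2$ are used only indirectly, to guarantee (as noted just before Lemma~\ref{lem:metric-transition}) that $d_{\M_1}$ and $d_{\M_2}$ are genuine distances, so that the notion of nonexpansive map is meaningful; the equivalence itself does not otherwise invoke them.
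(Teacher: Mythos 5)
Your proof is correct. The paper states this lemma without any proof, treating it as a routine fact, and your argument is exactly the intended verification: after translating nonexpansiveness through the reverse-inclusion order on $\F(\Lambda^{*})$ into the inclusion $d_{\M_1}(x,y)\subseteq d_{\M_2}(f(x),f(y))$, one direction is pushing paths forward transition by transition, and the other is the observation that a one-letter word in $d_{\M_2}(f(p),f(q))$ forces a single transition $(f(p),a,f(q))\in T_2$. Your closing remark is also accurate: the reflexive and involutive hypotheses serve only to make $d_{\M_1}$ and $d_{\M_2}$ genuine distances (final segments, symmetry via the involution, $d(x,y)=0$ iff $x=y$), while the equivalence itself never invokes them.
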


 From Lemma \ref{fact:morphism},   the category of reflexive and involutive transition
 systems with the morphisms defined above can be identified  to a subcategory of the
 category having as objects the metric spaces and morphisms the nonexpansive maps. %
 
 As with directed graphs,  Lemma \ref{lem:metric-transition} ensures that the various metric spaces mentioned in the introduction (injective, absolute retracts, etc.) come from  transition systems.  In particular, the distance $d_{\mathcal H_{\Lambda}}$  defined on $\mathcal H_{\Lambda}$ is the distance of some transition system,  say $\mathbf M_{\mathcal H_{\Lambda}}$. According to  Theorem \ref{thm:embedding}, every reflexive involutive transition systems embeds isometrically into some power of $\mathbf M_{\mathcal H_{\Lambda}}$. As in the case of graphs, this transition system is countably infinite (for more, see \cite{KP2,KPR, kabil-pouzet}).

 \section{A categorical approach of generalized metric spaces}
 Let $\mathcal C$ be  a category, with objects, say $\mathbf P$, $\mathbf Q$
 , ... and morphisms $f$, $g$,.... We say that the object $\mathbf P$ is a \emph{retract} of the object $\mathbf Q$ and we note $\mathbf P\vartriangleleft \mathbf Q$ if there are morphisms $f:\mathbf P\longrightarrow \mathbf Q$ and $g:\mathbf Q\longrightarrow \mathbf \mathbf P$ such that $g\circ f= \id_{\mathbf P}$, where $\id_{\mathbf P}$ is the identity map on $\mathbf P$. 
 
 Two examples:
 \begin{enumerate}
 	\item The objects of the category are the posets and the morphisms are the order-preserving maps (i.e. the maps $f$ such that $x\leq y$ implies $f(x)\leq f(y)$. 
 	\begin{figure}[H]
 		\unitlength=1cm
 		\begin{center}
 			\begin{tikzpicture}
 			\put(0.6,0.5){\scriptsize{$\mathbf P$}}
 			\put(1,0){\circle*{.15}}
 			\put(2,1){\circle*{.15}}
 			\put(3,0){\circle*{.15}}
 			\draw (1,0) to (2,1);
 			\draw (2,1) to (3,0); 
 			
 			\put(4.6,0.5){\scriptsize{$\mathbf Q$}}
 			\put(5,0){\circle*{.15}}
 			\put(6,1){\circle*{.15}}
 			\put(7,0){\circle*{.15}}
 			\put(8,1){\circle*{.15}}
 			\put(9,0){\circle*{.15}}
 			\put(10,1){\circle*{.15}}
 			\draw (5,0) to (6,1);
 			\draw (6,1) to (7,0); 
 			\draw (7,0) to (8,1);
 			\draw (8,1) to (9,0); 
 			\draw (9,0) to (10,1);
 			\draw (5,0) to (10,1); 	
 			\end{tikzpicture}
 		\end{center}
 		\caption{$\mathbf P$ is retract of $\mathbf Q$.}
 	\end{figure}
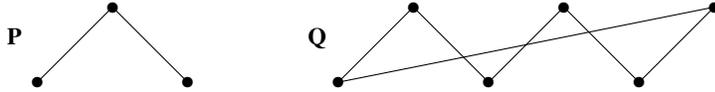
 	
 	\item The objects of the category are all reflexive graphs (which are the undirected graphs with a loop at every vertex, or, equivalently, the reflexive and symmetric binary relations)  and the morphisms are  all edge-preserving maps (note that an edge joining two vertices can be mapped on a loop).
 	
 	\begin{figure}[H]
 		\begin{center}
 			\definecolor{ttqqqq}{rgb}{0.2,0.,0.}
 			\begin{tikzpicture}[line cap=round,line join=round,>=triangle 45,x=1cm,y=1cm]
 			
 			\draw [line width=0.5pt] (0.,0.5)-- (0.,1.5);
 			\draw [line width=0.5pt] (1.,1.5)-- (1.,0.5);
 			\draw [line width=0.5pt] (0.,0.5)-- (1.,0.5);
 			\draw [line width=0.5pt] (1.,1.5)-- (0.,1.5);
 			
 			\begin{scriptsize}
 			\draw [color=ttqqqq] (0.,0.5) circle (3pt);
 			\draw [color=ttqqqq] (1.,1.5) circle (3pt);
 			\draw [color=ttqqqq] (1.,0.5) circle (3pt);
 			\draw [color=ttqqqq] (0.,1.5) circle (3pt);
 			
 			\draw (-0.5,1) node{$\mathbf G$};
 			\end{scriptsize}
 		
 			\end{tikzpicture}
 		
 			\;\;\;\;\;\;\;\;\;\;\;\;\;\;\;\;
 		
 			\definecolor{ttqqqq}{rgb}{0.2,0.,0.}
 			\begin{tikzpicture}[line cap=round,line join=round,>=triangle 45,x=1cm,y=1cm]
 			%\clip(-1.5,-0.5) rectangle (1.5,2.5);
 			\draw [line width=0.5pt] (0.,0.)-- (0.,1.);
 			\draw [line width=0.5pt] (1.,1.)-- (1.,0.);
 			\draw [line width=0.5pt] (0.,0.)-- (1.,0.);
 			\draw [line width=0.5pt] (1.,1.)-- (0.,1.);
 			
 			\draw [line width=0.5pt] (0.5,1.5)-- (0.,1.);
 			\draw [line width=0.5pt] (0.5,1.5)-- (1.,1.);
 		
 			\draw [line width=0.5pt] (-0.5,0.5)-- (0.,0.);
 			\draw [line width=0.5pt] (-0.5,0.5)-- (0.,1.);
 			
 			\draw [line width=0.5pt] (0.5,-0.5)-- (1.,0.);
 			\draw [line width=0.5pt] (0.5,-0.5)-- (0.,0.);
 			
 			\draw [line width=0.5pt] (1.5,0.5)-- (1.,1.);
 			\draw [line width=0.5pt] (1.5,0.5)-- (1.,0.);
 			\begin{scriptsize}
 			\draw [color=ttqqqq] (0.,0.) circle (3pt);
 			\draw [color=ttqqqq] (1.,1.) circle (3pt);
 			\draw [color=ttqqqq] (1.,0.) circle (3pt);
 			\draw [color=ttqqqq] (0.,1.) circle (3pt);
 		
 			\draw [color=ttqqqq] (0.5,1.5) circle (3pt);
 			\draw [color=ttqqqq] (1.5,0.5) circle (3pt);
 			\draw [color=ttqqqq] (0.5,-0.5) circle (3pt);
 			\draw [color=ttqqqq] (-0.5,0.5) circle (3pt);
 			\draw (-1,0.5) node{$\mathbf K$};
 			\end{scriptsize}
 			\end{tikzpicture}
 			\caption{$\mathbf G$ is retract of $\mathbf K$.}
 		\end{center}
 	\end{figure}
 \end{enumerate}
 The central question about retraction  is to decide, for two given objects $\mathbf P$ and $\mathbf Q$, whether $\mathbf P$ is a retract of $\mathbf Q$ or not. A related question is to decide whether a given morphism $f: \mathbf P \longrightarrow \mathbf Q$ has a companion $g: \mathbf Q \longrightarrow \mathbf P$ such that $g \circ f= \id_{\mathbf P}$; if this is the case, $f$ is said to be \emph{coretraction}  and its companion  is a \emph{retraction}.
 In fact, these questions are still largely unsolved, even for very simple categories like those of posets and graphs. Neverthlesss a fruitful approach of a solution is this:\\
 Identify a general property, say (p), that the coretractions  enjoy in the category considered; for example, in the above category of posets each coretraction is an order-embeding (that is a map $f$ such that $x\leq y$ is equivalent to $f(x)\leq f(y)).$ Now looking at (p) as an \textit{approximation} of the coretractions, then characterize the objects $\mathbf P$ for which this approximation is accurate, that is  for which every morphism of source $\mathbf P$ and with property (p) is a coretraction. These $\mathbf P$ are commonly called the \textit{absolute retracts} (briefly $AR$); (a terminology not perfectly adequate, since these objects depend upon the approximation, but commonly used in the field), we will rather say $AR$ with respect to the approximation (p). 
 In  the category of metric spaces with nonexpansive mappings we are lead  to the following definitions :
 \subsection{Retraction, coretraction, absolute retract}  
 Let $\mathbf E$ and $\mathbf F$ be two metric spaces over a Heyting algebra $\mathcal H$. The space $\mathbf E$ is a
 {\it retract} of $\mathbf F$, in symbols $\mathbf E\lhd \mathbf F$ if there are nonexpansive 
 maps
 $f: \mathbf E \to \mathbf F$ and $g : \mathbf F \to \mathbf E$ such that $g \circ f = \id_{\mathbf E}$. If this is the case,  $f$ is said to be
 {\it coretraction}  and $g$ a {\it retraction}. If $\mathbf E$ is a subspace of $\mathbf F$,
 then $\mathbf E$ is a retract of $\mathbf F$ if there is a nonexpansive map from
 $\mathbf F$ to $\mathbf E$ such that $g(x) = x$ for all $x\in E$, where $E$ is the domain of $\mathbf E$. We can easily see that
 every coretraction  is an isometry. A metric space is an
 {\it absolute retract} if it is a retract of every isometric extension.
 \subsubsection{Injectivity  and extension property} A metric space $\mathbf E$ is said to be {\it injective}  if for all spaces $\mathbf F$ and 
 $\mathbf E'$, each nonexpansive mapping $f : \mathbf F \to \mathbf E$, and every isometry
 $g : \mathbf F \to \mathbf E'$ there is a nonexpansive mapping $ h : \mathbf E' \to \mathbf E$ such that
 $ h\circ g =f$. 
 
 A metric space $\mathbf E$ has the \emph{one-point extension property} if for every space $\mathbf E':= (E', d')$ and every subset $F$ of $E'$,  every nonexpansive map $f: \mathbf E'_{\restriction F}\rightarrow \mathbf E$ extends to some $x'\in E'\setminus F$  (if any) to a nonexpansive map from $\mathbf E'_{\restriction F \cup \{x'\}}$ into $\mathbf E$. 
 
 Using Zorn's lemma one has immediately:
 
 \begin{lemma} A metric space $\mathbf E:= (E, d)$ over $\mathcal H$ is injective iff it has the one-point extension property. 
 \end{lemma}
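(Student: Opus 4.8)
\subsection*{Proof proposal}

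The plan is to prove the two implications separately, with the harder one resting on a standard Zorn's lemma argument, exactly as the statement anticipates.

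For the implication that injectivity implies the one-point extension property, I would simply specialize the definition of injectivity. Given a space $\mathbf E':=(E',d')$, a subset $F\subseteq E'$, a point $x'\in E'\setminus F$, and a nonexpansive map $f:\mathbf E'_{\restriction F}\to \mathbf E$, I apply injectivity with source $\mathbf F:=\mathbf E'_{\restriction F}$, ambient extension $\mathbf E'_{\restriction F\cup\{x'\}}$, the given map $f$, and the inclusion $g:\mathbf E'_{\restriction F}\to \mathbf E'_{\restriction F\cup\{x'\}}$. Since $g$ is the identity on $F$ and both distances are restrictions of $d'$, it is an isometry, so injectivity yields a nonexpansive $h:\mathbf E'_{\restriction F\cup\{x'\}}\to \mathbf E$ with $h\circ g=f$; this $h$ is the desired extension.

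For the converse, assume $\mathbf E$ has the one-point extension property and let $f:\mathbf F\to \mathbf E$ be nonexpansive and $g:\mathbf F\to \mathbf E'$ an isometry. First I would observe that $g$ is injective: by (d1), $g(x)=g(y)$ forces $d'(g(x),g(y))=0$, hence $d(x,y)=0$ and $x=y$; thus $g$ identifies $\mathbf F$ with the isometric subspace $\mathbf E'_{\restriction g(F)}$ of $\mathbf E'$, and the task reduces to extending the nonexpansive map $f\circ g^{-1}:\mathbf E'_{\restriction g(F)}\to \mathbf E$ to all of $\mathbf E'$. To this end I consider the set $\mathcal P$ of pairs $(G,h)$ where $g(F)\subseteq G\subseteq E'$ and $h:\mathbf E'_{\restriction G}\to \mathbf E$ is nonexpansive and restricts to $f\circ g^{-1}$ on $g(F)$, ordered by $(G_1,h_1)\leq (G_2,h_2)$ iff $G_1\subseteq G_2$ and $h_2$ extends $h_1$. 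This poset is nonempty, containing $(g(F),f\circ g^{-1})$, and every chain has an upper bound obtained by taking the union of the domains together with the common value map: nonexpansiveness is checked one pair of points at a time, and any two points already lie in a single member of the chain, so the union map is nonexpansive and lies in $\mathcal P$.

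By Zorn's lemma $\mathcal P$ has a maximal element $(G^*,h^*)$. If $G^*\neq E'$, then $E'\setminus G^*$ is nonempty and I apply the one-point extension property to $\mathbf E'$, the subset $G^*$, and the nonexpansive map $h^*$; this produces a nonexpansive extension of $h^*$ to $\mathbf E'_{\restriction G^*\cup\{x'\}}$ for some $x'\in E'\setminus G^*$, contradicting maximality. Hence $G^*=E'$, and $h:=h^*$ is a nonexpansive map $\mathbf E'\to \mathbf E$ with $h\circ g=f$, proving injectivity. The only genuinely delicate point is the verification that the upper bound of a chain is itself nonexpansive and belongs to $\mathcal P$; everything else is a direct transcription of the definitions. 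I expect no real obstacle here, since nonexpansiveness is a property of pairs of points and is therefore preserved under directed unions, which is precisely what makes the Zorn argument succeed.
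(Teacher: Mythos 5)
Your proposal is correct and follows essentially the same route as the paper: the forward direction is a direct specialization of injectivity, and the converse is the same Zorn's lemma argument on the poset of partial nonexpansive extensions, with the one-point extension property contradicting maximality of a proper-domain extension. The only difference is that you spell out details the paper leaves implicit (the identification of $\mathbf F$ with its isometric image $g(F)$, and the verification that unions along chains remain nonexpansive), which is sound but not a different method.
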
	
 
 \begin{proof}Trivially, injectivity implies the one-point extension property. For the converse, let $\mathbf E':= (E' d')$, $F\subseteq E'$ and $ f : F \to E$ be a
 	nonexpansive map from $\mathbf E'_{\restriction F}$ into $E$. Consider the collection of nonexpansive  maps
 	$f': F'\to E$ which extend $f$. This collection of maps is
 	inductive. From Zorn's lemma, it has a maximal element $g$. The domain $F''$ of $g$ is $E'$, otherwise, pick $x\in E'\setminus E''$; since $\mathbf E$ has the one-point extension, $g$ extends to $x$, a contradiction. 
 \end{proof}
 
 As it will become apparent in Theorem \ref{thm: caracterisation-hyperconvexity}, we may replace "for some $x'$" by "every $x'$" in the definition above.

 \subsubsection{Hyperconvexity}  We say that a space $\mathbf E$ is {\it hyperconvex} if the
 intersection of every family
 of balls $\left(B_{\mathbf E} (x_i,r_i)\right)_{i\in I}$ is non-empty whenever
 $d(x_i,x_j) \leq r_i \oplus \overline {r_j}$ for all $i, j \in I$.

 Hyperconvexity is equivalent to the conjunction of the following 
 conditions:\\
 1) {\it Convexity} : for all $x,y \in E$ and $p,q \in \mathcal H$ such
 that $d(x,y) \leq p \oplus q$ there is $z \in E$ such that $d(x,z)\leq p$
 and $d(z,y)\leq q$.\\
 2) The {\it 2-Helly property},  also called the
 {\it 2-ball intersection property} : The intersection of every set
 (or, equivalently, every family) of balls is non-empty provided
 that their pairwise intersections are all non-empty.

 \subsection{A description of hyperconvex metric spaces}\label{section:hyperconvex}

 As it is easy to see, the collection of hyperconvex spaces over a Heyting algebra is stable under (non-empty) products and retracts. Thus, in the terminology of Duffus and Rival \cite{DuRi}, it forms a variety. 
 A less trivial property is this: 

 \begin{theorem}\cite{JaMiPo}\label{thm: space values-hyperconvex}
 	The metric space $\mathbf H:=(\mathcal H , d_{\mathcal H})$ is hyperconvex.
 \end{theorem}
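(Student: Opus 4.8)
The plan is to prove hyperconvexity directly, by exhibiting an explicit point that belongs to every ball of a compatible family. Given balls $\bigl(B(p_i,r_i)\bigr)_{i\in I}$ in $\mathbf H$ satisfying $d_{\mathcal H}(p_i,p_j)\le r_i\oplus\overline{r_j}$ for all $i,j$, the natural candidate for a common point is the meet
\[
q^{*}:=\bigwedge_{i\in I}\,(p_i\oplus r_i).
\]
Everything then reduces to checking two inequalities against the description of balls in $\mathbf H$.

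First I would record how balls look in $\mathbf H$. From the definition (\ref{eq:distance}), $d_{\mathcal H}(p,q)=\mathrm{Min}\,D(p,q)$, and the set $D(p,q)$ is a \emph{final segment} of $\mathcal H$: if $r\in D(p,q)$ and $r\le r'$, then $\overline{r}\le\overline{r'}$ (the involution preserves the order), whence $p\le q\oplus\overline{r}\le q\oplus\overline{r'}$ and $q\le p\oplus r\le p\oplus r'$, so $r'\in D(p,q)$. Since $D(p,q)$ has least element $d_{\mathcal H}(p,q)$, we obtain $d_{\mathcal H}(p,q)\le r$ if and only if $r\in D(p,q)$, that is,
\[
q\in B(p,r)\iff p\le q\oplus\overline{r}\ \text{ and }\ q\le p\oplus r.
\]

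Next I would unpack the compatibility hypothesis. Saying $d_{\mathcal H}(p_i,p_j)\le r_i\oplus\overline{r_j}$ means $r_i\oplus\overline{r_j}\in D(p_i,p_j)$, and the first defining inequality of membership, together with the identity $\overline{r_i\oplus\overline{r_j}}=r_j\oplus\overline{r_i}$ (the involution reverses $\oplus$ and is an involution), yields
\[
p_i\le p_j\oplus r_j\oplus\overline{r_i}\quad\text{for all }i,j\in I.
\]
Now I verify that $q^{*}\in B(p_i,r_i)$ for each $i$. The inequality $q^{*}\le p_i\oplus r_i$ is immediate, since $q^{*}$ is a meet. For the companion inequality $p_i\le q^{*}\oplus\overline{r_i}$ I use the distributivity condition (\ref{heyting}), applied to distribute $\oplus\,\overline{r_i}$ over the meet:
\[
q^{*}\oplus\overline{r_i}=\left(\bigwedge_{j\in I}(p_j\oplus r_j)\right)\oplus\overline{r_i}=\bigwedge_{j\in I}\bigl(p_j\oplus r_j\oplus\overline{r_i}\bigr)\ge p_i,
\]
the last step because every term $p_j\oplus r_j\oplus\overline{r_i}$ dominates $p_i$ by the reformulated hypothesis. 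Thus $q^{*}$ lies in each $B(p_i,r_i)$, and the intersection is nonempty.

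I do not expect a genuine obstacle here: the argument is a short computation once the right candidate is identified. The crux is precisely guessing $q^{*}=\bigwedge_i(p_i\oplus r_i)$ and noticing that the compatibility condition $d_{\mathcal H}(p_i,p_j)\le r_i\oplus\overline{r_j}$ is tailored to deliver exactly the inequality needed after distributing $\oplus\,\overline{r_i}$ over the meet. The two points requiring care are the involution identity $\overline{r_i\oplus\overline{r_j}}=r_j\oplus\overline{r_i}$ and the use of distributivity over an arbitrary (possibly infinite) meet, which is available precisely because $\mathcal H$ is a Heyting algebra; completeness of the lattice guarantees that $q^{*}$ exists. (Alternatively, one could split the verification into \emph{convexity} and the \emph{$2$-Helly property}, but the single-shot meet argument handles arbitrary families at once and is cleaner.)
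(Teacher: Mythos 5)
Your proof is correct, and every step checks out: $D(p,q)$ is indeed up-closed (the involution preserves the order and $\oplus$ is monotone), so $q\in B(p,r)$ iff $p\le q\oplus\overline r$ and $q\le p\oplus r$; the involution identity $\overline{r_i\oplus\overline{r_j}}=r_j\oplus\overline{r_i}$ is legitimate; and the distributivity condition (\ref{heyting}) applies to the infinite meet, which exists by completeness. However, your route is genuinely different from the paper's. The paper verifies hyperconvexity through the standard decomposition: it first proves \emph{convexity}, exhibiting the explicit midpoint $z:=(x\oplus p)\vee(y\oplus\overline q)$, then shows that every ball of $\mathbf H$ is a closed interval of the complete lattice $\mathcal H$ (with least element $\bigwedge B_{\mathbf H}(x,r)$ and greatest element $\bigvee B_{\mathbf H}(x,r)$, concretely $\lceil x-\overline r\rceil$ and $x\oplus r$), and finally invokes the $2$-Helly property of closed intervals in a complete lattice. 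Your single-shot argument is shorter and treats arbitrary families at once; note also that your witness $q^{*}=\bigwedge_i(p_i\oplus r_i)$ is precisely the meet of the upper endpoints of the balls viewed as intervals, so the two arguments ultimately produce the same point --- what changes is the bookkeeping. What the paper's longer route buys is reusable structure: the interval description of balls and the convexity/$2$-Helly split are exploited again later (for instance in the discussion of equally centered sets and compact normal structure, and in identifying hyperconvex graphs and posets), whereas your computation proves the theorem and nothing else. Both proofs rest on the same two pillars --- completeness of $\mathcal H$ and the distributivity condition --- which in the paper's version are hidden inside the residuation defining $d_{\mathcal H}$ and the interval endpoints.
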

 
 \begin{proof} We just give the idea, we  defer the reader to \cite{JaMiPo} for details.
 	
 	One shows first that $\mathbf H$ is convex. Indeed, let $x,y \in \mathbf H$ and $p,q \in \mathcal H$ such
 	that $d_{\mathcal H}(x,y) \leq p \oplus q$. Set $z:= (x \oplus p) \vee (y \oplus \overline q)$  and check that   $d_{\mathcal H}(x,z)\leq p$
 	and $d_{\mathcal H}(z,y)\leq q$.
 	
 	Next, one shows that balls in $\mathbf H$ are intervals of $\mathcal H$. More precisely, any ball $B_{\mathbf H}(x, r)$ of $\mathbf H$ is the closed interval $[q, p]:=\{y\in \mathcal H:  q \leq y \leq r\}$ where $q:= \bigwedge B_{\mathbf H}(x, r)$ and $p:= \bigvee B_{\mathbf H}(x, r)$.  
 	
 	Finally, to conclude,  observe that the closed intervals of  a complete lattice have the $2$-Helly property. 
 \end{proof}

 \vskip2mm

 We recall the notions of metric forms. 
 
 Let $\mathbf E:= (E,d)$ be a metric space over a Heyting algebra $\mathcal H$. A \emph {weak metric form}  is every map $
 f:E\longrightarrow \mathcal H$ satisfying 
 
 \begin{equation}\label{eq:weakmetricform}
 	d(x,y) \leq f(x) \oplus \overline {f(y)} 
 \end{equation}
 for all $
 x,y\in E.$    
 
 This is a \emph{metric form}  if it is a weak metric form satisfying:  
 
 \begin{equation}\label{eq:metricform}
 	f(x) \leq  d(x,y) \oplus f(y)
 \end{equation}
 for all $x,y\in E.$

 We denote by $\mathcal C(\mathbf E)$, resp. $\mathcal L(\mathbf E)$,  the set of  weak metric form, resp. metric forms over $\mathbf E$. We equip these sets by the distance induced from the sup-distance on the power $\mathbf H^E$.

 \begin{lemma}\label{lem:metric form} 
 	Let $\mathbf E:= (E,d)$ be a metric space over $\mathcal H$, and $f:E\to \mathcal H$. The 
 	following
 	properties are equivalent:
 	
 	\begin{enumerate}[{(i)}]
 		
 		\item $f$ is a metric form; 
 		\item $f$ satisfies 
 		\begin{equation}\label{eq:metricform2}
 			d_{\mathcal H}(d(x,y),f(x))\leq f(y) 
 		\end{equation}
 		for all $x, y\in E$; 
 		\item In the product space ${\mathbf H}^{E} $ equipped with the "sup"
 		distance, $d(\bar {\delta}(y),f) = f(y)$ for all $y\in E$; 
 		\item There is some isometric extension $\mathbf E':= (E',d')$ of $\mathbf E$ and $u\in E'$
 		such that $f(y) =d'(y,u)$ for all $y\in E$.
 		
 	\end{enumerate}
 \end{lemma}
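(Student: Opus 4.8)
The plan is to establish the cycle (i) $\Leftrightarrow$ (ii) $\Leftrightarrow$ (iii) $\Rightarrow$ (iv) $\Rightarrow$ (i), so that all four properties are equivalent. The single tool underlying the first two equivalences is the residuation set $D(p,q)=\{r\in\mathcal H: p\leq q\oplus\bar r \text{ and } q\leq p\oplus r\}$ introduced before (\ref{eq:distance}), together with the defining fact that $d_{\mathcal H}(p,q)$ is its least element.

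First I would record a small but decisive observation: $D(p,q)$ is a final segment of $\mathcal H$. Indeed, if $r\in D(p,q)$ and $r\leq s$, then $\bar r\leq\bar s$ since the involution is order-preserving, so $p\leq q\oplus\bar r\leq q\oplus\bar s$ and $q\leq p\oplus r\leq p\oplus s$, whence $s\in D(p,q)$. Consequently $d_{\mathcal H}(p,q)\leq s$ holds iff $s\in D(p,q)$. Instantiating with $p=d(x,y)$, $q=f(x)$ and $s=f(y)$, the inequality $d_{\mathcal H}(d(x,y),f(x))\leq f(y)$ of (ii) unwinds to the conjunction of $d(x,y)\leq f(x)\oplus\overline{f(y)}$ and $f(x)\leq d(x,y)\oplus f(y)$; these are exactly (\ref{eq:weakmetricform}) and (\ref{eq:metricform}). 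Thus (i) $\Leftrightarrow$ (ii) is immediate.

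For (ii) $\Leftrightarrow$ (iii), I would expand the sup-distance on $\mathbf H^{E}$: since $\bar\delta(y)(z)=d(z,y)$, one has $d(\bar\delta(y),f)=\bigvee_{z\in E}d_{\mathcal H}(d(z,y),f(z))$. Assuming (ii), every term is $\leq f(y)$, so the join is $\leq f(y)$; and the single term $z=y$ equals $d_{\mathcal H}(0,f(y))=f(y)$ (because $D(0,a)=\{r:a\leq r\}$ by neutrality of $0$ in $\oplus$ and minimality of $0$), giving the reverse inequality and hence (iii). Conversely, (iii) forces each individual term $d_{\mathcal H}(d(x,y),f(x))$ to be $\leq f(y)$, which is exactly (ii).

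Finally, (iii) $\Rightarrow$ (iv) uses the embedding of Theorem \ref{thm:embedding}: by (\ref{eq:sup-distance}) the map $\bar\delta$ is an isometry of $\mathbf E$ into $\mathbf H^{E}$, so I would take $\mathbf E'$ to be the restriction of $\mathbf H^{E}$ to $\bar\delta(E)\cup\{f\}$ and set $u:=f$; then, identifying $x$ with $\bar\delta(x)$, property (iii) reads $d'(y,u)=f(y)$. For (iv) $\Rightarrow$ (i) I would simply invoke the triangle inequality d2) and the symmetry axiom d3) in the extension: $d(x,y)=d'(x,y)\leq d'(x,u)\oplus d'(u,y)=f(x)\oplus\overline{f(y)}$ and $f(x)=d'(x,u)\leq d'(x,y)\oplus d'(y,u)=d(x,y)\oplus f(y)$, which are precisely the two metric-form inequalities. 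The only places where genuine content enters are the final-segment remark about $D(p,q)$ and the appeal to the isometric embedding $\bar\delta$; everything else is triangle-inequality bookkeeping, so I expect no real obstacle beyond keeping the involution and the (noncommutative) order of $\oplus$ straight.
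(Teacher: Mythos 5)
Your proof is correct and follows essentially the same route as the paper's: reading (ii) off the residuation set $D(p,q)$, using formula (\ref{eq:sup-distance}) with the term $z=y$ and $d_{\mathcal H}(0,f(y))=f(y)$ for (iii), taking $u:=f$ inside $\mathbf H^{E}$ via the embedding $\bar\delta$ for (iv), and closing the cycle with the triangle inequality. The only differences are cosmetic: you prove two of the implications as equivalences and spell out details the paper leaves implicit (that $D(p,q)$ is a final segment, and the computation of $d_{\mathcal H}(0,a)$).
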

 \begin{proof}
 	$(i) \Rightarrow (ii)$ According to the definition of the distance $d_{\mathcal H}$, conditions
 	(\ref {eq:weakmetricform}) and (\ref {eq:metricform}) amount  to $d_{\mathcal H}\left(d(x,y),f(x)\right) \leq f(y)$, that is  condition (\ref {eq:metricform2}).
 	
 	$(ii)\Rightarrow (iii)$ 
 	
 	According to formula (\ref{eq:sup-distance}):
 	
 	$$d(\bar \delta (y),f) =
 	\bigvee _{x\in E}d_{\mathcal H}\left(d(x,y),f(x)\right) \leq f(y).$$
 	Now, taking $x=y$, we get  $d(x,y)=0$, and
 	$d_{\mathcal H}\left(0,f(y)\right) = f(y)$, thus the supremum in the inequality
 	above is $f(y)$.
 	
 	$(iii) \Rightarrow (iv)$ Since $\bar \delta$ is an isometric embedding 
 	from $\mathbf E$ into ${\mathbf H}^{E} $, it suffices to take $\mathbf E' := {\mathbf
 		H}^{E}$ and
 	$u := f$.
 	
 	$(iv) \Rightarrow (i)$ Obvious from the triangular inequality.
 \end{proof}
 
 \begin{corollary}\label{cor:delta}The image of  $\bar \delta$ is included into $\mathcal L(\mathbf E)$,  hence, $\bar \delta$  is an isometry of $\mathbf E$ into $\mathcal L(\mathbf E)$.  
 \end{corollary}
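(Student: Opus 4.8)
The plan is to establish the two assertions of the corollary in turn: first, that $\bar\delta(x)$ is a metric form for every $x\in E$ (so that the image of $\bar\delta$ lies in $\mathcal L(\mathbf E)$), and then that $\bar\delta$ preserves distances. For the first assertion the quickest route is to invoke the implication $(iv)\Rightarrow(i)$ of Lemma \ref{lem:metric form}: take the isometric extension appearing in condition $(iv)$ to be $\mathbf E$ itself and the distinguished point to be $u:=x$; then for every $y\in E$ one has $\bar\delta(x)(y)=d(y,x)$, which is exactly of the form $d'(y,u)$, so $\bar\delta(x)$ is a metric form. Alternatively, I would check the two defining inequalities directly. Writing $f:=\bar\delta(x)$, so that $f(u)=d(u,x)$, the weak-metric-form inequality (\ref{eq:weakmetricform}) reads $d(u,v)\le f(u)\oplus\overline{f(v)}$; using d3 to rewrite $\overline{f(v)}=\overline{d(v,x)}=d(x,v)$, the right-hand side is $d(u,x)\oplus d(x,v)$, and the inequality is precisely the triangle inequality d2. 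The metric-form inequality (\ref{eq:metricform}) reads $f(u)\le d(u,v)\oplus f(v)$, i.e. $d(u,x)\le d(u,v)\oplus d(v,x)$, which is again d2. In both presentations the only facts used are the triangle inequality and the symmetry axiom.

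For the second assertion, note that $\mathcal L(\mathbf E)$ carries the distance induced from the sup-distance of the power $\mathbf H^{E}$, and that $\bar\delta(x)$ is nothing but the family $(d(z,x))_{z\in E}$ viewed as a point of $\mathbf H^{E}$. Hence the induced distance between $\bar\delta(x)$ and $\bar\delta(y)$ is $\bigvee_{z\in E}d_{\mathcal H}(d(z,x),d(z,y))$, and by the fundamental equality (\ref{eq:sup-distance}) already recorded after Theorem \ref{thm:embedding}, this supremum equals $d(x,y)$. Thus $\bar\delta$ preserves distances, and since by the first part it takes values in $\mathcal L(\mathbf E)$, it is an isometry of $\mathbf E$ into $\mathcal L(\mathbf E)$.

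There is essentially no obstacle: once the first part places the image of $\bar\delta$ inside $\mathcal L(\mathbf E)$, the isometry statement is merely a restatement of (\ref{eq:sup-distance}). The only point requiring a little care is the bookkeeping of the involution via d3 in the weak-metric-form inequality; everything else reduces to a direct appeal to the triangle inequality.
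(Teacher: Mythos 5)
Your proposal is correct and follows essentially the same route as the paper: the paper verifies condition (ii) of Lemma \ref{lem:metric form} for $f:=\bar\delta(u)$, which unpacks to exactly the two triangle-inequality checks you carry out (your appeal to the implication $(iv)\Rightarrow(i)$ with $\mathbf E'=\mathbf E$, $u=x$ is just another facet of the same lemma). Your explicit treatment of the isometry claim via equality (\ref{eq:sup-distance}) is the same observation the paper leaves implicit in the word ``hence.''
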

 \begin{proof}
 	Let $u\in E$. We check that $\bar \delta(u)$ is a weak metric form  for every $u\in E$. For that we show that inequality (\ref{eq:metricform2}) holds with $f:= \bar\delta(u)$. Indeed, we have
 	$d_{\mathcal H}(d(x,y),\bar\delta(u)(x))=d_{\mathcal H}(d(x,y), d(x, u)) \leq d(y, u):= \bar\delta(u)(y)$. 
 \end{proof}
 
 We recall Lemma II-4.4 of \cite{JaMiPo}. 
 
 \begin{lemma}\label{lemmakey}Let $\mathbf E:=(E, d)$ be a metric space over $\mathcal H$. 
 	For every weak metric form $f$,  the map $f_M: E\rightarrow \mathcal{H}$ defined by $f_M(x):= \bigwedge\{d(x,y)\oplus  f(y): y\in E\}$ is the largest metric form below $f$ and  $\bigcap \{B(x, f(x)): x\in E\}=\bigcap \{B(x, f_M(x)): x\in E\}$. Furthermore, the map $f \mapsto f_M$ is a retraction from ${\mathcal C}(\mathbf E)$ onto
 	${\mathcal L}(\mathbf E)$.
 \end{lemma}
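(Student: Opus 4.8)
The plan is to verify the four assertions in turn, working directly with the pointwise infimum $f_M(x)=\bigwedge_{z\in E}\{d(x,z)\oplus f(z)\}$, which is well defined since $\mathcal H$ is a complete lattice. First I would record the inequality $f_M\le f$: taking $z=x$ in the defining meet and using $d(x,x)=0$ together with the fact that $0$ is neutral for $\oplus$ gives $d(x,x)\oplus f(x)=f(x)$, so this value already occurs in the family whose infimum is $f_M(x)$.

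Next I would check that $f_M$ is a metric form. Inequality (\ref{eq:metricform}), namely $f_M(x)\le d(x,y)\oplus f_M(y)$, follows from the triangle inequality $d(x,z)\le d(x,y)\oplus d(y,z)$ together with the distributivity law (\ref{heyting}): distributing $d(x,y)$ over the meet yields $d(x,y)\oplus f_M(y)=\bigwedge_z\{d(x,y)\oplus d(y,z)\oplus f(z)\}$, and each of its terms dominates the corresponding term $d(x,z)\oplus f(z)$ of $f_M(x)$. For the weak-form inequality (\ref{eq:weakmetricform}) I would first observe that the involution, being an order-preserving involution of a complete lattice, is an order-automorphism and hence commutes with arbitrary meets; combined with $\overline{d(y,w)}=d(w,y)$ this gives $\overline{f_M(y)}=\bigwedge_w\{\overline{f(w)}\oplus d(w,y)\}$. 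Using distributivity once more,
\[
f_M(x)\oplus\overline{f_M(y)}=\bigwedge_{z,w}\{d(x,z)\oplus f(z)\oplus\overline{f(w)}\oplus d(w,y)\},
\]
and in each term the inner factor satisfies $f(z)\oplus\overline{f(w)}\ge d(z,w)$ since $f\in\mathcal C(\mathbf E)$, so the whole term is at least $d(x,z)\oplus d(z,w)\oplus d(w,y)\ge d(x,y)$ by two uses of the triangle inequality; hence $d(x,y)\le f_M(x)\oplus\overline{f_M(y)}$.

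For maximality, given any metric form $g\le f$ I would use that $g$ obeys (\ref{eq:metricform}), so $g(x)\le d(x,z)\oplus g(z)\le d(x,z)\oplus f(z)$ for every $z$, and taking the meet over $z$ gives $g(x)\le f_M(x)$. In particular, if $f$ is itself a metric form then $f\le f_M\le f$, so $f_M=f$ and the operator fixes $\mathcal L(\mathbf E)$ pointwise. The equality of ball intersections is then short: $f_M\le f$ gives $\bigcap_x B(x,f_M(x))\subseteq\bigcap_x B(x,f(x))$, while any $u$ with $d(x,u)\le f(x)$ for all $x$ satisfies $d(x,u)\le d(x,z)\oplus d(z,u)\le d(x,z)\oplus f(z)$ for every $z$, whence $d(x,u)\le f_M(x)$, giving the reverse inclusion.

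The step I expect to require the most care is showing that $f\mapsto f_M$ is nonexpansive for the sup-distance, which is what upgrades the idempotent operator to a retraction. Writing $\rho:=\bigvee_{x}d_{\mathcal H}(f(x),g(x))$ for the sup-distance between $f,g\in\mathcal C(\mathbf E)$, I would unwind the definition of $d_{\mathcal H}$ through the set $D(\cdot,\cdot)$ to obtain, for every $z$, the inequalities $g(z)\le f(z)\oplus\rho$ and $f(z)\le g(z)\oplus\overline{\rho}$ (the latter using that $r\le\rho$ forces $\overline r\le\overline\rho$). Substituting these into the meets defining $g_M(x)$ and $f_M(x)$ and pulling the constants out through distributivity gives $g_M(x)\le f_M(x)\oplus\rho$ and $f_M(x)\le g_M(x)\oplus\overline{\rho}$, i.e. $\rho\in D\!\left(f_M(x),g_M(x)\right)$, so $d_{\mathcal H}(f_M(x),g_M(x))\le\rho$. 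Taking the supremum over $x$ shows the sup-distance between $f_M$ and $g_M$ is at most $\rho$, which completes the proof that $f\mapsto f_M$ is a nonexpansive retraction of $\mathcal C(\mathbf E)$ onto $\mathcal L(\mathbf E)$.
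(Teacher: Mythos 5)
Your proof is correct and takes essentially the same route as the paper's: the same distributivity computations show $f_M$ is a metric form, maximality of $f_M$ is obtained identically, and nonexpansiveness of $f \mapsto f_M$ is established by the same pair of inequalities $g_M(x)\le f_M(x)\oplus d(f,g)$ and $f_M(x)\le g_M(x)\oplus \overline{d(f,g)}$ pulled through the meets. The only differences are cosmetic: you spell out that $f_M\le f$ and that the order-preserving involution commutes with arbitrary meets (both left implicit in the paper), and for the reverse ball-intersection inclusion you apply the triangle inequality directly where the paper routes the same computation through the maximality of $f_M$ applied to the metric form $\overline{\delta}(t)$.
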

 \begin{proof}
 	The verification is routine (the difficulty was to discover the formulation). 
 	
 	One proves first that if $g\in \mathcal L(\mathbf E)$ and $g\leq f$ then $g\leq  f_M$.
 	Indeed, since $g$ is a metric form then,  for every $x,y\in E$,  one  has $g(x)\leq d(x,y)\oplus g(y)$ and since $g\leq f$, one has $g(y)\leq f(y)$, thus $g(x)\leq d(x,y)\oplus f(y)$ hence $g(x)\leq \bigwedge\{d(x,y)\oplus  f(y): y\in E\}=:  f_M(x)$. 
 	
 	Next, one proves that $f_M$  is 
 	a metric form,  that is $d(x,y) \leq f_{M}(x) \oplus \overline {f_M(y)}$ and $f_M(x)\leq d(x,y)\oplus f_M(y)$ for all $x,y \in E$. The right hand side of the   first inequality $ f_M(x)\oplus {\overline {f_M(y)}}$ is equal to $\bigwedge\{d(x,z)\oplus  f(z): z\in E\} \oplus \bigwedge\{\overline {f(t)}\oplus d(t,y): t\in E\}$. Using the distributivity condition on $\mathcal H$, this yields $\bigwedge \{d(x,z)\oplus  f(z)\oplus \overline {f(t)}\oplus d(t,y): z,t\in E\}$.  From the triangular inequality and the fact that $f(z)\oplus \overline f(t)\geq d(z,t)$ we get $d(x,z)\oplus  f(z)\oplus \overline {f(t)}\oplus d(t,y)\geq d(x,y)$, hence$ f_M(x)\oplus {\overline {f_M(y)}}\geq d(x,y)$. For the second inequality, we have $d(x,z)\oplus f(z)\leq d(x,y)\oplus d(y,z) \oplus f(z)$ for all $z\in E$, hence $f_M(x):= \bigwedge\{d(x,z)\oplus  f(z): z\in E\}\leq \bigwedge\{d(x,y)\oplus d(y,z) \oplus f(z): z\in E\}=d(x,y)\oplus \{\bigwedge d(y,z) \oplus f(z): z\in E\}=:f_M(z)$.
 	
 	From these two fact follows that $f_M$ is the largest metric form  below $f$
 	
 	For the equality of the intersections of ball, note that the inclusion $\bigcap \{B(x, f_M(x)): x\in E\}\subseteq \bigcap \{B(x, f(x)): x\in E\}$ follows immediately from the fact that $f_M\leq f$.
 	For the reverse inclusion, pick $t\in \bigcap \{B(x, f(x)): x\in E\}$ that is $\overline {\delta} (t)(x)=d(x,t)\leq f(x)$  for every $x\in E$ or equivalently $\overline {\delta}(t)\leq f$. Since $\overline \delta(t)$ is a metric form  and $f_M$ is the largest metric form  below $f$, we have  $\overline \delta(t)\leq f_M$ amounting to $t\in \bigcap \{B(x, f_M(x)): x\in E\}$.
 	
 	Finally, one  checks that the map $f \mapsto f_M$ is a retraction  from ${\mathcal C}(\mathbf E)$ onto
 	${\mathcal L}(\mathbf E)$. 
 	
 	Since $f_M$ is the largest metric form below $f$, this map fixes ${\mathcal L}(\mathbf E)$ pointwise.  To conclude, it suffices to prove that this map is nonexpansive that is $d(f_M, g_M)\leq d(f,g)$ for all $f,g\in {\mathcal C}(\mathbf E)$. Let $f,g\in {\mathcal C}(\mathbf E)$. By definition of the distance on ${\mathcal C}(\mathbf E)$, we have  $f(y)\leq g(y)\oplus \overline {d(f,g)}$ hence $d(x,y)\oplus f(y)\leq d(x,y)\oplus g(y) \oplus \overline {d(f,g)}$ for all $x, y\in E$. 
 	This yields $f_M(x):= \bigwedge \{d(x,y)\oplus f(y) : y\in E\}\leq \bigwedge\{d(x,y)\oplus g(y) \oplus \overline {d(f,g)}: y\in E\}= \bigwedge\{d(x,y)\oplus g(y):y\in E\} \oplus \overline {d(f,g)}=:g_M(x)\oplus \overline {d(f,g)}$, that is $f_M(x)\leq g_M(x)\oplus \overline {d(f,g)}$. The same argument shows that $g_M(x)\leq f_M(x)\oplus d(f, g)$. Consequently, $d_\mathcal {H}(f_M(x), g_M(x))\leq d(f,g)$. Since this holds  for every $x\in E$, $d(f_M, g_M)\leq d(f,g)$ as required. The proof of the lemma is then complete. 
 \end{proof}

 Lemma \ref{lemmakey} was obtained independently by Kat\v{e}tov \cite{katetov}. 
 It plays a key role in the description of hyperconvex spaces, of injective envelopes and of hole-preserving maps.

 We  obtain below the following test of hyperconvexity. 
 \begin{proposition}\label{prop: test hyperconvexity}
 	Let $\mathbf E:= (E,d)$ be a metric space over a Heyting algebra $\mathcal H$. The
 	following properties are equivalent:
 	\begin{enumerate}[{(i)}]
 		
 		\item $\mathbf E$ is hyperconvex;
 		
 		\item For every weak metric form $ f : E \to \mathcal H$, the intersection
 		of balls $B\left(x,f(x)\right)$ is non-empty; 
 		
 		\item For every isometric extension $\mathbf E^{'}:=(E', d')$ of $\mathbf  E$ and every
 		$u \in E^{'} \setminus E$, there is a retraction of $\mathbf E'_{\restriction  E\cup \{u\}}$ onto 
 		$\mathbf E$.
 	\end{enumerate}
 \end{proposition}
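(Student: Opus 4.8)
The plan is to reformulate all three conditions as statements about the non-emptiness of the distinguished ball families $\big(B(x,f(x))\big)_{x\in E}$, and then to close the cycle $(i)\Rightarrow(ii)\Rightarrow(iii)\Rightarrow(i)$. The pivotal observation is that the hyperconvexity consistency condition $d(x_i,x_j)\le r_i\oplus\overline{r_j}$, read for the family of radii $r_x:=f(x)$ indexed by the centers $x\in E$, is \emph{literally} the defining inequality \eqref{eq:weakmetricform} of a weak metric form. This dictionary is what lets me pass back and forth between arbitrary consistent families of balls and (weak) metric forms.

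For $(i)\Rightarrow(ii)$ I would simply note that if $f$ is a weak metric form, then the family $\big(B(x,f(x))\big)_{x\in E}$, with centers $x$ and radii $f(x)$, satisfies the consistency hypothesis of hyperconvexity by \eqref{eq:weakmetricform}, so its intersection is non-empty. For the reverse passage $(ii)\Rightarrow(i)$, given an arbitrary consistent family $\big(B(x_i,r_i)\big)_{i\in I}$, I would manufacture a weak metric form out of it by setting $f(z):=\bigwedge_{i\in I}\big(d(z,x_i)\oplus r_i\big)$. Taking $z=x_j$ and using $d(x_j,x_j)=0$ gives $f(x_j)\le r_j$, whence $\bigcap_{x\in E}B(x,f(x))\subseteq\bigcap_{i\in I}B(x_i,r_i)$; applying $(ii)$ then produces the desired common point. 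The real content is checking that $f$ is genuinely a weak metric form: expanding $f(z)\oplus\overline{f(w)}$, pushing the involution through the meet (it is an order-automorphism, hence preserves arbitrary meets) and invoking the distributivity law \eqref{heyting} rewrites this as $\bigwedge_{i,j}\big(d(z,x_i)\oplus r_i\oplus\overline{r_j}\oplus d(x_j,w)\big)$; each summand dominates $d(z,x_i)\oplus d(x_i,x_j)\oplus d(x_j,w)\ge d(z,w)$ by the consistency condition together with two uses of the triangle inequality d2).

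The equivalence $(ii)\Leftrightarrow(iii)$ I would route through the two metric-form lemmas. Once $(iii)$ is decoded it becomes almost free: a retraction of $\mathbf E'_{\restriction E\cup\{u\}}$ onto $\mathbf E$ is exactly a point $t\in E$ with $d(x,t)\le d'(x,u)$ for every $x\in E$, that is, a common point of the balls $\big(B(x,f(x))\big)_{x\in E}$ where $f(x):=d'(x,u)$; and $f$ is a \emph{metric form} by Lemma \ref{lem:metric form}, $(iv)\Rightarrow(i)$. Since every metric form is a fortiori a weak metric form, $(ii)$ supplies such a $t$, giving $(ii)\Rightarrow(iii)$. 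For $(iii)\Rightarrow(ii)$ I would take an arbitrary weak metric form $f$ and replace it by the largest metric form below it, $f_M$, furnished by Lemma \ref{lemmakey}; that same lemma guarantees $\bigcap_x B(x,f(x))=\bigcap_x B(x,f_M(x))$, so it suffices to hit the $f_M$-balls. By Lemma \ref{lem:metric form}, $(i)\Rightarrow(iv)$, the metric form $f_M$ is realized as $f_M(y)=d'(y,u)$ for some point $u$ in an isometric extension $\mathbf E'$, and then the retraction provided by $(iii)$ delivers the required point.

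The hard part will be the single computation in $(ii)\Rightarrow(i)$: verifying that the candidate $f(z)=\bigwedge_i\big(d(z,x_i)\oplus r_i\big)$ is a weak metric form is precisely where the full strength of the distributivity axiom \eqref{heyting} and the compatibility of the involution with meets are consumed, and it is the step most easily mishandled. A secondary point needing care arises in $(iii)\Rightarrow(ii)$, namely the boundary case in which the point $u$ realizing $f_M$ lies in (the image of) $E$ rather than strictly outside it: there $u$ itself already belongs to every ball $B(x,f_M(x))$, so the conclusion holds without invoking $(iii)$, while in all remaining cases one may realize $f_M$ by a point $u\in E'\setminus E$ (e.g.\ inside $\mathbf H^{E}$) and apply the hypothesis verbatim.
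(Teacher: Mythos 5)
Your proposal is correct and follows essentially the same path as the paper's proof: the same identification of consistent families of balls with weak metric forms, the same use of Lemma~\ref{lemmakey} and Lemma~\ref{lem:metric form} (via $f_M$ and its realization as a distance to a point of an isometric extension) to settle (iii)$\Rightarrow$(ii), and the same decoding of a retraction of $\mathbf E'_{\restriction E\cup\{u\}}$ as a common point of the balls $B\bigl(x,d'(x,u)\bigr)$. The only deviations are organizational and cosmetic — you prove the two equivalences (i)$\Leftrightarrow$(ii) and (ii)$\Leftrightarrow$(iii) instead of the paper's cycle (iii)$\Rightarrow$(ii)$\Rightarrow$(i)$\Rightarrow$(iii); in (ii)$\Rightarrow$(i) your witness $f(z)=\bigwedge_{i}\bigl(d(z,x_i)\oplus r_i\bigr)$ replaces the paper's simpler $f(x)=\bigwedge_{i,\,x_i=x}r_i$ (yours is exactly the paper's witness smoothed by the $f_M$ operator, and both verifications consume the distributivity axiom in the same way); and your explicit treatment of the boundary case where the point realizing $f_M$ already lies in $E$ is a correct handling of a detail the paper's argument leaves implicit.
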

 \begin{proof}
 	$(iii) \Rightarrow (ii)$  Let $f : E \to \mathcal H$ be a weak metric form  and $f_M$  be the largest metric form  below $f$ given by Lemma \ref{lemmakey}. According to Corollary \ref{cor:delta}, $\bar \delta$ is an isometry of $\mathbf E$ into $\mathcal L(\mathbf E)$. Thus, setting $\mathbf E':=\mathcal L(\mathbf E)$, we may view $\mathbf E'$ as an isometric extension of $\mathbf E$.  Since $f_M$ is a metric form,  Lemma \ref{lem:metric form} ensures that $d_{\mathbf E'} (\bar {\delta}(y),  f_M) = f_M(y)$ for all $y\in E$. Thus $ f_M\in \bigcap_{x\in E} B_{\mathbf E'}(\bar \delta (x), f_M(x))$. Any retraction of $\mathbf E'_{\restriction  \bar\delta (E)\cup \{u\}}$ onto 
 	$\mathbf E$ will send $f_M$ into $\bigcap_{x\in E} B_{\mathbf E}(x, f_M(x))$. According to Lemma \ref{lemmakey}, this intersection is $\bigcap_{x\in E}  B_{\mathbf E}(x, f(x))$.

 	$(ii) \Rightarrow (i)$ Let $\big(B(x_i,r_i)\big)_{i\in I}$ be a family of 
 	balls of $\mathbf E$ such that 
 	
 	\begin{equation} \label{eq:hyperconvexity}
 		d(x_i,x_j)\leq r_i\oplus \overline {r_j}
 	\end{equation}
 	for all $i,j\in I$. 
 	
 	Define
 	$f : E \to \mathcal H$ as follows:  for each $x\in E$, set
 	$ f(x) = \displaystyle\bigwedge _{ i\in I, x_i=x}r_i$.
 	The distributivity condition on $\mathcal H$ ensures that $$d(x,y) \leq f(x) \oplus \overline {f(y)}$$ for all $x,y\in E$. Hence $f$ is a weak metric form. 
 	It follows that:
 	$$\emptyset \ne \displaystyle\bigcap _{x \in E} B\big(x,f(x)\big) \subseteq
 	\displaystyle\bigcap _ {i \in I} B(x_i,r_i).$$
 	$(i) \Rightarrow (iii)$ Let $\mathbf E':= (E',d')$ be an isometric extension of 
 	$\mathbf E$
 	and $u \in E' \setminus E$. For all $x,y \in E$, we have
 	$d(x,y) = d'(x,y) \le d'(x,u) \oplus d'(u,y)$. Since $\mathbf E$ is 
 	hyperconvex,
 	the set
 	$\displaystyle \bigcap _{x \in E} B\Big(x,d'(x,u)\Big)$ is non-empty.
 	Let $u'$ be an arbitrary element of this intersection. The map
 	$g : E\cup \{u\} \to E$ defined by $g(x) = x$ for every $x \in E$ and
 	$g(u) = u'$ is a retraction.
 \end{proof}
 
 We conclude this paragraph with a characterization theorem:

 \begin{theorem}\cite{JaMiPo}\label{thm: caracterisation-hyperconvexity}
 	Let $\mathcal H$ be an Heyting algebra. Then,  for a metric space $\mathbf E:= (E,d)$ over $\mathcal  H$,
 	the following conditions are equivalent:
 	\begin{enumerate} [{(i)}]
 		\item $\mathbf E$ is an absolute retract; 
 		\item $\mathbf E$ is injective; 
 		\item $\mathbf E$ is hyperconvex; 
 		\item $\mathbf E$ is a retract of a power of $\mathcal H$.
 	\end{enumerate}
 \end{theorem}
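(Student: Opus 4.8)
The plan is to establish the cycle of implications $(iii)\Rightarrow(ii)\Rightarrow(i)\Rightarrow(iv)\Rightarrow(iii)$, so that each condition forces the next and the loop closes. Since the substantive analytic work has already been packaged into the earlier results, the argument is mostly a matter of assembling them in the right order, the one nontrivial step being the passage from a ball-intersection property to a genuine extension of maps.

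First I would prove $(iii)\Rightarrow(ii)$, that hyperconvexity entails injectivity. By the lemma characterizing injectivity through the one-point extension property, it suffices to verify that a hyperconvex $\mathbf E$ has that property. So let $\mathbf E':=(E',d')$, $F\subseteq E'$, let $f:\mathbf E'_{\restriction F}\to\mathbf E$ be nonexpansive, and fix $x'\in E'\setminus F$. I want a value $w\in E$ for $f(x')$ with $d(f(a),w)\le d'(a,x')$ for every $a\in F$, that is, $w\in\bigcap_{a\in F}B\big(f(a),d'(a,x')\big)$. The compatibility hypothesis of hyperconvexity is supplied by the triangle inequality together with the involution: for $a,b\in F$,
\[ d(f(a),f(b))\le d'(a,b)\le d'(a,x')\oplus d'(x',b)=d'(a,x')\oplus\overline{d'(b,x')}. \]
Hyperconvexity then yields a nonempty intersection, and any $w$ in it extends $f$; this proves the one-point extension property, hence injectivity.

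Next, $(ii)\Rightarrow(i)$ is immediate from the definition of injectivity: given an isometric extension $\mathbf E'$ of $\mathbf E$, apply injectivity with $\mathbf F:=\mathbf E$, the nonexpansive map $\id_{\mathbf E}:\mathbf E\to\mathbf E$, and the isometric inclusion $g:\mathbf E\to\mathbf E'$; the resulting $h:\mathbf E'\to\mathbf E$ with $h\circ g=\id_{\mathbf E}$ restricts to the identity on $E$, so it is a retraction, and $\mathbf E$ is an absolute retract. For $(i)\Rightarrow(iv)$ I would invoke Theorem \ref{thm:embedding}: $\mathbf E$ embeds isometrically into a power $\mathbf H^I$ of $\mathbf H:=(\mathcal H,d_{\mathcal H})$, which is an isometric extension of $\mathbf E$; being an absolute retract, $\mathbf E$ is a retract of it, i.e. a retract of a power of $\mathcal H$.

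Finally, $(iv)\Rightarrow(iii)$ follows from stability. Theorem \ref{thm: space values-hyperconvex} gives that $\mathbf H$ is hyperconvex, and since hyperconvexity is preserved under (non-empty) products and retracts, every retract of a power of $\mathcal H$ is hyperconvex; this closes the cycle. I expect the only genuinely delicate point to be $(iii)\Rightarrow(ii)$, where one must translate the abstract one-point extension requirement into the correct family of balls $B\big(f(a),d'(a,x')\big)$ and recognize that the hyperconvexity compatibility condition is precisely what the triangle inequality and the involution deliver. The remaining implications are formal once the lemma on the one-point extension property, Theorem \ref{thm:embedding}, and Theorem \ref{thm: space values-hyperconvex} are in hand.
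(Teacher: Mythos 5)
Your proposal is correct and follows essentially the same route as the paper: the same cycle of implications (the paper writes it as $(i)\Rightarrow(iv)\Rightarrow(iii)\Rightarrow(ii)\Rightarrow(i)$, which is the identical cycle started at a different vertex), with the same use of Theorem \ref{thm:embedding} for $(i)\Rightarrow(iv)$, of Theorem \ref{thm: space values-hyperconvexity} plus variety-closure for $(iv)\Rightarrow(iii)$, and of the balls $B\bigl(f(a),d'(a,x')\bigr)$ with the compatibility estimate $d(f(a),f(b))\le d'(a,x')\oplus\overline{d'(b,x')}$ to get the one-point extension property for $(iii)\Rightarrow(ii)$. No gaps; your treatment of $(iii)\Rightarrow(ii)$ is in fact slightly more explicit than the paper's sketch.
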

 
 \begin{proof}
 	We just give an hint (for a detailed proof, see \cite{JaMiPo}). 
 	
 	$(i) \Rightarrow (iv)$ According to theorem \ref{thm:embedding}, the space $\mathbf E$ 
 	isometrically embeds into a power of $\mathbf H:= (\mathcal H, d_{\mathcal H})$; since it is an absolute retract, it
 	must be a retract of such a power.
 	
 	$(iv) \Rightarrow (iii)$ The space $\mathbf H$ is hyperconvex and the class of
 	hyperconvex spaces is closed under product and retract, i.e, in our
 	terminology, forms a variety.  
 	
 	$(iii) \Rightarrow (ii)$ We prove that the one-point extension holds. Let $\mathbf E':= (E', d')$, $A'\subseteq E'$, $x'\in E'\setminus A'$  and $ f : A \to E$ be a
 	nonexpansive map. Let $\mathcal B:= (B_{\mathbf E} (f(a'), d'(a', x')))_{a'\in A'}$. Since $f$ is nonexpansive, this family of balls satisfies the hyperconvexity  condition, namely 
 	$$d(f(a'), f(a''))\leq d'(a',a'') \leq d'(a', x') \oplus \overline{d'(a'', x')}. $$ Hence,  it has a non-empty intersection. Pick $x$ into  this intersection and set $f(x'):= x$. 
 	
 	$(ii) \Rightarrow (i)$ Trivial.
 \end{proof}

 \subsection{Injective envelope}
 A nonexpansive map $f: \mathbf E\longrightarrow \mathbf E'$ is \textit{essential} it for
 every nonexpansive map  $g: \mathbf E'\longrightarrow \mathbf E''$,  the map $g\circ f$ is
 an isometry if and only if $g$ is isometry (note that, in
 particular, $f$ is an
 isometry). An essential nonexpansive map $f$ from $\mathbf E$ into an injective metric space  $\mathbf E'$ over $\mathcal{H}$ is called an {\it injective envelope}  of $\mathbf E$. We will rather say that $\mathbf E'$ is  an  \emph{injective envelope}  of  $\mathbf E$. Indeed, this says in substance that  an injective envelope of a metric space $\mathbf E$ is  a minimal
 injective metric space over $\mathcal H$ containing isometrically $\mathbf E$.
 
 The construction of
 injective envelopes is based upon the notion of \textit{
 	minimal metric form}, a notion borrowed to Isbell \cite{isbell} that he calls \emph{extremal}. 
 
 Let us recall that a (weak) metric form is  \textit{minimal} if there is no other  (weak) metric form $g$
 satisfying $g\leq f$ (that is $g(x)\leq f(x)$ for all $x\in E$). Since from Lemma \ref{lemmakey}, every weak metric form  majorizes a metric form, the two notions of minimality coincide.  Due to the distributivity condition and the completeness of $\mathcal H$,  we may apply Zorn's lemma to get the existence of a minimal metric form  below any weak metric form.  
 
 As shown in
 \cite{JaMiPo}, (cf. also theorem 2.2 of \cite{KP2}):

 \begin{theorem} \label{thm: env injective}
 	Every generalized metric space $\mathbf E$ over a Heyting algebra $\mathcal H$ has an injective envelope , namely the space $\mathcal N(\mathbf E)$ of
 	minimal metric forms.
 \end{theorem}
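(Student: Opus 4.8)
The plan is to verify the two defining features of an injective envelope for the map $\bar\delta\colon \mathbf E\to \mathcal N(\mathbf E)$, $x\mapsto \bar\delta(x)=d(\cdot,x)$: that $\mathcal N(\mathbf E)$ is injective, and that $\bar\delta$ is essential. First I would record that $\bar\delta$ genuinely lands in $\mathcal N(\mathbf E)$. By Corollary \ref{cor:delta} each $\bar\delta(u)$ is a metric form; if $g\leq \bar\delta(u)$ is a metric form then $g(u)\leq d(u,u)=0$, so $g(u)=0$, and the weak metric form inequality at $y=u$ gives $d(x,u)\leq g(x)\oplus\overline{g(u)}=g(x)$, whence $g=\bar\delta(u)$. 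Thus $\bar\delta(u)$ is minimal and $\bar\delta$ isometrically embeds $\mathbf E$ into $\mathcal N(\mathbf E)$. I would also keep at hand the identity $d(\bar\delta(x),f)=f(x)$, valid for every metric form $f$ (Lemma \ref{lem:metric form}(iii)).

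For injectivity I would use the test of Proposition \ref{prop: test hyperconvexity}: it suffices that for every weak metric form $\Phi$ on the space of centres the corresponding balls $B(f,\Phi(f))$ meet. The key computation is that the explicit point
\begin{equation*}
h(x):=\bigwedge_{f'}\bigl(f'(x)\oplus\Phi(f')\bigr)
\end{equation*}
is a metric form lying in all these balls. Indeed, the metric form inequality for $h$ follows termwise from that of each $f'$ together with the distributivity law \eqref{heyting}; the weak form inequality for $h$, and the membership $d(f,h)\leq\Phi(f)$, both reduce, via $f'(x)=d(\bar\delta(x),f')$ and $\Phi(f')\oplus\overline{\Phi(f)}\geq d(f',f)$, to triangle inequalities in $\mathbf H^{E}$. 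Carried out with $f'$ ranging over all of $\mathcal L(\mathbf E)$ this shows outright that $\mathcal L(\mathbf E)$ is hyperconvex, hence injective by Theorem \ref{thm: caracterisation-hyperconvexity}.

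The main obstacle is passing from $\mathcal L(\mathbf E)$ to $\mathcal N(\mathbf E)$: when the centres lie in $\mathcal N(\mathbf E)$ the point $h$ above is still a metric form in all the balls, but it need not be minimal (already for a two-point space there are choices of $\Phi$ for which $h$ is not minimal), so it need not belong to $\mathcal N(\mathbf E)$, and a minimal form below $h$ can fall out of some ball. To repair this I would invoke the extremal characterization of minimal forms, namely that every minimal $f$ satisfies $f(y)=\bigvee_{x}d_{\mathcal H}(d(x,y),f(x))$, and combine it with the Zorn-type existence of a minimal form below any weak metric form to select an intersection point that is itself minimal. This tightness argument is exactly where the work sits, and is the step for which I would defer the details to \cite{JaMiPo}. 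Granting it, $\mathcal N(\mathbf E)$ is hyperconvex, hence injective.

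Finally I would prove that $\bar\delta\colon\mathbf E\to\mathcal N(\mathbf E)$ is essential. Let $g\colon\mathcal N(\mathbf E)\to\mathbf E''$ be nonexpansive with $g\circ\bar\delta$ an isometry, and fix a minimal form $f$; set $f'(x):=d''(g(\bar\delta(x)),g(f))$. Nonexpansiveness and $d(\bar\delta(x),f)=f(x)$ give $f'\leq f$, while for all $x,y\in E$
\begin{equation*}
d(x,y)=d''(g(\bar\delta(x)),g(\bar\delta(y)))\leq d''(g(\bar\delta(x)),g(f))\oplus d''(g(f),g(\bar\delta(y)))=f'(x)\oplus\overline{f'(y)},
\end{equation*}
so $f'$ is a weak metric form below $f$; since $f$, being a minimal metric form, is also minimal as a weak metric form, $f'=f$, i.e. $d''(g(\bar\delta(x)),g(f))=f(x)$ for every $x$. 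Applying this to two minimal forms $f_1,f_2$ and writing $r:=d''(g(f_1),g(f_2))$, the two triangle inequalities through $g(f_1)$ and through $g(f_2)$ yield $f_1(x)\leq f_2(x)\oplus\overline r$ and $f_2(x)\leq f_1(x)\oplus r$ for every $x$, that is $r\in D(f_1(x),f_2(x))$, so $d_{\mathcal H}(f_1(x),f_2(x))\leq r$. Taking the join over $x$ gives $d(f_1,f_2)\leq r$, and with nonexpansiveness $d(f_1,f_2)=d''(g(f_1),g(f_2))$. Hence $g$ is an isometry, $\bar\delta$ is essential, and $\mathcal N(\mathbf E)$ is an injective envelope of $\mathbf E$.
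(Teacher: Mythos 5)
Your preliminary steps are sound: $\bar\delta(u)$ is indeed a minimal form (your argument via $g(u)=0$ is correct), your explicit point $h(x):=\bigwedge_{f'}\bigl(f'(x)\oplus\Phi(f')\bigr)$ does prove hyperconvexity of $\mathcal L(\mathbf E)$ (the paper instead exhibits the retraction $u\mapsto\varphi_u$ directly, but your computation is valid), and your essentiality argument at the end is correct and even sharper than the paper's, which only verifies minimality among injective extensions. However, the heart of the theorem is precisely the step you defer to \cite{JaMiPo}: that $\mathcal N(\mathbf E)$, and not merely $\mathcal L(\mathbf E)$, is injective. Deferring it leaves the proposal proving only that $\mathbf E$ has \emph{some} injective isometric extension, not that it has an injective envelope. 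Worse, the repair you sketch cannot work as stated: the identity you call the ``extremal characterization of minimal forms,'' $f(y)=\bigvee_{x}d_{\mathcal H}(d(x,y),f(x))$, is satisfied by \emph{every} metric form --- it is exactly item (iii) of Lemma \ref{lem:metric form} read through Formula (\ref{eq:sup-distance}) --- so it distinguishes nothing, and combining it with Zorn's lemma returns you to the very difficulty you identified: for a minimal form $\tilde h\leq h$ the inequality $f(x)\leq \tilde h(x)\oplus\overline{\Phi(f)}$ is not obtained by any pointwise computation.

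The missing idea in the paper is of a different nature and, ironically, is the same mechanism you exploit in your essentiality argument. Given an isometric extension $\mathbf E'$ of $\mathcal N(\mathbf E)$ and $u\in E'$, set $\varphi_u(x):=d'(\bar\delta(x),u)$ (this is your $h$, up to the identification $\Phi_M(\bar\delta(x))=\bigwedge_{f'}(f'(x)\oplus\Phi(f'))$), and pick by Zorn a minimal form $\tilde u\leq\varphi_u$. The partial map fixing $\bar\delta(E)$ pointwise and sending $u\mapsto\tilde u$ is nonexpansive, since $d(\bar\delta(x),\tilde u)=\tilde u(x)\leq\varphi_u(x)=d'(\bar\delta(x),u)$; by the injectivity of $\mathcal L(\mathbf E)$, which you have already established, it extends to a nonexpansive map $\Phi:\mathbf E'\to\mathcal L(\mathbf E)$. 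For every minimal $f$, nonexpansiveness gives $\Phi(f)(x)=d(\bar\delta(x),\Phi(f))\leq d'(\bar\delta(x),f)=f(x)$, so $\Phi(f)=f$ by minimality --- the ``a smaller (weak) metric form must be equal'' argument you use for essentiality. Then $d(f,\tilde u)=d(\Phi(f),\Phi(u))\leq d'(f,u)$ for all $f\in\mathcal N(\mathbf E)$, so the minimal form $\tilde u$ lies in every ball $B(f,d'(f,u))$, proving $\mathcal N(\mathbf E)$ hyperconvex, hence injective by Theorem \ref{thm: caracterisation-hyperconvexity}. This transfer of the ball constraints through an extension supplied by the injectivity of $\mathcal L(\mathbf E)$ is the idea your proposal is missing.
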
 
 \begin{proof}
 	Let $\mathbf E$ be a metric space over the Heyting algebra $\mathcal H$. 
 	
 	\noindent One proves first that   the space $\mathcal L(\mathbf E)$ of metric forms  is an absolute retract. This means that every isometric extension $\mathbf E':= (E',d')$ can be retracted on  $\mathcal L(\mathbf E)$. This is  almost immediate. For every $u\in E'$,  let   
 	$\varphi_u: E\rightarrow \mathcal H$ be defined by setting $\varphi_{u} (x):= d'(\bar \delta (x), u)$. Since the map $\bar \delta: E \rightarrow \mathcal H$ is an isometry, $\varphi_{u}$ is a metric form.  To conclude, one proves that  the map $\varphi: u \mapsto  \varphi_u$ is a retraction of $\mathbf E'$ on $\mathcal L(\mathbf E)$. First, $\varphi$ is the identity on $\mathcal L(\mathbf E)$. Indeed, if $f\in \mathcal L(\mathbf E)$ then, according to  $(iii)$ of Lemma \ref{lem:metric form},  $\varphi_f (x)= d(\bar \delta  (x), f)=f(x)$ for every $x\in E$,  hence $\varphi_f= f$. Next, $\varphi$ is nonexpansive, that is, $d(\varphi_u, \varphi_v) \leq d'(u,v)$ for all $u, v\in \mathbf E'$.   From the triangular inequality, we have:
 	\begin{equation}\label{eq:metricform1}
 		d'(\bar \delta (x), u)\leq d'(\bar {\delta} (x), v)\oplus d'(v,u)
 	\end{equation}
 	and
 	\begin{equation}\label{eq:metricform3}
 		d'(\bar \delta (x), v)\leq d'(\bar {\delta} (x), u)\oplus d'(u, v)
 	\end{equation}
 	for every $x\in E$. 
 	
 	\noindent These inequalities translate to $\varphi_{u}(x) \leq \varphi_{v}(x)\oplus \overline {d'(u,v)}$ and  $\varphi_{v}(x)\leq \varphi_{u}(x)\oplus d'(u,v)$,   that is $d_{\mathcal H}(\varphi_u(x), 
 	\varphi_v(x)) \leq d'(u,v)$.  This yields  $d(\varphi_u, \varphi_v):= \bigvee _{x\in E}d_{\mathcal H}(\varphi_u(x), 
 	\varphi_v(x))\leq d'(u,v)$, as required. 
 	
 	Next, one proves that the space $\mathcal N(\mathbf E)$ of minimal metric forms  over $\mathbf E$ is hyperconvex. According to $(iii)$ of Proposition  \ref{prop: test hyperconvexity} this amounts to  prove that 
 	for every isometric extension $\mathbf E':=(E', d')$ of $\mathcal  N(\mathbf  E)$ and every
 	$u \in E' \setminus \mathcal N (\mathbf E)$, there is a retraction of $\mathbf E'_{\restriction  \mathcal N(\mathbf E)\cup \{u\}}$ onto 
 	$\mathcal N(\mathbf E)$. This amounts to the fact that 
 	the intersection
 	of balls $A:= \bigcap _{f\in \mathcal N(\mathbf E)} B_{\mathbf E'}(f,  d' (f, u))$ contains some element $\tilde u$ of $\mathcal N(\mathbf E)$. Let $\varphi_u: \mathbf E\rightarrow \mathcal H$  defined by setting $\varphi_{u} (x):= d'(\bar \delta (x), u)$. As illustrated above,  this is a metric form  on $\mathbf E$. Let $\tilde  u$ be a minimal metric form  on $\mathbf E$ below $u$. Let $\phi: \bar \delta (E)\cup \{u\}\rightarrow \mathcal L(\mathbf E)$ be the nonexpansive map sending   $u$ to $\tilde u$ and leaving fixed every other element. Since $\mathcal L(\mathbf  E)$ is an absolute retract, it is injective, hence $\phi$ extends to a nonexpansive map $\Phi$ from $\mathbf E'$ into $\mathcal L(\mathbf E))$. This map is the identity on $\mathcal N(\mathbf E)$. Indeed, let $f\in \mathcal N(\mathbf  E)$. Since $\Phi$ is nonexpansive, we have $d(\bar \delta (x), \Phi(f))\leq  d'(\bar \delta (x), f)$ for every $x\in E$, meaning $\Phi(f)(x)\leq f(x)$. Since $f$ is minimal, $\Phi(f)=f$. From this, it follows that $d(f, \tilde u)= d(\Phi(f), \Phi(u)\leq d'(f,u)$ for every $f\in \mathcal N(\mathbf E)$.  This proves that $\tilde u$ belongs to $A$. Hence $\mathcal N(\mathbf E)$ is hyperconvex. According to Theorem \ref{thm: caracterisation-hyperconvexity} it is injective. If $\mathbf E'$ is an injective space  between $\mathbf E$ and $\mathcal N(\mathbf E)$ then the identity map $\id$ on $\mathbf E$ extends to a  nonexpansive map $\Phi$ from $\mathcal N(\mathbf E)$ into $\mathbf E'$. As above,  for every $f\in \mathcal N(\mathbf E)$ we have $\Phi (f)\leq f$ hence $\Phi(f) =f$ since $f$ is minimal. It follows that $\mathbf E'= \mathcal N(\mathbf E)$. This proves that $\mathcal N(\mathbf E)$ is a minimal injective metric space containing $\mathbf E$.   \end{proof}

 A particularly useful fact is the following:
 \begin{lemma}\label{fact:fixe}
 	If a nonexpansive map from an injective envelope of $\mathbf E:= (E, d)$ into itself  fixes $E$ pointwise,  then  it is the identity map.
 \end{lemma}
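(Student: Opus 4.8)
The plan is to work with the concrete model of the injective envelope furnished by Theorem~\ref{thm: env injective}, namely the space $\mathcal N(\mathbf E)$ of minimal metric forms, with $\mathbf E$ sitting inside it via the isometry $\bar\delta$, where $\bar\delta(u)(x)=d(x,u)$. Since the lemma is a property invariant under isomorphism and $\mathcal N(\mathbf E)$ is the envelope produced by the construction, I may take the envelope in question to be $\mathcal N(\mathbf E)$. First I would record the elementary fact that each $\bar\delta(u)$ is a \emph{minimal} metric form, so that $\bar\delta(E)\subseteq\mathcal N(\mathbf E)$ and the hypothesis ``$\Psi$ fixes $E$ pointwise'' reads $\Psi(\bar\delta(u))=\bar\delta(u)$ for every $u\in E$. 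This minimality is immediate: if a weak metric form $g$ satisfies $g\leq\bar\delta(u)$, then $g(u)\leq d(u,u)=0$, and feeding $y=u$ into the weak metric form inequality $d(x,y)\leq g(x)\oplus\overline{g(y)}$ (using $\overline 0 = 0$ and the fact that $0$ is neutral by~(\ref{roleof0})) forces $d(x,u)\leq g(x)$, whence $g=\bar\delta(u)$. By Corollary~\ref{cor:delta} each $\bar\delta(u)$ is already a metric form, so this shows it is minimal.

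The core of the argument is then a short monotonicity computation. Let $\Psi:\mathcal N(\mathbf E)\to\mathcal N(\mathbf E)$ be nonexpansive with $\Psi(\bar\delta(u))=\bar\delta(u)$ for all $u\in E$, and fix an arbitrary $f\in\mathcal N(\mathbf E)$; the goal is $\Psi(f)=f$. For each $y\in E$, nonexpansiveness combined with the fixing gives $d(\bar\delta(y),\Psi(f))=d(\Psi(\bar\delta(y)),\Psi(f))\leq d(\bar\delta(y),f)$. Since both $f$ and $\Psi(f)$ lie in $\mathcal N(\mathbf E)\subseteq\mathcal L(\mathbf E)$, the equivalence $(i)\Leftrightarrow(iii)$ of Lemma~\ref{lem:metric form} evaluates these distances as $d(\bar\delta(y),f)=f(y)$ and $d(\bar\delta(y),\Psi(f))=\Psi(f)(y)$. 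The inequality therefore becomes $\Psi(f)(y)\leq f(y)$ for every $y\in E$, that is, $\Psi(f)\leq f$ in the pointwise order.

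Minimality now closes the proof: $\Psi(f)$ is a metric form lying below the minimal metric form $f$, so $\Psi(f)=f$; as $f$ was arbitrary, $\Psi=\id$. I do not anticipate a genuine obstacle, since the statement is essentially a formal consequence of two facts already in hand --- that the distance from an embedded point $\bar\delta(y)$ to a metric form $f$ recovers the value $f(y)$ (Lemma~\ref{lem:metric form}), and that the envelope consists precisely of the minimal forms. The one point requiring care is the \emph{direction} of the inequality: nonexpansiveness together with the fixing only ever yields $\Psi(f)\leq f$, never the reverse, so it is the minimality built into $\mathcal N(\mathbf E)$ --- and not mere injectivity, as would already hold for the larger space $\mathcal L(\mathbf E)$ --- that is indispensable for upgrading $\Psi(f)\leq f$ to equality. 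This rigidity is a hallmark of the injective envelope among the injective isometric extensions of $\mathbf E$.
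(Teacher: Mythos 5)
Your proof is correct and is essentially the paper's own argument: the paper gives no standalone proof of this lemma, but inside the proof of Theorem~\ref{thm: env injective} it runs exactly your computation twice --- nonexpansiveness of $\Phi$ together with Lemma~\ref{lem:metric form}$(iii)$ yields $\Phi(f)(x)=d(\bar\delta(x),\Phi(f))\leq d(\bar\delta(x),f)=f(x)$, and minimality of $f\in\mathcal N(\mathbf E)$ forces $\Phi(f)=f$. The only point to keep in order is your opening reduction: passing from an arbitrary injective envelope to the model $\mathcal N(\mathbf E)$ uses the fact that any envelope is isomorphic to $\mathcal N(\mathbf E)$ by an isomorphism fixing $E$, a fact the paper records only \emph{after} this lemma and which is itself deduced from the concrete-model case (via essentiality of the embedding and injectivity of both spaces), so your argument is non-circular precisely because your core computation is carried out on $\mathcal N(\mathbf E)$ first.
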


 Note that  two injective envelopes of $\mathbf E$ are   isomorphic via an isomorphism which is the identity over $\mathbf E$. This allows to talk about "the" injective envelope of $\mathbf E$.  A particular injective envelope of $\mathbf E$, as $\mathcal N(\mathbf E)$,   will be called a \emph{representation} of the injective envelope.

 We describe the  injective envelopes of two-element metric spaces (see \cite{KP2} for proofs).
 Let $\mathcal H$ be a Heyting algebra and $v\in \mathcal H$. Let $\mathbf E:=(\{x,y\}, d)$ be a two-element metric space over $\mathcal H$ such that $d(x,y)=v$. We denote by $\tilde {\mathcal{N}}_v$  the  injective envelope of $\mathbf E$. We give two representations of it.  Let   $\mathcal{C}_{v}$ be the set of all pairs
 $(u_{1},u_{2})\in \mathcal H^{2}$ such that $v\leq u_{1}\oplus  \overline {u_{2}} $. Equip this
 set with the ordering induced by the product ordering on $\mathcal H^{2}$ and denote by $\mathcal{N}_{v}$ the set of its minimal elements. Each element of $\mathcal{N}_{v}$ defines a  minimal metric form. We equip $\mathcal H^{2}$
 with the supremum distance:
 $$d_{\mathcal H^{2}}\left( (u_{1},u_{2}), (u'_{1},u'_{2})\right)
 :=d_{\mathcal H}(u_{1}, u'_{1})\vee d_{\mathcal H}(u_{2}, u'_{2}).$$

 Let $v\in \mathcal H$ and $\mathcal{S}_{v}:=  \left\{ \lceil v - \beta \rceil
 :\beta \in \mathcal H\right\}$  be the subset of $\mathcal H$;  equipped with the ordering induced by
 the ordering over $\mathcal H$ this is  a complete lattice. According to lemma 2.5 of \cite{KP2},  $(x_1, x_2)\in \mathcal{N}_{v}$ iff  $x_1= \lceil v - x_2 \rceil$ and $\overline {x_2}=\lceil - x_1 \oplus v \rceil$. This yields a correspondence between   $ \mathcal {N}_{v}$ and  $\mathcal{S}_{v}$.
 \begin{lemma} (Lemma 2.3,  Proposition 2.7 of \cite {KP2})
 	The space $\mathcal{N}_{v}$ equipped with the supremum
 	distance and the set $\mathcal{S}_{v}$  equipped with
 	the distance induced by the distance over $\mathcal H$ are injective envelopes of   the two-element metric spaces $\{({0}, v), (v, {0})\}$ and  $\left\{{0},v\right\}$ respectively. These spaces are isometric to the injective envelope of $\mathbf E:= (\{x,y\}, d)$ where $d(x,y)=v$.
 \end{lemma}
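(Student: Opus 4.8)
The plan is to recast the statement in the language of metric forms and then read everything off from Theorem~\ref{thm: env injective}. Write $E=\{x,y\}$, so that $d(x,y)=v$ and $d(y,x)=\overline v$, and identify a map $f\colon E\to\mathcal H$ with the pair $(f(x),f(y))\in\mathcal H^2$. Under this identification the weak metric forms on $\mathbf E$ are exactly the pairs $(u_1,u_2)$ with $v\le u_1\oplus\overline{u_2}$, i.e.\ the set $\mathcal C_v$: the inequality $d(x,y)\le f(x)\oplus\overline{f(y)}$ of \eqref{eq:weakmetricform} reads $v\le u_1\oplus\overline{u_2}$, its companion for the pair $(y,x)$ is the involute of this one and hence automatic, and the two diagonal inequalities hold because $0$ is least. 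By Lemma~\ref{lemmakey} every weak metric form dominates a metric form, so the minimal elements of $\mathcal C_v$ are exactly the minimal metric forms; that is, $\mathcal N_v$ coincides as a set with the space $\mathcal N(\mathbf E)$ of minimal metric forms. Since the distance on $\mathcal N(\mathbf E)$ is the one inherited from the sup-distance of $\mathbf H^{E}=\mathbf H^{2}$, which is exactly $d_{\mathcal H^2}$, the space $(\mathcal N_v,d_{\mathcal H^2})$ is $\mathcal N(\mathbf E)$, and Theorem~\ref{thm: env injective} identifies it as an injective envelope of $\mathbf E$.

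Next I would exhibit the embedded copy of $\mathbf E$ and pass to the one-coordinate representation. From $\bar\delta(u)(z)=d(z,u)$ one gets $\bar\delta(x)=(d(x,x),d(y,x))=(0,\overline v)$ and $\bar\delta(y)=(d(x,y),d(y,y))=(v,0)$; a short computation gives $d_{\mathcal H}(0,v)=v$ and $d_{\mathcal H}(\overline v,0)=v$, so both pairs lie in $\mathcal N_v$ and, by Corollary~\ref{cor:delta}, $\bar\delta$ is an isometry of $\mathbf E$ onto the two-element subspace $\{(0,\overline v),(v,0)\}$ of $\mathcal N_v$ occurring in the statement (it becomes $\{(0,v),(v,0)\}$ when the involution is the identity). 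For $\mathcal S_v$ I would use the correspondence of Lemma~2.5 of \cite{KP2}, that $(x_1,x_2)\in\mathcal N_v$ iff $x_1=\lceil v-x_2\rceil$ and $\overline{x_2}=\lceil -x_1\oplus v\rceil$. Each coordinate then determines the other, so the first projection $\pi\colon(x_1,x_2)\mapsto x_1$ is a bijection of $\mathcal N_v$ onto $\mathcal S_v=\{\lceil v-\beta\rceil:\beta\in\mathcal H\}$, sending $(0,\overline v)\mapsto 0$ and $(v,0)\mapsto v$.

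The step I expect to be the crux is that $\pi$ is an isometry. Concretely, I must show $d_{\mathcal H}(x_2,x_2')\le d_{\mathcal H}(x_1,x_1')$ for all $(x_1,x_2),(x_1',x_2')\in\mathcal N_v$, so that the sup-distance $d_{\mathcal H}(x_1,x_1')\vee d_{\mathcal H}(x_2,x_2')$ collapses to its first term. This is the nonexpansiveness, for $d_{\mathcal H}$, of the residuation map $x_1\mapsto\overline{x_2}=\lceil -x_1\oplus v\rceil$, which I would derive from the adjunction $v\le x_1\oplus r\iff\lceil -x_1\oplus v\rceil\le r$ together with the distributivity axiom \eqref{heyting}; the symmetric relation $x_1=\lceil v-x_2\rceil$ yields the reverse inequality, hence equality. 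Granting this, $\pi$ is a surjective isometry carrying $\{(0,\overline v),(v,0)\}$ to $\{0,v\}$, so $\mathcal S_v$ with the induced distance is an injective envelope of $\{0,v\}$, which is isometric to $\mathbf E$ because $d_{\mathcal H}(0,v)=v$ and $d_{\mathcal H}(v,0)=\overline v$. Finally, since an injective envelope is unique up to an isometry fixing the base space (the remark following Theorem~\ref{thm: env injective}), the spaces $\mathcal N(\mathbf E)$, $\mathcal N_v$ and $\mathcal S_v$ are pairwise isometric, giving the last assertion.
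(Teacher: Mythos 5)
Your proposal is correct, but there is nothing in the survey to compare it against line by line: the paper states this lemma without proof, importing it from \cite{KP2} (Lemma 2.3 and Proposition 2.7) and merely pointing to \cite{KP2} and \cite{kabil-pouzet} for a presentation ``in terms of Galois correspondence''. What you have done is reconstruct that proof from the survey's own machinery, and the reconstruction is the natural one: identifying maps $\{x,y\}\to\mathcal H$ with pairs so that $\mathcal C_v$ becomes the set of weak metric forms, using Lemma~\ref{lemmakey} (as the paper itself remarks before Theorem~\ref{thm: env injective}) to see that minimal weak metric forms and minimal metric forms coincide, hence $\mathcal N_v=\mathcal N(\mathbf E)$ with the sup-distance, and then quoting Theorem~\ref{thm: env injective}. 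Your crux step is also the right one, and it is exactly the Galois-correspondence argument the citation alludes to; for the record it is a two-line computation: if $(x_1,x_2),(x_1',x_2')\in\mathcal N_v$ and $\epsilon:=d_{\mathcal H}(x_1,x_1')$, then $v\le x_1'\oplus\overline{x_2'}\le x_1\oplus\epsilon\oplus\overline{x_2'}$ forces $\overline{x_2}\le\epsilon\oplus\overline{x_2'}$ by minimality of $\overline{x_2}$, i.e.\ $x_2\le x_2'\oplus\overline{\epsilon}$, and symmetrically $x_2'\le x_2\oplus\epsilon$, so $d_{\mathcal H}(x_2,x_2')\le d_{\mathcal H}(x_1,x_1')$; the other minimality condition gives the reverse inequality. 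Note that only the adjunction and the compatibility of $\oplus$ with the order are used here, not the distributivity axiom (which merely guarantees that the residuals exist), and one must mind the sidedness of $\oplus$ when passing through the involution, which your phrasing ``nonexpansiveness of the residuation map'' glosses over but does not get wrong.

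Three bookkeeping points, none fatal. First, the clash between the base space $\{(0,v),(v,0)\}$ in the statement and the copy $\{(0,\overline v),(v,0)\}$ you compute is the paper's own: with the survey's definition of $\mathcal C_v$ (pairs with $v\le u_1\oplus\overline{u_2}$) the pair $(0,v)$ need not even belong to $\mathcal C_v$, and the statement is carrying over the convention of \cite{KP2}, where the second coordinate absorbs the involution; you were right to flag this. Second, membership of $(0,\overline v)$ and $(v,0)$ in $\mathcal N_v$ is a minimality assertion, not a consequence of the distance computations you give; it does hold, by the one-line check that any element of $\mathcal C_v$ below either pair equals it. Third, surjectivity of the projection onto $\mathcal S_v$ (not just injectivity) needs the Galois-closure identity behind ``each coordinate determines the other''; this is implicit in Lemma 2.5 of \cite{KP2} as quoted, but a careful write-up should make it explicit.
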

 The reader will find more details in \cite{KP2} and in \cite{kabil-pouzet}, with a presentation in terms of Galois correspondence. An illustration is given in Section \ref{section-illustration}.

 \subsection{Hole-preserving maps}
 In this subsection, we introduce the notions of hole-preserving maps. A large part is borrowed from subsection II-4 of \cite{JaMiPo}.

Let $\mathbf E$ and $\mathbf F$ be two  metric spaces over a Heyting algebra $\mathcal{H}$. If $f$ is a nonexpansive map from $\mathbf F$ into  $\mathbf E$, and $h$ is a map from   $F$ into $\mathcal{H}$, the \emph{image} of $h$ is the map $h_f$ from $E$ into $\mathcal{H}$ defined by $h_f(x): \bigwedge \{h(y): f(y)=x\}$ (in particular $h_f(x)= 1$ where $1$ is the largest element of $\mathcal{H}$ for every $x$ not in the range of $f$).  A \emph{hole} of $\mathbf F$ is any map $h:F \rightarrow \mathcal{H}$ such that  the intersection of balls $B(x, h(y))$ of $F$ ($x\in F$) is empty. If $h$ is a hole of $\mathbf F$, the map $f$ \emph{preserves} $h$ provided that $h_f$ is a hole of $\mathbf E$. The map $f$ is \emph{hole-preserving} if the image of every hole is a hole.

As it is easy to see, coretractions preserve holes and  hole-preserving maps are isometries. One may then use hole-preserving maps as approximations of coretractions

We recall the following result of \cite{JaMiPo}.

\begin{theorem}\label{thm:hole-preserving} On an  involutive Heyting algebra $\mathcal{H}$,  the absolute retracts and the injectives w.r.t. hole-preserving maps coincide. The class of these objects is closed under products and retractions. Moreover, every metric space embeds into some member of this class by some hole-preserving map. \end{theorem}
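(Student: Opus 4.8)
The plan is to exploit the general categorical mechanism sketched in the text: the \emph{hole-preserving maps} form a class of morphisms that contains every coretraction (coretractions preserve holes) and is contained in the isometries (hole-preserving maps are isometries), so they are a legitimate approximation of the coretractions, and the theorem asserts that this approximation has ``enough injectives''. Write $AR^{\ast}$ and $Inj^{\ast}$ for the absolute retracts, respectively the injectives, with respect to hole-preserving maps, where $\mathbf E\in Inj^{\ast}$ means that for every hole-preserving $g\colon\mathbf F\to\mathbf E'$ and every nonexpansive $f\colon\mathbf F\to\mathbf E$ there is a nonexpansive $h\colon\mathbf E'\to\mathbf E$ with $h\circ g=f$. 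First I would record the two implications that are formal in any such situation. The inclusion $Inj^{\ast}\subseteq AR^{\ast}$ is immediate: if $\mathbf E\in Inj^{\ast}$ and $g\colon\mathbf E\to\mathbf E'$ is hole-preserving, applying injectivity to $f:=\id_{\mathbf E}$ yields $h$ with $h\circ g=\id_{\mathbf E}$, so $g$ is a coretraction. Closure of $Inj^{\ast}$ under retracts and products is equally routine: for a retract one post-composes an extension with the retraction, and for a product $\prod_{i}\mathbf E_i$ one extends each coordinate map $\pi_i\circ f$ separately (using $\mathbf E_i\in Inj^{\ast}$) and assembles the $h_i$, the result being nonexpansive for the sup-distance.

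The heart of the matter is the embedding statement, from which the converse $AR^{\ast}\subseteq Inj^{\ast}$ follows formally. Suppose it is shown that every space $\mathbf E$ admits a hole-preserving isometric embedding $j\colon\mathbf E\to\mathbf I$ into some $\mathbf I\in Inj^{\ast}$. If $\mathbf E\in AR^{\ast}$, then $j$, being a hole-preserving map out of $\mathbf E$, is a coretraction, so there is a retraction $r\colon\mathbf I\to\mathbf E$ with $r\circ j=\id_{\mathbf E}$. Given any hole-preserving $g\colon\mathbf F\to\mathbf E'$ and nonexpansive $f\colon\mathbf F\to\mathbf E$, injectivity of $\mathbf I$ extends $j\circ f$ along $g$ to some $h'\colon\mathbf E'\to\mathbf I$, and then $h:=r\circ h'$ satisfies $h\circ g=r\circ h'\circ g=r\circ j\circ f=f$, so $\mathbf E\in Inj^{\ast}$. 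Thus $AR^{\ast}=Inj^{\ast}$, and by the previous paragraph this common class is a variety. The entire theorem is therefore reduced to producing, for each $\mathbf E$, a hole-preserving embedding into an injective.

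Here lies the only genuine difficulty, and the point where the hole-preserving theory diverges sharply from the isometric one. The obvious candidate, the injective envelope $\mathcal N(\mathbf E)$ of Theorem \ref{thm: env injective}, does \emph{not} serve: being hyperconvex it has no non-trivial holes, and if $h$ is a hole of $\mathbf E$ that is a weak metric form, then by Lemma \ref{lemmakey} and Zorn's lemma $h$ dominates a minimal metric form $\psi$, and since $d(\bar\delta(x),\psi)=\psi(x)\le h(x)$ for all $x$ by Lemma \ref{lem:metric form}, the point $\psi\in\mathcal N(\mathbf E)$ lies in $\bigcap_{x\in E}B(\bar\delta(x),h(x))$, so the image of $h$ is no longer a hole. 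The same computation shows that a space carrying a non-trivial hole cannot embed hole-preservingly into any hyperconvex space, so the target must itself reproduce the hole structure of $\mathbf E$ while being injective. Accordingly I would build a \emph{hole-preserving injective hull} by a transfinite recursion along the following lines: set $\mathbf E_0:=\mathbf E$, at each successor stage adjoin, for every weak metric form on the current space whose associated family of balls is not forced to be empty by a hole already inherited from $\mathbf E$, a new point realizing that form, and at limit stages take unions. Using the completeness and distributivity of $\mathcal H$ together with Lemma \ref{lemmakey} (every weak metric form dominates a canonical metric form with the same ball intersection), one argues that the process stabilizes at a space $\mathbf I$, that the inclusion $\mathbf E\hookrightarrow\mathbf I$ never fills an inherited hole and is hence hole-preserving, and that $\mathbf I\in Inj^{\ast}$. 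The last verification is the delicate one and the main obstacle: when extending a nonexpansive map along a hole-preserving $g$, it is precisely the hypothesis that $g$ preserves holes which guarantees that the ball families one must meet in $\mathbf I$ avoid the unfilled, $\mathbf E$-inherited holes and were therefore filled during the construction. Controlling this interaction — filling enough to obtain injectivity while never filling an inherited hole, and bounding the recursion so that it closes off — is where the work lies; the detailed bookkeeping is carried out in \cite{JaMiPo}.
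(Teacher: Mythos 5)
Your opening two paragraphs are sound: the inclusion $Inj^{\ast}\subseteq AR^{\ast}$, the closure of $Inj^{\ast}$ under retracts and sup-products, and the formal reduction of the whole theorem to the existence of a hole-preserving embedding of every space into an injective are all correct, and your diagnosis that the hyperconvex envelope $\mathcal N(\mathbf E)$ of Theorem \ref{thm: env injective} cannot serve (it fills every hole that is a weak metric form) is exactly right. The gap is in the third paragraph, which is where the actual content of the theorem lives. Your verification of injectivity of the saturated space $\mathbf I$ proceeds by one-point extensions, justified by the claim that hole-preservation of $g\colon\mathbf F\to\mathbf E'$ guarantees the needed ball families in $\mathbf I$ were filled during the construction. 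But hole-preservation of $g$ only controls families of balls centered in the image of $F$: for the first new point $u\in E'\setminus g(F)$ it gives a point $w\in\bigcap_{y\in F}B_{\mathbf F}\bigl(y,d'(g(y),u)\bigr)$, and then $f(w)$ meets the required balls in \emph{any} target whatsoever, so for this case no saturation of $\mathbf I$ is needed at all. The cases where saturation is genuinely needed are the later stages of the Zorn argument, where the balls are centered at images of points of $E'\setminus g(F)$ under the partial extension already built; about those families the hypothesis on $g$ says nothing, and whether they avoid the inherited holes depends on the whole history of the partial map, not on $g$ and $f$. So your justification addresses precisely the case that is free, and is silent on the case that matters. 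In addition, the recursion needs a stabilization argument (each stage creates new weak metric forms) that is never supplied, and deferring "the detailed bookkeeping" to \cite{JaMiPo} does not close a blind proof — especially since that reference does not perform a transfinite recursion at all.

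What \cite{JaMiPo} (and this paper) actually does is a one-step construction that circumvents the pointwise-coherence problem entirely. One forms the \emph{replete space} $\mathscr H(\mathbf E)$: the set of metric forms $h$ on $\mathbf E$ whose ball family $\bigcap_{x\in E}B(x,h(x))$ is nonempty, with the sup-distance. Then: (a) $\overline\delta$ is a hole-preserving embedding of $\mathbf E$ into $\mathscr H(\mathbf E)$ — a point $k\in\mathscr H(\mathbf E)$ lying in all balls $B(\overline\delta(x),h(x))$ of an inherited hole $h$ would satisfy $k\leq h$, forcing $\emptyset\neq\bigcap_{x}B(x,k(x))\subseteq\bigcap_{x}B(x,h(x))$, a contradiction; (b) $\mathscr H(\mathbf E)$ is an absolute retract w.r.t. hole-preserving maps; and (c) — the substantial step, replacing your point-by-point extension — every nonexpansive $f\colon\mathbf F\to\mathbf E$ extends functorially to $\mathscr H_f\colon\mathscr H(\mathbf F)\to\mathscr H(\mathbf E)$ (Lemma \ref{lem:extension}), built from the ball-intersection-preserving retraction $k\mapsto k_M$ of Lemma \ref{lemmakey}. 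Composing the nonexpansive map $u\mapsto\hat u$, where $\hat u(y):=d'(g(y),u)$ lands in $\mathscr H(\mathbf F)$ precisely because $g$ is hole-preserving, with $\mathscr H_f$ produces a \emph{globally} nonexpansive extension in one stroke: this is the transferability property, and the categorical fact that transferability forces absolute retracts and injectives to coincide \cite{kiss-marki-prohle-tholen} then yields the theorem. The form-level, functorial construction is exactly what handles the coherence between different adjoined points that a one-point-at-a-time argument cannot.
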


The  proof relies on  the introduction of the  replete space
 $\mathscr H(\mathbf E)$ of a generalized metric space $\mathbf E$. The space  $\mathbf E$ is an absolute retract (w.r.t. the hole-preserving maps) or not depending whether  $\mathbf E$ is a retract of $\mathscr H(\mathbf E)$ or not. Furthermore, with the existence  of the replete space one may prove  the \emph{transferability}  of hole-preserving maps (Lemma II-4.6 of \cite{JaMiPo}), that is the fact that  for every  nonexpansive map $f: \mathbf F \rightarrow \mathbf E$, and every hole-preserving map $g: \mathbf F\rightarrow \mathbf G$  there is a hole-preserving map  $g': \mathbf G \rightarrow \mathbf E'$ and  a nonexpansive map $f': \mathbf G\rightarrow  \mathbf E'$ such that  $g'\circ f= f'\circ g$.  Indeed, one may choose $\mathbf E'= \mathscr H(\mathbf E)$. As it is well known among categorists, the transferability property implies that absolute retracts and injective objects coincide \cite{kiss-marki-prohle-tholen}.

In the sequel we define the replete space and give the proof of the transferability property. 

Proofs are borrowed from \cite{JaMiPo}.

\begin{figure}[H] \centering
	\begin{tikzcd} [row sep=5em, column sep=6.5em]
	F \arrow[Rightarrow]{r}{g} \arrow[right ,swap]{d}{f}
	& G \arrow[d, dashed, "f'"] & F \arrow[Rightarrow]{r}{g} \arrow[right ,swap]{d}{f}
	& G \arrow[d, dashed, "\bar{f}"] \\
	E \arrow[r, Rightarrow, dashed, swap , "g'" ]
	& E'
	& E \arrow[r, Rightarrow, dashed, swap , "\bar{\delta}" ]
	& \mathscr H(E)
	\end{tikzcd}
	\caption{Transferability.}  \label{iter-F3}
	
	\end{figure}
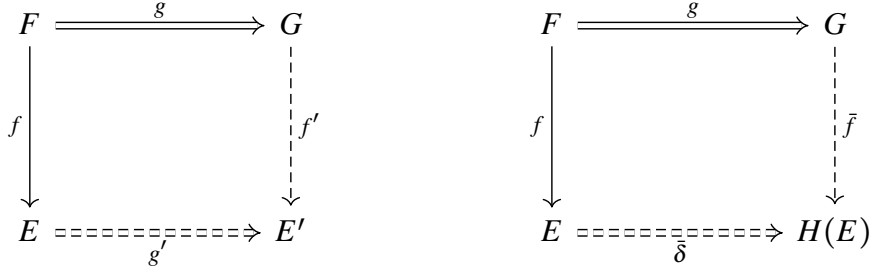

Let  $\mathscr H(\mathbf E)$ be the subset of $\mathcal L(E)$ consisting of metric forms $h$ such that the intersection of balls $B(x, h(x))$ for $x\in E$ is nonempty. If  $\mathcal{H}$ is a Heyting algebra, we may equip $\mathscr H(\mathbf E)$ with the distance induced by the sup-distance on $\mathbf H^E$. We call it the \emph{replete space}.

We recall the following two results of \cite{JaMiPo}.

\begin{lemma}(see Lemma II-4.3 p. 195) If $\mathbf E:= (E, d)$ is a metric space over a Heyting algebra $\mathcal{H}$  then  $\overline \delta: \mathbf E\rightarrow \mathscr H(\mathbf E)$  defined by $\overline \delta (x)(y):= d(y,x)$ is a hole-preserving map  from $\mathbf E$ into $\mathscr H(\mathbf E)$. Furthermore $\mathscr H(\mathbf E)$ is an absolute retract w.r.t. the hole-preserving maps (i.e.,  this is a retract of every extension by a hole-preserving map).
\end{lemma}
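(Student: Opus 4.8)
The plan is to treat the two assertions separately: the fact that $\overline{\delta}$ is hole-preserving is essentially a book-keeping argument built on the identity $d(\overline{\delta}(x),k)=k(x)$ of Lemma \ref{lem:metric form}(iii), while the absolute-retract claim requires one genuine use of the hole-preserving hypothesis.

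For the first assertion I would first check that $\overline{\delta}$ really lands in $\mathscr H(\mathbf E)$: each $\overline{\delta}(x)$ is a metric form by Corollary \ref{cor:delta}, and since $d(z,x)=\overline{\delta}(x)(z)$ the point $x$ lies in $\bigcap_{z\in E}B_{\mathbf E}(z,\overline{\delta}(x)(z))$, so that intersection is nonempty. Being an isometry of the separated space $\mathbf E$ (Corollary \ref{cor:delta}), $\overline{\delta}$ is injective, whence for a hole $h$ of $\mathbf E$ its image satisfies $h_{\overline{\delta}}(\overline{\delta}(x))=h(x)$ and $h_{\overline{\delta}}(g)=1$ off the range of $\overline{\delta}$. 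To show $h_{\overline{\delta}}$ is a hole of $\mathscr H(\mathbf E)$, suppose some $k\in\mathscr H(\mathbf E)$ lies in $\bigcap_{g}B_{\mathscr H(\mathbf E)}(g,h_{\overline{\delta}}(g))$; restricting to $g=\overline{\delta}(x)$ and using $d(\overline{\delta}(x),k)=k(x)$ yields $k(x)\le h(x)$ for all $x$. Since $k\in\mathscr H(\mathbf E)$ there is $t\in E$ with $\overline{\delta}(t)\le k$, so $d(z,t)\le k(z)\le h(z)$ for all $z$, placing $t$ in $\bigcap_z B_{\mathbf E}(z,h(z))$ and contradicting that $h$ is a hole. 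Hence $h_{\overline{\delta}}$ is a hole and $\overline{\delta}$ is hole-preserving.

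For the second assertion, let $\mathbf G$ be any extension of $\mathscr H(\mathbf E)$ along a hole-preserving map which, being an isometry, I treat as an isometric inclusion $\iota:\mathscr H(\mathbf E)\hookrightarrow\mathbf G$. Composing $\overline{\delta}$ with $\iota$ embeds $\mathbf E$ isometrically in $\mathbf G$, so for each $u\in\mathbf G$ the map $\psi_u:E\to\mathcal H$ given by $\psi_u(x):=d_{\mathbf G}(\overline{\delta}(x),u)$ is a metric form by Lemma \ref{lem:metric form}(iv). I propose the retraction $r(u):=\psi_u$. Granting for the moment that $r$ takes values in $\mathscr H(\mathbf E)$, two routine checks finish: $r$ fixes $\mathscr H(\mathbf E)$ pointwise because $\psi_g(x)=d_{\mathbf G}(\overline{\delta}(x),g)=g(x)$ for $g\in\mathscr H(\mathbf E)$ (Lemma \ref{lem:metric form}(iii), using that $\iota$ is isometric), and $r$ is nonexpansive by the triangle inequality exactly as in the proof of Theorem \ref{thm: env injective}, giving $d_{\mathcal H}(\psi_u(x),\psi_v(x))\le d_{\mathbf G}(u,v)$ for each $x$ and hence $d(\psi_u,\psi_v)\le d_{\mathbf G}(u,v)$ after taking the supremum over $x$.

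The hard part, and the only place the hole-preserving hypothesis enters, is verifying $\psi_u\in\mathscr H(\mathbf E)$. I would argue by contradiction: if $\psi_u\notin\mathscr H(\mathbf E)$ then, $\psi_u$ being a metric form, no $t\in E$ satisfies $\overline{\delta}(t)\le\psi_u$. Define $h$ on $\mathscr H(\mathbf E)$ by $h(\overline{\delta}(x)):=\psi_u(x)$ and $h(g):=1$ elsewhere. Using $d(\overline{\delta}(x),k)=k(x)$ once more, $\bigcap_x B_{\mathscr H(\mathbf E)}(\overline{\delta}(x),\psi_u(x))$ equals $\{k\in\mathscr H(\mathbf E):k\le\psi_u\}$, and this set is empty, for any such $k$ would dominate some $\overline{\delta}(t)$ and force $\overline{\delta}(t)\le\psi_u$. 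Thus $h$ is a hole of $\mathscr H(\mathbf E)$. Since $\iota$ is hole-preserving, $h_\iota$ is a hole of $\mathbf G$; as the radii equal to $1$ give the whole space and impose no constraint, this says exactly $\bigcap_x B_{\mathbf G}(\overline{\delta}(x),\psi_u(x))=\emptyset$. But $d_{\mathbf G}(\overline{\delta}(x),u)=\psi_u(x)$ puts $u$ in that very intersection, a contradiction. Hence $\psi_u\in\mathscr H(\mathbf E)$, so $r$ is a well-defined nonexpansive retraction onto $\mathscr H(\mathbf E)$, proving that $\mathscr H(\mathbf E)$ is an absolute retract with respect to hole-preserving maps.
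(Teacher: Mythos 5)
Your proof is correct and takes essentially the same route as the paper: the retraction is the very map the paper proposes, $u\mapsto \tilde u$ with $\tilde u(x):=d'(\overline\delta(x),u)$, and the paper declares the rest ``almost immediate.'' Your contribution is to supply the details the paper omits --- the verification of the first assertion, and above all the contradiction argument (via the auxiliary hole $h$ with $h(\overline\delta(x)):=\psi_u(x)$ and $h(g):=1$ elsewhere) showing that $\psi_u=\tilde u$ indeed lies in $\mathscr H(\mathbf E)$, which is precisely the step where the hole-preserving hypothesis is needed.
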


\begin{proof} The proof of this lemma is almost immediate. We just indicate that  $\mathscr H(\mathbf E)$ is an absolute retract.  Let $\mathbf E':=(E', d')$ be a hole-preserving extension of $\mathscr H(\mathbf E)$. For $u\in \mathbf E'$, set $\tilde u: E\rightarrow \mathcal H$ defined by setting $\tilde u(x):= d' (\overline \delta(x),u)$ for all $x\in E$. By construction, $\tilde u$ is a metric form; moreover,  it belongs to $\mathscr H(\mathbf E)$. To conclude, observe that the map $u \mapsto \tilde u$ is a retraction. \end{proof}

\begin{lemma}(see Lemma II-4.5 p. 196)\label{lem:extension} If $\mathbf E:= (E, d)$ and $\mathbf F$ are  two  metric spaces over a Heyting algebra $\mathcal{H}$  then  every nonexpansive map $f: \mathbf F\rightarrow \mathbf E$ extends to a nonexpansive map $\mathscr H_f:\mathscr H(\mathbf F)\rightarrow \mathscr H(\mathbf E)$. \end{lemma}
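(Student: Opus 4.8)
The plan is to build $\mathscr H_f$ in two steps: push a metric form forward along $f$, then correct it back into a metric form. Concretely, for $h\in\mathscr H(\mathbf F)$ I would first form its \emph{image} $h_f:E\to\mathcal H$, $h_f(x):=\bigwedge\{h(z):f(z)=x\}$ (with $h_f(x)=1$ off the range of $f$, as in the definition of image above), and then set $\mathscr H_f(h):=(h_f)_M$, the largest metric form below $h_f$ furnished by Lemma \ref{lemmakey}. The correction by $(\cdot)_M$ is necessary because the raw image $h_f$ is in general only a \emph{weak} metric form: nonexpansiveness lets distances shrink under $f$, which destroys the lower triangular inequality (\ref{eq:metricform}) but not the upper one (\ref{eq:weakmetricform}).

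First I would check that $h_f$ is a weak metric form. For $x,x'$ in the range of $f$, since the order-preserving involution is a lattice automorphism it commutes with meets, so the distributivity condition (\ref{heyting}) gives $h_f(x)\oplus\overline{h_f(x')}=\bigwedge\{h(z)\oplus\overline{h(z')}:f(z)=x,\ f(z')=x'\}$; as $h$ is a weak metric form and $f$ is nonexpansive, each term dominates $d_E(x,x')=d_E(f(z),f(z'))\le d_F(z,z')\le h(z)\oplus\overline{h(z')}$, and the cases where $x$ or $x'$ lies off the range are settled by $h_f=1$. Hence $h_f\in\mathcal C(\mathbf E)$, so Lemma \ref{lemmakey} applies and yields a metric form $(h_f)_M$ with $\bigcap_{x}B(x,(h_f)_M(x))=\bigcap_{x}B(x,h_f(x))$. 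To see this intersection is nonempty I would pick a witness $w\in\bigcap_{z}B_F(z,h(z))$, which exists because $h\in\mathscr H(\mathbf F)$; then for any $x=f(z)$ one has $d_E(x,f(w))=d_E(f(z),f(w))\le d_F(z,w)\le h(z)$, whence $f(w)\in\bigcap_{x}B(x,h_f(x))$. Thus $\mathscr H_f(h)\in\mathscr H(\mathbf E)$.

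Next I would verify that $\mathscr H_f$ extends $f$, that is $\mathscr H_f(\overline\delta_F(y))=\overline\delta_E(f(y))$ for $y\in F$, where $\overline\delta_F(y)(z)=d_F(z,y)$. Writing $h:=\overline\delta_F(y)$, for any $z$ with $f(z)=x$ nonexpansiveness gives $d_E(x,f(y))\le d_F(z,y)=h(z)$, so $\overline\delta_E(f(y))\le h_f$; since $\overline\delta_E(f(y))$ is itself a metric form (Corollary \ref{cor:delta}), it lies below $(h_f)_M$. Conversely $h_f(f(y))=0$, because the index $z=y$ contributes $d_F(y,y)=0$, and therefore $(h_f)_M(x)\le d_E(x,f(y))\oplus h_f(f(y))=d_E(x,f(y))$. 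The two inequalities give equality.

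Finally, for nonexpansiveness I would factor $\mathscr H_f$ as the image map $h\mapsto h_f$ followed by the retraction $g\mapsto g_M$, which is nonexpansive by the last assertion of Lemma \ref{lemmakey}; it then suffices to show $h\mapsto h_f$ is nonexpansive. Setting $r:=d(h,h')=\bigvee_{z}d_{\mathcal H}(h(z),h'(z))$, the definition of $d_{\mathcal H}$ via $D(p,q)$ gives $h(z)\le h'(z)\oplus\overline r$ and $h'(z)\le h(z)\oplus r$ for all $z$; taking meets over $\{z:f(z)=x\}$ and pulling the constants $\overline r$, resp. $r$, out of the meet by (\ref{heyting}) yields $h_f(x)\le h'_f(x)\oplus\overline r$ and $h'_f(x)\le h_f(x)\oplus r$, i.e. $d_{\mathcal H}(h_f(x),h'_f(x))\le r$ for every $x$, so $d(h_f,h'_f)\le r$. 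I expect the only delicate point to be the bookkeeping of the image construction under the meets defining $h_f$: checking that distributivity (\ref{heyting}) and the commutation of the involution with meets survive intact, and that the off-range values $h_f=1$ never spoil any inequality; everything else is the formal combination of Lemma \ref{lemmakey} and Corollary \ref{cor:delta}.
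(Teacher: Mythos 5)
Your proposal is correct and follows essentially the same route as the paper's proof: define the image map $h\mapsto h_f$, check it is nonexpansive and carries weak metric forms of $\mathbf F$ to weak metric forms of $\mathbf E$, compose with the retraction $k\mapsto k_M$ of $\mathcal C(\mathbf E)$ onto $\mathcal L(\mathbf E)$ from Lemma \ref{lemmakey}, and then verify the extension property and the preservation of nonempty ball intersections. Your argument merely fills in the verifications the paper leaves implicit (and your explicit witness $f(w)$ for nonemptiness is a cleaner rendering of the paper's ball-intersection equality), so there is nothing to correct.
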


\begin{proof} The proof,  substantial,  relies on Lemma \ref{lemmakey}. 
We define $\tilde f:  \mathbf  H^{F}\rightarrow \mathbf  H^{E}$ by setting $\tilde f (h):= h_f$ where   $h_f$ is the map from $E$ into $\mathcal{H}$  defined by $h_f(x):= \bigwedge \{h(y): f(y)=x\}$). One check  first that this map is nonexpansive and next  that if $h\in \mathcal C(\mathbf F)$ then $\tilde f(h)\in \mathcal C( \mathbf E)$. For $k\in \mathcal C(E)$, let $k_M$ be the largest metric form below $k$  given by Lemma \ref{lemmakey}.  Let $r$ be the retraction from $\mathcal C(\mathbf E)$ onto $\mathcal L(\mathbf E)$ defined by setting $r(k):= k_M$ for all $k\in \mathcal C(E)$. The composition $r\circ \tilde f:\mathcal C( \mathbf F)\rightarrow \mathcal L(\mathbf E)$ is nonexpansive as a composition of nonexpansive maps. It extends $f$ once $F$ and $E$ are identified to their images $\overline \delta (F))$ and $\overline \delta(E)$, that is $(r\circ \tilde f)(\overline \delta (y))= \overline \delta ( f(y))$ for all $y\in F$. Indeed, observe first that $\tilde f(\overline \delta (y))(f(y))=0$. Next, since by definition of $r$,   $r (\tilde f(\overline \delta(y)))\leq \tilde f(\overline \delta(y))$, one has  $(r\circ \tilde f)(\overline \delta (y)(f(y))=0$. Since $r \circ  \tilde f(\overline \delta(y))$ is a metric form, this imposes that $r \circ  \tilde f(\overline \delta(y))= \overline \delta (f(y))$ (indeed, $d(\overline\delta (y),  r\circ \tilde f (\overline \delta (y)))=0$). Finally,  by Lemma \ref{lemmakey}, we have $\bigcap \{B(y, v(y)): y\in F\}=\bigcap \{B(x, \tilde f (v)(x): x\in E\}= \bigcap \{B(x, (r\circ \tilde f )(v)(x): x\in E\}$. Consequently,  $r\circ {\tilde f} (v) \in \mathscr {H}(\mathbf E)$ for every $v\in \mathscr {H}(\mathbf F)$. 
The restriction $\mathscr H_f$ of $r\circ \tilde f$ to $\mathscr H(\mathbf F)$ has the required property. \end{proof}

\begin{lemma}(see Lemma II-4.6 p. 197) The hole-preserving maps are transferable. \end{lemma}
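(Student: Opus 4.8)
The plan is to establish the transferability square with the canonical choice $\mathbf E':=\mathscr H(\mathbf E)$ and $g':=\overline{\delta}\colon \mathbf E\to\mathscr H(\mathbf E)$, which is already hole-preserving by Lemma II-4.3 of \cite{JaMiPo}. Everything then reduces to producing a \emph{nonexpansive} map $\overline f\colon\mathbf G\to\mathscr H(\mathbf E)$ making the right-hand square of Figure \ref{iter-F3} commute, i.e.\ with $\overline f\circ g=\overline{\delta}\circ f$; the pair $(g',f'):=(\overline{\delta},\overline f)$ then witnesses transferability. I would build $\overline f$ as a composite $\overline f:=\mathscr H_f\circ\psi$, separating the contribution of the nonexpansive map $f$ from that of the hole-preserving map $g$.

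First I would invoke Lemma \ref{lem:extension} on the nonexpansive map $f$, obtaining a nonexpansive $\mathscr H_f\colon\mathscr H(\mathbf F)\to\mathscr H(\mathbf E)$ which, once $\mathbf F$ and $\mathbf E$ are identified with their images under $\overline{\delta}_{\mathbf F}$ and $\overline{\delta}_{\mathbf E}$, satisfies $\mathscr H_f\circ\overline{\delta}_{\mathbf F}=\overline{\delta}_{\mathbf E}\circ f$. It then suffices to construct a nonexpansive $\psi\colon\mathbf G\to\mathscr H(\mathbf F)$ extending $\overline{\delta}_{\mathbf F}$ along $g$, i.e.\ with $\psi\circ g=\overline{\delta}_{\mathbf F}$, for then $\overline f:=\mathscr H_f\circ\psi$ is nonexpansive and $\overline f\circ g=\mathscr H_f\circ\overline{\delta}_{\mathbf F}=\overline{\delta}_{\mathbf E}\circ f$, exactly as wanted.

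To obtain $\psi$ I would apply Lemma \ref{lem:extension} a second time, now to $g$ (which is nonexpansive, indeed an isometry, since hole-preserving maps are isometries), producing a nonexpansive $\mathscr H_g\colon\mathscr H(\mathbf F)\to\mathscr H(\mathbf G)$ with $\mathscr H_g\circ\overline{\delta}_{\mathbf F}=\overline{\delta}_{\mathbf G}\circ g$. The decisive point is that $\mathscr H_g$ is itself hole-preserving, so that $\mathscr H(\mathbf G)$ is an extension of $\mathscr H(\mathbf F)$ by a hole-preserving map. Since $\mathscr H(\mathbf F)$ is an absolute retract with respect to hole-preserving maps (Lemma II-4.3 of \cite{JaMiPo}), there is a retraction $\rho\colon\mathscr H(\mathbf G)\to\mathscr H(\mathbf F)$ with $\rho\circ\mathscr H_g=\id$. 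Setting $\psi:=\rho\circ\overline{\delta}_{\mathbf G}$ (recall $\overline{\delta}_{\mathbf G}$ is nonexpansive) gives $\psi\circ g=\rho\circ\overline{\delta}_{\mathbf G}\circ g=\rho\circ\mathscr H_g\circ\overline{\delta}_{\mathbf F}=\overline{\delta}_{\mathbf F}$, the filler required.

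The hard part will be verifying that the extension $\mathscr H_g$ of the hole-preserving map $g$ is again hole-preserving (equivalently, that it realizes $\mathscr H(\mathbf F)$ as a hole-preserving subspace of $\mathscr H(\mathbf G)$); this is precisely what unlocks the absolute-retract property of $\mathscr H(\mathbf F)$, and it is not formal, since knowing $\mathscr H_g\circ\overline{\delta}_{\mathbf F}$ is hole-preserving does not by itself force $\mathscr H_g$ to be. I would prove it by unwinding $\mathscr H_g=(r\circ\tilde g)\!\restriction\!\mathscr H(\mathbf F)$ from the proof of Lemma \ref{lem:extension}, and checking, via the defining formula $\tilde g(h)(v)=\bigwedge\{h(w):g(w)=v\}$ together with Lemma \ref{lemmakey} (which ensures that replacing a weak metric form by its largest metric form $f_M$ leaves the pertinent ball-intersections unchanged), that an empty intersection of balls in $\mathscr H(\mathbf F)$ is carried to an empty one in $\mathscr H(\mathbf G)$; the hypothesis that $g$ preserves holes is exactly what is consumed here. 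Once this is in hand the remaining steps are compositions of nonexpansive maps, and the commutativity identities chain together to yield $g'\circ f=\overline{\delta}_{\mathbf E}\circ f=\overline f\circ g=f'\circ g$, establishing transferability.
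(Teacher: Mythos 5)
Your global plan is sound in outline --- with $\mathbf E':=\mathscr H(\mathbf E)$, $g':=\overline\delta_{\mathbf E}$ and $f':=\mathscr H_f\circ\psi$, every identity you chain is valid --- but the proposal has a genuine gap exactly where you place ``the hard part'': the claim that $\mathscr H_g$ is hole-preserving carries the whole proof (without it there is no retraction $\rho$), it is never proved, and the sketch you give points at the wrong tool. Unwinding $\tilde g(h)(v)=\bigwedge\{h(w):g(w)=v\}$ together with Lemma \ref{lemmakey} is the computation already performed in the proof of Lemma \ref{lem:extension}: it shows that $\mathscr H_g$ maps $\mathscr H(\mathbf F)$ \emph{into} $\mathscr H(\mathbf G)$, a push-forward fact. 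Hole-preservation of $\mathscr H_g$ demands the opposite direction: from a point $k\in\mathscr H(\mathbf G)$ lying in every ball $B\bigl(\mathscr H_g(h),\eta(h)\bigr)$, $h\in\mathscr H(\mathbf F)$, one must manufacture a point of $\mathscr H(\mathbf F)$ lying in every ball $B\bigl(h,\eta(h)\bigr)$. The claim is in fact true, so your route can be completed, but the missing idea is a pull-back construction: (a) check that $\mathscr H_g(h)\circ g=h$ for every $h\in\mathscr H(\mathbf F)$ (this uses the defining formula of the retraction $r$, the identity $q\oplus 1=1$ obtained from the distributivity condition with an empty index set, and the fact that $h$ is a metric form), which in particular makes $\mathscr H_g$ an isometry; (b) given $k$ as above, set $h^{*}:=k\circ g$; it is a metric form, and it belongs to $\mathscr H(\mathbf F)$ precisely because $g$ preserves holes --- the $g$-image of $h^{*}$ coincides with $k$ on $g(F)$, so its balls contain $\bigcap_{v\in G}B(v,k(v))\neq\emptyset$; (c) $d(h,h^{*})\leq d(\mathscr H_g(h),k)\leq\eta(h)$, since by (a) the coordinate of the pair $(h,h^{*})$ at $y$ equals the coordinate of $(\mathscr H_g(h),k)$ at $g(y)$. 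Lemma \ref{lem:hole-localretract} (or Lemma \ref{lem:one-local-retract}) then gives that $\mathscr H_g$ is hole-preserving.

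It is worth seeing why the paper's proof avoids all of this. It never considers $\mathscr H(\mathbf G)$: it defines $\mathscr J_g:\mathbf G\rightarrow\mathscr H(\mathbf F)$ directly by $\mathscr J_g(u)(y):=d(g(y),u)$, checks that $\mathscr J_g(u)\in\mathscr H(\mathbf F)$ (the only point where hole-preservation of $g$ enters), that $\mathscr J_g$ is nonexpansive and that $\mathscr J_g\circ g=\overline\delta_{\mathbf F}$, and sets $\hat f:=\mathscr H_f\circ\mathscr J_g$. Observe that $\mathscr J_g(u)=\overline\delta_{\mathbf G}(u)\circ g$ is exactly the pull-back witness of step (b) applied to $k:=\overline\delta_{\mathbf G}(u)$, and that if you compute your $\psi=\rho\circ\overline\delta_{\mathbf G}$ with the canonical retraction $\rho$ of Lemma II-4.3 of \cite{JaMiPo}, you recover precisely $\mathscr J_g$. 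So completing your argument forces you to rediscover the paper's map anyway, with the extra overhead of proving that the replete-space construction carries hole-preserving maps to hole-preserving maps; the paper's explicit definition reaches the same commuting square with strictly less machinery.
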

\begin{proof}Let $f: \mathbf F\rightarrow \mathbf E$ be a nonexpansive map and $g: \mathbf F\rightarrow \mathbf G$ be a hole-preserving map. As above denote by $\overline \delta$ the map from $\mathbf E$ into $\mathscr H(\mathbf E)$ defined by $\overline \delta (x):= d (z, x)$ for $z\in E$. We define $\hat f: \mathbf G\rightarrow \mathscr H(\mathbf E)$ in such a way that $\hat f\circ g= \overline \delta\circ f$. 

For this purpose, define a nonexpansive map $\mathscr I_g:  \mathbf G\rightarrow \mathcal H$ as follows. For every $u\in G$, set   $\hat u:\mathbf F\rightarrow \mathcal H$  defined by $\hat u(y):=(d(g(y),u)$ and set $\mathscr J(u):= \hat u$. We check successively that the map $\hat u$ belongs to $\mathscr {H}(\mathbf F)$ (indeed, $u\in \bigcap_{y\in F} B(g(y), d(g(y),u))$; since $g$ is hole-preserving, $\bigcap_{y\in F} B(y, d(g(y),u))= \bigcap_{y\in F} B(y, \hat u(y))$ is non empty), hence $\mathscr I_g$ maps   $\mathbf G$ into  $\mathcal H$. Next, $\mathscr I_g$ is nonexpansive and finally $\mathscr I_g (g(y))= \overline \delta (y)$ for every $y\in F$ (since $g$ is hole-preserving, it is an isometry, thus $\mathscr I _g(g(y))(z)=d(g(z), g(x))=d(z,y)= \overline \delta(y)(z)$ for every $z\in F$).  Set $\hat f:= \mathscr H_f\circ \mathscr J_g$ where $\mathscr H_f$ is given by Lemma \ref{lem:extension}. Then $\hat f\circ g= \overline \delta \circ f$. This proves that $f$ is transferable. 
\end{proof}

 \subsubsection{Hole-preserving maps and one-local retracts}
 In his study of the fixed point property, Khamsi \cite{khamsi} introduced  a notion of one-local  retracts. This notion, defined for ordinary metric space,  extends to metric spaces over a Heyting algebra. In fact, it extends  to  metric spaces over an ordered monoid  equipped with an involution and more generally  to   binary structures which are reflexive and involutive in the sense of   \cite{khamsi-pouzet}.  It plays a crucial role in the fixed point theorem presented in the next section. In the sequel, otherwise stated, we do not suppose that $\mathcal H$ satisfies the distributivity condition. 
 
 Let $\mathbf  E:= (E,d)$ be a  metric space over  $\mathcal{H}$ and $A$ be a subset of $E$.  We say that $\mathbf E_{\restriction A}:= (A, d_{\restriction A})$  is a \emph{one-local retract} of $\mathbf E$  if it is a retract of $\mathbf E_{\restriction A\cup \{x\}} := (A\cup \{x\},  d_{\restriction A\cup \{x\}})$ (via the identity map) for every $x\in E$.

 \begin{lemma}\label{lem:one-local-retract} Let $\mathbf E:=   (E, d)$ be a metric space over $\mathcal{H}$ and $A$ be a subset of $E$. Then $\mathbf E_{\restriction A}$ is a one-local retract  of $\mathbf E$ iff for every family of balls $\Big(B(x_i, r_i )\Big)_{i \in I}$, with $x_i \in A$, $r_i\in \mathcal H$ for any $i \in I$, such that $\bigcap\limits_{i \in I}\ B_{\mathbf E}(x_i, r_i )$ is not empty, the intersection  $\bigcap\limits_{i \in I}\ B_{\mathbf E}(x_i, r_i ) \cap A$ is not empty.  
 \end{lemma}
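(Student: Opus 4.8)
The plan is to prove both implications by translating the one-local retract property, point by point, into the existence of a suitable element of $A$ near each external point, and then reading that existence directly off the ball condition. Throughout I will use only that $\oplus$ is order-compatible and that the involution is order-preserving with $\overline{d(u,v)} = d(v,u)$; no distributivity is needed, in keeping with the convention stated just before the lemma.

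For the forward direction I would assume $\mathbf E_{\restriction A}$ is a one-local retract and take a family $\big(B_{\mathbf E}(x_i, r_i)\big)_{i \in I}$ with all $x_i \in A$ whose intersection contains some $u \in E$. If $u \in A$ there is nothing to prove, so suppose $u \notin A$ and let $g : \mathbf E_{\restriction A \cup \{u\}} \to \mathbf E_{\restriction A}$ be the retraction furnished by the hypothesis, so that $g$ is nonexpansive and fixes $A$ pointwise. Then $g(u) \in A$, and nonexpansiveness together with $g(x_i) = x_i$ gives $d(x_i, g(u)) \le d(x_i, u) \le r_i$ for every $i$. Hence $g(u)$ lies in $\bigcap_{i \in I} B_{\mathbf E}(x_i, r_i) \cap A$, which is therefore nonempty.

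For the converse I would fix $x \in E$ and note that to build a retraction $g$ with $g_{\restriction A} = \mathrm{id}$ it suffices to produce a single $a \in A$ with $d(a, y) \le d(x, y)$ for all $y \in A$; applying the involution, this is equivalent to $d(y, a) \le d(y, x)$ for all $y \in A$, i.e. to $a \in \bigcap_{y \in A} B_{\mathbf E}(y, d(y, x))$. The key observation is that this family of balls has all its centers in $A$ and already contains $x$ itself, since $d(y, x) \le d(y, x)$; thus its full intersection in $E$ is nonempty. The ball condition then yields a point $a$ of that intersection lying in $A$, and setting $g(x) := a$ (with $g = \mathrm{id}$ on $A$) gives the candidate retraction.

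I would finish by verifying nonexpansiveness of $g$ case by case on pairs from $A \cup \{x\}$: pairs inside $A$ and the pair $(x,x)$ are immediate, the pair $(y, x)$ with $y \in A$ is precisely $d(y, a) \le d(y, x)$, and the pair $(x, y)$ follows from the same inequality by applying the involution. The only mildly delicate point is the involution bookkeeping, ensuring that the single inequality $d(y, a) \le d(y, x)$ simultaneously controls both $d(a, y) \le d(x, y)$ and its dual; once one records $\overline{d(u,v)} = d(v,u)$ this is routine, and it is the step where a careless argument could slip.
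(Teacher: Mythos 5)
Your proof is correct and follows essentially the same route as the paper: the forward direction composes the retraction with membership in the balls, and the converse intersects the balls $B(y,d(y,x))$ centered in $A$ containing the external point $x$, extracts a point $a\in A$ from the hypothesis, and sends $x$ to $a$. Your explicit case-by-case check of nonexpansiveness (including the involution step) merely spells out what the paper leaves as routine.
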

 
 \begin{proof} Suppose that $\mathbf E_{\restriction A}$ is a one-local retract  of $\mathbf E$. Let $I$ be a set.  Consider a family of balls $\Big(B_{\mathbf E}(x_i, r_i )\Big)_{i \in I}$, with $x_i \in A$, $r_i\in \mathcal H$ for any $i \in I$, such that $B = \bigcap\limits_{i \in I}\ B_{\mathbf E}(x_i, r_i )$ is not empty. Let $a\in B$ and let $h$ be a retraction from $\mathbf E_{\restriction A \cup \{a\}}$ onto $\mathbf E_{\restriction A}$.  Set $a':= h(a)$. Since  $h$ fixes $A$ and retracts $a$ onto $a'$,  $a'\in  B_{\mathbf E}(x_i, r_i)$, hence  $a'\in \bigcap\limits_{i \in I}\ B_{\mathbf E}(x_i, r_i ) \cap A$.  Conversely, ones  proves that $\mathbf E_{\restriction A}$ is a one-local retract  provided that the intersection property of balls is satisfied.  Let $a\in E \setminus A$. 
 	Let
 	$$\mathcal B:= \{B(u, r):\; u\in A,\ a\in B(u, r)\;\text{and}\; r\in \mathcal H \}.$$
 	Set $B:= \bigcap \mathcal B$.  Then  $a\in B$ which implies $B\not=\emptyset$. According to the ball's property, $B\cap A\not =\emptyset$. Let $a'\in B\cap A$.  The map $h: A\cap \{a\}\rightarrow A$ which is the identity on $A$ and satisfies $h(a) = a'$ is a retraction of $\mathbf E_{\restriction A\cup \{a\}}$. 
 	
 \end{proof} 
 \begin{lemma} \label{lem:hole-localretract}Let $\mathbf E$ and $\mathbf E'$ be two metric spaces over $\mathcal{H}$. A nonexpansive map $f$ from $\mathbf E$ into    $\mathbf E'$ is hole-preserving iff  $f$ is an isometry of $\mathbf E$ onto its image and this image is a one-local retract  of $\mathbf E'$.
 \end{lemma}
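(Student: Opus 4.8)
The plan is to reduce the statement to the ball-intersection criterion of Lemma \ref{lem:one-local-retract}. First I would dispose of the metric part. Since hole-preserving maps are isometries (recorded just above the statement), in the direction $(\Rightarrow)$ the map $f$ is automatically an isometry, while in $(\Leftarrow)$ it is an isometry by hypothesis; in either case $f$ is injective, because $f(x)=f(y)$ forces $d'(f(x),f(y))=0$, hence $d(x,y)=0$ and $x=y$ by (d1). Thus, writing $A:=f(E)$, the map $f$ is a bijective isometry of $\mathbf E$ onto the isometric subspace $\mathbf E'_{\restriction A}$, and ``$f$ is an isometry onto its image'' is exactly the assertion that $\mathbf E\cong \mathbf E'_{\restriction A}$. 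It then remains to prove, assuming $f$ is an isometry, that $f$ is hole-preserving if and only if $A$ is a one-local retract of $\mathbf E'$.

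The heart of the argument is a dictionary between holes of $\mathbf E$ and families of balls of $\mathbf E'$ centred in $A$. Given $h:E\to\mathcal H$, injectivity of $f$ gives $h_f(f(y))=h(y)$ for $y\in E$, while $h_f(x')=1$ for $x'\notin A$; since $B_{\mathbf E'}(x',1)=E'$, the balls with centre outside $A$ do not constrain any intersection, so $\bigcap_{x'\in E'} B_{\mathbf E'}(x',h_f(x'))=\bigcap_{y\in E} B_{\mathbf E'}(f(y),h(y))$. Transporting balls along the isometry $f$ shows moreover that $\bigcap_{y\in E} B_{\mathbf E}(y,h(y))$ corresponds to $A\cap\bigcap_{y\in E} B_{\mathbf E'}(f(y),h(y))$. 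Hence ``$h$ is a hole of $\mathbf E$'' reads $A\cap\bigcap_{y} B_{\mathbf E'}(f(y),h(y))=\emptyset$, and ``$h_f$ is a hole of $\mathbf E'$'' reads $\bigcap_{y} B_{\mathbf E'}(f(y),h(y))=\emptyset$. Consequently $f$ is hole-preserving precisely when, for every $h$, emptiness of $A\cap\bigcap_y B_{\mathbf E'}(f(y),h(y))$ forces emptiness of $\bigcap_y B_{\mathbf E'}(f(y),h(y))$; the contrapositive is exactly that every such family of balls with non-empty intersection meets $A$.

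To match this with Lemma \ref{lem:one-local-retract} I would note that, as $h$ ranges over all maps $E\to\mathcal H$ and $f$ is a bijection onto $A$, the families $(B_{\mathbf E'}(f(y),h(y)))_{y\in E}$ are exactly the families of balls of $\mathbf E'$ assigning one radius to each centre of $A$. A general family $(B(x_i,r_i))_{i\in I}$ with $x_i\in A$ reduces to this form: concentric balls satisfy $\bigcap_{i:\,x_i=a}B(a,r_i)=B(a,\bigwedge_{i:\,x_i=a} r_i)$, and for a point of $A$ occurring as no centre one uses radius $1$. Therefore the condition obtained in the previous paragraph is verbatim the criterion of Lemma \ref{lem:one-local-retract} for $A$ to be a one-local retract, and both implications follow at once.

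I expect the only genuinely delicate points to be the two bookkeeping reductions. The first is the passage through the earlier remark that hole-preserving maps are isometries (should one wish to prove it directly, given a pair $a,b$ with $s:=d'(f(a),f(b))<d(a,b)$ one tests the hole assigning radius $0$ to every point of the fibre $f^{-1}(f(a))$, radius $\overline s$ to every point of $f^{-1}(f(b))$, and $1$ elsewhere: it is a hole of $\mathbf E$ because $d(a,b)\not\leq s$, yet its image is met by $f(a)$), which is what licenses the identification $\mathbf E\cong\mathbf E'_{\restriction A}$ and the clean formula $h_f(f(y))=h(y)$; without injectivity this would fail, since $h_f$ takes a meet over the whole fibre. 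The second is the verification that the per-point radius assignment encoding a hole and the arbitrary indexed families of Lemma \ref{lem:one-local-retract} describe the same collection of ball-intersections. Both are routine once the role of the top element $1$ (whole-space balls) and of the meet of concentric radii are made explicit.
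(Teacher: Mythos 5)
Your proposal is correct and takes exactly the route the paper intends: the paper omits this proof as ``routine\dots based on Lemma~\ref{lem:one-local-retract}'', and your dictionary between holes of $\mathbf E$ and families of balls of $\mathbf E'$ centred in $A:=f(E)$ is precisely that reduction, with the bookkeeping steps (injectivity of isometries via (d1), radius-$1$ balls being all of $E'$, replacement of concentric balls by the ball of radius $\bigwedge_{i:\,x_i=a} r_i$, and the recorded fact that hole-preserving maps are isometries) supplied correctly. Nothing further is needed.
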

 The routine proof is based on Lemma \ref{lem:one-local-retract}. We omit it.

 \section{Fixed point property}
 
 A central result in the category of ordinary metric spaces endowed with nonexpansive maps is the Sine-Soardi's fixed point theorem \cite{sine, soardi}
 asserting that every nonexpansive map on a bounded hyperconvex metric space  has a fixed point. 
 
 This result was generalized in two directions. First, Penot \cite{penot} introduced the notion of space endowed with a compact normal structure, extending the notion of bounded hyperconvex space. With this notion, Kirk's theorem \cite{kirk} amounted to the fact that  every nonexpansive  map on a space endowed with a compact normal structure has a fixed point. The existence of a common fixed point for a commuting set of nonexpansive maps was considered by several authors (see \cite {bruck-all, demarr, lim}). In 1986,  Baillon \cite{baillon}, extending  Sine-Soardi's theorem,  proved that every set of nonexpansive maps  which commute on a bounded hyperconvex space has a common fixed point. Khamsi \cite{khamsi} extended this  result to metric spaces endowed with a compact and normal structure. In \cite{JaMiPo} Sine-Soardi's theorem was extended to bounded hyperconvex spaces over a Heyting algebra, for an appropriate notion of boundedness. The possible extension to commuting set of nonexpansive maps was left unresolved (only the case of a countable set was settled). In \cite{khamsi-pouzet} the notion of compact normal structure for metric spaces over  Heyting algebra (and more generally for systems of binary relations) was introduced and Khamsi's theorem extended to families of nonexpansive maps  which commute on a space endowed with a compact and normal structure. 
 
 Here we present first the generalization of Sine-Soardi's theorem to bounded hyperconvex spaces over a Heyting algebra. Next, we introduce the notion of compact and normal structure and we present briefly the result of Khamsi-Pouzet.

 In the sequel we consider generalized metric spaces whose the set of values $\mathcal H$ does not satisfy necessarily the distributivity condition. We define for these spaces   the notions of diameter,  radius and Chebyshev center.  
 
 Let $\mathbf E:= (E, d)$ be a metric space over $\mathcal H$.  We denote by $\mathcal B_{\mathbf E}$ the set of balls of $\mathbf E$. Let $A$  be a nonempty subset of $E$ and $r \in \mathcal H$.  The $r$-{\emph center} is the set $C_{\mathbf E}(A, r ):= \{ x\in E : A \subseteq  B(x, r )\}$. Set $\Cov_{\mathbf E}(A):= \bigcap \{ B \in   \mathcal B_{\mathbf E}  : A\subseteq B\}$.
 The \emph{diameter} of $A$ is
 $\bigvee \{d(x,y): x,y\in A\}$. The \emph{radius} $r(A)$ is $\bigwedge \{v\in \mathcal{H}: A\subseteq B(x,v)\; \text{for some}\; x\in A\}$.  
 A subset $A$ of $E$ is \emph{equally centered} if $\delta (A)=r(A)$.

 \subsection{The case of hyperconvex spaces} 
 
 We suppose that $\mathcal H$ is a Heyting algebra. We define  the notion  of boundedness.
 
 An element $v\in V$ is \emph{self-dual} if $\overline v=v$, it is \emph{accessible} if there is some $r\in V$ with $v\not \leq r$ and $v \leq r\oplus \overline r$ and \emph{inaccessible} otherwise. Clearly, $0$ is inacessible;   every inaccessible element $v$ is self-dual (otherwise, $\overline v$ is incomparable to $v$ and we may choose $r:= \overline v$).
 
 \begin{definition}
 	We say that a space $(E, d)$ is \emph{bounded} if  $0$ is the only inaccessible element below $\delta(E)$.\end{definition}
 
 \begin{lemma}\label{equallycentered}
 	Let $A$ be an intersection of balls  of $(E, d)$. If $\delta(A)$ is inacessible  then  $A$ is equally centered; the  converse holds if $(E, d)$ is hyperconvex.
 \end{lemma}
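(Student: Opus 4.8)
The plan is to work throughout with the two ``profile'' quantities attached to $A$: for $x\in A$ put $\rho(x):=\bigvee_{y\in A}d(x,y)$, so that $A\subseteq B(x,\rho(x))$ and $\rho(x)$ is the least radius achieving this with centre $x$. Then by definition $r(A)=\bigwedge_{x\in A}\rho(x)$, while $\delta(A)=\bigvee_{x,y\in A}d(x,y)\geq\rho(x)$ for every $x$; hence one always has $r(A)\leq\delta(A)$, and $A$ is equally centred precisely when $\rho(x)=\delta(A)$ for every $x\in A$. Two elementary observations will be used repeatedly: the involution is an order-automorphism of the complete lattice $\mathcal H$, so it commutes with arbitrary joins, and consequently $\overline{\delta(A)}=\bigvee_{x,y}\overline{d(x,y)}=\bigvee_{x,y}d(y,x)=\delta(A)$, i.e. $\delta(A)$ is self-dual.

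For the first implication (inaccessibility $\Rightarrow$ equally centred), fix $x\in A$. For arbitrary $x',y'\in A$ the triangle inequality together with $d(x',x)=\overline{d(x,x')}\leq\overline{\rho(x)}$ and $d(x,y')\leq\rho(x)$ gives $d(x',y')\leq d(x',x)\oplus d(x,y')\leq\overline{\rho(x)}\oplus\rho(x)$; taking the join over $x',y'$ yields $\delta(A)\leq\overline{\rho(x)}\oplus\rho(x)$. Writing this as $\delta(A)\leq s\oplus\overline s$ with $s:=\overline{\rho(x)}$ and invoking inaccessibility of $\delta(A)$ gives $\delta(A)\leq s=\overline{\rho(x)}$; applying the involution and self-duality turns this into $\delta(A)\leq\rho(x)$, whence $\rho(x)=\delta(A)$. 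As $x$ was arbitrary, $A$ is equally centred. (Note this direction uses neither hyperconvexity nor the hypothesis that $A$ is an intersection of balls.)

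For the converse, assume $\mathbf E$ hyperconvex, $A=\bigcap_{j}B(c_j,s_j)$ equally centred, and suppose for contradiction that $\delta(A)$ is accessible, witnessed by some $r$ with $\delta(A)\not\leq r$ and $\delta(A)\leq r\oplus\overline r$. Consider the family of balls $\{B(c_j,s_j)\}_j\cup\{B(y,r)\}_{y\in A}$. The plan is to verify the pairwise hyperconvexity (Helly) condition for this family and extract a point in its intersection. The crucial point, and the one place where the non-commutativity of $\oplus$ must be respected, is that the cover balls are taken with radius $r$ (not $\overline r$): the pairwise condition between $B(y,r)$ and $B(y',r)$ then reads $d(y,y')\leq r\oplus\overline r$, which is exactly guaranteed by $d(y,y')\leq\delta(A)\leq r\oplus\overline r$. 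The remaining pairwise checks are routine and use only $A\neq\emptyset$, the inclusions $y\in A\subseteq B(c_j,s_j)$, and $0\leq r$; I would dispatch them without comment.

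Hyperconvexity then provides a point $z$ in the intersection. Membership in the balls $B(c_j,s_j)$ says $z\in A$, while membership in each $B(y,r)$ says $d(y,z)\leq r$, i.e. $d(z,y)=\overline{d(y,z)}\leq\overline r$ for all $y\in A$; thus $\rho(z)\leq\overline r$ and so $r(A)\leq\overline r$. Since $A$ is equally centred, $\delta(A)=r(A)\leq\overline r$, and applying the involution together with self-duality yields $\delta(A)\leq r$, contradicting $\delta(A)\not\leq r$. Hence $\delta(A)$ is inaccessible. I expect the only genuine obstacle to be bookkeeping the orientations of the involution in the Helly conditions --- in particular realising that the cover radius must be $r$ rather than $\overline r$, so that the self-dual bound $\delta(A)\leq r\oplus\overline r$ can be applied directly.
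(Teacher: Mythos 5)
Your proof is correct and follows essentially the same route as the paper's: the forward direction is the same triangle-inequality-plus-inaccessibility argument (the paper uses an arbitrary admissible radius $r$ where you use the least one $\rho(x)$), and the converse builds exactly the paper's family of balls --- the balls defining $A$ together with the balls $B(y,r)$, $y\in A$ --- applies hyperconvexity to get a point of $A$ centering $A$ within $\overline r$, and concludes via equal centering and self-duality of $\delta(A)$. The only cosmetic differences are that you argue the converse by contradiction rather than directly, and that you spell out the self-duality of the diameter and the pairwise distance conditions, which the paper leaves implicit.
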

 \begin{proof}
 	Suppose that $v:=\delta(A)$ is inaccessible. Let $r \in \mathcal H$  such that  $A \subseteq B(x, r)$. This yields $d(a,b)\leq d(a,x)\oplus d(x,b)\leq \overline r\oplus r$ for every $a,b\in A$. Thus $v\leq \overline r\oplus r$. Since $v$ is inacessible,  $v\leq r$, hence $v\leq r(A)$. Thus $v= r(A)$. Suppose that $A$ is equally centered. Let $r$ be such that $v\leq r\oplus \overline r$. The  balls $B(x, r)$ ($x\in A$)  intersect pairwise and intersect  each of the balls whose $A$ is an intersection; since $(E, d)$ is hyperconvex, these balls have a nonempty intersection. Any member $a$ of  this intersection is in $A$ and satisfies  $A \subseteq B(a, \overline r)$. Since $A$ is equally centered $r(A)=v$. Hence, $v\leq \overline r$. Since $v$ is self-dual,  $v\leq r$. Thus $v$ is inaccessible.
 \end{proof}
 
 \begin{lemma} \label{Mainlemma}
 	Let $\mathbf E$ be a non empty hyperconvex metric space  over a Heyting algebra $\mathcal H$ and $f:\mathbf E\longrightarrow \mathbf E$ be a nonexpansive mapping, then there is a non empty hyperconvex subspace $\mathbf S$ of $\mathbf E$ such that $f(S)\subseteq  S$ and its diameter $\delta(S)=\vee \{d(x,y): x,y\in S\} $ is inaccessible.
 \end{lemma}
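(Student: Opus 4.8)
The plan is to realise $\mathbf S$ as a \emph{minimal} nonempty $f$-invariant subspace that is an intersection of balls, and then to show that minimality forces the diameter to be inaccessible. First I would consider the family $\Sigma$ of all nonempty subsets $D$ of $E$ that are intersections of balls of $\mathbf E$ and satisfy $f(D)\subseteq D$, ordered by inclusion. It is nonempty since $E=B(x,1)$ belongs to it. Every chain $(D_i)$ in $\Sigma$ should have a lower bound, namely its intersection $D_\infty=\bigcap_i D_i$; stability under $f$ (as $f(D_\infty)\subseteq f(D_i)\subseteq D_i$) and being an intersection of balls are immediate, and non-emptiness is where hyperconvexity enters. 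Writing each $D_i$ as an intersection of balls, any two of the defining balls $B(z,s)$, $B(z',s')$ already share a point $a$ coming from the smaller of the two members of the chain supplying them; then $d(z,z')\leq d(z,a)\oplus d(a,z')\leq s\oplus\overline{s'}$, so the whole family of defining balls satisfies the hyperconvexity condition and $D_\infty\neq\emptyset$. Zorn's lemma then yields a minimal $\mathbf S:=(S,d_{\restriction S})$. Since an intersection of balls of a hyperconvex space is again hyperconvex (the given defining balls, together with any admissible family, still meet the pairwise condition because every ball in the defining family contains all of $S$), $\mathbf S$ is hyperconvex.

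Next I would argue by contradiction that $\delta(S)$ is inaccessible. Suppose it is accessible, so there is $r$ with $\delta(S)\not\leq r$ and $\delta(S)\leq r\oplus\overline r$. Because $\delta(S)=\bigvee_{x,y\in S}d(x,y)$ is self-dual and the involution is an order-preserving bijection, I may replace $r$ by $\rho:=\overline r$ to obtain a witness with $\delta(S)\leq\overline\rho\oplus\rho$ while still $\delta(S)\not\leq\rho$; getting the \emph{orientation} of the non-commutative $\oplus$ right here is the first delicate point. I then form the Chebyshev centre $C:=\{x\in S: S\subseteq B(x,\rho)\}=S\cap\bigcap_{y\in S}B(y,\overline\rho)$. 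This is again an intersection of balls, and it is non-empty: for $y,y'\in S$ one has $d(y,y')\leq\delta(S)\leq\overline\rho\oplus\rho=\overline\rho\oplus\overline{\overline\rho}$, which is precisely the hyperconvexity condition for the family $\bigl(B(y,\overline\rho)\bigr)_{y\in S}$ inside the hyperconvex space $\mathbf S$.

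The heart of the argument, and the step I expect to be the main obstacle, is proving that $C$ is $f$-invariant \emph{without} invoking a ball-hull lemma of the form $\Cov(f(S))=S$. If $x\in C$ then $d(f(x),f(y))\leq d(x,y)\leq\rho$ for every $y\in S$, so $f(S)\subseteq B(f(x),\rho)$; together with $f(S)\subseteq S$ this gives $f(S)\subseteq S_1:=S\cap B(f(x),\rho)$. The key observation is that $S_1$ is automatically $f$-invariant, since $f(S_1)\subseteq f(S)\subseteq S_1$, while $S_1$ is a non-empty intersection of balls contained in $S$. Minimality of $S$ then forces $S_1=S$, i.e. $S\subseteq B(f(x),\rho)$, and as $f(x)\in f(S)\subseteq S$ this means $f(x)\in C$. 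Hence $C$ is a non-empty $f$-invariant intersection of balls inside $S$, and minimality gives $C=S$. But $C=S$ says every $x\in S$ satisfies $S\subseteq B(x,\rho)$, whence $\delta(S)\leq\rho$, contradicting $\delta(S)\not\leq\rho$. Therefore $\delta(S)$ is inaccessible, as required (and, a posteriori, $\mathbf S$ is equally centred, in agreement with Lemma \ref{equallycentered}). The only subtle ingredients are thus the two non-emptiness verifications through hyperconvexity and the self-duality/orientation bookkeeping for $\oplus$; the self-invariance trick for $S_1$ is exactly what bypasses the usual hull argument.
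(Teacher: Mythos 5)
Your proof is correct and follows essentially the same route as the proof the paper cites (Lemma III-1.1 of \cite{JaMiPo}, echoed in Lemma \ref{lem:infimum-invariant}): Zorn's lemma applied to the nonempty $f$-invariant intersections of balls, with hyperconvexity of $\mathbf E$ guaranteeing that chains have nonempty intersection, followed by a Chebyshev-center/minimality argument forcing the diameter of the minimal set to be inaccessible (the content of Lemma \ref{equallycentered}). Your $S_1:=S\cap B(f(x),\rho)$ step is just an inlined form of the standard identity $\Cov_{\mathbf E}(f(S))=S$, which is proved by the very same appeal to minimality, so the deviation is cosmetic rather than substantive.
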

 For a proof see Lemma III-1.1 of \cite{JaMiPo}.
 As a corollary, we have the following 
 \begin{lemma}
 	Let $\mathbf E$ be a non empty hyperconvex space. Then there is a non empty hyperconvex invariant subspace $\mathbf S$  whose diameter is inaccessible.
 \end{lemma}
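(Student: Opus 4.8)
The plan is to read off this corollary as the special case of Lemma \ref{Mainlemma} in which the nonexpansive self-map is taken to be the identity. Lemma \ref{Mainlemma} associates, to \emph{every} nonexpansive self-map $f$ of a non-empty hyperconvex space $\mathbf E$, a non-empty hyperconvex subspace $\mathbf S$ with $f(S)\subseteq S$ and with inaccessible diameter $\delta(S)$; the present statement is exactly what survives when one discards the particular role of $f$. So first I would simply exhibit a nonexpansive map to feed into that lemma, and the cheapest such map is $\id_{\mathbf E}$.

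Concretely, I would observe that $\id_{\mathbf E}\colon \mathbf E\to \mathbf E$ is nonexpansive, since $d(\id_{\mathbf E}(x),\id_{\mathbf E}(y))=d(x,y)\leq d(x,y)$ for all $x,y\in E$. Applying Lemma \ref{Mainlemma} with $f:=\id_{\mathbf E}$ then furnishes a non-empty hyperconvex subspace $\mathbf S$ of $\mathbf E$ satisfying $\id_{\mathbf E}(S)\subseteq S$ and such that the diameter $\delta(S)=\bigvee\{d(x,y):x,y\in S\}$ is inaccessible. Since $\id_{\mathbf E}(S)=S$, the invariance condition $\id_{\mathbf E}(S)\subseteq S$ holds automatically, so $\mathbf S$ is invariant and has precisely the two asserted properties.

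There is essentially no obstacle at this level, which is why the statement is labelled a corollary: the entire technical content is packaged inside Lemma \ref{Mainlemma}, whose proof is deferred to \cite{JaMiPo}. The only point worth flagging is interpretive rather than computational, namely how to read the word \emph{invariant} in a statement that mentions no map. The natural reading, and the one that makes the passage from Lemma \ref{Mainlemma} immediate, is that invariance is meant relative to the identity, so that it imposes no condition beyond $\mathbf S\subseteq \mathbf E$; the terminology is retained because this subspace is the starting point for the subsequent fixed point arguments, where a genuine self-map is reinstated.
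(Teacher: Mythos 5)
Your proposal is correct and is essentially the paper's own argument: the paper gives no separate proof at all, presenting this statement as an immediate corollary of Lemma \ref{Mainlemma} (whose proof is itself deferred to \cite{JaMiPo}). Whether one reads \emph{invariant} as vacuous (your choice, via $f:=\id_{\mathbf E}$) or as referring to the nonexpansive map still implicitly in scope from Lemma \ref{Mainlemma}, the statement follows at once from that lemma, so your derivation and your interpretive caveat are both sound.
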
  

 \begin{theorem}\label{theoremSST}
 	Let $\mathbf E$ be a non empty bounded hyperconvex space. Then every nonexpansive map $f$ has a fixed point. Moreover, the restriction of $\mathbf E$ to the set  $Fix(f)$ of its fixed points is hyperconvex
 \end{theorem}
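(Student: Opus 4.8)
The plan is to derive both assertions from Lemma \ref{Mainlemma} together with the boundedness hypothesis, obtaining the second assertion by reapplying the first inside a suitable invariant subspace.

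\emph{Existence of a fixed point.} First I would apply Lemma \ref{Mainlemma} to $f$ to get a non empty hyperconvex subspace $\mathbf S$ of $\mathbf E$ with $f(S)\subseteq S$ and $\delta(S)$ inaccessible. Since $S\subseteq E$ we have $\delta(S)\leq\delta(E)$, so $\delta(S)$ is an inaccessible element below $\delta(E)$; boundedness of $\mathbf E$ then forces $\delta(S)=0$. By axiom d1 this means $S$ is a singleton, say $S=\{x_0\}$, and $f(S)\subseteq S$ gives $f(x_0)=x_0$.

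\emph{A preliminary lemma.} The key preliminary step is to record that in a hyperconvex space every non empty intersection of balls $A:=\bigcap_{j}B_{\mathbf E}(y_j,s_j)$ is itself hyperconvex. To prove it I would take a family $\big(B(a_k,t_k)\big)_k$ with $a_k\in A$ satisfying $d(a_k,a_l)\leq t_k\oplus\overline{t_l}$, adjoin the defining balls $B(y_j,s_j)$, and verify the hyperconvexity inequalities for every pair of the enlarged family: for two $a$'s it is the hypothesis; for two $y$'s it follows from the existence of a common point $p\in A$ and the triangle inequality $d(y_j,y_{j'})\leq d(y_j,p)\oplus d(p,y_{j'})\leq s_j\oplus\overline{s_{j'}}$; and for a mixed pair it follows from $a_k\in B(y_j,s_j)$ together with $0\leq t_k$, $0\leq\overline{t_k}$ and the fact that $0$ is neutral. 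Hyperconvexity of $\mathbf E$ then yields a common point of the enlarged family; this point lies in $A$ and in every $B(a_k,t_k)$, so $A$ is hyperconvex.

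\emph{Hyperconvexity of $Fix(f)$.} Let $\big(B_{\mathbf E}(x_i,r_i)\big)_{i\in I}$ be a family with $x_i\in Fix(f)$ and $d(x_i,x_j)\leq r_i\oplus\overline{r_j}$, and set $A:=\bigcap_{i\in I}B_{\mathbf E}(x_i,r_i)$. By hyperconvexity of $\mathbf E$, $A\neq\emptyset$, and by the preliminary lemma it is hyperconvex. It is $f$-invariant, since for $x\in A$ and every $i$ we have $d(x_i,f(x))=d(f(x_i),f(x))\leq d(x_i,x)\leq r_i$, whence $f(x)\in A$. Moreover $\delta(A)\leq\delta(E)$ forces every inaccessible element below $\delta(A)$ to be $0$, so $\mathbf E_{\restriction A}$ is bounded. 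Applying the already proved first assertion to the bounded hyperconvex space $\mathbf E_{\restriction A}$ and the nonexpansive map $f_{\restriction A}$ produces a fixed point $x^{*}\in A$. Then $x^{*}\in Fix(f)$ with $d(x_i,x^{*})\leq r_i$ for all $i$, so the family of balls in the subspace $\mathbf E_{\restriction Fix(f)}$ has non empty intersection; together with the non emptiness of $Fix(f)$ (the case of the empty family, guaranteed by the first assertion) this shows $\mathbf E_{\restriction Fix(f)}$ is hyperconvex. The main obstacle is exactly the preliminary lemma: the bookkeeping that the enlarged family still satisfies all hyperconvexity inequalities is what makes $A$ hyperconvex and hence lets the fixed point argument be run again inside $A$.
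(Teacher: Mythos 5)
Your proposal is correct and follows essentially the same route as the paper's proof: Lemma \ref{Mainlemma} plus boundedness forces the invariant subspace $S$ to be a singleton (giving the fixed point), and hyperconvexity of $Fix(f)$ is obtained by showing the intersection $A$ of the given balls is a nonempty, bounded, hyperconvex, $f$-invariant subspace and rerunning the fixed point argument inside it. The only difference is that you supply a careful verification of the step the paper asserts without proof (``as any intersection of balls of an hyperconvex space, $\mathbf E_{\restriction T}$ is hyperconvex''), which is a legitimate and correctly executed piece of bookkeeping.
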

 \begin{proof}
 	Since $0$ is the unique inaccessible element below the diameter $\delta(E)$, the diameter of the non empty set $S$ given by lemma \ref{Mainlemma} is $0$, thus $S$ reduces to a single element fixed by $f$. 
 	Let $\{B_F(x_i,r_i):i\in I\}$ a family of balls of $Fix(f)$ with $d(x_i,x_j)\leq r_i+\overline{r}_j$ for all $i,j\in I$. Since $\mathbf E$ is hyperconvex, then $T=\cap \{B_F(x_i,r_i):i\in I\}\neq \emptyset$ and, as any intersection of balls of an hyperconvex space, $\mathbf E_{\restriction T}$ is hyperconvex  and, of course, bounded. Now, since $f$ is nonexpansive and the $x_i$ are fixed by $f$, we have $f(T)\subseteq T$. The f.p.p applied to $T$ gives an $x\in Fix(f)\cap T$. Thus, the above intersection is non empty and $Fix(f)$ is hyperconvex.

 \end{proof}
 \begin{corollary}
 	Let $\mathbf E$ be a non empty bounded hyperconvex space. Among the subspace of $\mathbf E$, the retracts of $E$ are the sets of fixed points of the nonexpansive maps from $\mathbf E$ into itself.
 \end{corollary}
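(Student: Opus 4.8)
The plan is to establish the two inclusions separately. One direction is essentially formal, and the substance of the statement lies in the other, where I would invoke Theorem~\ref{theoremSST} to produce a hyperconvex fixed-point set and then Theorem~\ref{thm: caracterisation-hyperconvexity} to turn hyperconvexity into the retract property.

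First I would show that every retract is a set of fixed points. Suppose the subspace $\mathbf E_{\restriction A}$ is a retract of $\mathbf E$. By definition there is a nonexpansive retraction $g : \mathbf E \to \mathbf E$ with range $A$ and $g(x) = x$ for all $x \in A$. Regarding $g$ as a nonexpansive self-map of $\mathbf E$, I get $A \subseteq Fix(g)$ because $g$ fixes $A$ pointwise, and $Fix(g) \subseteq A$ because every fixed point $x = g(x)$ lies in the range $A$ of $g$. Hence $A = Fix(g)$ is exactly the set of fixed points of a nonexpansive self-map of $\mathbf E$. (Recall, incidentally, that the inclusion $\mathbf E_{\restriction A} \hookrightarrow \mathbf E$ is then a coretraction, hence an isometry, which is why the retracts one considers are genuinely isometric subspaces.)

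Conversely, let $f : \mathbf E \to \mathbf E$ be an arbitrary nonexpansive map and put $A := Fix(f)$. Since $\mathbf E$ is a non-empty bounded hyperconvex space, Theorem~\ref{theoremSST} guarantees both that $A$ is non-empty and that the subspace $\mathbf E_{\restriction A}$ is hyperconvex. By the characterisation of absolute retracts in Theorem~\ref{thm: caracterisation-hyperconvexity}, a hyperconvex space is an absolute retract, i.e.\ a retract of every one of its isometric extensions. As $\mathbf E_{\restriction A}$ is by construction an isometric subspace of $\mathbf E$, the space $\mathbf E$ is such an isometric extension, and therefore $\mathbf E_{\restriction A} = \mathbf E_{\restriction Fix(f)}$ is a retract of $\mathbf E$. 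Combining the two directions yields the claimed equivalence.

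The argument is short once the right tools are cited, and the only point that genuinely uses the hypotheses is this second direction: the non-emptiness of $Fix(f)$ and, more importantly, its hyperconvexity are supplied entirely by Theorem~\ref{theoremSST}, whose proof rests on the boundedness assumption through Lemma~\ref{Mainlemma} (boundedness forces the invariant subspace $\mathbf S$ of diameter $0$ to be a singleton). Once hyperconvexity of the fixed-point set is in hand, the passage to \emph{retract} is nothing more than the equivalence between hyperconvexity and the absolute retract property, so there is no further obstacle to overcome.
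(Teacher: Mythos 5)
Your proof is correct and follows essentially the same route as the paper: the retract-to-fixed-point-set direction is the same formal observation that $A=Fix(g)$ for any retraction $g$, and the converse invokes Theorem~\ref{theoremSST} for non-emptiness and hyperconvexity of $Fix(f)$ and then the equivalence of hyperconvexity with the absolute retract property (Theorem~\ref{thm: caracterisation-hyperconvexity}) to conclude that $\mathbf E_{\restriction Fix(f)}$ is a retract of $\mathbf E$. You have merely spelled out the details that the paper leaves implicit.
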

 \begin{proof}
 	If $\mathbf  A$ is a retract of $\mathbf E$, then $A=Fix(g)$ for every retraction. Conversely, from the above result the set $Fix(f)$ of fixed points of a map $f:E\longrightarrow E$ is hyperconvex. But the hyperconvex are absolute retracts, thus $Fix(f)$ is a retract.
 \end{proof}

 \subsection{Compact and normal structure}
 
 We extend the notion of compact and normal structure defined by Penot, \cite{penot1, penot},  for ordinary metric spaces.  We consider generalized  metric spaces over an involutive ordered algebra $\mathcal H$ which, otherwise stated, does not necessarily satisfies the distributivity condition. We say that a metric space   $\mathbf E$  has a \emph{compact structure} if the collection of balls of $\mathbf E$ has the finite intersection property (f.i.p.) and it has  a \emph{normal structure} if for every   intersection of balls $A$, either $\delta(A)=0$ or $r(A)<\delta(A)$. This condition amounts to the fact that  the only equally centered intersections of balls are singletons.

 Lemma \ref{equallycentered} with the fact that the $2$-Helly property implies that the collection of balls has the finite intersection property, yields:
 
 \begin{corollary}\label{cor:compact+normal} If a generalized   metric space $\mathbf E:=(E,d)$ over a Heyting algebra  is bounded and hyperconvex then it has a compact normal structure.
 \end{corollary}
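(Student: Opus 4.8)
The plan is to check separately the two defining properties of a compact normal structure, each of which follows almost directly from hyperconvexity, boundedness, and Lemma~\ref{equallycentered}.

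First I would establish the \emph{compact structure}, that is, that the collection $\mathcal B_{\mathbf E}$ of balls is compact in the sense that every subfamily with the finite intersection property has non-empty total intersection. Recall that hyperconvexity was shown to be equivalent to the conjunction of convexity and the $2$-Helly property, the latter asserting that a family of balls all of whose pairwise intersections are non-empty has non-empty total intersection. Now if a family $\big(B_i\big)_{i\in I}$ of balls has the finite intersection property, then in particular every pair $B_i\cap B_j$ is non-empty, so the $2$-Helly property gives $\bigcap_{i\in I} B_i\neq\emptyset$. Hence $\mathbf E$ has a compact structure.

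Next I would establish the \emph{normal structure}. As recorded just after its definition, this amounts to showing that every equally centered intersection of balls is a singleton. So let $A$ be an intersection of balls with $\delta(A)=r(A)$. Since $\mathbf E$ is hyperconvex, the converse half of Lemma~\ref{equallycentered} applies and tells us that $\delta(A)$ is inaccessible. On the other hand $A\subseteq E$ forces $\delta(A)\leq\delta(E)$, so $\delta(A)$ is an inaccessible element below $\delta(E)$. Because $\mathbf E$ is bounded, the only such element is $0$, whence $\delta(A)=0$; as $d(x,y)=0$ if and only if $x=y$, this means that $A$ is a singleton. Thus the only equally centered intersections of balls are singletons, which is exactly the normal structure condition.

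The argument is an assembly of facts already proved, so I do not expect a genuine obstacle; the only points needing care are to invoke the \emph{converse} direction of Lemma~\ref{equallycentered} (the one that requires hyperconvexity) and then to use boundedness to force $\delta(A)=0$. It is also worth noting in passing that $r(A)\leq\delta(A)$ holds automatically, since choosing any $x_0\in A$ gives $A\subseteq B(x_0,\delta(A))$; this is what makes the dichotomy ``$\delta(A)=0$ or $r(A)<\delta(A)$'' equivalent to the statement ``equally centered implies singleton'' used above.
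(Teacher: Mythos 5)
Your proof is correct and takes essentially the same route as the paper, which derives this corollary in a single sentence from the $2$-Helly property (giving the compactness of the family of balls) together with Lemma~\ref{equallycentered} and boundedness (giving normality). You have simply filled in the details the paper leaves implicit, including the worthwhile observation that $r(A)\leq\delta(A)$ holds automatically, so nothing further is needed.
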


 We denote by $\hat {\mathcal B }_{\mathbf E}$, the collection of intersections of balls and by  $\hat {\mathcal B }_{\mathbf E}^*$ the set of the non empty one. 
 \begin{lemma}  \label{lem:infimum-invariant} Let  $\mathbf E := (E,d)$ be metric space over $\mathcal H$.  Let $f$ be  a nonexpansive map $\mathbf E$.  If $\mathbf {E}$ has a compact structure then every member of $\hat {\mathcal B }_{\mathbf E}^*$ preserved by  $f$ contains a minimal one. If $A \in \hat {\mathcal B}_{\mathbf E}^*$ is a minimal member preserved by $f$, then $\Cov_{\mathbf E}(f(A)) = A$ and $A$ is equally centered.
 \end{lemma}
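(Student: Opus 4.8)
The plan is to handle the two assertions separately, using the compact structure (the finite intersection property of balls) only for the existence of a minimal preserved member, and using the equality $\Cov_{\mathbf E}(f(A))=A$ as the engine for the statement about equal centering. For the existence part, I would apply Zorn's lemma to the family $\mathcal P$ of all nonempty intersections of balls $C\in\hat{\mathcal B}_{\mathbf E}^*$ with $C\subseteq A$ and $f(C)\subseteq C$, ordered by reverse inclusion. Given a chain in $\mathcal P$, its intersection $C_0$ is again an intersection of balls and is preserved by $f$ (if $x\in C_0$ then $f(x)\in f(C)\subseteq C$ for every $C$ of the chain), so the only thing to verify is that $C_0\neq\emptyset$. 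Writing each $C$ of the chain as an intersection of a family of balls and letting $\mathcal F$ be the union of all these families, any finitely many $B_1,\dots,B_n\in\mathcal F$ come from finitely many chain members; the smallest of those members, say $C'$, is nonempty and satisfies $C'\subseteq B_i$ for each $i$, so $B_1\cap\dots\cap B_n\supseteq C'\neq\emptyset$. Thus $\mathcal F$ has the finite intersection property, and the compact structure gives $C_0=\bigcap\mathcal F\neq\emptyset$. Hence $C_0\in\mathcal P$ is an upper bound, and a maximal element of $\mathcal P$ is a minimal preserved member contained in $A$.

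Now assume $A$ is minimal. To prove $\Cov_{\mathbf E}(f(A))=A$, put $A':=\Cov_{\mathbf E}(f(A))$. Since $A$ is an intersection of balls, $\Cov_{\mathbf E}(A)=A$, and from $f(A)\subseteq A$ together with the monotonicity of $\Cov_{\mathbf E}$ we get $A'\subseteq\Cov_{\mathbf E}(A)=A$. The neat point is that $A'$ is itself preserved: from $A'\subseteq A$ we obtain $f(A')\subseteq f(A)$, while $f(A)\subseteq\Cov_{\mathbf E}(f(A))=A'$, so $f(A')\subseteq A'$. As $A'$ is a nonempty intersection of balls (it contains $f(A)\neq\emptyset$) contained in $A$, minimality forces $A'=A$.

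For equal centering I would sidestep any question of attainment of the radius by working from an arbitrary base point. Fix $x_0\in A$ and set $r:=\bigvee_{a\in A}d(x_0,a)$, so $A\subseteq B(x_0,r)$. Consider $Z:=\{x\in A: A\subseteq B(x,r)\}=A\cap\bigcap_{a\in A}B(a,\overline r)$, which is an intersection of balls and contains $x_0$, hence lies in $\hat{\mathcal B}_{\mathbf E}^*$. The crucial step is that $Z$ is preserved by $f$: if $x\in Z$ then $d(x,a)\le r$ for all $a\in A$, so nonexpansiveness yields $d(f(x),f(a))\le r$, i.e. $f(A)\subseteq B(f(x),r)$; therefore $\Cov_{\mathbf E}(f(A))\subseteq B(f(x),r)$, and since $\Cov_{\mathbf E}(f(A))=A$ by the previous paragraph we conclude $A\subseteq B(f(x),r)$, while $f(x)\in f(A)\subseteq A$, so $f(x)\in Z$. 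By minimality $Z=A$, meaning $\bigvee_{a\in A}d(x,a)\le r=\bigvee_{a\in A}d(x_0,a)$ for every $x\in A$. As $x_0$ was arbitrary, the eccentricity $x\mapsto\bigvee_{a\in A}d(x,a)$ is constant on $A$; its common value is at once $\delta(A)=\bigvee_{x\in A}\bigvee_{a\in A}d(x,a)$ and $r(A)=\bigwedge_{x\in A}\bigvee_{a\in A}d(x,a)$, so $\delta(A)=r(A)$ and $A$ is equally centered.

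The main obstacle is this last part, and it is conceptual rather than computational: one must recognise that $\Cov_{\mathbf E}(f(A))=A$ is precisely what turns each sublevel set $Z$ of the eccentricity function into an $f$-preserved intersection of balls, and that running the argument through every base point $x_0$ forces the eccentricity to be constant, which is exactly the content of $\delta(A)=r(A)$. By contrast, the compactness hypothesis is needed only in the Zorn step, to ensure that a descending chain of preserved ball-intersections cannot collapse to the empty set; neither the distributivity of $\mathcal H$ nor attainment of the radius at a single Chebyshev center is required anywhere.
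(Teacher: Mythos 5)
Your proof is correct. The paper states this lemma without giving a proof (it is quoted from the cited Khamsi--Pouzet reference), and your argument is exactly the standard route taken there: Zorn's lemma made to work by the finite intersection property of balls, minimality forcing $\Cov_{\mathbf E}(f(A))=A$ since $\Cov_{\mathbf E}(f(A))$ is a preserved ball-intersection inside $A$, and then the observation that each eccentricity sublevel set $Z=A\cap\bigcap_{a\in A}B(a,\overline r)$ is a preserved ball-intersection, so minimality makes the eccentricity $x\mapsto\bigvee_{a\in A}d(x,a)$ constant, which yields $\delta(A)=r(A)$; in particular, running the argument over every base point $x_0$ is the right way to avoid any attainment-of-radius assumption in this generalized setting.
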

 
 This lemma allows us to deduce Penot's formulation \cite{penot1, penot} of Kirk's fixed point theorem \cite{kirk} under our formulation.
 
 \begin{theorem}\label{cor:main}  Let $\mathbf E := (E, d)$ be a generalized metric space over $\mathcal H$.  Assume $\mathbf {E}$ has a compact normal structure.  Then every nonexpansive map $f$ on  $\mathbf E$ has a fixed point.
 \end{theorem}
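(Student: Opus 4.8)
The plan is to combine compactness, via Lemma \ref{lem:infimum-invariant}, with the normal structure in order to produce a \emph{singleton} invariant intersection of balls, which will automatically supply a fixed point.

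First I would observe that the whole space is itself an intersection of balls: for any $x\in E$ (the space being nonempty) and the greatest element $1\in\mathcal H$ one has $B(x,1)=E$, so $E\in\hat{\mathcal B}_{\mathbf E}^*$. Since $f(E)\subseteq E$, the space $E$ is a member of $\hat{\mathcal B}_{\mathbf E}^*$ preserved by $f$. As $\mathbf E$ has a compact structure, Lemma \ref{lem:infimum-invariant} applies and yields a minimal member $A\in\hat{\mathcal B}_{\mathbf E}^*$ that is preserved by $f$. For this minimal $A$ the same lemma delivers the two crucial facts $\Cov_{\mathbf E}(f(A))=A$ and that $A$ is equally centered, i.e. $r(A)=\delta(A)$.

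Next I would invoke the normal structure. By definition, for the intersection of balls $A$ either $\delta(A)=0$ or $r(A)<\delta(A)$. The second alternative is incompatible with $A$ being equally centered, so necessarily $\delta(A)=0$. By the identity axiom (d1) for distances, $\delta(A)=\bigvee\{d(x,y):x,y\in A\}=0$ forces $d(x,y)=0$, hence $x=y$, for all $x,y\in A$; thus $A=\{a\}$ is a singleton. Finally I would read off the fixed point from the covering identity: since $A=\{a\}$, we have $f(A)=\{f(a)\}$ and $\Cov_{\mathbf E}(f(A))=\Cov_{\mathbf E}(\{f(a)\})$. As $f(a)$ belongs to every ball containing it, $f(a)\in\Cov_{\mathbf E}(\{f(a)\})=\Cov_{\mathbf E}(f(A))=A=\{a\}$, whence $f(a)=a$, and $a$ is the desired fixed point.

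The heart of the argument is entirely packaged in Lemma \ref{lem:infimum-invariant}: the compactness hypothesis (the finite intersection property of the balls) is precisely what powers the Zorn-type argument producing a minimal invariant intersection of balls and what yields both the covering identity $\Cov_{\mathbf E}(f(A))=A$ and the equal-centeredness of $A$. Granting that lemma, the deduction above is short, and the only point requiring care is the clean interplay of equal-centeredness with the normal-structure dichotomy, which excludes $r(A)<\delta(A)$ and pins $\delta(A)$ to $0$. Were one to avoid using the lemma as a black box, the main obstacle would be re-establishing it, namely showing—through compactness—that a minimal invariant member exists and satisfies $\Cov_{\mathbf E}(f(A))=A$.
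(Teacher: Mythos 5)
Your proof is correct and is exactly the deduction the paper intends: it applies Lemma \ref{lem:infimum-invariant} to the invariant intersection of balls $E$ itself to get a minimal invariant, equally centered $A$, then uses normality to force $\delta(A)=0$, hence $A$ a singleton fixed point --- the same argument the paper spells out inside its proof of Proposition \ref{prop:main2}. The only cosmetic difference is that you extract the fixed point via the identity $\Cov_{\mathbf E}(f(A))=A$, whereas invariance $f(A)\subseteq A=\{a\}$ already gives it directly.
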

 
 \medskip
 An easy consequence of Theorem \ref{cor:main}, we have the following beautiful structural result:
 
 \begin{proposition}\label{prop:main}  Let $\mathbf E := (E, d)$ be a a metric space over $\mathcal H$ with  a compact normal structure. Let $f$ be an endomorphism $\mathbf E$.  Then the restriction $\mathbf E_{\restriction Fix(f)}$, to the set $Fix(f)$ of fixed points of $f$, has a compact normal structure.
 \end{proposition}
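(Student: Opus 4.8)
The strategy is to check the two ingredients of a compact normal structure for $\mathbf E_{\restriction Fix(f)}$ one at a time, resting on two preliminary remarks. First, by Theorem~\ref{cor:main} the set $Fix(f)$ is nonempty. Second, I record an \emph{inheritance} fact: if $A\in\hat{\mathcal B}_{\mathbf E}^*$ then $\mathbf E_{\restriction A}$ again has a compact normal structure. This is routine: a nonempty intersection of balls of $\mathbf E_{\restriction A}$ is again a nonempty intersection of balls of $\mathbf E$ (since $A$ is itself one), while $\delta$ and $r$ are intrinsic, being computed from points and centers lying in the set itself, so the compact structure of $\mathbf E$ transfers and the normal structure inequality is just the one for $\mathbf E$. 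The organizing observation is that an intersection of balls of $Fix(f)$ always has the shape $C=Fix(f)\cap A$, where $A:=\bigcap_j B_{\mathbf E}(x_j,r_j)$ has its centers $x_j$ in $Fix(f)$; because these centers are fixed and $f$ is nonexpansive, $A$ is $f$-invariant, and moreover $C=Fix(f)\cap A=Fix(f_{\restriction A})$.

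For the compact structure I take a family $\big(B_{Fix(f)}(x_i,r_i)\big)_{i\in I}$ of balls with the finite intersection property and set $A:=\bigcap_i B_{\mathbf E}(x_i,r_i)$. Every finite subfamily of the $B_{\mathbf E}(x_i,r_i)$ meets, since it contains the corresponding nonempty finite intersection taken inside $Fix(f)$; hence $A\neq\emptyset$ by the compact structure of $\mathbf E$, and $A$ is $f$-invariant with fixed centers. Applying Theorem~\ref{cor:main} to the compact normal space $\mathbf E_{\restriction A}$ and the map $f_{\restriction A}$ yields a fixed point $p\in A$, and then $p\in Fix(f)\cap A=\bigcap_i B_{Fix(f)}(x_i,r_i)$. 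Thus every family of balls of $Fix(f)$ with the finite intersection property has nonempty intersection.

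The normal structure is the heart of the matter. Fix $C=Fix(f)\cap A=Fix(g)$ with $g:=f_{\restriction A}$ and $X:=\mathbf E_{\restriction A}$ (compact normal by inheritance), assume $C$ is equally centered, $v:=\delta(C)=r(C)$, and aim to force $v=0$. Two facts drive the argument. (i) $\delta(\Cov_X(C))=v$: here $v$ is self-dual (diameters always are); each $c\in C$ satisfies $C\subseteq B(c,v)$, a ball, so $\Cov_X(C)\subseteq B(c,v)$, giving $d(c,w)\le v$ and hence $d(w,c)=\overline{d(c,w)}\le v$ for every $w\in\Cov_X(C)$; therefore $C\subseteq B(w,v)$, so $\Cov_X(C)\subseteq B(w,v)$ and $\delta(\Cov_X(C))\le v$, while $\ge v$ is clear. (ii) The key equality $r(C)=r_X(C)$, where $r_X(C):=\bigwedge\{w:\ C\subseteq B(x,w)\ \text{for some}\ x\in A\}$ allows any center in $X$. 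The inequality $r_X(C)\le r(C)$ is trivial; for the converse, any $w$ admitting a global center makes $C_X(C,w):=A\cap\bigcap_{c\in C}B_{\mathbf E}(c,\overline w)$ nonempty, and this is an intersection of balls of $X$ with \emph{fixed} centers, hence $g$-invariant and compact normal, so by Theorem~\ref{cor:main} it contains a fixed point $p$; since $Fix(g)=C$, this $p$ lies in $C$ and realizes $w$ with a center inside $C$. Combining (i), (ii) and the normal structure of $X$ applied to the non-singleton admissible set $\Cov_X(C)$ gives the chain $r(C)=r_X(C)=r_X(\Cov_X(C))\le r(\Cov_X(C))<\delta(\Cov_X(C))=v=\delta(C)$, contradicting $r(C)=v$. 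Hence $v=0$ and $C$ is a singleton.

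I expect the main obstacle to be precisely step (ii), i.e. locating a \emph{fixed} near-central point. The tempting shortcut — pass to $\Cov_X(C)$, use the normal structure of $X$ to find a non-diametral center, and argue it may be taken fixed — fails because $\Cov$ does not commute with nonexpansive maps: on $\mathbb R$ the map $x\mapsto|x-2|$ sends the admissible hull $[0,4]$ of $\{0,4\}$ onto $[0,2]$, not into $\Cov(\{g(0),g(4)\})=\{2\}$, so the Chebyshev center of $\Cov_X(C)$ need be neither $g$-invariant nor meet $Fix(g)$. The equality $r(C)=r_X(C)$ circumvents this, because the relevant center-sets $C_X(C,w)$ are \emph{themselves} $f$-invariant intersections of balls, so the fixed point theorem applies to them directly; this is exactly where the reduction to $A$ is essential, as it forces the produced fixed point back into $C=Fix(g)$. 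Getting this interchange right, together with the bookkeeping of the involution in the self-duality of $v$, are the only delicate points; everything else is an application of the inheritance fact and of Theorem~\ref{cor:main}.
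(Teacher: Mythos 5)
Your proof is correct. It is worth noting that it is organized differently from the paper's. The paper never verifies the two halves of the definition directly for $Fix(f)$: it first proves (Proposition \ref{prop:main2}) that $Fix(f)$ is a nonempty one-local retract of $\mathbf E$ --- that argument is essentially your compact-structure step, except that where you invoke Theorem \ref{cor:main} on the $f$-invariant intersection $A$, the paper invokes Lemma \ref{lem:infimum-invariant} together with normality to produce a fixed point inside $A$ --- and it then appeals to Lemma \ref{lem:compact}, the statement that one-local retracts inherit compact normal structure, whose proof the survey does not reproduce but delegates to \cite{khamsi-pouzet}. Your normal-structure argument is, in effect, a self-contained proof of that missing lemma in the case at hand: the key equality $r(C)=r_X(C)$, which you obtain by applying Theorem \ref{cor:main} to the $f$-invariant center-sets $C_X(C,w)$, delivers exactly the ball-intersection property that the one-local-retract hypothesis supplies via Lemma \ref{lem:one-local-retract} in the paper's route, and your computations $\delta(\Cov_X(C))=\delta(C)$ and $r_X(C)=r_X(\Cov_X(C))$ are the standard transfer of normality to the admissible hull. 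What the paper's factorization buys is reusability, since one-local retracts are needed again for Theorems \ref{thm:cor2} and \ref{thm:best}; what your route buys is a complete proof within the survey, resting only on Theorem \ref{cor:main} and the routine inheritance of compact normal structure by nonempty members of $\hat{\mathcal B}_{\mathbf E}^*$. Your closing diagnosis is also accurate: since $\Cov$ does not commute with nonexpansive maps, one cannot simply take a non-diametral center of $\Cov_X(C)$ and argue that it is fixed, which is precisely why both your argument and the paper's must pass through $f$-invariant intersections of balls rather than through individual centers.
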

 
 Proposition \ref{prop:main} will allow us to prove that a finite set of commuting endomorphism maps has a common fixed point and the restriction of $\mathbf E$ to the set of common fixed points has a compact normal structure.   Obviously one would like to know whether such a conclusion still holds for infinitely many maps.  In order to do this, one has to investigate carefully the structure of the fixed points of an endomorphism.  This will rely on the properties of oen local-retracts. 
 
 The next result is the most important one as it shows that a one-local retract  enjoys the same properties as the larger set.

 \begin{lemma}\label{lem:compact}  Let $\mathbf E:= (E, d)$ be a metric space over $\mathcal H$,   $X \subseteq E$  be a nonempty subset.  Assume $\mathbf {E}_{\restriction X}$ is a one-local retract  of $\mathbf E$. If $\mathbf E$  has a compact and normal structure, then $\mathbf {E}_{\restriction X}$ also has a normal compact structure.  \end{lemma}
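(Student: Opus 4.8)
The plan is to derive both the compact and the normal structure of $\mathbf E_{\restriction X}$ from those of $\mathbf E$, using as the sole bridge the ball-theoretic description of one-local retracts in Lemma~\ref{lem:one-local-retract}: whenever a family of balls of $\mathbf E$ centered in $X$ has nonempty intersection in $E$, that intersection already meets $X$. Two preliminary remarks make the transfer work. First, a ball of $\mathbf E_{\restriction X}$ centered at $x\in X$ is $B_{\mathbf E}(x,r)\cap X$, so any intersection of balls $A$ of $\mathbf E_{\restriction X}$ can be written $A=\tilde A_0\cap X$ with $\tilde A_0:=\bigcap_i B_{\mathbf E}(x_i,r_i)$ an intersection of balls of $\mathbf E$ all centered in $X$. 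Second, since $A\subseteq X$ and the radius $r(A)$ is computed with centers ranging over $A$ itself, both $\delta(A)$ and $r(A)$ take the same value whether read in $\mathbf E_{\restriction X}$ or in $\mathbf E$; so I may compute them in $\mathbf E$ throughout.

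For the compact structure I would take a family $\big(B_{\mathbf E_{\restriction X}}(x_i,r_i)\big)_{i\in I}$ with the finite intersection property. Each finite subintersection equals $\big(\bigcap_{i\in F}B_{\mathbf E}(x_i,r_i)\big)\cap X$, so the corresponding balls of $\mathbf E$ also have the finite intersection property; compactness of $\mathbf E$ gives $\bigcap_{i\in I}B_{\mathbf E}(x_i,r_i)\neq\emptyset$, and since every center lies in $X$, Lemma~\ref{lem:one-local-retract} places a point of this intersection inside $X$, i.e. $\bigcap_{i\in I}B_{\mathbf E_{\restriction X}}(x_i,r_i)\neq\emptyset$.

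For the normal structure, let $A$ be a nonempty intersection of balls of $\mathbf E_{\restriction X}$ and set $\tilde A:=\Cov_{\mathbf E}(A)$, an intersection of balls of $\mathbf E$. A short computation, using that $\delta(A)$ is self-dual ($\overline{\delta(A)}=\delta(A)$) so that $A\subseteq B_{\mathbf E}(u,\delta(A))$ for every $u\in\tilde A$, shows $\delta(\tilde A)=\delta(A)$; moreover $\tilde A\subseteq\tilde A_0$ and $\tilde A_0\cap X=A$. Applying the normal structure of $\mathbf E$ to $\tilde A$, either $\delta(A)=\delta(\tilde A)=0$, in which case $A$ is a singleton and we are done, or $r(\tilde A)<\delta(\tilde A)=\delta(A)$; in the latter case it suffices to prove $r(A)\leq r(\tilde A)$.

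The heart of the matter, and the step I expect to be the main obstacle, is that $r(\tilde A)$ may be witnessed only by centers lying outside $X$, whereas $r(A)$ admits centers solely in $A$; this is exactly where Lemma~\ref{lem:one-local-retract} is decisive. I would fix any $w$ with $\tilde A\subseteq B_{\mathbf E}(c,w)$ for some $c\in\tilde A$. Then $A\subseteq B_{\mathbf E}(c,w)$, which by the involution axioms means $c\in C_w:=\bigcap_{a\in A}B_{\mathbf E}(a,\overline w)$, an intersection of balls centered in $A\subseteq X$; together with the defining balls of $\tilde A_0$ this exhibits $c\in\tilde A_0\cap C_w$, a nonempty intersection of balls of $\mathbf E$ all centered in $X$ (here $c\in\tilde A\subseteq\tilde A_0$). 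Lemma~\ref{lem:one-local-retract} then yields a point $c'$ of this intersection in $X$, so $c'\in(\tilde A_0\cap X)\cap C_w=A\cap C_w$; thus $c'\in A$ and $A\subseteq B_{\mathbf E}(c',w)$, giving $r(A)\leq w$. Taking the infimum over all admissible $w$ gives $r(A)\leq r(\tilde A)<\delta(A)$, establishing the normal structure. The only genuinely routine points are the identities $C_{\mathbf E}(A,w)=\bigcap_{a\in A}B_{\mathbf E}(a,\overline w)$ and $\delta(\Cov_{\mathbf E}(A))=\delta(A)$, both immediate from the triangle inequality and the axioms on the involution.
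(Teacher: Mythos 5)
Your proof is correct. A point of comparison first: the paper itself states Lemma~\ref{lem:compact} without proof (it is imported from \cite{khamsi-pouzet}), so there is no in-paper argument to measure yours against; what you have written is a self-contained proof that the survey omits, and it follows the natural route one would expect that reference to take. The two transfers you perform are exactly the right ones, and both check out against the paper's definitions. For compactness, the reduction of the f.i.p.\ of balls of $\mathbf E_{\restriction X}$ to the f.i.p.\ of the corresponding balls of $\mathbf E$, followed by an application of Lemma~\ref{lem:one-local-retract} to push a point of the total intersection into $X$, is sound, since $B_{\mathbf E_{\restriction X}}(x,r)=B_{\mathbf E}(x,r)\cap X$ for $x\in X$. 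For normality, the two identities you flag as routine are indeed valid in this setting: $\delta(A)$ is self-dual because the involution is an order-automorphism of the complete lattice $\mathcal H$ and $d(x,y)=\overline{d(y,x)}$, which gives $\delta(\Cov_{\mathbf E}(A))=\delta(A)$; and $C_{\mathbf E}(A,w)=\bigcap_{a\in A}B_{\mathbf E}(a,\overline w)$ is immediate from the same axioms. The genuinely essential step — converting a center $c\in\tilde A=\Cov_{\mathbf E}(A)$ witnessing $\tilde A\subseteq B_{\mathbf E}(c,w)$ into a center $c'\in A$ witnessing $A\subseteq B_{\mathbf E}(c',w)$, by applying Lemma~\ref{lem:one-local-retract} to the family $\tilde A_0\cap\bigcap_{a\in A}B_{\mathbf E}(a,\overline w)$, all of whose centers lie in $X$ — is exactly where the one-local-retract hypothesis must enter, since the paper's radius $r(A)$ only allows centers inside $A$; you identified and handled this correctly. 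The final comparison $r(A)\leq r(\tilde A)<\delta(\tilde A)=\delta(A)$ yields $r(A)<\delta(A)$ (strictness survives because $r(A)=\delta(A)$ would force $\delta(A)\leq r(\tilde A)<\delta(A)$). Note also that your argument never invokes the distributivity condition, which matters: the paper explicitly works without the Heyting hypothesis in this section, and your proof respects that.
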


 \begin{proposition}\label{prop:main2}  Let $\mathbf E:= (E, d)$ be a metric space over $\mathcal H$. Assume $\mathbf E$ has a compact normal structure.  Then for every nonexpansive map  $f$ of $\mathbf E$, the set of fixed points $Fix(f)$ of $f$ is a nonempty one-local retract  of $\mathbf {E}$.  Thus $\mathbf E_{\restriction Fix(f)}$ has a compact normal structure.
 \end{proposition}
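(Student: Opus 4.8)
The plan is to verify the two assertions of the proposition in turn: first that $Fix(f)$ is a nonempty one-local retract of $\mathbf E$, and then to read off the compact normal structure of $\mathbf E_{\restriction Fix(f)}$ from the machinery already in place. Nonemptiness is immediate from Theorem \ref{cor:main}, since $\mathbf E$ has a compact normal structure; so the real content of the argument lies in the one-local retract property.

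To establish that $\mathbf E_{\restriction Fix(f)}$ is a one-local retract of $\mathbf E$, I would invoke the ball-intersection criterion of Lemma \ref{lem:one-local-retract}. Thus I would start from an arbitrary family of balls $\big(B_{\mathbf E}(x_i, r_i)\big)_{i\in I}$ whose centers $x_i$ lie in $Fix(f)$ and whose intersection $A := \bigcap_{i\in I} B_{\mathbf E}(x_i, r_i)$ is nonempty, and aim to produce a fixed point of $f$ inside $A$. The first observation is that $A$ is invariant under $f$: for $y \in A$ and each $i$, since $f(x_i) = x_i$ and $f$ is nonexpansive, $d(x_i, f(y)) = d(f(x_i), f(y)) \leq d(x_i, y) \leq r_i$, so $f(y) \in B_{\mathbf E}(x_i, r_i)$ for every $i$ and hence $f(A) \subseteq A$. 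Being an intersection of balls and nonempty, $A$ belongs to $\hat {\mathcal B }_{\mathbf E}^*$ and is preserved by $f$.

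The key step is then to localize the fixed-point construction inside $A$. Because $\mathbf E$ has a compact structure, Lemma \ref{lem:infimum-invariant} applies to the $f$-preserved member $A$: it contains a minimal $f$-preserved member $A_0 \in \hat {\mathcal B }_{\mathbf E}^*$, and this $A_0$ is equally centered, i.e.\ $\delta(A_0) = r(A_0)$. Now normal structure forbids a nonsingleton intersection of balls from being equally centered, since it forces the alternative $\delta(A_0) = 0$; hence $A_0$ is a singleton, say $A_0 = \{p\}$. From $f(A_0) \subseteq A_0$ we get $f(p) = p$, so $p \in Fix(f)$, while $p \in A_0 \subseteq A$. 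Therefore $A \cap Fix(f) \neq \emptyset$, which is exactly the condition of Lemma \ref{lem:one-local-retract}, and $\mathbf E_{\restriction Fix(f)}$ is a one-local retract of $\mathbf E$. The final assertion then follows directly from Lemma \ref{lem:compact}, applied with $X := Fix(f)$.

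The main obstacle, conceptually, is resisting the temptation to argue that the restricted space $\mathbf E_{\restriction A}$ inherits a compact normal structure and then apply Theorem \ref{cor:main} to it; verifying normal structure for the restricted space directly is awkward, and the radius and diameter are taken relative to $\mathbf E$. The cleaner route, and the one I would take, exploits the fact that the fixed point theorem itself is driven by Lemma \ref{lem:infimum-invariant}: one extracts the fixed point straight from the minimal $f$-invariant intersection of balls sitting inside $A$, which normal structure collapses to a point, rather than re-deriving the fixed point property for a subspace.
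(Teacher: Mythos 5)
Your proposal is correct and follows essentially the same route as the paper's own proof: take a family of balls centered in $Fix(f)$ with nonempty intersection $A$, note $f$ preserves $A$, extract via Lemma \ref{lem:infimum-invariant} a minimal $f$-preserved, equally centered intersection of balls inside $A$, collapse it to a singleton fixed point by normality, and conclude with Lemma \ref{lem:one-local-retract}. Your additional details (the explicit nonexpansiveness computation showing $f(A)\subseteq A$, the citation of Theorem \ref{cor:main} for nonemptiness, and of Lemma \ref{lem:compact} for the final assertion) only make explicit what the paper leaves implicit.
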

 \begin{proof}  Let $I$ be a set.  Consider a family of balls $\Big(B_{\mathbf E}(x_i, r_i )\Big)_{i \in I}$, with $x_i\in Fix(f)$ and $r_i\in \mathcal E$ for  $i \in I$, such that $A = \bigcap\limits_{i \in I}\ B_{\mathbf E}(x_i, r_i )$ is not empty. Since each $x_i$ belongs to $Fix(f)$, $f$ preserves $A$. Since $A$ is an intersection of balls, Lemma \ref {lem:infimum-invariant} ensures that $A$ contains an intersection of balls $A'$ which is minimal,  preserved by $f$,  and equally centered. From the normality of $\mathbf E$,  $A'$  is reduced to a single element, i.e., $A'$ is reduced to a fix-point of $f$.  Consequently,  $A\cap  Fix(f)\not= \emptyset$. According to Lemma \ref{lem:one-local-retract}, $Fix(f)$ is a one-local retract.
 \end{proof}

 In \cite{khamsi-pouzet}, Khamsi and Pouzet proved that: 
 \begin{theorem} \label{thm:cor2}
 	If   a generalized metric space $\mathbf E:=(E,d)$   has a compact normal structure then every commuting family $\mathcal F$ of nonexpansive self maps has a common fixed point. Furthermore, the restriction of $\mathbf E$ to the  set $Fix (\mathcal F)$  of common fixed points of $\mathcal F$ is a one-local retract of $(E,d)$.
 \end{theorem}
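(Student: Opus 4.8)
The plan is to deduce both assertions from one stronger statement: \emph{inside every nonempty $\mathcal F$-invariant intersection of balls $A$ of $\mathbf E$ there is a point fixed by all of $\mathcal F$}. Taking $A=E$ (which is the ball $B(x,\delta(E))$, hence an intersection of balls) gives the common fixed point, and the one-local retract assertion will fall out of the ball-characterisation of Lemma~\ref{lem:one-local-retract}. The first observation I would record is that such an $A$ again carries a compact normal structure: the balls of $\mathbf E_{\restriction A}$ are the traces on $A$ of the balls of $\mathbf E$, so their finite intersection property is inherited from the compact structure of $\mathbf E$ (using that $A$ is itself an intersection of balls), while the intersections of balls of $\mathbf E_{\restriction A}$ are intersections of balls of $\mathbf E$, so the normality condition ``$\delta(B)=0$ or $r(B)<\delta(B)$'' is inherited as well. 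It therefore suffices to prove the existence of a common fixed point in an arbitrary space with a compact normal structure, and then to apply that existence result to the various restrictions $\mathbf E_{\restriction A}$.

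For existence I would follow the single-map Lemma~\ref{lem:infimum-invariant}, but carry the whole family along. Ordering the nonempty $\mathcal F$-invariant intersections of balls by reverse inclusion, every chain has nonempty intersection by the finite intersection property coming from the compact structure, and that intersection is again a nonempty $\mathcal F$-invariant intersection of balls; so Zorn's lemma yields a minimal such set $A$. The key algebraic input is the family cover-identity $\Cov_{\mathbf E}\big(\bigcup_{f\in\mathcal F}f(A)\big)=A$: setting $A':=\Cov_{\mathbf E}\big(\bigcup_{f\in\mathcal F}f(A)\big)$ one has $A'\subseteq A$ because each $f(A)\subseteq A$ and $A$ is ball-closed, whereas for every $g\in\mathcal F$ one gets $g(A')\subseteq g(A)\subseteq\bigcup_{f\in\mathcal F}f(A)\subseteq A'$, so $A'$ is a nonempty $\mathcal F$-invariant intersection of balls contained in $A$, whence $A'=A$ by minimality. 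The goal is then to show that $A$ is equally centered, i.e. $r(A)=\delta(A)$; granting this, equal-centeredness rules out $r(A)<\delta(A)$, so normality forces $\delta(A)=0$, whence $A$ reduces to a single point $p$, and $\mathcal F$-invariance gives $f(p)=p$ for all $f\in\mathcal F$.

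Once existence is available in every space with a compact normal structure, the ``furthermore'' is immediate from Lemma~\ref{lem:one-local-retract}. Indeed, let $\big(B(x_i,r_i)\big)_{i\in I}$ be balls with $x_i\in Fix(\mathcal F)$ and with $A:=\bigcap_{i\in I}B(x_i,r_i)$ nonempty. Since each $x_i$ is fixed by every $f\in\mathcal F$ and $f$ is nonexpansive, $d(x_i,f(a))=d(f(x_i),f(a))\leq d(x_i,a)\leq r_i$ for every $a\in A$, so $f(A)\subseteq A$; thus $A$ is a nonempty $\mathcal F$-invariant intersection of balls, and it carries a compact normal structure by the first paragraph. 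Applying the existence statement to $\mathbf E_{\restriction A}$ produces a common fixed point lying in $A$, that is a point of $A\cap Fix(\mathcal F)$, which is exactly the intersection property characterising $Fix(\mathcal F)$ as a one-local retract. (For a \emph{finite} commuting family one may bypass the minimal-set construction and simply iterate Proposition~\ref{prop:main2}: commuting maps preserve one another's fixed-point sets, so $f_2$ restricts to $\mathbf E_{\restriction Fix(f_1)}$; Lemma~\ref{lem:compact} and Proposition~\ref{prop:main2} then propagate the compact normal structure and the one-local retract property from $Fix(f_1)$ to $Fix(f_1)\cap Fix(f_2)$, and so on, with one-local retracts composing transitively through Lemma~\ref{lem:one-local-retract}.)

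The crux, and the only genuinely hard step, is the equal-centeredness of the minimal $\mathcal F$-invariant set $A$; this is where commutativity is used essentially and where the passage from one map to a family is not formal. In the single-map proof of Lemma~\ref{lem:infimum-invariant} one passes to the Chebyshev centre $C=\{a\in A:A\subseteq B(a,r(A))\}$ and uses $\Cov_{\mathbf E}(f(A))=A$ to see that $C$ is again invariant; for a family this breaks down, since $a\in C$ only yields $f(A)\subseteq B(f(a),r(A))$ and not $A\subseteq B(f(a),r(A))$, and because covers of invariant subsets need not be invariant under the other members of $\mathcal F$. The substance of the argument, which is Baillon's contribution transported to the generalized setting in \cite{khamsi-pouzet}, is to replace the single-map cover-identity by the family identity $\Cov_{\mathbf E}\big(\bigcup_{f}f(A)\big)=A$ established above and, together with the finite intersection property, to show that the common value of the radius is attained and reproduces itself under the family, forcing $r(A)=\delta(A)$. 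I expect this equal-centeredness step to absorb essentially all the work, the remaining reductions being the routine inheritance and ball-intersection arguments sketched above.
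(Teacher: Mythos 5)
There is a genuine gap, and it sits exactly where your proposal parks ``essentially all the work.'' Your reduction machinery is correct: the inheritance of a compact normal structure by a nonempty intersection of balls, the Zorn's lemma production of a minimal nonempty $\mathcal F$-invariant intersection of balls $A$, the family cover-identity $\Cov_{\mathbf E}\big(\bigcup_{f\in\mathcal F}f(A)\big)=A$, and the derivation of the one-local-retract conclusion from your ``stronger statement'' via Lemma \ref{lem:one-local-retract} are all sound. But the stronger statement is never proved: everything hinges on the claim that the minimal $A$ is equally centered, and at precisely that point you write ``granting this'' and later ``I expect this equal-centeredness step to absorb essentially all the work.'' That claim \emph{is} the theorem: a posteriori a minimal $\mathcal F$-invariant intersection of balls must be a singleton (it contains a common fixed point, and singletons are invariant intersections of balls), so proving it is exactly as hard as proving the theorem, and nothing in your outline does so. Your own analysis shows why it is not a routine extension of Lemma \ref{lem:infimum-invariant}: for $a$ in the Chebyshev center of $A$ and $f,g\in\mathcal F$, nonexpansiveness yields $f(A)\subseteq B(f(a),r(A))$ but gives no control of $g(A)$ by $B(f(a),r(A))$, so the cover-identity cannot be invoked; moreover commutativity, which must be used somewhere, enters nowhere in your argument. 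The historical record is the counter-pressure here: Baillon \cite{baillon}, Khamsi \cite{khamsi} and Khamsi--Pouzet \cite{khamsi-pouzet} all avoid minimal invariant sets for the whole family precisely because this direct extension of the Kirk--Penot argument breaks down.

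The paper's proof takes a structurally different route, which in your proposal survives only as the final parenthetical remark about finite families. Finite subfamilies are handled by induction: with $\mathcal F'':=\mathcal F'\setminus\{f\}$, the set $Fix(\mathcal F'')$ is a nonempty one-local retract of $\mathbf E$ (induction hypothesis), hence carries a compact normal structure by Lemma \ref{lem:compact}; commutativity (its only use) makes $f$ an endomorphism of $\mathbf E_{\restriction Fix(\mathcal F'')}$, and Proposition \ref{prop:main2} together with transitivity of one-local retracts completes the step. The passage to an infinite family is then made through Theorem \ref{thm:best}: the sets $Fix(\mathcal F'')$, $\mathcal F''$ finite, form a down-directed family of nonempty one-local retracts, so their intersection $Fix(\mathcal F)$ is again a nonempty one-local retract. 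The engine behind Theorem \ref{thm:best} is the transfinite descending-intersection Lemma \ref{lem:descending-intersection}, whose proof the survey itself defers to \cite{khamsi-pouzet}. In short, you correctly identified where the difficulty lives, but your main route replaces the actual mechanism (one-local retracts plus the directed-intersection theorem) with an unproven claim that your own discussion shows cannot be established by the means you propose; to repair the proposal you would have to either supply that argument or fall back on the paper's induction-plus-Theorem \ref{thm:best} scheme.
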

 The fact that a space has a compact structure is an infinistic property (any finite metric space  enjoys it). A description of generalized metric spaces with a compact normal structure eludes us, even in the case of ordinary metric spaces.

 From Theorem \ref{thm:cor2},  we obtain:
 \begin{corollary} \label{thm:cor3}
 	If a generalized   metric space $\mathbf E$  is bounded and hyperconvex  then every commuting family  of nonexpansive self maps has a common fixed point.
 \end{corollary}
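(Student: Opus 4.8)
The plan is to obtain the corollary as an immediate consequence of the two results already in hand: Corollary~\ref{cor:compact+normal}, which upgrades boundedness together with hyperconvexity to a compact normal structure, and Theorem~\ref{thm:cor2}, which delivers a common fixed point for commuting families on any space with a compact normal structure. So the entire argument is a two-step chaining of hypotheses, and no new machinery is required.

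First I would verify that $\mathbf E$ has a compact normal structure. This is exactly the content of Corollary~\ref{cor:compact+normal}, but it is worth recalling the mechanism. Compactness, that is, the finite intersection property of the collection of balls, is immediate from the $2$-Helly property built into hyperconvexity. Normality is the statement that the only equally centered intersections of balls are singletons; by Lemma~\ref{equallycentered}, in a hyperconvex space an intersection of balls $A$ is equally centered precisely when its diameter $\delta(A)$ is inaccessible, and the boundedness hypothesis asserts that $0$ is the only inaccessible element below $\delta(E)$. Hence an equally centered intersection of balls must have $\delta(A)=0$ and is therefore a singleton, which is exactly the normal structure condition.

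Second, with the compact normal structure established, I would simply invoke Theorem~\ref{thm:cor2}: any commuting family $\mathcal F$ of nonexpansive self maps of $\mathbf E$ has a common fixed point, and in fact $\mathbf E_{\restriction Fix(\mathcal F)}$ is a one-local retract of $\mathbf E$. This completes the proof.

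The main difficulty is not located in this corollary, whose proof is a one-line deduction, but is hidden in the supporting Theorem~\ref{thm:cor2}. There the real work lies in reducing the possibly infinite commuting family to a situation controlled by the single-map fixed point theorem (Theorem~\ref{cor:main}) together with the one-local-retract analysis of Proposition~\ref{prop:main2} and Lemma~\ref{lem:compact}, so that the set of common fixed points can be built up as a one-local retract stable under the whole family. Those steps carry the analytic weight; once they are granted, boundedness and hyperconvexity enter only through the single clean implication recalled above.
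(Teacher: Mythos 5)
Your proposal is correct and follows exactly the paper's own route: the paper derives this corollary by combining Corollary~\ref{cor:compact+normal} (boundedness plus hyperconvexity, via Lemma~\ref{equallycentered} and the $2$-Helly property, gives a compact normal structure) with Theorem~\ref{thm:cor2}. Your recollection of the mechanism behind Corollary~\ref{cor:compact+normal} and your remark that the real work is concentrated in Theorem~\ref{thm:cor2} are both accurate.
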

 \vskip3mm
 In order to prove the existence of a common fixed point for a family of nonexpansive mappings in the context of hyperconvex metric spaces, Baillon \cite{baillon} discovered an intersection property satisfied by this class of metric spaces.  In order to prove an analogue to Baillon's conclusion under our setting, we will need the following lemma.

 \medskip
 \begin{lemma}\label{lem:descending-intersection}  Let $\mathbf E:= (E, d)$ be a metric space  over $\mathcal H$, endowed with a compact normal structure.  Let $\kappa$ be an infinite  cardinal. For every ordinal $\alpha$, $\alpha< \kappa$, let $B_{\alpha}$ and $E_{\alpha}$ be subsets  of $E$ such that:
 	\begin{enumerate}
 		\item $B_{\alpha} \supseteq B_{\alpha+1}$ and $E_{\alpha} \supseteq E_{\alpha+1}$ for every $\alpha<\kappa$;
 		\item $\bigcap\limits_{\gamma<\alpha} B_{\gamma}=B_{\alpha}$ and $\bigcap\limits_{\gamma<\alpha} E_{\gamma}=E_{\alpha}$ for every limit ordinal $\alpha< \kappa$;
 		\item $\mathbf E_{\alpha}:=\mathbf E_{\restriction  E_{\alpha}}$ is a one-local retract of $\mathbf E$ and $B_{\alpha}$ is a nonempty intersection of balls of $\mathbf  E_{\alpha}$.
 	\end{enumerate}
 	Then $B_{\kappa}:=\bigcap\limits_{\alpha< \kappa} B_{\alpha}\not = \emptyset$.
 \end{lemma}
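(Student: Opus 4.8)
The plan is to split each $B_\alpha$ into a ``ball part'' and a ``one-local-retract part'', to dispose of the ball part by compactness, and to isolate the genuine difficulty at the top stage $\kappa$, where normality is what must be used.

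First I would fix, for every $\alpha<\kappa$, a presentation $B_\alpha=\bigcap_i B_{\mathbf E_\alpha}(x^\alpha_i,r^\alpha_i)$ with all centres $x^\alpha_i\in E_\alpha$, and write $\hat B_\alpha:=\bigcap_i B_{\mathbf E}(x^\alpha_i,r^\alpha_i)$ for the same balls read in $\mathbf E$, so that $B_\alpha=\hat B_\alpha\cap E_\alpha$. Setting $\hat B_\kappa:=\bigcap_{\alpha<\kappa}\hat B_\alpha$ and $E_\kappa:=\bigcap_{\alpha<\kappa}E_\alpha$, the hypotheses give $B_\kappa=\hat B_\kappa\cap E_\kappa$. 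So two things remain: that $\hat B_\kappa$ is large enough inside each $E_\alpha$, and that one point lies in $\hat B_\kappa$ and in all the $E_\alpha$ at once.

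Second, I would show $\hat B_\kappa\cap E_\alpha\neq\emptyset$ for every $\alpha<\kappa$. The point is that, although $\hat B_\kappa$ uses balls centred in the larger sets $E_\gamma$ $(\gamma<\alpha)$, its trace on $E_\alpha$ is well centred: since the chain $(B_\gamma)$ is decreasing, $B_\alpha\subseteq\hat B_\gamma$ for all $\gamma\le\alpha$, whence $\bigcap_{\gamma\le\alpha}\hat B_\gamma\cap E_\alpha=B_\alpha$ and therefore $\hat B_\kappa\cap E_\alpha=B_\alpha\cap\bigcap_{\gamma>\alpha}\bigl(\hat B_\gamma\cap E_\alpha\bigr)$, an intersection of balls of $\mathbf E_\alpha$ (the balls of a stage $\gamma\ge\alpha$ have centres in $E_\gamma\subseteq E_\alpha$). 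Any finite subfamily of these balls involves finitely many stages; if $\beta$ is the largest, every ball of the subfamily contains $B_\beta$ by monotonicity of $(B_\gamma)$, and $B_\beta\neq\emptyset$. Thus this family of balls of $\mathbf E_\alpha$ has the finite intersection property. By Lemma \ref{lem:compact} the one-local retract $\mathbf E_\alpha$ again carries a compact normal structure, and compactness of $\mathbf E_\alpha$ then yields $\hat B_\kappa\cap E_\alpha\neq\emptyset$.

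The remaining step at $\kappa$ is the real obstacle. The sets $S_\alpha:=\hat B_\kappa\cap E_\alpha$ are nonempty and decreasing with $B_\kappa=\bigcap_{\alpha<\kappa}S_\alpha$, but the previous argument cannot be repeated, since $E_\kappa$ is not assumed to be a one-local retract—indeed that assertion is itself a special case of the statement being proved. Nor does bare compactness of $\mathbf E$ suffice: passing to the covers $\Cov_{\mathbf E}(S_\alpha)$ produces a nonempty intersection of balls of $\mathbf E$, but these covers may protrude out of the $E_\alpha$, so the resulting point need not lie in $E_\kappa$. This is precisely where normality must intervene. The aim is to produce a single point lying in $\hat B_\kappa$ and in every $E_\alpha$ simultaneously, for such a point automatically lies in $E_\kappa$, hence in $B_\kappa$; following the pattern of Lemma \ref{lem:infimum-invariant} and the defining property of a normal structure—by which an equally centred intersection of balls is a singleton—I would try to realize $\bigcap_{\alpha<\kappa}S_\alpha$ as containing a minimal equally centred intersection of balls. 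The hard part, and the crux of the whole proof, is to run this minimization compatibly with the constraint of meeting \emph{all} the $E_\alpha$: one must verify that the descending intersections arising in the minimization stay nonempty and keep meeting every $E_\alpha$, which forces one to interweave the compactness of the $\mathbf E_\alpha$ (Lemma \ref{lem:compact}), the continuity hypotheses (1)--(2) on the two chains, and normality. This transfinite bookkeeping is the substance of the argument of \cite{khamsi-pouzet}.
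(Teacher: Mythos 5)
Your first two steps are sound: writing $B_\alpha=\hat B_\alpha\cap E_\alpha$, where $\hat B_\alpha$ is the corresponding intersection of balls of $\mathbf E$ centred in $E_\alpha$, and then using monotonicity of the two chains, Lemma \ref{lem:compact}, and compactness of each $\mathbf E_\alpha$ to get $S_\alpha:=\hat B_\kappa\cap E_\alpha\neq\emptyset$, is a correct reduction. It recasts the lemma as the statement that the single intersection of balls $\hat B_\kappa$ of $\mathbf E$, which meets every $E_\alpha$, must also meet $E_\kappa=\bigcap_{\alpha<\kappa}E_\alpha$. But that residual statement is not a technicality --- it is the entire content of the lemma (it is essentially what Theorem \ref{thm:best} is designed to extract from it), and you do not prove it. You correctly diagnose why the easy moves fail: bare compactness of $\mathbf E$ does not help because the covers $\Cov_{\mathbf E}(S_\alpha)$ may protrude out of the $E_\alpha$, and the argument of your second step cannot be repeated at $\kappa$ because $E_\kappa$ is not known to be a one-local retract (that would beg the question). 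You then point at normality and a minimization in the style of Lemma \ref{lem:infimum-invariant}; but the actual construction --- producing a descending transfinite family of nonempty intersections of balls that provably keeps meeting every $E_\alpha$ and is forced, by normality, down to a singleton --- is precisely what you defer to \cite{khamsi-pouzet}. A proof whose crux ends with ``this transfinite bookkeeping is the substance of the argument of the reference'' is a proof sketch, not a proof: naming the tools that should close the gap does not close it.

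For what it is worth, the survey itself does exactly the same thing: it states the lemma and says the proof is in \cite{khamsi-pouzet} and beyond the scope of the paper. So there is no in-paper argument to measure your attempt against, and your preliminary reductions (in particular the observation that one may assume the ball systems are coherently read inside $\mathbf E$, and that each trace $S_\alpha$ is a nonempty intersection of balls of $\mathbf E_\alpha$) are genuine added value over the text. But judged as a self-contained proof of the statement, your attempt has a real gap, located exactly at the step where normality and the continuity hypotheses (1)--(2) must be put to work.
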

 
 The proof is  in \cite{khamsi-pouzet}, it  is beyond the scope of this paper.

 From Lemma \ref{lem:descending-intersection} follows:
 
 \begin{theorem}\label{thm:best} Let $\mathbf E:= (E, d)$ be generalized metric space. Assume that $\mathbf E$ has a compact normal structure.  Then, the intersection of every down-directed family $\mathcal F$ of one-local retracts is a nonempty one-local retract. 
 \end{theorem}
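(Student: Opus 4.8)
The plan is to prove the statement by transfinite induction on the cardinality $\kappa := |\mathcal F|$, establishing the combined assertion that for every down-directed family $\mathcal F$ of one-local retracts of $\mathbf E$ (ordered by inclusion) the set $X := \bigcap \mathcal F$ is a \emph{nonempty} one-local retract. The two engines are the ball-theoretic characterization of one-local retracts (Lemma \ref{lem:one-local-retract}) and the descending-intersection principle (Lemma \ref{lem:descending-intersection}); the induction is precisely what will let us meet hypothesis $(3)$ of the latter, namely that the partial intersections along a well-ordered chain are themselves one-local retracts. When $\kappa$ is finite, down-directedness yields a least member $F_\ast = X$, which is a one-local retract, and one-local retracts are nonempty; this settles the base case.

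So assume $\kappa$ is infinite and that the assertion holds for every down-directed family of size $<\kappa$. I would enumerate $\mathcal F = \{F_i : i<\kappa\}$ and, for each $\alpha<\kappa$, let $\mathcal F_\alpha$ be the smallest down-directed subfamily of $\mathcal F$ containing $\{F_\gamma : \gamma<\alpha\}$, obtained by repeatedly adjoining, for each pair already chosen, a common lower bound furnished by the down-directedness of $\mathcal F$. A routine cardinal count gives $|\mathcal F_\alpha|\le |\alpha|+\aleph_0<\kappa$; the families increase with $\alpha$, satisfy $\mathcal F_\lambda=\bigcup_{\alpha<\lambda}\mathcal F_\alpha$ at limits, and $\bigcup_{\alpha<\kappa}\mathcal F_\alpha=\mathcal F$. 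Setting $E_\alpha:=\bigcap\mathcal F_\alpha$ and $\mathbf E_\alpha:=\mathbf E_{\restriction E_\alpha}$, the induction hypothesis applies to each $\mathcal F_\alpha$ and shows that every $\mathbf E_\alpha$ is a nonempty one-local retract of $\mathbf E$; moreover $(E_\alpha)_{\alpha<\kappa}$ is decreasing, continuous at limits, with $\bigcap_{\alpha<\kappa}E_\alpha=X$.

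With this chain in hand I would finish in two steps. For nonemptiness, I apply Lemma \ref{lem:descending-intersection} with $B_\alpha:=E_\alpha$: each $E_\alpha$ is the full ball $B_{\mathbf E_\alpha}(y,1)$, hence a nonempty intersection of balls of $\mathbf E_\alpha$, and hypotheses $(1)$--$(3)$ hold by the previous paragraph, so $X=\bigcap_{\alpha<\kappa}E_\alpha\neq\emptyset$. For the one-local retract property I would verify the criterion of Lemma \ref{lem:one-local-retract}: take balls $\big(B_{\mathbf E}(x_i,r_i)\big)_{i\in I}$ with every $x_i\in X$ and with $A:=\bigcap_i B_{\mathbf E}(x_i,r_i)\neq\emptyset$. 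Since $x_i\in X\subseteq E_\alpha$ and $E_\alpha$ is a one-local retract, Lemma \ref{lem:one-local-retract} gives $A\cap E_\alpha=\bigcap_i B_{\mathbf E_\alpha}(x_i,r_i)\neq\emptyset$, a nonempty intersection of balls of $\mathbf E_\alpha$. The sets $B_\alpha:=A\cap E_\alpha$ then satisfy the hypotheses of Lemma \ref{lem:descending-intersection}, continuity at limits following from $\bigcap_{\gamma<\alpha}(A\cap E_\gamma)=A\cap E_\alpha$, whence $A\cap X=\bigcap_{\alpha<\kappa}B_\alpha\neq\emptyset$. By Lemma \ref{lem:one-local-retract} this proves that $X$ is a one-local retract.

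The main obstacle is exactly the limit behaviour encapsulated in the two uses of Lemma \ref{lem:descending-intersection}: that lemma demands, at every stage, that the truncated intersection be a one-local retract, yet a down-directed family need not contain any member lying below an infinite decreasing subchain, so one cannot simply extract a cofinal chain of members of $\mathcal F$. The device that removes this difficulty is to filter $\mathcal F$ not by its own members but by the intersections $E_\alpha=\bigcap\mathcal F_\alpha$ of down-directed \emph{subfamilies} of strictly smaller cardinality, so that the induction hypothesis certifies each $E_\alpha$ as a one-local retract \emph{before} the descending-intersection principle is invoked. A secondary, purely technical point to treat with care is the cardinal bookkeeping guaranteeing $|\mathcal F_\alpha|<\kappa$ together with the continuity $\mathcal F_\lambda=\bigcup_{\alpha<\lambda}\mathcal F_\alpha$ at limit stages.
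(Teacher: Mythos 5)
Your argument is, in substance, the paper's own proof: both reduce the down-directed family to a descending well-ordered sequence $(E_\alpha)_{\alpha<\kappa}$ of one-local retracts that is continuous at limit stages, and then invoke Lemma \ref{lem:descending-intersection} twice --- once with $B_\alpha:=E_\alpha$ to get nonemptiness of the intersection, and once with $B_\alpha:=E_\alpha\cap\bigcap_{i\in I}B_{\mathbf E}(x_i,r_i)$ to verify the ball criterion of Lemma \ref{lem:one-local-retract}. The differences are presentational. Where the paper passes from down-directed families to chains by citing an order-theoretic fact (Cohn, Proposition 5.9) and then argues by induction on the size of chains, you merge the two reductions into a single transfinite induction on $|\mathcal F|$ via an Iwamura-type filtration by down-directed subfamilies of smaller cardinality; this is self-contained and has the virtue of carrying the equality ``infimum equals intersection'' through the induction explicitly. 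Also, to check hypothesis $(3)$ of Lemma \ref{lem:descending-intersection} for $B_\alpha=E_\alpha$, the paper invokes normal structure (through Lemma \ref{lem:compact}) to exhibit $E_\alpha$ as a ball of $\mathbf E_\alpha$, whereas you simply note $E_\alpha=B_{\mathbf E_\alpha}(y,1)$ for any $y\in E_\alpha$; since $\mathcal H$ is a complete lattice with greatest element $1$, this is correct and simpler.

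There is one step that fails as written: the bound $|\mathcal F_\alpha|\le|\alpha|+\aleph_0<\kappa$ is false at the first infinite stage $\kappa=\aleph_0$, where $|\alpha|+\aleph_0=\aleph_0=\kappa$; the $\omega$-step pairwise closure of even a finite set may be countably infinite, so the induction hypothesis cannot be applied to $\mathcal F_\alpha$ in the countable case. The repair is routine: a finite subset of a down-directed family has a single common lower bound inside the family, so one may take $\mathcal F_\alpha:=\{F_\gamma:\gamma<\alpha\}\cup\{G_\alpha\}$ with $G_\alpha$ such a bound (this set is already down-directed), giving $|\mathcal F_\alpha|\le|\alpha|+1$; alternatively, handle $\kappa=\aleph_0$ separately by choosing $G_0:=F_0$ and $G_{n+1}\in\mathcal F$ below both $G_n$ and $F_{n+1}$, which produces a coinitial descending chain with the same intersection. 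With that correction (and with ``smallest down-directed subfamily,'' which need not exist, replaced by the closure under a fixed choice function of lower bounds, so that the $\mathcal F_\alpha$ increase and are continuous at limits), your proof is complete and coincides with the paper's.
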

 
 \begin{proof} Let $\mathbf E: = (E,d)$ be a generalized metric space.  Assume $\mathbf E$ has a compact normal structure. Let $P$ be the set, ordered by inclusion, of nonempty subsets $A$ of $E$ such that $\mathbf E_{\restriction A}$ is a one-local retract of $\mathbf E$.  As any ordered set,  every down-directed subset of $P$ has an infimum  iff  every totally ordered subset of $P$ has an infimum (see \cite{cohn} Proposition 5.9 p 33).  We claim that $P$ is closed under intersection of every chain of its members.  Indeed, we argue by induction on the size of totally ordered families of one-local retracts  of $\mathbf E$. First we may suppose that $E$ has more than one element.  Next, we may suppose that these families are dually well ordered by induction. Thus,   given an infinite cardinal $\kappa$, let $(\mathbf E_{\restriction E_{\alpha}})_{\alpha<\kappa}$ be a descending sequence of one-local retracts  of $\mathbf E$. From the induction hypothesis, we may suppose that  the restriction of $\E$ to $E'_{\alpha}:= \bigcap\limits_{\gamma<\alpha}E_{\gamma}$ is a one-local retract  of $\mathbf E$ for each limit ordinal $\alpha< \kappa$. Hence, we may suppose that  $E_{\alpha}:= \bigcap\limits_{\gamma<\alpha}E_{\gamma}$ for each limit ordinal $\alpha< \kappa$. Since $\mathbf  E_{\alpha}:= \mathbf E_{\restriction E_{\alpha}}$ is a one-local retract of $\mathbf E$ and  $\mathbf E$ has a normal structure, $\mathbf E_{\alpha}$ has a normal structure (Lemma \ref{lem:compact}).  Hence, either $E_{\alpha}$ is a singleton, say $x_{\alpha}$, or $r_{\mathbf E_{\alpha}}(E_{\alpha}) \setminus \delta_{\mathbf E_{\alpha}} (E_{\alpha})\not =\emptyset$. In both cases, $E_{\alpha}$ is a ball of $\mathbf E_{\alpha}$. Hence  the first case, $E_{\alpha}= B_{\mathbf E_{\alpha}}(x_{\alpha}, r_{\restriction {E_{\alpha}}})$,  whereas in second case,  $E_{\alpha}\subseteq B_{\mathbf E_{\alpha}}(x, r)$ for some $x\in E_{\alpha}, r \in r_{\mathbf E_{\alpha}}(E_{\alpha}) \setminus \delta_{\mathbf E_{\alpha}} (E_{\alpha})$.  Hence,  Lemma \ref{lem:descending-intersection} applies with $B_{\alpha}= E_{\alpha}$ and gives that $E_{\kappa}$ is nonempty. Let us prove that $\mathbf E_{\kappa}:= \mathbf E_{\restriction E_{\kappa}}$ is a one-local retract  of $\mathbf E$. We apply Lemma \ref{lem:one-local-retract}. Let $(B_{\mathbf E}(x_i, r_i ))_{i\in I}, x_i \in E_{\kappa}$, $r_i\in \mathcal H$ be a family of balls such that the intersection is nonempty. Since $\mathbf E_{\alpha}$ is a one-local retract  of $\mathbf E$, the intersection $B_{\alpha}:= E_{\alpha}\bigcap \bigcap\limits_{i\in I}B_{\mathbf E}(x_i, r_i)$ is nonempty for every $\alpha<\kappa$. Now, Lemma \ref{lem:descending-intersection} applied to the sequence $(E_{\alpha}, B_{\alpha})_{\alpha<\kappa}$ tells us that $B_{\kappa}:=E_{\kappa}\bigcap \bigcap\limits_{i\in I} B_{\mathbf E}(x_i, r_i)$ is nonempty. According to Lemma \ref{lem:one-local-retract}, $\mathbf {E}_{\restriction B_{\kappa}}$ is a one-local retract  of $\mathbf E$.
 \end{proof}
 \medskip
 
 From this result, we  prove our fixed point result. 
 
 \noindent {\bf Proof of Theorem \ref{thm:cor2}.}
 For a subset $\mathcal {F'}$ of $\mathcal F$, let $Fix (\mathcal {F'})$ be the set of fixed points of $\mathcal {F'}$.  Using Proposition \ref{prop:main2}, we conclude that $\mathbf {E}_{\restriction Fix(\mathcal {F'})}$ is a nonempty one-local retract  of $\mathbf E$ for every finite subset $\mathcal {F'}$ of $\mathcal F$.  Indeed, we show this by induction on the number $n$ of elements of $\mathcal {F'}$. If $n=1$, this is the conclusion of Proposition \ref{prop:main2}.  Let $n\geq 1$. Suppose that the property holds for every subset $\mathcal {F''}$ of $\mathcal F'$ such that $\vert\mathcal {F''}\vert<n$. Let $f\in \mathcal {F'}$  and $\mathcal F'':= \mathcal {F'} \setminus \{f\}$. From our inductive hypothesis, $\mathbf {E}_{\restriction Fix(\mathcal {F''})}$  is a one-local retract  of $\mathbf E$. Thus, according to lemma \ref{lem:compact},   $\mathbf {E}_{\restriction Fix(\mathcal {F''})}$ has a compact normal structure. Now since $f$ commutes with every member $g$ of $\mathcal {F''}$, $f$ preserves $Fix(\mathcal {F''})$.  Indeed, if  $u \in Fix(\mathcal {F''})$,  we have $g(f(u))=f(g(u))=f(u)$, that is $f(u) \in Fix(\mathcal {F''})$. Thus $f$ induces  an endomorphism $f''$ of $\mathbf {E}_{\restriction Fix(\mathcal {F''})}$.    According to Proposition \ref{prop:main2},  the restriction of $\mathbf E_{\restriction Fix(\mathcal {F''}) }$ to $Fix (f'')$, that is $\mathbf E_{\restriction Fix(\mathcal {F'})}$,  is a nonempty one-local retract  of $\mathbf E_{\restriction Fix(\mathcal {F''})}$. Since the notion of one-local retract  is transitive  it follows that $\mathbf E_{\restriction Fix(\mathcal {F'})}$ is a nonempty one-local retract of $\mathbf E$.  Let $\mathcal P:= \{Fix (\mathcal {F''}): \vert \mathcal {F''}\vert < \aleph_{0} \}$ and $P:= \bigcap \mathcal P$. According to theorem \ref{thm:best},  $\mathbf E_{\restriction  P}$ is a one-local retract of $\mathbf E$. Since $P= Fix(\mathcal {F})$ the conclusion follows.
 \hfill $\Box$
 \section{Illustrations}

 \subsection{The case of ordinary metric and ultrametric spaces} 
 Let $\mathcal{H}:=\mathbb R^{+} \cup \{+\infty\}$.  The inaccessible  elements are $0$ and $+\infty$ hence, if one  deals with ordinary metric spaces,  unbounded  spaces in the above sense are those which are unbounded in the ordinary sense.  If one deals with ordinary metric spaces, infinite products  can yield spaces  for which $+\infty$ is attained. On may replace powers with $\ell^{\infty}$-spaces. Doing so, the notions of  absolute retract, injective, hyperconvex and retract of some $\ell^{\infty}_{\mathbb R}(I)$-space coincide. This is the result of Aronzajn and Panitchpakdi \cite{ArPa}. The existence of an injective envelope  was proved by Isbell \cite{isbell}. The injective enveloppe of a $2$-element ordinary metric space is a closed interval  $[0, v]$ of the real line with the absolute value distance; injective envelopes of ordinary  metric spaces up to five elements  have been described \cite {Dr1}. For some applications,  see \cite{Dr1, Che}.  
 
 The existence of a fixed point for a nonexpansive map on a bounded hyperconvex space is the famous result of Sine and Soardi. Theorem \ref{thm:cor2} applied to a bounded  hyperconvex metric space  is  Baillon's fixed point theorem.  Applied to
 a metric space with  a compact normal structure, this is  a result obtained by Khamsi 
 \cite{khamsi}. For a survey about hyperconvex spaces see \cite{espinola-khamsi}.
 
 The results presented about injective spaces   apply to ultrametric spaces  over $\mathbb R^{+} \cup \{+\infty\} $. A similar characterization to ours  was obtained in \cite{bayod-martinez};  a description of the injective envelope  is also given.  The paper \cite{PR} contains a study of ultrametric spaces  over a complete lattice satisfying our  distributivity condition, also called an \emph{op-frame}. Metric spaces over op-frame are  studied in \cite{ackerman}. Ultrametric spaces  over a lattice and their  connexion with collections of equivalence relations are also studied in \cite{braunfeld}.  
 
 \subsection{The case of ordered sets}\label{subsection:orderedset}
  
 Set $\mathcal H:=\{-, 0, 1, +\}$ with $0< -, +< 1$. The  retracts of powers of this lattice are all complete lattices. This is confirmed by the following fact.

 \begin{proposition}
 	A metric space $(\mathbf E,d)$ over $\mathcal{H}$ is hyperconvex iff the corresponding poset is a complete lattice.
 \end{proposition}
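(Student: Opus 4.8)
The plan is to prove both implications directly from the definition of hyperconvexity, using the explicit description of balls over this four-element $\mathcal{H}$. Recall that, writing $\mathbf{P}_d=(E,\leq)$ for the associated poset, a ball of $(E,d)$ is the singleton $\{x\}$ when its radius is $0$, the whole space $E$ when the radius is $1$, the principal final segment $\uparrow x=B(x,+)$ when the radius is $+$, and the principal initial segment $\downarrow x=B(x,-)$ when the radius is $-$. Since $\overline{+}=-$, $\overline{-}=+$, $\overline{0}=0$, $\overline{1}=1$, and $\oplus=\vee$, the compatibility condition $d(x_i,x_j)\leq r_i\oplus\overline{r_j}$ is vacuous except between an up-ball and a down-ball, where it collapses to $d(x_i,x_j)\leq +$, i.e. to the comparability $x_i\leq x_j$. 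This observation is the lever for both directions.

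First I would show that a complete lattice is hyperconvex. Given a compatible family $\big(B(x_i,r_i)\big)_{i\in I}$, I discard the radius-$1$ balls and classify the remaining indices by their radius. If some radius equals $0$, the pairwise condition among the radius-$0$ indices forces all their centers to coincide at a point $a$, and the cross conditions with the up- and down-balls give exactly $x_j\leq a$ for every up-center $x_j$ and $a\leq x_k$ for every down-center $x_k$, so $a$ lies in every ball. If no radius is $0$, set $a:=\bigvee\{x_j : r_j=+\}$, with $\bigvee\emptyset$ read as the bottom element. The up/down cross-compatibility yields $x_j\leq x_k$ whenever $r_j=+$ and $r_k=-$, hence $a\leq x_k$ for every down-center, while $a\geq x_j$ for every up-center by construction; thus $a$ belongs to every ball and the intersection is nonempty.

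Conversely I would show that a hyperconvex $(E,d)$ forces $\mathbf{P}_d$ to be a complete lattice by exhibiting all suprema. Fix $S\subseteq E$ and let $U$ be its set of upper bounds. Applying hyperconvexity to the trivially compatible family $\{B(s,+):s\in S\}$ gives $\bigcap_{s\in S}\uparrow s=U\neq\emptyset$. Then I feed hyperconvexity the combined family $\{B(s,+):s\in S\}\cup\{B(u,-):u\in U\}$: its only nontrivial compatibility constraints are the up/down cross terms, which demand $s\leq u$ for $s\in S$ and $u\in U$, true by definition of $U$. Hence the intersection is nonempty, and any point $m$ in it satisfies $m\geq s$ for all $s\in S$ (so $m\in U$) and $m\leq u$ for all $u\in U$; that is, $m=\bigvee S$. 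Since $S$ was arbitrary, every subset of $E$ has a supremum, and a poset in which every subset has a supremum is a complete lattice (the infimum of a subset being the supremum of its set of lower bounds).

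The only delicate point is the bookkeeping of the involution in the cross terms — checking that the compatibility inequality between a $B(\cdot,+)$ and a $B(\cdot,-)$ really reduces to $x_i\leq x_j$ and matches the upper-bound relation — together with the harmless degenerate cases (empty index classes, the bottom as an empty join). A shorter but less self-contained route is available through Theorem~\ref{thm: caracterisation-hyperconvexity}: hyperconvexity is equivalent to being a retract of a power of $\mathbf{H}$, and since nonexpansive maps coincide with order-preserving maps and the poset of $\mathbf{H}$ is the four-element lattice $\{-,0,1,+\}$, this translates into $\mathbf{P}_d$ being a retract of a power of that lattice; one then invokes the Banaschewski--Bruns fact that such retracts are exactly the complete lattices. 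I would prefer the direct argument above, since it keeps the proof entirely internal to the definition of hyperconvexity.
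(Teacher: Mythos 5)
Your proof is correct, and it takes a genuinely different route from the paper: the paper states this proposition without proof, offering it as confirmation of the preceding remark that retracts of powers of the four-element lattice $\{-,0,1,+\}$ are exactly the complete lattices, so its implicit justification is precisely the ``shorter'' route you mention at the end --- Theorem \ref{thm: caracterisation-hyperconvexity} (hyperconvex $\Leftrightarrow$ retract of a power of $\mathbf H$) combined with the Banaschewski--Bruns-type fact about retracts of powers of that lattice. Your main argument instead works entirely inside the definition of hyperconvexity, using the classification of balls (singletons, $E$, $\uparrow\! x=B(x,+)$, $\downarrow\! x=B(x,-)$) and a case-by-case computation of $r_i\oplus\overline{r_j}$: the cross-compatibility between an up-ball and a down-ball reduces to comparability of the centers, the join $a:=\bigvee\{x_j: r_j=+\}$ witnesses the intersection in the forward direction, and the two-stage application of hyperconvexity (first to $\{B(s,+)\}_{s\in S}$ to get $U\neq\emptyset$, then to $\{B(s,+)\}_{s\in S}\cup\{B(u,-)\}_{u\in U}$ to extract $\sup S$ as a point of the intersection) is exactly what is needed for the converse; you also correctly dispose of the radius-$0$ and empty-class degeneracies, so there is no gap. (One phrasing is slightly loose: the compatibility condition is ``vacuous except between an up-ball and a down-ball'' only among radii $+,-,1$; radius-$0$ balls impose further constraints --- but you treat those separately, so this is cosmetic.) As for what each approach buys: yours is elementary and self-contained, and it makes visible exactly where completeness of the lattice is used (existence of the join of the up-centers) and where hyperconvexity is used (producing a least upper bound as a point in a ball intersection); the paper's route is shorter and places the statement within the general variety machinery (hyperconvex $=$ retract of a power of $\mathbf H$), but it rests on the characterization theorem and on a lattice-theoretic fact that the paper itself does not prove.
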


 Since $0$ is the only inacessible element of $\mathcal{H}$, Theorem \ref{thm:cor3} applies: \emph{Every commuting family of order-preserving maps on a complete lattice has a common fixed point}. This is Tarski's theorem (in full).
 
 Posets coming from  $\mathcal{H}$-metric spaces with a compact normal structure are a bit more general than complete lattices, hence Theorem \ref{thm:cor2} on compact normal structure could say a bit more than Tarski's fixed point theorem. As we will see below, in the case of one order-preserving map, this is no more than  Abian-Brown's fixed-point theorem.
 
 Let $\mathbf P:= (E, \leq)$ be a poset. We observe first that the f.i.p. property of the collection of balls $\mathcal B_{\mathbf P}:= \{\downarrow x: x\in E\}\cup \{\uparrow y: y\in E\}$ is an infinistic condition: it holds for every finite poset. In order to describe it we introduce the following notions.

 A pair of subsets $(A, B)$ of  $E$ is called a \emph{gap} of $\mathbf P$ if every element of $A$ is dominated by every element of $B$ but there is no element of $E$ which dominates every element of $A$ and is dominated by every element of $B$ (cf. \cite{DuRi}). In other words:
 $(\bigcap_{x\in A} B(x, \leq ))\cap (\bigcap_{y\in B} B(y, \geq ))= \emptyset$ while $B(x, \leq )\cap B(y, \geq ) \not =\emptyset$ for every $x\in A, y\in B$. A \emph{subgap} of $(A,B)$ is any pair $(A', B')$ with $A'\subseteq A$, $B'\subseteq B$, which is a gap. The gap $(A,B)$ is \emph{finite} if $A$ and $B$ are finite, otherwise it is \emph{infinite}.  Say that an ordered set $\mathbf Q$ \emph{preserves} a gap $(A,B)$ of $\mathbf P$ if there is an order-preserving map  $g$ of $\mathbf P$ to $\mathbf Q$ such that $(g(A), g(B))$ is a gap of $\mathbf Q$. On the preservation of gaps, see \cite{nevermann-wille}.

 \begin{lemma}\label{lem:gap}
 	Let $\mathbf P:= (E, \leq )$ be a poset. Then:
 	\begin{enumerate}
 		\item $\mathbf P$ is  a complete lattice iff $\mathbf P$ contains no gap;
 		\item An order-preserving map $f:\mathbf P\rightarrow \mathbf Q$ preserves all gaps of $\mathbf P$ iff it preserves all holes of $\mathbf P$ with values in $\mathcal H\setminus \{0\}$ iff $\mathbf Q_{\restriction {P}}$ is  a one-local retract  of $\mathbf Q$;
 		\item $\mathcal B_{\mathbf P}$ satisfies the f.i.p. iff every gap of $\mathbf P$ contains a finite subgap iff every hole is finite.
 	\end{enumerate}
 	
 \end{lemma}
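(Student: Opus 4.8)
The plan is to run everything through the ball dictionary for $\mathcal H=\{-,0,1,+\}$: in a poset $\mathbf P:=(E,\le)$ the nonzero-radius balls are exactly $B(x,+)=\mathord\uparrow x$, $B(x,-)=\mathord\downarrow x$ and $B(x,1)=E$, while $B(x,0)=\{x\}$. Hence a hole $h$ with values in $\mathcal H\setminus\{0\}$ is nothing but a pair $A:=h^{-1}(+)$ of lower constraints and $B:=h^{-1}(-)$ of upper constraints (the value $1$ constraining nothing), whose balls intersect in the set of \emph{interpolants} $\{z:\ a\le z\ \forall a\in A,\ z\le b\ \forall b\in B\}$; so $h$ is a hole iff no interpolant exists. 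Comparing with the definition of a gap, I would first record the backbone observation that gaps are precisely the nonzero-valued holes with the extra property $A\le B$ (equivalently, all pairwise intersections $\mathord\uparrow a\cap\mathord\downarrow b$ are nonempty), and that then $A\cap B=\emptyset$, so the associated map $h$ is well defined.

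For (1), the forward direction is immediate: in a complete lattice, for any pair with $A\le B$ the element $z:=\bigvee A$ (the empty join being the bottom) is an interpolant, since each $b\in B$ is an upper bound of $A$; so no gap exists. Conversely, assuming no gaps, for an arbitrary $S\subseteq E$ the pair $(S,S^{u})$, with $S^{u}$ the set of upper bounds, satisfies $S\le S^{u}$, hence is not a gap, and its interpolant is forced to be the least upper bound of $S$; thus every subset has a supremum and $\mathbf P$ is a complete lattice (taking $S=\emptyset$ also supplies a bottom).

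For (2) the real work lies, and the third condition compels us to read $\mathbf P$ as a subposet of $\mathbf Q$, i.e.\ $f$ as an order-embedding (equivalently an isometry of these poset-spaces); I would note in passing that nonzero-hole preservation forces any order-preserving $f$ to be such an embedding — for $x\not\le y$ the two-ball hole $h(x)=+,\,h(y)=-$ is a genuine hole whose image fails to be one as soon as $f(x)\le f(y)$ (including $f(x)=f(y)$) — so no generality is lost. Granting this, the equivalence of nonzero-hole preservation with the one-local retract property is Lemma \ref{lem:one-local-retract}: $\mathbf Q_{\restriction P}$ is a one-local retract iff every family of $\mathbf Q$-balls centred in $P$ with nonempty intersection meets $P$; a family containing a radius-$0$ ball $\{f(x)\}$ or a radius-$1$ ball $Q$ satisfies this automatically, so one restricts to radii in $\{+,-\}$, and these families are exactly the images of nonzero holes, with ``meets $P$'' versus ``empty in $Q$'' expressing hole preservation (this is Lemma \ref{lem:hole-localretract} made explicit). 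Finally, gap preservation equals nonzero-hole preservation: a nonzero hole with $A\le B$ is a gap, whereas one with some $a\not\le b$ already has $\mathord\uparrow a\cap\mathord\downarrow b=\emptyset$, a disjointness the embedding preserves, so such holes are preserved for free. The step I expect to be the main obstacle is exactly this middle equivalence — justifying the exclusion of the value $0$ via the singleton-ball reduction and threading it correctly through the two cited lemmas while keeping the maps embeddings so that $\mathbf Q_{\restriction P}$ is meaningful.

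For (3), recall that compactness here means the balls have the finite intersection property, that is, every family of balls with empty intersection has a finite subfamily with empty intersection; reading a family of balls as a hole (collapsing several balls with the same centre to the ball of their meet radius), this says precisely that every hole contains a finite subhole, giving ``f.i.p.\ $\Leftrightarrow$ every hole is finite''. To pass to gaps I would split a hole by the dictionary: a hole carrying a value $0$, or with $A\not\le B$, already contains a subhole with at most two balls (a singleton against a ball missing its centre, or $\mathord\uparrow a$ against $\mathord\downarrow b$ with $a\not\le b$), hence is automatically finite, so only the gap-type holes ($A\le B$) are at stake, and for those a finite subhole is exactly a finite subgap. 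Combining the three yields ``f.i.p.\ $\Leftrightarrow$ every hole is finite $\Leftrightarrow$ every gap contains a finite subgap''; here only the two-ball reductions and the empty-set edge cases require care, and they are routine next to the crux in (2).
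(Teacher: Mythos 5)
Your proof is correct, and it is essentially the argument the paper has in mind but never writes down: the paper dismisses this lemma with ``the routine proof is omitted,'' pointing only at the similarity of item (2) with Lemma~\ref{lem:hole-localretract}, and your route --- the ball dictionary $B(x,+)=\uparrow x$, $B(x,-)=\downarrow x$, $B(x,0)=\{x\}$, $B(x,1)=E$, threaded through Lemmas~\ref{lem:one-local-retract} and~\ref{lem:hole-localretract} --- is exactly that routine argument made explicit, including the two-ball reductions in (3).

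One point in (2) deserves sharpening. You are right that the statement must be read with $\mathbf P$ a subposet of $\mathbf Q$ (equivalently, $f$ an order-embedding), as the notation $\mathbf Q_{\restriction P}$ presupposes; but your remark that ``no generality is lost'' because nonzero-hole preservation forces an embedding is one-sided, since gap preservation does \emph{not} force an embedding. Collapsing the two-element chain $\{0<1\}$ onto a singleton preserves all gaps vacuously (a complete lattice has none, by your own item (1)), and its image is trivially a one-local retract, yet the hole $h(1)=+$, $h(0)=-$ is not preserved: its image takes the value $+\wedge-=0$ and the corresponding ball is the nonempty singleton. So for arbitrary order-preserving maps the three conditions of (2) are genuinely inequivalent; it is the subposet reading, not the forcing observation, that makes the lemma true, and under that reading your argument goes through. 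A last cosmetic point: in your one-local-retract step, Lemma~\ref{lem:one-local-retract} allows a family containing both $\uparrow p$ and $\downarrow p$ for the same center $p$; such a family is not the image of a hole, but it is harmless, since its intersection is contained in $\{p\}$ and hence meets $P$ whenever nonempty.
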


 The routine proof is omitted. We may note the similarity of $(b)$ and lemma \ref{lem:hole-localretract}.
 
 From item (c) of lemma \ref{lem:gap} it follows that every nonempty chain in  a poset $\mathbf P$ for which the collection of balls has the $f.i.p$,  has a supremum and an infimum. Such a poset is called \emph{chain-complete}.
 
 Abian-Brown's theorem \cite {abian-brown} asserts that \emph{in a chain-complete poset with a least or largest element, every order-preserving map  has a fixed point}.
 The fact that the collection of intersection of balls of $\mathbf P$ has a normal structure  means that every nonempty intersection of balls of $\mathbf P$ has either a least or largest element. Being the intersection of the empty family of balls,   $\mathbf P$ has either  a least element or a largest element. Consequently, if $\mathbf P$ has a compact normal structure, we may suppose without loss of generality  that it has a least element. Since every nonempty chain has a supremum, it follows from Abian-Brown's theorem that every order preserving map has a fixed point. 
 
 On the other hand,  a description of posets with a compact normal structure is still open. 
 We just  observe that retracts of powers of $\bigvee$ or retracts of powers of $\bigwedge$ have a compact normal structure.

 Theorem \ref{thm:cor2} above yields a fixed point theorem for a commuting family of order-preserving maps  on any retract of a power of $\bigvee$ or of a power of  $\bigwedge$. But this result  says nothing about retracts of products of $\bigvee$ and $\bigwedge$.  These two posets fit in the category of fences. As we have seen in Subsection \ref{subsection:fencedistance}, every poset embeds isometrically (w.r.t. the fence distance)  into a product of fences. Fences are hyperconvex, hence from Theorem \ref{thm:cor3} it follows that:

 \begin{theorem}\cite[Theorem 4.18]{khamsi-pouzet}\label{thm:cor5}
 	If a poset $\mathbf Q$ is a retract of a product $\mathbf P$ of   finite fences of bounded length, every commuting set of order-preserving maps  on $\mathbf Q$ has a fixed point.
 \end{theorem}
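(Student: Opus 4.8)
The plan is to transport the statement into the metric language of Subsection~\ref{subsection:fencedistance}, where a poset is a metric space over the fence Heyting algebra $\mathcal H$ and the order-preserving maps are exactly the nonexpansive maps, and then to reduce everything to Theorem~\ref{thm:cor3}. Concretely, I would show that the product $\mathbf P$ of finite fences of bounded length is a \emph{bounded hyperconvex} space, deduce that its retract $\mathbf Q$ is again bounded and hyperconvex, and finally invoke Theorem~\ref{thm:cor3} for $\mathbf Q$.

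First I would establish hyperconvexity of $\mathbf P$. Each factor is a finite fence, hence hyperconvex, and since the class of hyperconvex spaces over $\mathcal H$ is closed under products (it forms a variety), the product $\mathbf P$ is hyperconvex.

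The main work, and the step I expect to be the genuine obstacle, is boundedness. For this it suffices to check that every finite nonzero pair $(n,m)\in\mathcal H$ is \emph{accessible}: when $n\neq m$ one may take the witness $r:=(m,n)$, for then $(n,m)\not\leq r$ while $(n,m)\leq r\oplus\overline r=(m\oplus n,\,n\oplus m)$; when $n=m$ one takes instead a neighbouring pair such as $r:=(n,n-1)$, the odd/even correction in $\oplus$ pushing $r\oplus\overline r$ above $(n,n)$ while keeping $(n,n)\not\leq r$. Granting this, boundedness of $\mathbf P$ follows from the length bound: if every factor fence has length at most a fixed $N$, then each coordinate distance $d_i(x_i,y_i)$ is a finite pair with entries bounded in terms of $N$, so the sup-distance, and hence the diameter $\delta(\mathbf P)$, is dominated by a single finite element of $\mathcal H$. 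As the elements of $\mathcal H$ below a finite element are themselves finite pairs together with $0$, and all finite nonzero pairs are accessible, the only inaccessible element below $\delta(\mathbf P)$ is $0$; thus $\mathbf P$ is bounded in the sense defined before Lemma~\ref{equallycentered}.

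It then remains to pass to $\mathbf Q$ and conclude. Because a coretraction is an isometry, $\mathbf Q$ embeds isometrically into $\mathbf P$, so $\delta(\mathbf Q)\leq\delta(\mathbf P)$ is again finite and $\mathbf Q$ is bounded; and because the hyperconvex spaces form a variety, the retract $\mathbf Q$ of the hyperconvex space $\mathbf P$ is hyperconvex. Thus $\mathbf Q$ is a bounded hyperconvex space, and Theorem~\ref{thm:cor3} applies: every commuting family of nonexpansive self-maps of $\mathbf Q$, equivalently every commuting set of order-preserving maps of the poset $\mathbf Q$, has a common fixed point. (Alternatively, one could keep $\mathbf P$ and transfer the conclusion along the retraction: conjugating a commuting family $\mathcal F$ on $\mathbf Q$ by the coretraction $c$ and the retraction $\rho$ yields the commuting family $\{c\circ f\circ\rho: f\in\mathcal F\}$ on $\mathbf P$, whose common fixed point $p$ supplied by Theorem~\ref{thm:cor3} projects under $\rho$ to a common fixed point $\rho(p)$ of $\mathcal F$.)
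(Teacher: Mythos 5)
Your proposal is correct and follows essentially the same route as the paper: the paper's own (very brief) argument is precisely this reduction to Theorem~\ref{thm:cor3}, using that fences are hyperconvex, that hyperconvex spaces over $\mathcal H$ form a variety (closed under products and retracts), and that order-preserving maps are the nonexpansive maps for the fence distance. The only difference is that you make explicit the boundedness step (accessibility of the finite nonzero pairs of $\mathcal H$, in the spirit of Lemma~\ref{lem-accessible} for the graph case), which the paper leaves implicit.
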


 Since every complete lattice is a retract of a power of the two-element chain, this result contains Tarski's fixed point theorem.

 \subsection{The case of graphs}\label{subsection:directedgraphs}
 
 Retracts of (undirected) graphs have been considered by various authors, for reflexive graphs as well as irreflexive graphs (see \cite {bandelt-farber-hell, hell16, hell18, He1, hell-rival}. The existence of the injective envelope  of an undirected graph (presented in \cite{JaMiPo}) is in \cite{pesch}, a characterization of injective graphs is in \cite{bandelt-pesh}.

 To each directed graph,  we have associated its zigzag distance,  yielding a metric with values in $\mathcal H_{\Lambda}: =\mathbf {F}(\Lambda^{*})$. Metric spaces over $\mathcal H_{\Lambda}$ such that the distance is the zigzag distance associated with a  reflexive directed graph  were  characterized by Lemma  \ref{lem:connexity}. 
 The condition stated there is a weak form of convexity, thus it holds for hyperconvex spaces.  Let $\mathcal D:=\mathcal H_{\Lambda}$ and  $\mathcal G_{\mathcal D}$ be the class  of graphs whose the zigzag distance belongs to $\mathcal D$. With the homomorphisms of graphs, it becomes a  category.  As a category, $\mathcal G_{\mathcal D}$ identifies to a full subcategory of $\mathcal M_{\mathcal D}$, the  category   of metric spaces  over $\mathcal D$, with the nonexpansive maps as morphisms (see Lemma \ref{graphmorphisms}). 
 
 According to Theorem\ref {thm: caracterisation-hyperconvexity}: 
 
 \begin{theorem}A member $\mathbf E:= (E, d)$ of $\mathcal M_{\mathcal D}$ is an absolute retract iff the distance on $E$ comes from a directed graph  and this graph is an absolute retract in $\mathcal G_{\mathcal D}$ with respect  to isometric embedding.  \end{theorem}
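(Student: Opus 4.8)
The plan is to reduce the statement to two results already available: Theorem~\ref{thm: caracterisation-hyperconvexity}, which identifies the absolute retracts of $\mathcal M_{\mathcal D}$ with the hyperconvex spaces, and Lemma~\ref{lem:connexity}, which says exactly when a metric space over $\mathcal H_{\Lambda}$ arises as a reflexive directed graph equipped with its zigzag distance. The bridge between the two categories is Lemma~\ref{graphmorphisms}: since graph-homomorphisms between reflexive directed graphs are precisely the nonexpansive maps, $\mathcal G_{\mathcal D}$ is a \emph{full} subcategory of $\mathcal M_{\mathcal D}$, so the retract relation between graph objects is the same whether it is computed in $\mathcal G_{\mathcal D}$ or in $\mathcal M_{\mathcal D}$. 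The one genuinely computational point is to check that the convexity half of hyperconvexity is exactly the separation condition of Lemma~\ref{lem:connexity}.

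For that computation I would translate membership into the lattice order of $\mathcal H_{\Lambda}=\mathbf F(\Lambda^*)$. Writing $\uparrow u$ for the principal final segment generated by a word $u$, one has $u\in d(x,y)$ iff $d(x,y)\leq \uparrow u$; and, using the factorization property of the subword order recorded before Lemma~\ref{fact:heyting} (if $uv\leq w$ then $w=u'v'$ with $u\leq u'$ and $v\leq v'$), one checks $\uparrow u \oplus \uparrow v=\uparrow(uv)$. Hence $uv\in d(x,y)$ means $d(x,y)\leq \uparrow u\oplus \uparrow v$, and applying convexity with $p:=\uparrow u$ and $q:=\uparrow v$ produces $z$ with $u\in d(x,z)$ and $v\in d(z,y)$, which is precisely the hypothesis of Lemma~\ref{lem:connexity}. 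Thus every convex, and in particular every hyperconvex, space in $\mathcal M_{\mathcal D}$ arises from a reflexive directed graph.

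For the forward implication, if $\mathbf E$ is an absolute retract of $\mathcal M_{\mathcal D}$ it is hyperconvex by Theorem~\ref{thm: caracterisation-hyperconvexity}, hence convex, hence its distance comes from a graph $\mathbf G$ by the computation above. Every isometric extension of $\mathbf G$ inside $\mathcal G_{\mathcal D}$ is in particular an isometric extension in $\mathcal M_{\mathcal D}$, onto which $\mathbf E$ retracts; by Lemma~\ref{graphmorphisms} the retraction is a graph-homomorphism, so $\mathbf G$ is an absolute retract in $\mathcal G_{\mathcal D}$ with respect to isometric embedding. For the converse, suppose $d$ comes from a graph $\mathbf G$ that is an absolute retract in $\mathcal G_{\mathcal D}$. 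I would form the injective envelope $\mathcal N(\mathbf E)$ of Theorem~\ref{thm: env injective}. Being injective it is hyperconvex, hence convex, hence itself an object of $\mathcal G_{\mathcal D}$ by the computation above, and $\mathbf E$ embeds isometrically into it. As $\mathbf G$ is an absolute retract in $\mathcal G_{\mathcal D}$, this embedding splits, so $\mathbf E$ is a retract of the absolute retract $\mathcal N(\mathbf E)$; since absolute retracts are closed under retracts (they form a variety, by Theorem~\ref{thm: caracterisation-hyperconvexity}), $\mathbf E$ is an absolute retract of $\mathcal M_{\mathcal D}$.

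The main obstacle is precisely the passage between the two categories in the converse direction: one must be sure that the injective hull $\mathcal N(\mathbf E)$, manufactured in the ambient category $\mathcal M_{\mathcal D}$ of \emph{all} metric spaces over $\mathcal H_{\Lambda}$, actually lives inside the subcategory $\mathcal G_{\mathcal D}$ of graphs, for only then can the absolute-retract property of $\mathbf G$ be invoked to retract $\mathcal N(\mathbf E)$ onto $\mathbf E$. This is exactly what the convexity computation of the second paragraph guarantees; everything else is bookkeeping with Lemma~\ref{graphmorphisms} and the variety property.
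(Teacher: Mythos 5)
Your proof is correct and follows essentially the same route as the paper: the paper obtains this theorem directly from Theorem \ref{thm: caracterisation-hyperconvexity}, combined with the remark (made just before stating it) that the condition of Lemma \ref{lem:connexity} is a weak form of convexity and with the fullness of $\mathcal G_{\mathcal D}$ inside $\mathcal M_{\mathcal D}$ given by Lemma \ref{graphmorphisms}. Your write-up simply makes explicit what the paper leaves implicit, namely the computation $\uparrow\! u \oplus \uparrow\! v = \uparrow\!(uv)$ showing that convexity yields the splitting condition of Lemma \ref{lem:connexity}, and the use of the injective envelope (which, being injective, is itself a graph, as the paper notes immediately after the theorem) to handle the converse direction.
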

 
 These members  of $\mathcal G_{\mathcal D}$ are described by the following result: 
 
 \begin{theorem}\cite{JaMiPo}
 	For a reflexive directed graph $\mathbf G=(E, \mathcal E)$,
 	the following conditions are equivalent 
 	\begin{enumerate}[{(i)}]
 		\item   $\mathbf G$ is an absolute retract with respect to isometries; 
 		\item   $\mathbf G$ is injective with respect to isometries; 
 		\item $G$ has the extension property; 
 		\item The collection of balls $B(x,\uparrow\alpha)_{x \in E, \alpha \in
 			\Lambda^{*}}$ has the 2-Helly property;  
 		\item $\mathbf G$ is a retract of a power of $\mathbf G_{\mathcal H_{\Lambda}}$.
 	\end{enumerate}
 \end{theorem}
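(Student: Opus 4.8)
The plan is to reduce the whole statement to the metric characterisation of Theorem \ref{thm: caracterisation-hyperconvexity}, the only genuinely new content being the graph-theoretic reformulation $(iv)$. First I would view $\mathbf G$ as a metric space over $\mathcal H_{\Lambda}$ via its zigzag distance $d_{\mathbf G}$; by Lemma \ref{lem:connexity} this space satisfies the single-word splitting property, and by Lemma \ref{graphmorphisms} graph-homomorphisms between reflexive directed graphs are exactly the nonexpansive maps, so the graph-theoretic notions in $(i)$, $(ii)$, $(v)$ are the restrictions to graph objects of the corresponding metric notions. Using the preceding theorem (which identifies, for a space whose distance comes from a graph, the graph absolute retracts with the metric ones), together with Theorem \ref{thm: caracterisation-hyperconvexity} and the lemma asserting that a metric space is injective iff it has the one-point extension property, I obtain at once the equivalence of $(i)$, $(ii)$, $(iii)$, $(v)$ with the metric property that $\mathbf G$ is hyperconvex. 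For $(v)$ one also uses that powers and retracts of graph-spaces are again graph-spaces (the splitting property of Lemma \ref{lem:connexity} is inherited coordinatewise by products) and that $\mathbf G_{\mathcal H_{\Lambda}}$ is the graph carrying the distance $d_{\mathcal H_{\Lambda}}$, so that a metric retract of a power of $\mathbf H=(\mathcal H_{\Lambda},d_{\mathcal H_{\Lambda}})$ is exactly a graph retract of a power of $\mathbf G_{\mathcal H_{\Lambda}}$. It then remains only to prove $(iv)\Leftrightarrow$ hyperconvex.

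For this I would record the key description of balls. Since $\mathcal H_{\Lambda}$ is ordered by reverse inclusion, $B(x,\uparrow\alpha)=\{y\in E:\alpha\in d_{\mathbf G}(x,y)\}$, and, because every final segment $r$ satisfies $r=\bigcup_{\alpha\in r}\uparrow\alpha$ (a meet in $\mathcal H_{\Lambda}$), every ball decomposes as
\[ B(x,r)=\bigcap_{\alpha\in r}B(x,\uparrow\alpha). \]
Thus the principal balls $B(x,\uparrow\alpha)$ generate all balls under intersection, and the $2$-Helly property for all balls is equivalent to the $2$-Helly property for the principal balls alone, once pairwise intersections are under control. The direction hyperconvex $\Rightarrow(iv)$ is then immediate: hyperconvexity contains the $2$-Helly property for all balls, hence a fortiori for the subfamily of principal balls.

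The heart of the argument is $(iv)\Rightarrow$ hyperconvex. Given a family $(B(x_i,r_i))_{i\in I}$ with $d_{\mathbf G}(x_i,x_j)\le r_i\oplus\overline{r_j}$ for all $i,j$, I would rewrite $\bigcap_{i}B(x_i,r_i)$ as the intersection of the principal balls $B(x_i,\uparrow\alpha)$ over all pairs $(i,\alpha)$ with $\alpha\in r_i$, and check that these principal balls intersect pairwise. For indices $(i,\alpha)$ and $(j,\beta)$ with $\alpha\in r_i$, $\beta\in r_j$, reverse inclusion turns the hypothesis into $r_i\,\overline{r_j}\subseteq d_{\mathbf G}(x_i,x_j)$, so $\alpha\,\overline{\beta}\in d_{\mathbf G}(x_i,x_j)$; the single-word splitting of Lemma \ref{lem:connexity} then yields $z\in E$ with $\alpha\in d_{\mathbf G}(x_i,z)$ and $\overline{\beta}\in d_{\mathbf G}(z,x_j)$, that is $z\in B(x_i,\uparrow\alpha)\cap B(x_j,\uparrow\beta)$ after using $d_{\mathbf G}(x_j,z)=\overline{d_{\mathbf G}(z,x_j)}$ (the case $i=j$ being trivial since $x_i$ lies in both balls). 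Applying $(iv)$ to this pairwise-intersecting family of principal balls gives $\bigcap_i B(x_i,r_i)\neq\emptyset$, which is precisely hyperconvexity. I expect the main obstacle to be organisational rather than deep: keeping the reverse-inclusion order of $\mathcal H_{\Lambda}$, the involution, and the two roles of $x_i,x_j$ straight in the pairwise-intersection computation, and justifying the passage between the graph category $\mathcal G_{\mathcal D}$ and the metric category $\mathcal M_{\mathcal D}$ (products, retracts, isometric extensions) by Lemma \ref{lem:connexity} at each point where a metric theorem is imported.
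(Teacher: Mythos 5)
Your proposal is correct and takes exactly the route the survey intends: the paper states this theorem without proof (citing \cite{JaMiPo}), and what you assemble — Lemma \ref{graphmorphisms} and Lemma \ref{lem:connexity} as the graph/metric dictionary, the compatibility of products and retracts with graph-spaces, the bridge theorem stated just before, and Theorem \ref{thm: caracterisation-hyperconvexity} — is precisely the machinery the paper develops for this purpose. The only step needing genuinely new work, the equivalence of $(iv)$ with hyperconvexity, is handled correctly: the decomposition $B(x,r)=\bigcap_{\alpha\in r}B(x,\uparrow\alpha)$, the pairwise-intersection check via single-word splitting of $\alpha\overline{\beta}\in d_{\mathbf G}(x_i,x_j)$ together with the involution identity $d_{\mathbf G}(x_j,z)=\overline{d_{\mathbf G}(z,x_j)}$, and the trivial case $i=j$ are all sound.
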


 Every metric space $\mathbf E$ over $\mathcal H_{\Lambda}$ has an injective envelope; being injective, its metric comes from a graph. If $\mathbf E$ comes from a graph, the graph corresponding to the injective envelope of $\mathbf E$ is the injective envelope in $\mathcal G_{\mathcal D}$. For more recent facts about  the injective envelope see \cite{kabil-pouzet}. 
 
 We just mention a simple example  of hyperconvex graph.

 \begin{lemma}\label{zigzag1} The metric space associated to any directed zigzag $\mathbf Z$  has the extension property. In particular, every nonexpansive map sending two vertices of a reflexive directed graph $\mathbf G$ on the extremities of $\mathbf Z$ extends to a graph   homorphism from $\mathbf G$ to $\mathbf Z$. \end{lemma}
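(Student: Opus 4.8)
The plan is to show that $\mathbf Z$ is \emph{hyperconvex} and then invoke Theorem \ref{thm: caracterisation-hyperconvexity}, which identifies hyperconvexity with injectivity, absolute retractness and the extension property. The guiding idea is the one used for $\mathbf H$ in Theorem \ref{thm: space values-hyperconvex}: I want to argue that \emph{balls of $\mathbf Z$ are intervals of the underlying path}, so that hyperconvexity reduces to a one-dimensional Helly argument together with convexity.

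First I would set up notation. Enumerate the vertices of the zigzag as $v_0, \dots, v_n$ following the path which is the symmetric hull of $\mathbf Z$, and for $i\le k$ let $w_{ik}$ be the word spelling the orientations of the edges met along the direct path from $v_i$ to $v_k$. Because $\mathbf Z$ is reflexive, extra letters may always be absorbed by loops, so $u\in d_{\mathbf Z}(v_i,v_k)$ exactly when $u$ contains $w_{ik}$ as a subword; thus $d_{\mathbf Z}(v_i,v_k)$ is the final segment $\uparrow w_{ik}$ generated by $w_{ik}$ (when some edges are traversable both ways the generator is replaced by the finite antichain of admissible path-words, and the argument below goes through with routine bookkeeping). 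The crucial monotonicity is then immediate: for $i\le k'\le k$ the word $w_{ik'}$ is a prefix, hence a subword, of $w_{ik}$, so $\uparrow w_{ik}\subseteq \uparrow w_{ik'}$, i.e. $d_{\mathbf Z}(v_i,v_{k'})\le d_{\mathbf Z}(v_i,v_k)$ (recall $\le$ is reverse inclusion). Consequently every ball $B_{\mathbf Z}(v_i,r)$ is a set of consecutive vertices containing its centre, i.e. an interval of the path.

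With this established, hyperconvexity splits into two pieces, exactly as in the proof of Theorem \ref{thm: space values-hyperconvex}. The \emph{2-Helly property} becomes the classical statement that pairwise intersecting intervals of a totally ordered set have a common point, so a family of balls whose members intersect pairwise has nonempty intersection. For \emph{convexity} — given $d_{\mathbf Z}(v_i,v_j)\le p\oplus q$, find $v_k$ with $d(v_i,v_k)\le p$ and $d(v_k,v_j)\le q$ — I would use a cut argument on the factorization $w_{ij}=w_{ik}w_{kj}$: take $\ell^{*}$ maximal such that the length-$\ell^{*}$ prefix of $w_{ij}$ is a subword of every word of $p$, and check, using that $w_{ij}$ is a subword of every concatenation $uv$ with $u\in p$, $v\in q$, that the complementary suffix is then a subword of every word of $q$; setting $k:=i+\ell^{*}$ gives the required intermediate vertex. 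Combining convexity (which, applied to $p$ and $\overline{r_\mu}$, yields pairwise intersection from the compatibility condition $d(x_\lambda,x_\mu)\le r_\lambda\oplus\overline{r_\mu}$) with the Helly step shows $\mathbf Z$ is hyperconvex; alternatively one may verify condition (ii) of Proposition \ref{prop: test hyperconvexity} directly.

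Finally, hyperconvexity of $\mathbf Z$ gives, through Theorem \ref{thm: caracterisation-hyperconvexity}, that $\mathbf Z$ is injective and has the extension property. The ``in particular'' clause is then the special case where the domain is the two-element isometric subspace $\{x,y\}$ of $\mathbf G$: a nonexpansive map sending $x,y$ to the extremities $v_0,v_n$ of $\mathbf Z$ extends, by injectivity, to a nonexpansive map $\mathbf G\to\mathbf Z$, which by Lemma \ref{graphmorphisms} is exactly a graph homomorphism. I expect the convexity cut argument — and in particular the bookkeeping needed when some edges of $\mathbf Z$ are two-way, so that $d_{\mathbf Z}$ is generated by several words — to be the only genuinely delicate step; the interval description of balls and the Helly step are then routine.
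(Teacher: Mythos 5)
Your proposal is correct and follows essentially the same route as the paper's proof, which reads in full: balls of $\mathbf Z$ are intervals of the underlying path, each such interval is finite or the whole path, hence the $2$-Helly property holds; convexity ``holds trivially''; so $\mathbf Z$ is hyperconvex and Theorem \ref{thm: caracterisation-hyperconvexity} gives the extension property. What you add is the substance behind the two assertions the paper leaves implicit: the explicit description $d_{\mathbf Z}(v_i,v_k)=\uparrow w_{ik}$ (with the antichain variant for two-way edges), the resulting monotonicity making balls intervals, and the cut argument proving convexity; all of these are sound.

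One point does need patching. Your enumeration $v_0,\dots,v_n$ tacitly assumes $\mathbf Z$ is finite, whereas the paper's definition of a zigzag allows the symmetric hull to be a one-way or doubly infinite path, and the lemma is stated for ``any directed zigzag''. For such $\mathbf Z$, your Helly step as stated --- ``pairwise intersecting intervals of a totally ordered set have a common point'' --- is false in general: the intervals $\{k\in\Z : k\ge n\}$, $n\in\N$, pairwise intersect but have empty intersection. This is exactly why the paper inserts the observation that every ball of $\mathbf Z$ is either finite or the full path. That observation is immediate from your own formula: if the radius $r$ is nonempty, pick $u\in r$; then $v_k\in B(v_i,r)$ forces $w_{ik}\le u$, hence $|k-i|\le |u|$, so the ball is finite, while $r=\emptyset$ gives the full path. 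With that one extra sentence the Helly argument (and hence your whole proof) goes through for infinite zigzags as well.
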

 
 \begin{proof}
 	Let $\mathbf Z$ be a directed zigzag (with loops). Its symmetric hull (obtained  by deleting the orientation of arcs in $\mathbf Z$) is a path. The balls in $\mathbf Z$ are intervals of that path, and each of these intervals  is either finite or the full path. Hence, every family of balls has the $2$-Helly property. Since convexity holds trivially, $\mathbf Z$, as a metric space over $\mathcal H_{\Lambda}$, is hyperconvex, hence according to   Theorem \ref{thm: caracterisation-hyperconvexity}, it satisfies the extension property. 
 \end{proof}
 
 For more,  we refer to \cite{KP1, KP2}.

 \subsection{The case of oriented graphs}\label{subsection:orientedgraphs}
 The situation  of oriented graphs is  different. These graphs cannot be modeled over a Heyting algebra (theorem IV-3.1 of  \cite{JaMiPo} is erroneous), but the absolute retracts in this category can be (this was proved by Bandelt, Sa\"{\i}dane and the second author and included in \cite{Sa}; see also the forthcoming paper \cite{bandelt-pouzet-saidane}). The appropriate Heyting algebra is the  \emph{MacNeille completion} of $\Lambda^{\ast}$ where $\Lambda:= \{+,-\}$.
 
 The MacNeille completion of $\Lambda^{\ast}$ is in some sense the least complete lattice extending $\Lambda^{\ast}$. The definition goes as follows. If $X$ is a subset of $\Lambda^{\ast}$ ordered by the subword ordering then
 
 $$\uparrow\! X:= \{ \beta  \in \Lambda^{\ast}: \alpha \leq \beta\;  \text{for some}\;  \alpha\in X\}$$  is the \emph {final segment generated by} $X$ and
 $$\downarrow \!X := \{\alpha  \in \Lambda^{\ast}:  \alpha \leq \beta \;  \text{for some}\;  \beta\in X\}$$   is the \emph{initial segment  generated by} $X$. For a singleton $X =
 \{\alpha\}$, we omit the set brackets and call $\uparrow\! \alpha $ and $\downarrow
 \!\alpha $ a  \emph{principal final segment} and a \emph{principal initial segment} respectively.
 
 $$X^{\Delta}:= \bigcap_{x \in X} \uparrow x$$
 is the {\it upper cone} generated by $X$, and
 $$X^{\nabla}:= \bigcap_{x \in X} \downarrow x$$
 is the {\it lower cone} generated by $X$.  The pair $(\Delta, \nabla)$ of mappings on the complete
 lattice of subsets of  $\Lambda^{\ast}$ constitutes a Galois connection. Thus,  a set $Y$ is an upper cone  if
 and only if $Y = Y^{\nabla \Delta}$, while a set $W$ is
 an lower cone if and only if $W = W^{\Delta \nabla}.$ This Galois connection
 $(\Delta, \nabla)$ yields the {\it MacNeille completion} of
 $\Lambda^{\ast}.$ This completion is realized  as the complete
 lattice $\{W^{\nabla}:  W\subseteq \Lambda^{\ast}\}$
 ordered by inclusion or alternatively $\{Y ^{\Delta} : Y\subseteq \Lambda^{\ast}\}$ ordered by reverse inclusion. We choose as completion the set $\{Y ^{\Delta} : Y\subseteq \Lambda^{\ast}\}$ ordered by reverse inclusion  that we denote by $\mathbf {N}(\Lambda^{\ast})$. This complete lattice is studied in detail  in \cite{bandelt-pouzet}.

 We recall the following characterization of members of the MacNeille completion of $\Lambda^*$.
 \begin{proposition}\label{cancellation} \cite{bandelt-pouzet} corollary 4.5.
 	A member  $Z$ of $\mathbf{F}(\Lambda^{*})$ belongs to $\mathbf {N}(\Lambda^{*})$ if and only if it satisfies the following cancellation rule:
 	if $u + v \in Z$ and $u - v \in Z$ then $u v \in Z$.
 \end{proposition}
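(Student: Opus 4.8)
The plan is to translate both the hypothesis and the conclusion into the subword order on $\Lambda^*$ and reduce everything to one meet identity. Recall that $\mathbf N(\Lambda^*)=\{Y^\Delta:Y\subseteq\Lambda^*\}$ and that $Y^\Delta=\bigcap_{\alpha\in Y}\uparrow\alpha$; hence $Z\in\mathbf N(\Lambda^*)$ if and only if $Z$ is an intersection of principal final segments $\uparrow\alpha$, equivalently $Z=Z^{\nabla\Delta}$. So the target is: $Z\in\mathbf F(\Lambda^*)$ is cancellation-closed if and only if $Z$ is such an intersection. The whole argument rests on the following identity in the subword order, which I would isolate first:
\begin{equation*}
uv=(u+v)\wedge(u-v)\qquad\text{for all }u,v\in\Lambda^*,
\end{equation*}
that is, $uv\le u+v$, $uv\le u-v$, and every word $y$ with $y\le u+v$ and $y\le u-v$ already satisfies $y\le uv$.

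Granting the identity, the ``only if'' direction is immediate. Each principal final segment $\uparrow\alpha$ is cancellation-closed: if $u+v,u-v\in\uparrow\alpha$, i.e. $\alpha\le u+v$ and $\alpha\le u-v$, then $\alpha\le uv$, so $uv\in\uparrow\alpha$. Cancellation-closure is preserved under arbitrary intersections, so every $Z=\bigcap_{\alpha\in Y}\uparrow\alpha$ in $\mathbf N(\Lambda^*)$ is cancellation-closed. To establish the identity itself I would argue on subword embeddings: an embedding of $y$ into $u\,(+)\,v$ either misses the distinguished middle letter, whence $y$ factors as $y'y''$ with $y'\le u$, $y''\le v$ and already $y\le uv$; or it uses the middle $+$, forcing a letter $y_i=+$ with $y_1\cdots y_{i-1}\le u$ and $y_{i+1}\cdots y_n\le v$. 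Running the same dichotomy for $u\,(-)\,v$ produces, in the non-trivial case, a letter $y_j=-$ with $y_1\cdots y_{j-1}\le u$ and $y_{j+1}\cdots y_n\le v$. Since $y_i=+\neq-=y_j$ we have $i\neq j$; taking $i<j$ (the other case is symmetric), the prefix $y_1\cdots y_i\le y_1\cdots y_{j-1}\le u$ while $y_{i+1}\cdots y_n\le v$, so splitting $y$ after position $i$ gives $y\le uv$.

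The ``if'' direction is where the work lies. By Higman's theorem $\Lambda^*$ is a well-quasi-order, so a final segment $Z$ has only finitely many minimal elements $m_1,\dots,m_k$, $Z=\bigcup_i\uparrow m_i$, and the lower bounds of $Z$ are exactly the common subwords of $m_1,\dots,m_k$; hence $w\in Z^{\nabla\Delta}$ iff every common subword of the $m_i$ is a subword of $w$. As $Z\subseteq Z^{\nabla\Delta}$ is automatic, I must show $Z^{\nabla\Delta}\subseteq Z$, i.e. that a word $w$ lying above every common subword of the $m_i$ satisfies $m_i\le w$ for some $i$. I would argue the contrapositive: assuming $m_i\not\le w$ for all $i$ while, for contradiction, every common subword of the $m_i$ is $\le w$, I would manufacture words $u,v$ with $u+v\in Z$ and $u-v\in Z$ but $uv\notin Z$, contradicting cancellation-closure. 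The main obstacle is exactly this construction: it amounts to locating inside the minimal elements a common ``branch point'' after which one $m_i$ continues with a $+$ and another with a $-$ — the divergence that leaves only a short common subword (already $\le w$) — and reading off from it the factorization witnessing the cancellation failure. Controlling this by induction on $k$ and on $\sum_i|m_i|$, so that a cancellation witness is produced in every case where the common subwords fail to separate $w$ from $Z$, is the heart of the matter; the toy case $m_1=+$, $m_2=-$, where $u=v=\Box$ gives $u+v=+$, $u-v=-\in Z$ but $uv=\Box\notin Z$, already shows why cancellation is the precise obstruction.
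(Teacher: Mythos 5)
The paper itself offers no proof of this proposition --- it is quoted from \cite{bandelt-pouzet} (Corollary 4.5) --- so your argument has to stand on its own, and only half of it does. The ``only if'' half is complete and correct: your identity $uv=(u+v)\wedge(u-v)$ in the subword order is properly proved (the dichotomy ``the embedding misses or uses the inserted middle letter'', together with the observation that the two middle positions must differ, is exactly right); consequently each principal final segment $\uparrow\!\alpha$ is closed under the cancellation rule, closure under the rule passes to arbitrary intersections, and every member $Y^{\Delta}=\bigcap_{\alpha\in Y}\uparrow\!\alpha$ of $\mathbf{N}(\Lambda^{*})$ satisfies it. That part could be spliced into a paper as is.

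The ``if'' direction, however, is a plan rather than a proof, and you say so yourself: the construction of the cancellation witness ``is the heart of the matter'' and is nowhere carried out. The preliminary reduction is fine (Higman's theorem, finitely many minimal elements $m_1,\dots,m_k$, $Z^{\nabla}=\bigcap_i\downarrow\! m_i$, $Z\subseteq Z^{\nabla\Delta}$ automatic, so the issue is $Z^{\nabla\Delta}\subseteq Z$), but what remains is precisely the hard content of the cited result. Concretely: given $w$ with $m_i\not\le w$ for every $i$ while every common subword of the $m_i$ embeds in $w$, you must produce $u,v$ with $u+v\in Z$, $u-v\in Z$ and $uv\notin Z$. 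The two positive memberships may be witnessed by \emph{different} minimal elements, and the negative condition $uv\notin Z$ must be checked against \emph{all} the $m_i$ simultaneously; nothing in your sketch controls this negative condition. Moreover the ``common branch point after which one $m_i$ continues with $+$ and another with $-$'' is a prefix-order intuition that does not transfer to subword embeddings, which interleave: for instance $m_1=+-+$ and $m_2=-+-$ have common subwords $+-$ and $-+$ but no common prefix structure at all, and any witness construction must handle such interleaving. The toy case $m_1=+$, $m_2=-$ (where $u=v=\Box$) settles nothing beyond motivation. As it stands, your proposal establishes that cancellation is necessary for membership in $\mathbf{N}(\Lambda^{*})$, but not that it is sufficient, which is the substantial implication.
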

 
 The concatenation, order and involution defined on $\mathbf {F}(\Lambda^{\ast})$ induce a Heyting algebra $\mathcal N_{\Lambda}$ on  $\mathbf {N}(\Lambda^{\ast})$ (see Proposition 2.2 of \cite{bandelt-pouzet}). Being a  Heyting algebra, $\mathcal N_{\Lambda}$ supports a  distance $d_{\mathcal N_{\Lambda}}$ and this distance is the zigzag distance of a graph $\G_{\mathcal {N}_{\Lambda}}$. But, it is not true that every oriented graph embeds isometrically into a power of that graph. For example, an oriented cycle cannot be embedded.  The following result characterizes  graphs which can be  isometrically embedded, via the zigzag distance, into  products of reflexive and oriented zigzags. It is stated in part in Subsection IV-4 of \cite{JaMiPo}, cf. Proposition IV-4.1.

 \begin{theorem}\label{theo:isometric}
 	For a  directed graph   $\G: = (E, \mathcal E)$ equipped with the zigzag distance,   the following properties are equivalent:
 	\begin{enumerate} [{(i)}]
 		\item $\G$ is isometrically embeddable into a product of  reflexive and oriented zigzags;
 		\item $\G$ is isometrically embeddable into a power of $\G_{\mathcal  N_{\Lambda}}$;
 		\item The values of the zigzag distance between  vertices of $\G$ belong to $\mathcal N_{\Lambda}$.
 	\end{enumerate}
 \end{theorem}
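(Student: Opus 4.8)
The plan is to run the cycle $(i)\Rightarrow(ii)\Rightarrow(iii)\Rightarrow(i)$, with the last implication carrying essentially all the content. Throughout I use the realisation of the MacNeille completion as $\mathbf N(\Lambda^*)=\{Y^\Delta:Y\subseteq\Lambda^*\}$ given in the text: since $\uparrow\!\alpha=\{\alpha\}^\Delta$ and $Y^\Delta=\bigcap_{\alpha\in Y}\uparrow\!\alpha$, a final segment $Z$ lies in $\mathcal N_\Lambda$ if and only if it is an intersection of principal final segments, equivalently $Z=\bigcap\{\uparrow\!\beta:\uparrow\!\beta\supseteq Z\}$. In particular $\mathcal N_\Lambda$ is closed under arbitrary intersections, that is under the joins $\bigvee=\bigcap$ of the reverse-inclusion order of $\mathcal H_\Lambda$. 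I also record the computation that in a reflexive oriented zigzag $\mathbf L_\beta$ the zigzag distance between the endpoints is the principal final segment $\uparrow\!\beta$ (and between any two vertices the principal final segment of the intervening subword); hence every oriented zigzag, and by the previous remark every product of oriented zigzags, is a metric space whose distances lie in $\mathcal N_\Lambda$.

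For $(i)\Rightarrow(ii)$: if $\G$ embeds isometrically into a product of oriented zigzags, then its distance values coincide with those of the product and therefore lie in $\mathcal N_\Lambda$, so $\G$ is a metric space over the Heyting algebra $\mathcal N_\Lambda$; applying Theorem \ref{thm:embedding} to $\mathcal N_\Lambda$ embeds it isometrically into a power of $\mathbf H_{\mathcal N_\Lambda}=(\mathcal N_\Lambda,d_{\mathcal N_\Lambda})=\G_{\mathcal N_\Lambda}$. For $(ii)\Rightarrow(iii)$: a power of $\G_{\mathcal N_\Lambda}$ carries the sup-distance, which is a join of $\mathcal N_\Lambda$-valued coordinate distances and hence again $\mathcal N_\Lambda$-valued by closure under joins; an isometric embedding transports these values to $\G$.

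The substantive step is $(iii)\Rightarrow(i)$, which I would prove by exhibiting an explicit isometric embedding $\Phi$ into the product of \emph{all} nonexpansive maps from $\G$ into oriented zigzags. Each coordinate $h:\G\to\mathbf Z$ is nonexpansive, so $d_{\mathbf Z}(h(x),h(y))\supseteq d_{\G}(x,y)$, whence $\bigvee_h d_{\mathbf Z}(h(x),h(y))=\bigcap_h d_{\mathbf Z}(h(x),h(y))\supseteq d_{\G}(x,y)$ — one of the two inclusions. For the reverse, I must produce, for every pair $x,y$ and every word $w\notin d_{\G}(x,y)$, a coordinate whose distance omits $w$. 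Here condition $(iii)$ enters: since $d_{\G}(x,y)\in\mathcal N_\Lambda$ equals $\bigcap\{\uparrow\!\beta:\uparrow\!\beta\supseteq d_{\G}(x,y)\}$ (Proposition \ref{cancellation}), the word $w$ is excluded by some $\uparrow\!\beta\supseteq d_{\G}(x,y)$ with $\beta\not\le w$. The two-point assignment $x\mapsto 0$, $y\mapsto|\beta|$ into $\mathbf L_\beta$ is nonexpansive precisely because $d_{\mathbf L_\beta}(0,|\beta|)=\uparrow\!\beta\supseteq d_{\G}(x,y)$; as $\mathbf L_\beta$ is hyperconvex, hence injective (Lemma \ref{zigzag1} together with Theorem \ref{thm: caracterisation-hyperconvexity}), this partial map extends to a nonexpansive $h:\G\to\mathbf L_\beta$, a coordinate of $\Phi$ whose distance $\uparrow\!\beta$ omits $w$. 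Thus $\bigcap_h d_{\mathbf Z}(h(x),h(y))=d_{\G}(x,y)$ and $\Phi$ is isometric, with target a product of oriented zigzags.

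I expect the main obstacle to be the bookkeeping in $(iii)\Rightarrow(i)$: verifying that the endpoint distance of an oriented zigzag is exactly $\uparrow\!\beta$ (so that backtracking walks, absorbed by reflexivity, contribute only words $\ge\beta$), and reconciling the reverse-inclusion conventions so that the single inequality ``$\uparrow\!\beta\supseteq d_{\G}(x,y)$'' serves simultaneously as the hypothesis supplied by the MacNeille description and as the nonexpansiveness condition needed to invoke the extension property. Once these are aligned, the injectivity of zigzags does the rest.
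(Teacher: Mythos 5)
Your proof is correct and is, in substance, the intended one: the survey gives no argument of its own (it defers to Proposition IV-5.1 of Jawhari--Misane--Pouzet), and your scheme --- separating each word $w\notin d_{\G}(x,y)$ by a principal final segment $\uparrow\!\beta\supseteq d_{\G}(x,y)$, which exists exactly because members of $\mathcal N_{\Lambda}$ are upper cones (intersections of principal final segments), and then extending the nonexpansive two-point map $x\mapsto 0$, $y\mapsto|\beta|$ to all of $\G$ by the injectivity of oriented zigzags (Lemma \ref{zigzag1}), together with the easy implications through Theorem \ref{thm:embedding} applied over $\mathcal N_{\Lambda}$ --- is precisely the line of that cited proof. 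The two points you flag do check out: $d_{\mathbf L_\beta}(0,|\beta|)=\uparrow\!\beta$ holds because in an \emph{oriented} zigzag each edge can be crossed rightward only with the letter it carries, so a last-crossing argument extracts $\beta$ as a subword of any accepted word; and the description of $\mathcal N_{\Lambda}$ as the sets $\bigcap\{\uparrow\!\beta:\uparrow\!\beta\supseteq Z\}$ follows from the Galois connection $(\Delta,\nabla)$ itself rather than from Proposition \ref{cancellation} (which is the cancellation-rule criterion), a harmless mis-citation since you derived that description correctly at the outset.
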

 The proof follows the same lines as the proof of  Proposition IV-5.1 p.212 of \cite{JaMiPo}.

 We may note that  the product can be infinite  even if the graph $\G$ is finite. Indeed, if $\G$ consists of two  vertices $x$ and $y$ with no value on the pair $\{x, y\}$ (that is the underlying graph is disconnected) then we need infinitely many zigzags of arbitrarily long length.
 \begin{theorem}\label{thm:ARcompletion}
 	An oriented graph $\G:= (V, \mathcal E)$ is an absolute retract in the category of oriented graphs if and only if it is a retract of a product of  oriented zigzags.\end{theorem}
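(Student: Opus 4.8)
The plan is to transfer the whole question to the Heyting algebra $\mathcal N_{\Lambda}=\mathbf N(\Lambda^{*})$ and then read off the answer from Theorem \ref{thm: caracterisation-hyperconvexity}. The starting observation is that an oriented graph has no $2$-cycle, so its zigzag distance satisfies the cancellation rule of Proposition \ref{cancellation}: if $u+v$ and $u-v$ both label zigzags from $x$ to $y$, then so does $uv$. Hence the values of the zigzag distance of any oriented graph lie in $\mathcal N_{\Lambda}$, so that every oriented graph is a metric space over the Heyting algebra $\mathcal N_{\Lambda}$, and the category of oriented graphs (with graph homomorphisms, i.e.\ nonexpansive maps by Lemma \ref{graphmorphisms}) is identified with a subcategory of the category $\mathcal M_{\mathcal N_{\Lambda}}$ of metric spaces over $\mathcal N_{\Lambda}$. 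I would first record that a metric product of reflexive oriented zigzags is again an oriented graph: in the sup-distance the join is the intersection of final segments, so $(x_i)\to(y_i)$ holds precisely when $x_i\to y_i$ in every coordinate, whence antisymmetry of each factor forces antisymmetry of the product; moreover $\mathcal N_{\Lambda}$ is closed under intersections (being the family of upper cones of the Galois connection $(\Delta,\nabla)$), so the product still takes its values in $\mathcal N_{\Lambda}$.

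For the implication \emph{retract of a product of oriented zigzags $\Rightarrow$ absolute retract}, I would argue that each oriented zigzag is hyperconvex over $\mathcal N_{\Lambda}$ (by the argument of Lemma \ref{zigzag1}, since its balls are intervals of a path, giving trivial convexity and the $2$-Helly property), that hyperconvexity is preserved by products and retracts, and hence that such a $\G$ is hyperconvex over $\mathcal N_{\Lambda}$. By Theorem \ref{thm: caracterisation-hyperconvexity} applied with $\mathcal H:=\mathcal N_{\Lambda}$, the graph $\G$ is then an absolute retract in $\mathcal M_{\mathcal N_{\Lambda}}$. To descend to the smaller category of oriented graphs, I would use that every isometric oriented-graph extension $\G'$ of $\G$ is, by the first paragraph, an isometric extension over $\mathcal N_{\Lambda}$; the resulting retraction $\G'\to\G$ is nonexpansive, hence a graph homomorphism, and so witnesses $\G$ as a retract inside the category of oriented graphs.

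For the converse, let $\G$ be an absolute retract in the category of oriented graphs. Since $\G$ is an oriented graph its zigzag distance takes values in $\mathcal N_{\Lambda}$, so condition $(iii)$ of Theorem \ref{theo:isometric} holds and $\G$ embeds isometrically into a product $\mathbf P$ of reflexive oriented zigzags. By the first paragraph $\mathbf P$ is itself an oriented graph, hence an isometric extension of $\G$ in the category of oriented graphs; being an absolute retract there, $\G$ is a retract of $\mathbf P$, as required.

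The delicate point, and the reason the slogan ``oriented graphs cannot be modeled over a Heyting algebra'' is not an obstruction here, is the reconciliation of the two notions of absolute retract used above. The category of oriented graphs is a proper subcategory of $\mathcal M_{\mathcal N_{\Lambda}}$: not every metric space over $\mathcal N_{\Lambda}$ satisfies the splitting condition of Lemma \ref{lem:connexity} needed to arise from a graph. What rescues the argument is that hyperconvexity forces convexity, hence the splitting condition of Lemma \ref{lem:connexity}, so the hyperconvex spaces over $\mathcal N_{\Lambda}$ \emph{do} come from graphs and, having their values in $\mathcal N_{\Lambda}$, from oriented graphs; thus the absolute retracts of the two categories agree on exactly the objects that occur in the proof. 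Verifying that every extension and every retraction produced along the way remains inside the category of oriented graphs is the step I expect to demand the most care.
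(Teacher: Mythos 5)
Your proof collapses at its ``starting observation'': antisymmetry (absence of $2$-cycles) does \emph{not} force the zigzag distance of an oriented graph to satisfy the cancellation rule of Proposition \ref{cancellation}. The oriented $3$-cycle is a counterexample: take vertices $a,b,c$ with arcs $a\to b$, $b\to c$, $c\to a$ (and loops). Then $+\in d(a,b)$ (the arc itself) and $--\in d(a,b)$ (the path through $c$ traversed against the arcs $c\to a$ and $b\to c$); taking $u:=-$ and $v:=\Box$, both $u{+}v=-+$ and $u{-}v=--$ lie in $d(a,b)$ (recall $d(a,b)$ is a final segment for the subword ordering and $+\leq -+$), so cancellation would force $uv=-\in d(a,b)$, i.e.\ an arc $b\to a$, which antisymmetry precisely forbids. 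Hence $d(a,b)\notin \mathbf{N}(\Lambda^{*})$. This is exactly the phenomenon the paper records just before Theorem \ref{theo:isometric}: an oriented cycle cannot be embedded isometrically into a power of $\G_{\mathcal N_{\Lambda}}$, equivalently into a product of oriented zigzags. Consequently your converse direction is unfounded: condition $(iii)$ of Theorem \ref{theo:isometric} is not automatic for oriented graphs, and deriving it from the absolute-retract hypothesis is the real content of the theorem. The paper's proof spends its effort exactly there: first one shows that an absolute retract has no $3$-element cycle, and only then that its zigzag distance satisfies the cancellation rule; both steps use the absolute-retract hypothesis and neither is free.

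The same false observation also undermines the descent step in your easy direction: an isometric oriented-graph extension $\G'$ of $\G$ need not have distance values in $\mathcal N_{\Lambda}$ (again, $\G'$ may contain an oriented cycle), so it is not a metric space over $\mathcal N_{\Lambda}$, and absolute retractness of $\G$ in $\mathcal M_{\mathcal N_{\Lambda}}$ says nothing about retracting $\G'$ onto $\G$. This direction, unlike the converse, is repairable: work over $\mathcal H_{\Lambda}:=\F(\Lambda^{*})$ throughout. Oriented zigzags are hyperconvex over $\mathcal H_{\Lambda}$ (Lemma \ref{zigzag1}), hyperconvexity is preserved by products and retracts, so a retract of a product of oriented zigzags is hyperconvex, hence injective, over $\mathcal H_{\Lambda}$ by Theorem \ref{thm: caracterisation-hyperconvexity}; every oriented-graph extension of $\G$ is an isometric extension over $\mathcal H_{\Lambda}$, and the resulting retraction is nonexpansive, hence a graph homomorphism by Lemma \ref{graphmorphisms}. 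No such repair exists for the converse: there you must reproduce the paper's two structural steps (no $3$-cycles, then the cancellation rule), which your proposal replaces by a claim that is simply false.
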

 We just give a sketch. For details,  see section V of \cite{Sa} and the  forthcoming paper \cite{bandelt-pouzet-saidane}. The proof has three  steps. Let $\G$ be an absolute retract. First, one proves that $\G$ has no $3$-element cycle. Second, one proves that the zigzag distance  between two vertices of $\G$ satisfies the cancelation rule. From Proposition \ref{cancellation},  it belongs to $\mathbf{N}(\Lambda^*)$; from theorem \ref{theo:isometric}, $\G$ isometrically embeds into a product of oriented zigzags. Since $\G$ is an absolute retract, it is a retract of that product.
 As illustrated by the results of Tarski and Sine-Soardi, absolute retracts are appropriate candidates for the fixed point property. Reflexive graphs with the fixed point property must be antisymmetric, i.e., oriented. Having described absolute retracts among oriented graphs,  we derive from Theorem \ref{thm:cor3} that the  bounded ones have the fixed point property.
 We start with a characterization of accessible elements of $\mathcal N_{\Lambda}$. The proof is omitted.
 \begin{lemma} \label{lem-accessible}Every element $v$ of $\mathcal N_{\Lambda}\setminus \{\Lambda^{\ast}, \emptyset\}$ is accessible.
 \end{lemma}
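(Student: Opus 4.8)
The plan is to produce, for every $v\in\mathcal N_\Lambda$ with $\emptyset\ne v\ne\Lambda^*$, an explicit \emph{principal} witness to accessibility. Recall that the order on $\mathcal N_\Lambda$ is the reverse of inclusion, so $x\le y$ means $x\supseteq y$, the least element $0$ is $\Lambda^*$ and the greatest element $1$ is $\emptyset$. Unravelling the definition of accessibility, I claim $v$ is accessible as soon as I exhibit a single word $\alpha$ with $\alpha\notin v$ and $\alpha\overline{\alpha}\in v$. Indeed, take $r:=\,\uparrow\!\alpha$. This $r$ lies in $\mathbf N(\Lambda^*)$, being the upper cone $\{\alpha\}^{\Delta}$ of a singleton; since the involution is an order-automorphism of $\Lambda^*$ we have $\overline{\uparrow\!\alpha}=\,\uparrow\!\overline{\alpha}$, and a short computation gives $r\oplus\overline r=(\uparrow\!\alpha)(\uparrow\!\overline{\alpha})=\,\uparrow\!(\alpha\overline{\alpha})$, which is again principal and hence is also the value of $\oplus$ computed inside $\mathcal N_\Lambda$. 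Thus $\alpha\overline{\alpha}\in v$ says exactly $\uparrow\!(\alpha\overline{\alpha})\subseteq v$, i.e.\ $v\le r\oplus\overline r$, while $\alpha\notin v$ says $\uparrow\!\alpha\not\subseteq v$, i.e.\ $v\not\le r$.

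So the whole task reduces to finding a word $\alpha\notin v$ with $\alpha\overline{\alpha}\in v$. First I would choose $\beta=b_0\cdots b_{n-1}\in v$ of \emph{minimal length}; since $v\ne\Lambda^*$ forces $\Box\notin v$ we have $n\ge 1$, and by minimality every word of length $<n$ lies outside $v$. I then split into two cases according to whether $\beta$ is constant. If $\beta$ is not constant, I set $\alpha:=b_0\cdots b_{n-2}$; its length is $n-1<n$, so $\alpha\notin v$ automatically, and because some earlier letter equals $\overline{b_{n-1}}$ (otherwise all letters of $\beta$ agree), the mirror part of $\alpha\overline{\alpha}=b_0\cdots b_{n-2}\,\overline{b_{n-2}}\cdots\overline{b_0}$ contains an occurrence of $b_{n-1}$; matching $b_0\cdots b_{n-2}$ to the prefix and $b_{n-1}$ to that occurrence embeds $\beta$ as a subword, so $\alpha\overline{\alpha}\ge\beta$ and $\alpha\overline{\alpha}\in v$. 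This case is elementary and uses neither completeness nor the cancellation rule.

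The constant case $\beta=c^{\,n}$ is where the hypothesis $v\in\mathcal N_\Lambda$ must enter, and it is the main obstacle. Here it suffices to find \emph{any} word $\alpha\notin v$ with $|\alpha|\ge n$: indeed $\alpha\overline{\alpha}$ contains exactly $|\alpha|\ge n$ occurrences of the letter $c$, so $c^{\,n}\le\alpha\overline{\alpha}$ and $\alpha\overline{\alpha}\in v$. To see such an $\alpha$ exists I argue by contradiction: if every word of length $\ge n$ were in $v$, then together with the fact (from minimality of $\beta$) that every word of length $<n$ is outside $v$, we would get $v=\{w:\lvert w\rvert\ge n\}$ exactly. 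But this final segment violates the cancellation rule of Proposition~\ref{cancellation}: taking $u=c^{\,n-1}$ and $w=\Box$, both $u+w$ and $u-w$ have length $n$ and lie in $v$, whereas $uw=c^{\,n-1}$ has length $n-1$ and does not. This contradicts $v\in\mathbf N(\Lambda^*)$, so the desired long word $\alpha$ must exist.

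Putting the two cases together yields in every situation a word $\alpha\notin v$ with $\alpha\overline{\alpha}\in v$, and hence the principal witness $r=\,\uparrow\!\alpha$ certifying that $v$ is accessible. I expect the only genuinely delicate point to be the constant case: there accessibility is not forced by the local combinatorics of one minimal word (the naive guesses such as $\overline{c}^{\,n}$ need not lie outside $v$) but by the \emph{global} cancellation property that characterises the MacNeille completion, which rules out the unique obstructing segment $\{w:\lvert w\rvert\ge n\}$. The translation step and the identity $r\oplus\overline r=\,\uparrow\!(\alpha\overline{\alpha})$ are routine, and the non-constant case is a one-line subword manipulation.
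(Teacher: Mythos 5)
Your proof is correct. Note that the paper itself states only ``The proof is omitted'' for this lemma, so there is no proof of record to compare yours against; what you wrote supplies a complete, self-contained argument. The reduction step is sound: $r:=\,\uparrow\!\alpha$ is the upper cone $\{\alpha\}^{\Delta}$, hence lies in $\mathbf{N}(\Lambda^{\ast})$, and the identity $(\uparrow\!\alpha)(\uparrow\!\overline{\alpha})=\,\uparrow\!(\alpha\overline{\alpha})$ follows from the factorization property of the Higman order that the paper records (if $uv\leq w$ then $w=u'v'$ with $u\leq u'$, $v\leq v'$), so under reverse inclusion the two accessibility conditions translate exactly to $\alpha\notin v$ and $\alpha\overline{\alpha}\in v$. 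The non-constant case is a correct two-letter subword manipulation: a non-constant $\beta$ must contain $\overline{b_{n-1}}$ among its first $n-1$ letters, which provides the needed occurrence of $b_{n-1}$ in the mirror half. In the constant case your use of Proposition \ref{cancellation} is exactly the right tool, though it can be streamlined: since $c^{\,n-1}\notin v$ by minimality and $v$ satisfies the cancellation rule, the contrapositive of that rule (with $u=c^{\,n-1}$ and the empty word) immediately gives that one of $c^{\,n-1}+$, $c^{\,n-1}-$ lies outside $v$; that word has length $n$ and serves directly as $\alpha$, with no need to argue by contradiction against the full obstructing segment $\{w:\lvert w\rvert\geq n\}$. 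Either phrasing is valid, and your observation that $\alpha\overline{\alpha}$ contains exactly $\lvert\alpha\rvert$ occurrences of each letter is the key point that makes the constant case close.
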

 \begin{theorem}\label{thm:cor4}
 	If a graph $\G$, finite or not, is a retract of a product of reflexive and directed zigzags of bounded length then every commuting set of endomorphisms has a common fixed point.
 \end{theorem}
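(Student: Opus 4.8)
The plan is to deduce the statement from Corollary \ref{thm:cor3}, which grants a common fixed point to every commuting family of nonexpansive self-maps of a \emph{bounded hyperconvex} space. By Lemma \ref{graphmorphisms} the endomorphisms of a reflexive directed graph are exactly its nonexpansive self-maps for the zigzag distance, so it is enough to show that $\G$, regarded as a metric space over $\mathcal H_{\Lambda}:=\mathbf F(\Lambda^{*})$, is bounded and hyperconvex. The argument therefore splits into two independent verifications, and only boundedness will use the bound on the lengths.

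Hyperconvexity is formal. Each reflexive directed zigzag is hyperconvex by Lemma \ref{zigzag1}, and the class of hyperconvex spaces over a Heyting algebra is closed under arbitrary products and under retracts (as recalled before Theorem \ref{thm: space values-hyperconvex}); hence the product $\mathbf P$ of the given zigzags is hyperconvex, and so is its retract $\G$. Nothing here uses the lengths.

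Boundedness is the crux. Recall that a space is bounded when $0$ is the only inaccessible element below its diameter, that in $\mathcal H_{\Lambda}$ the least element is $0=\Lambda^{*}$, and that the top element $\emptyset$ — the value taken by the distance on a pair of vertices lying in distinct connected components — is itself inaccessible. The danger peculiar to an \emph{infinite} product is this: although every factor is connected, so that each coordinate value $d_{i}(x_{i},y_{i})$ is a nonempty final segment, the product value $d(\vec x,\vec y)=\bigvee_{i}d_{i}(x_{i},y_{i})=\bigcap_{i}d_{i}(x_{i},y_{i})$ (join in $\mathcal H_{\Lambda}$ being intersection of final segments) might collapse to $\emptyset$, putting $\emptyset$ below the diameter and ruining boundedness. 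Here the bound $N$ on the lengths is decisive: every coordinate value is a final segment generated by words of length at most $N$, and over the two-letter alphabet there are only finitely many such final segments. Consequently $\bigcap_{i}d_{i}(x_{i},y_{i})$ is in truth a \emph{finite} intersection of final segments generated by words of length at most $N$, and such an intersection is never empty, a common super-word being obtained by concatenating one generator from each factor. Thus $d(\vec x,\vec y)\neq\emptyset$ for all $\vec x,\vec y$, whence, taking the join over all pairs (again a finite intersection of nonempty final segments), $\delta(\mathbf P)\neq\emptyset$.

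It remains to see that no nonzero element below $\delta(\mathbf P)$ is inaccessible, which amounts to proving the $\mathcal H_{\Lambda}$-analogue of Lemma \ref{lem-accessible}: every final segment $v$ with $\Box\notin v$ and $v\neq\emptyset$ is accessible. Since, by Higman's theorem, such a $v$ has only finitely many minimal words $w_{1},\dots,w_{m}$, each of length at least $1$, I would split each $w_{j}=w_{j}'w_{j}''$ with $w_{j}''\neq\Box$ and take $r:=\bigcap_{j}(\uparrow w_{j}'\cap\uparrow\overline{w_{j}''})$; the involution then yields $r\oplus\overline r\subseteq v$, i.e.\ $v\le r\oplus\overline r$, while a careful choice of a witnessing word shows $r\not\subseteq v$, i.e.\ $v\not\le r$. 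Every $v$ with $0<v\le\delta(\mathbf P)$ has $\Box\notin v$ (else $v=\Lambda^{*}=0$) and, since $\delta(\mathbf P)\neq\emptyset$, also $v\neq\emptyset$, so the only inaccessible element below $\delta(\mathbf P)$ is $0$; thus $\mathbf P$ is bounded. As a coretraction is an isometry, $\delta(\G)\le\delta(\mathbf P)$ and $\G$ inherits boundedness, after which Corollary \ref{thm:cor3} concludes. The main obstacle is precisely this accessibility statement — chiefly securing $r\not\subseteq v$ uniformly over the minimal words — together with the ``no collapse to $\emptyset$'' step; the remainder is a routine transfer of the hyperconvexity formalism.
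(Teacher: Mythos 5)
Your skeleton is the same as the paper's: prove that $\G$ is hyperconvex and bounded as a generalized metric space, then invoke Corollary \ref{thm:cor3}. The hyperconvexity step and your argument that the diameter is not $\emptyset$ (boundedly many possible coordinate values, and a finite intersection of nonempty final segments contains the concatenation of one generator from each) are correct, and the latter is in fact more explicit than the paper. The fatal gap is in the boundedness step, and it comes from your choice of ambient algebra. You work over $\mathcal H_{\Lambda}=\mathbf F(\Lambda^{\ast})$ and assert, as ``the $\mathcal H_{\Lambda}$-analogue of Lemma \ref{lem-accessible}'', that every final segment $v\notin\{\Lambda^{\ast},\emptyset\}$ is accessible. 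This is false: take $v:=\Lambda^{\ast}\setminus\{\Box\}$, the set of all nonempty words. It is a final segment distinct from $0=\Lambda^{\ast}$ and from $\emptyset$, yet if $r\not\subseteq v$ then $\Box\in r$, hence $\Box=\Box\,\overline{\Box}\in r\oplus\overline{r}$, so $r\oplus\overline{r}\not\subseteq v$; thus $v$ is inaccessible in $\mathcal H_{\Lambda}$. Worse, this $v$ lies below the diameter of \emph{every} space with at least two points (for $x\neq y$ one has $\Box\notin d(x,y)$, so $\delta\subseteq d(x,y)\subseteq v$), so over $\mathcal H_{\Lambda}$ no space with more than one point is ever bounded, and Corollary \ref{thm:cor3} can never be applied as you intend. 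This is precisely why the paper changes algebra: by Theorem \ref{theo:isometric}, a retract of a product of oriented zigzags has all its zigzag distances in the MacNeille completion $\mathcal N_{\Lambda}$, where your bad element does not exist ($\Lambda^{\ast}\setminus\{\Box\}$ violates the cancellation rule of Proposition \ref{cancellation}), and Lemma \ref{lem-accessible}, which is stated only for $\mathcal N_{\Lambda}$, then gives boundedness. Any repair of your proof must pass through this restriction of the set of values.

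There is a second, independent flaw: even for elements that \emph{are} accessible, your construction fails, because intersecting over \emph{all} minimal words is too restrictive. Take $v:=\uparrow(+)\cap\uparrow(-)$ (the words containing both letters), whose minimal words are $+-$ and $-+$; this $v$ belongs to $\mathcal N_{\Lambda}$ and lies below the diameter of the single zigzag $\mathbf L_{+-}$, so it cannot be dodged. Checking the four possible split choices, your $r=\bigcap_{j}\bigl(\uparrow w_{j}'\cap\uparrow\overline{w_{j}''}\bigr)$ always equals one of $v$, $\uparrow(+-)$, $\uparrow(-+)$, each contained in $v$, so $r\not\subseteq v$ never holds. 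The correct local move uses \emph{one} minimal word $w$ and \emph{one} split $w=w'w''$: for $r:=\uparrow w'\cap\uparrow\overline{w''}$ one always has $r\oplus\overline{r}\subseteq\uparrow(w'w'')\subseteq v$, and here $w=+-$, $w'=+$, $w''=-$ gives $r=\uparrow(+)\not\subseteq v$. But showing that such a choice can always be made for every $v\in\mathcal N_{\Lambda}\setminus\{\Lambda^{\ast},\emptyset\}$ is exactly the content of Lemma \ref{lem-accessible}, which the paper cites (with proof omitted) rather than reproves; your sketch does not supply it.
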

 \begin{proof}
 	We may suppose that $\G$ has more than one vertex. The diameter of $\G$ equipped with the zigzag distance  belongs to $\mathcal N_{\Lambda}\setminus \{\Lambda^{\ast}, \emptyset\}$. According to  Lemma \ref{lem-accessible},  it is accessible, hence as a metric space,  $\G$ is bounded. Being a retract of a product of hyperconvex metric spaces  it is hyperconvex. Theorem \ref{thm:cor3} applies.
 \end{proof}

 The properties of  reflexive and involutive transition systems extend almost verbatim the properties of directed graphs. They have been extended to non necessary reflexive transition systems (\cite{PR}, \cite{hudry1, hudry2}. Instead of presenting these properties, we illustrate their use in the following section. 
 \section{An illustration of the usefulness of the injective envelope}\label{section-illustration}
 
 Using the notion of injective envelope, we prove that on an ordered alphabet   $\Lambda$ the monoid $\F^{\circ}(\Lambda^{\ast}): =\F(\Lambda^{\ast})\smallsetminus  \{ \emptyset\} $ is  free. This result is exposed in \cite{KPR}.

 \begin{theorem}\label{prop1}
 	$\F^{\circ}(\Lambda^{\ast})$ is a free monoid.
 \end{theorem}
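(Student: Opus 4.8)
The plan is to present $\mathbf F^{\circ}(\Lambda^{\ast})$ as a \emph{graded} monoid and then extract unique factorization from the injective envelope. First I would record that $\mathbf F^{\circ}(\Lambda^{\ast})$ is genuinely a monoid: since $\emptyset$ is absorbing for concatenation ($\emptyset\oplus Y=\emptyset$), the product of two nonempty final segments is nonempty, so the set is closed under $\oplus$ and contains the neutral element $\Lambda^{\ast}$. Removing $\emptyset$ is exactly what legitimises the grading: for $F\in\mathbf F^{\circ}(\Lambda^{\ast})$ put $\lambda(F):=\min\{|w|:w\in F\}$, which is defined precisely because $F\neq\emptyset$. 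Every word of $F\oplus G$ has the form $fg$ with $f\in F,\ g\in G$, whence $\lambda(F\oplus G)=\lambda(F)+\lambda(G)$, so $\lambda$ is a monoid homomorphism onto $(\N,+)$; and $\lambda(F)=0$ iff $\Box\in F$ iff $F=\Lambda^{\ast}$. Thus $\lambda^{-1}(0)$ reduces to the identity, every non-identity element has length $\geq 1$, and induction on $\lambda$ shows immediately that every element is a finite product of \emph{atoms} (indecomposable non-identity elements). Freeness is therefore equivalent to uniqueness of this factorization.

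To control uniqueness I would feed in the injective envelope of two-element spaces. Fix $V\in\mathbf F^{\circ}(\Lambda^{\ast})$ and consider $\mathbf E:=(\{x,y\},d)$ with $d(x,y)=V$. By Theorem \ref{thm: env injective} it has an injective envelope, and by the description recalled above (Lemma 2.3 and Proposition 2.7 of \cite{KP2}) this envelope is isometric to the complete lattice $\mathcal S_{V}=\{\lceil V-C\rceil:C\in\mathbf F(\Lambda^{\ast})\}$ of right residuals $\lceil V-C\rceil=V/C:=\{u:uC\subseteq V\}$, ordered by reverse inclusion, with $x$ and $y$ sitting at $\Lambda^{\ast}$ and $V$. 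The useful point is the adjunction $X\subseteq V/Y\Leftrightarrow X\oplus Y\subseteq V$ together with the identity $(\bigcap_{i}X_{i})Y=\bigcap_{i}X_{i}Y$ established above for final segments: for every factorization $V=A\oplus B$ one gets $A\subseteq V/B$ and $(V/B)\oplus B=V$ (since $(V/B)B\subseteq V$ while $(V/B)B\supseteq AB=V$). Hence the canonical left factors of $V$ are exactly the residuals forming $\mathcal S_{V}$, and the injective envelope organises all factorizations of $V$ into a single complete lattice between $\Lambda^{\ast}$ and $V$.

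It then remains to prove the cancellation law making factorization unique, namely equidivisibility: if $A\oplus B=C\oplus D=V$, there is $u$ with $C=A\oplus u,\ B=u\oplus D$ or with $A=C\oplus u,\ D=u\oplus B$. I would extract $u$ from the lattice $\mathcal S_{V}$, replacing $A,C$ by the canonical left factors $V/B,\ V/D$ and using the residual adjunction and the distributivity $(\bigcap_{i}X_{i})Y=\bigcap_{i}X_{i}Y$ to show that these two left factors lie on a common chain from $\Lambda^{\ast}$ to $V$, reading off $u$ as the residual of the larger by the smaller. Combined with $\lambda$, equidivisibility yields freeness by the classical criterion that a graded equidivisible monoid is free; equivalently and more in the spirit of the paper, the atoms obtained by reading the successive covers of the finite chain from $\Lambda^{\ast}$ to $V$ inside $\mathcal S_{V}$ are determined by the envelope, which is unique up to an isometry fixing $x$ and $y$, so the atomic factorization of $V$ is forced.

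The main obstacle is precisely this last step. Ordinary concatenation of languages is far from cancellative — the example $\emptyset=(X_{1}\cap X_{2})Y\neq X_{1}Y\cap X_{2}Y$ for arbitrary languages already signals the failure — so equidivisibility cannot be assumed and must be forced by the finer structure of $\mathbf F(\Lambda^{\ast})$. The real labour is converting the abstract statement ``the injective envelope of a two-point space is the complete lattice $\mathcal S_{V}$ of residuals'' into genuine identities between final segments: that the relevant left factors are comparable along a common chain, and that the residual $u$ truly satisfies $C=A\oplus u$ and $B=u\oplus D$. Here the special distributivity of concatenation over intersection in $\mathbf F(\Lambda^{\ast})$ (which fails in the full power set) is the indispensable tool, and verifying these identities is where the proof will concentrate.
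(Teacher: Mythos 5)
Your reduction of freeness to ``graded $+$ equidivisible'' is legitimate as a framing: the length $\lambda(F):=\min\{|w|:w\in F\}$ is indeed additive, $\lambda^{-1}(0)=\{\Lambda^{\ast}\}$, existence of atomic factorizations follows by induction on $\lambda$, and the classical Levi-type criterion converts uniqueness into equidivisibility. But equidivisibility --- which you yourself call ``the main obstacle'' --- is never proved; what you offer is a plan whose central claim, that the left factors $V/B$ and $V/D$ ``lie on a common chain'' of $\mathcal S_V$, is asserted without argument, and it does not follow from the residual adjunction together with $(\bigcap_{i}X_{i})Y=\bigcap_{i}X_{i}Y$. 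Two further points make the plan as stated unsound. First, replacing $A,C$ by the residuals $V/B,V/D$ is only harmless when $A=V/B$ and $C=V/D$, i.e.\ when right cancellation holds in $\F^{\circ}(\Lambda^{\ast})$; but cancellativity is itself a consequence of the freeness you are trying to prove, so this substitution is circular unless equidivisibility is established for the given factors or cancellation is proved beforehand. Second, $\mathcal S_V$ consists of \emph{all} residuals $\lceil V-\beta\rceil$ with $\beta\in\mathcal H$, and for most $\beta$ one has $\lceil V-\beta\rceil\oplus\beta\subsetneq V$; hence elements of $\mathcal S_V$, and in particular ``successive covers of a chain in $\mathcal S_V$'', are not in general exact left factors of $V$, so the lattice does not by itself ``organise all factorizations.''

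The paper fills exactly this hole by a geometric rather than residuation-theoretic argument. The gluing formula (\ref{eq:sum injective}), $\mathcal S_{F_1}\mathcal S_{F_2}\cong\mathcal S_{F_1F_2}$ (Corollary 4.9 of \cite{KP2}), matches every nontrivial factorization $V=A\oplus B$ with a cut vertex of the reflexive graph $\mathbf G_V$ attached to the transition system of the envelope $\mathcal S_V$, a cut vertex separating the initial state $x$ from the final state $y$. Theorem \ref{thm:blockdecomposition} then shows that the blocks of $\mathbf G_V$ form a path from the block of $x$ to the block of $y$; since the cut vertices separating $x$ from $y$ are linearly ordered along this path, any two factorizations are comparable, and the distance between the corresponding cut vertices is the element $u$ realizing equidivisibility (equivalently, the block languages $F_0,\dots,F_{n-1}$ give the unique irreducible decomposition). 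That linear order of cut vertices is precisely the ``common chain'' your sketch needs, and it comes from the block--cut-vertex structure of a connected graph, obtained after the metric analysis of the envelope, not from the algebra of residuals. So your proposal correctly handles the existence half and accurately locates the difficulty, but the uniqueness half --- the actual content of the theorem --- is missing.
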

 
 We recall that a member $F$ of $\F(\Lambda^{\ast})$ is \emph{irreducible} if it is distinct from $\Lambda^{\ast}$ and  is not the concatenation of two members  of $\F(\Lambda^{\ast})$ distinct of $F$ (note that with this definition, the empty set is irreducible).  The fact that $\F^{\circ}(\Lambda^{\ast})$ is free amounts to the fact that each  member decomposes in a unique way as a concatenation of  irreducible elements. We give a synctactical proof in \cite{KPR} and   one with a geometric flavor.

 We present the last one. 
 
 We suppose that $\Lambda$ is equipped with an involution (this is not a restriction: we may choose the identity on $\Lambda$ as our involution). Then, we consider   metric spaces  whose  values of the distance  belong to $\mathbf F(\Lambda^{\ast})$. The category of metric spaces over $\mathbf F(\Lambda^{\ast})$, with the nonexpansive maps as morphisms,  has enough injectives. Furthermore, for every  final segment $F$ of $\Lambda^{\ast}$,
 the $2$-element  metric space $\mathbf E:=  (\left\{
 x,y\right\}, d)$ such that $d(x,y)=F$,  has an \emph{injective envelope} $\mathcal S_F$. 
 
 We define the gluing of two metric spaces by a common vertex.  Suppose that two metric spaces $\mathbf E_1:= (E_1, d_1)$ and $\mathbf E_2:= (E_2, d_2)$ have only one common vertex, say $r$. On the union $E_1\cup E_2$ we may define a distance  extending both $d_1$,  $d_2$,  setting $d(x, y):= d_i(x, r)\oplus d_j(r,y)$ for $x\in E_i, y\in E_j,$ and $i\not =j$. If $E_1$ and $E_2$ are arbitrary, we may replace them by isometric copies with a common vertex. We apply this construction to the injective envelope of two-element metric spaces. Let  $v_1$ and $v_2$ be  two  elements of a Heyting algebra and    $\mathcal S_{v_1}, \mathcal S_{v_2}$ be their injective envelopes. 
 Suppose that $\mathcal S_{v_1}$ is the injective envelope of $\{x_1, y_1\}$ with $x_1:=0, y_1:=v_1$ and that $\mathcal S_{v_2}$ is the injective envelope of $\{x_2, y_2\}$ with $x_2:=v_1$ and has no other element in common with $\mathcal S_{v_1}$. Let  $\mathcal S_{v_1}\oplus \mathcal S_{v_2}$ be their gluing. Since the distance from $x_1$ to $y_2$ is $v_1\oplus v_2$, this space embeds isometrically into the injective envelope $\mathcal S_{v_1\oplus v_2}$. For some Heyting algebras (and $ v_1$, $v_2$ distinct from $1$), these two spaces are isometric (see Figure \ref{fig:interpretation} for a geometric  interpretation). This is the case  of the Heyting algebra $\F(\Lambda^{\ast})$ (Corollary 4.9, p. 177 of \cite{KP2}). In terms of this Heyting algebra, this yields with obvious notations 
 \begin {equation}\label {eq:sum injective}
 \mathcal S_{F_1} \mathcal S_{F_2}\cong \mathcal S_{F_1 F_2}\;   \text{for all}\;F_1, F_2\in \F^{\circ}(\Lambda^{\ast}).
\end{equation} 

Say that an  injective which is not the gluing of two proper injectives is \emph{irreducible}. 
From  (\ref {eq:sum injective}) it follows that an injective of the form  $\mathcal S_F$ is irreducible iff $F$ is irreducible. 

In order to prove  that the decomposition of a final segment $F$ into a concatenation of irreducible final segments  is unique, we consider the  transition system  $\mathcal {M}_{F}$ on the alphabet $\Lambda $, with transitions $(p, a, q)$ if $a \in d(p,q)$, corresponding to the injective envelope $\mathcal S_F$. The automaton  $\mathcal{A}_{F}:=  ({\mathcal M}_{F},\left\{ x\right\}, \left\{
y\right\} )$ with $x=\Lambda^{\ast}$ as initial state and $y=F$ as final state accepts $F$.  A transition system yields a directed graph  whose arcs are the ordered pairs $(x,y)$ linked by some  transition. Since   the transition system $\mathcal {M}_{F}$ is reflexive and involutive,  the corresponding  graph $\mathbf {G}_{F}$ is undirected and has  a loop  at every vertex.  For an example, if $F= \Lambda^{\ast}$, $\mathcal S_F$ is the one-element metric space and $\mathbf{G}_{F}$ reduces to a loop. If $F= \emptyset$, $\mathcal S_F$ is the two-elements  metric space $E:= (\{x, y\}, d)$ with $d(x,y)= \emptyset$ and $\mathbf {G}_{F}$ has no edge. The cut vertices  of  $\mathbf {G}_{F}$ (vertices whose deletion increases the number of connected components) allow  to reconstruct the irreducible components of $\mathcal S_F$. 

\begin{figure}[h!]

\begin{center}
	\unitlength=1cm
	\definecolor{ttqqqq}{rgb}{0.2,0.,0.}
	\begin{tikzpicture}
	%Partie gauche
	\draw[-] ( 0 , 0 ) .. controls ( 0.7 , 0.4 ) and ( 1.3 , 0.4 ) .. ( 1.8 , 0.3 ) node[above,sloped,pos=0.5] {};
	\draw[-] ( 0 , 0 ) .. controls ( 0.6 , -0.5 ) and ( 1.2 , -0.5 ) .. ( 1.8 , 0.3 ) node[above,sloped,pos=0.5] {};
	\draw ( 0.5 , 0.35 ) node[anchor=north west] {$S_{v_1}$};
	\draw[-] ( 1.8 , 0.3 ) .. controls ( 2.5 , 0.5 ) and ( 3.2 , 0.5 ) .. ( 4.3 , 0 ) node[above,sloped,pos=0.5] {};
	\draw[-] ( 1.8 , 0.3 ) .. controls ( 2.5 , -0.5 ) and ( 3.2 , -0.5 ) .. ( 4.3 , 0 ) node[above,sloped,pos=0.5] {};
	\draw ( 2.6 , 0.3 ) node[anchor=north west] {$S_{v_2}$};
	\draw ( 0 , 0 ) -- ( 0 , -1.0 );
	%\draw[dotted] ( 0.2 , -0.2 ) -- ( 0.7 , 0.5 );
	\draw ( 1.8 , 0.3 ) -- ( 1.8 , -0.8 );
	\draw ( 4.3 , 0 ) -- ( 4.3 , -1.0 );
	\draw[->] ( 0.03 , -0.9 ) -- ( 4.27 , -0.9 );
	\draw[->] ( 0.03 , -0.7 ) -- ( 1.77 , -0.7 );
	\draw ( 0.6 , -0.33 ) node[anchor=north west] {\scriptsize $v_1$};
	\draw[->] ( 1.84 , -0.7 ) -- ( 4.27 , -0.7 );
	\draw ( 2.6 , -0.33 ) node[anchor=north west] {\scriptsize $v_2$};
	\draw ( 2 , -0.83 ) node[anchor=north west] {\scriptsize $v$};
	\put( 1.8 , 1.4 ){\circle*{.08}}
	\put( 1.8 , 0.3 ){\circle*{.08}}
	\put( 0 , 0 ){\circle*{.08}}
	\put( 4.3 , 0 ){\circle*{.08}}
	\draw[-] ( 0 , 0 ) -- ( 1.8 , 1.4 );
	\draw[->] ( 0.8 , 0.62 ) -- ( 0.9 , 0.7 );
	\draw ( 0.5 , 1.1 ) node[anchor=north west] {\scriptsize $u_1$};
	\draw[-] ( 1.8 , 1.4 ) -- ( 4.3 , 0 );
	\draw[->] ( 1.8 , 1.4 ) -- ( 3.05 , 0.7 );
	\draw ( 2.8 , 1.1 ) node[anchor=north west] {\scriptsize $u_2$};
	\draw[-] ( 4.5 , 0 ) -- ( 5.4 , 1.8 );
	%\draw ( 4.6 , 0.2 ) node[anchor=north west] {or};
	\draw[-] ( 4.5 , 0 ) -- ( 5.4 , -1.8 );
	
	%Figure droite Haut
	\draw[-] ( 5.6 , 1.5 ) .. controls ( 6.3 , 1.9 ) and ( 6.9 , 1.9 ) .. ( 7.4 , 1.8 ) node[above,sloped,pos=0.5] {};
	\draw[-] ( 5.6 , 1.5 ) .. controls ( 6.2 , 1.0 ) and ( 6.8 , 1.0 ) .. ( 7.4 , 1.8 ) node[above,sloped,pos=0.5] {};
	\draw ( 5.9 , 1.63 ) node[anchor=north west] {\scriptsize $S_{v_1}$};
	\draw[-] ( 7.4 , 1.8 ) .. controls ( 8.1 , 2.0 ) and ( 8.8 , 2.0 ) .. ( 9.9 , 1.5 ) node[above,sloped,pos=0.5] {};
	\draw[-] ( 7.4 , 1.8 ) .. controls ( 8.1 , 1.0 ) and ( 8.8 , 1.0 ) .. ( 9.9 , 1.5 ) node[above,sloped,pos=0.5] {};
	\draw ( 8.2 , 1.8 ) node[anchor=north west] {$S_{v_2}$};
	\draw ( 5.6 , 1.5 ) -- ( 5.6 , 0.7 );
	\draw ( 7.4 , 1.8 ) -- ( 7.4 , 0.7 );
	\draw ( 9.9 , 1.5 ) -- ( 9.9 , 0.7 );
	%\draw[->] ( 5.63 , 0.6 ) -- ( 9.87 , 0.6 );
	\draw[->] ( 5.63 , 0.8 ) -- ( 7.37 , 0.8 );
	\draw ( 6.2 , 1.17 ) node[anchor=north west] {\scriptsize $v_1$};
	\draw[->] ( 7.44 , 0.8 ) -- ( 9.87 , 0.8 );
	\draw ( 8.2 , 1.17 ) node[anchor=north west] {\scriptsize $v_2$};
	%\draw ( 7.6 , 0.67 ) node[anchor=north west] {\scriptsize $v$};
	\put( 7.4 , 2.9 ){\circle*{.08}}
	\put( 7.4 , 1.8 ){\circle*{.08}}
	\put( 5.6 , 1.5 ){\circle*{.08}}
	\put( 9.9 , 1.5 ){\circle*{.08}}
	\draw[-] ( 5.6 , 1.5 ) -- ( 7.4 , 2.9 );
	\draw[->] ( 6.4 , 2.12 ) -- ( 6.5 , 2.2 );
	\draw ( 6.1 , 2.6 ) node[anchor=north west] {\scriptsize $u_1$};
	\draw[-] ( 7.4 , 2.9 ) -- ( 9.9 , 1.5 );
	\draw[->] ( 7.4 , 2.9 ) -- ( 8.65 , 2.2 );
	\draw ( 8.4 , 2.6 ) node[anchor=north west] {\scriptsize $u_2$};
	\draw[->] ( 7.4 , 2.9  ) -- ( 6.5 , 1.49 );
	\put( 6.5 , 1.5 ){\circle*{.08}}
	\draw[-] ( 6.5 , 1.5 ) .. controls (6.3 , 1.6 ) and (5.9 , 1.6 ) .. (5.6 , 1.5 ) node[above,sloped,pos=0.5] {};
	\draw[-] ( 6.5 , 1.5 ) .. controls (6.8 , 1.82 ) and (7.1 , 1.82 ) .. (7.4 , 1.8 ) node[above,sloped,pos=0.5] {};
	\draw[- ] (6.5 , 1.5   ) -- ( 6.5 , 1.3 );
	\draw[->] (6.5 , 1.3   ) -- ( 7.4 , 1.3 );
	\draw ( 6.8 , 1.4 ) node[anchor=north west] {\scriptsize $u_2^1$};
	\draw ( 6.8 , 0.71 ) node[anchor=north west] {\scriptsize $Case~1$};

	\draw[-] ( 5.6 , -1.5 ) .. controls ( 6.3 , -1.1 ) and ( 6.9 , -1.1 ) .. ( 7.4 , -1.2 ) node[above,sloped,pos=0.5] {};
	\draw[-] ( 5.6 , -1.5 ) .. controls ( 6.2 , -2.0 ) and ( 6.8 , -2.0 ) .. ( 7.4 , -1.2 ) node[above,sloped,pos=0.5] {};
	\draw ( 6.1 , -1.15 ) node[anchor=north west] {$S_{v_1}$};
	\draw[-] ( 7.4 , -1.2 ) .. controls ( 8.1 , -1.0 ) and ( 8.8 , -1.0 ) .. ( 9.9 , -1.5 ) node[above,sloped,pos=0.5] {};
	\draw[-] ( 7.4 , -1.2 ) .. controls ( 8.1 , -2.0 ) and ( 8.8 , -2.0 ) .. ( 9.9 , -1.5 ) node[above,sloped,pos=0.5] {};
	\draw ( 8.63 , -1.33 ) node[anchor=north west] {\scriptsize $S_{v_2}$};
	\draw ( 5.6 , -1.5 ) -- ( 5.6 , -2.3 );
	\draw ( 7.4 , -1.2 ) -- ( 7.4 , -2.3 );
	\draw ( 9.9 , -1.5 ) -- ( 9.9 , -2.3 );
	
	\draw[->] ( 5.63 , -2.2 ) -- ( 7.4 , -2.2 );
	\draw ( 6.2 , -1.83 ) node[anchor=north west] {\scriptsize $v_1$};
	\draw[->] ( 7.44 , -2.2 ) -- ( 9.87 , -2.2 );
	\draw ( 8.2 , -1.83 ) node[anchor=north west] {\scriptsize $v_2$};
	\put( 7.4 , -0.1 ){\circle*{.08}}
	\put( 7.4 , -1.2 ){\circle*{.08}}
	\put( 5.6 , -1.5 ){\circle*{.08}}
	\put( 9.9 , -1.5 ){\circle*{.08}}
	\draw[-] ( 5.6 , -1.5 ) -- ( 7.4 , -0.1 );
	\draw[->] ( 6.4 , -0.88 ) -- ( 6.5 , -0.8 );
	\draw ( 6.1 , -0.4 ) node[anchor=north west] {\scriptsize $u_1$};
	\draw[-] ( 7.4 , -0.1 ) -- ( 9.9 , -1.5 );
	\draw[->] ( 7.4 , -0.1 ) -- ( 8.65 , -0.8 );
	\draw ( 8.4 , -0.4 ) node[anchor=north west] {\scriptsize $u_2$};
	
	\draw[->] ( 7.4 , -0.1  ) -- ( 8.7 , -1.49 );
	\put(  8.7 , -1.5 ){\circle*{.08}}
	\draw[-] ( 8.7 , -1.5 ) .. controls (9.1 , -1.4 ) and (9.4 , -1.3 ) .. (9.9 , -1.5) node[above,sloped,pos=0.5] {};
	\draw[- ] (7.4 , -1.2    ) -- ( 8.7 , -1.5 );
	\draw[->] (7.4 , -1.2  ) -- ( 8.05 , -1.35 );
	\draw ( 7.99 , -1.32 ) node[anchor=north west] {\scriptsize $u_1^2$};
	\draw ( 6.8 , -2.3 ) node[anchor=north west] {\scriptsize $Case~2$};
	\end{tikzpicture}
	\caption{Interpretation of the convexity property of a pair $(v_1,v_2)$.}\label{fig:interpretation}
\end{center}
\end{figure}
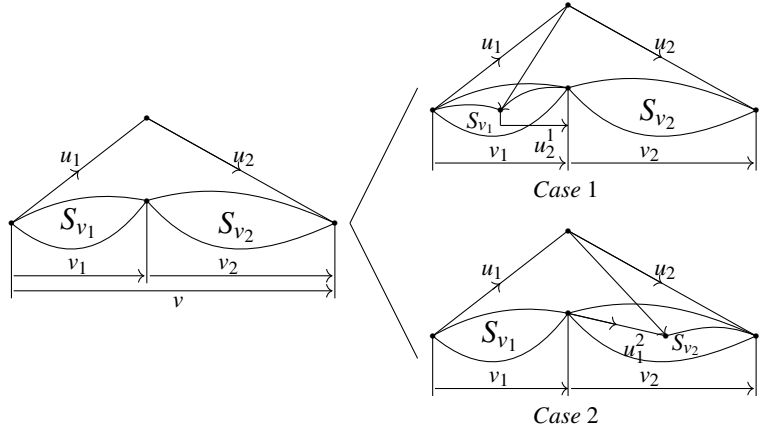

With the notion of cut vertex and block borrowed from graph theory, we prove:

\begin{theorem}\label{thm:blockdecomposition}
Let $F$ be a  final segment of $\Lambda^{\ast}$ distinct from $\Lambda^{\ast}$. Then $F$ is irreducible if and only if  $\mathcal S_F$ is irreducible if and only if $\mathbf {G}_F$ has no cut vertex. If $F$ is not irreducible,
the blocks of $\mathbf {G}_F$ are  the vertices of a finite path $C_0, \dots, C_{n-1}$ with $n\geq 2$,  whose  end vertices  $C_0$ and $C_{n-1}$ contain respectively  the initial state $x$ and the final state $y$ of the automaton $\mathcal{A}_{F}$ accepting $F$. Furthermore, $F$ is the concatenation $F_0 \dots  F_i \dots  F_{n-1}$, the automaton  $\mathcal{A}_{F_i}$ accepting $F_i$  being isomorphic to  $({\mathcal M}_{F}\!\restriction C_{i},\left\{ x_i\right\}, \left\{
x_{i+1}\right\} )$, where $x_0:=x$, $x_{n}:= y$ and  $\{x_{i+1}\}=C_{i} \cap C_{i+1}$ for $0\leq i< n-1$.
\end{theorem}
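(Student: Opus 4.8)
The plan is to interpret the whole statement through the injective envelope $\mathcal S_F$ of the two-point space $(\{x,y\},d)$ with $d(x,y)=F$, and to transport the graph-theoretic block decomposition of $\mathbf G_F$ back into a factorization of $F$ by means of the gluing identity (\ref{eq:sum injective}). The equivalence ``$F$ is irreducible iff $\mathcal S_F$ is irreducible'' is already in hand from (\ref{eq:sum injective}); what remains is to match irreducibility of $\mathcal S_F$ with the absence of a cut vertex of $\mathbf G_F$, and then to extract the path $C_0,\dots,C_{n-1}$ of blocks together with the concatenation $F=F_0\cdots F_{n-1}$. Throughout I exploit that $\mathbf G_F$ comes from the reflexive and involutive transition system $\mathcal M_F$, so that the convexity/connexity criterion of Lemma \ref{lem:metric-transition} is available.

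First I would prove the geometric fact underlying everything: every vertex $u$ of $\mathbf G_F$ lies on a geodesic from $x$ to $y$, namely $d(x,u)\oplus d(u,y)=F$. This follows directly from the minimal metric form representation $\mathcal N_F$ of $\mathcal S_F$ (\cite{KP2}): a point $u$ of $\mathcal S_F$ is the pair $(d(x,u),d(y,u))$, the defining inequality reads $F\leq d(x,u)\oplus\overline{d(y,u)}=d(x,u)\oplus d(u,y)$ by axiom d3, and minimality turns this into an equality. I would then set up the dictionary between cut vertices and gluings. A gluing forces a cut vertex by an easy edge argument: if $\mathcal S_F\cong\mathcal S_A\oplus\mathcal S_B$ glued at $z$, any edge $(a,b)$ with $a$ on the $x$-side and $b$ on the $y$-side would give $d(a,b)=d(a,z)\oplus d(z,b)$ containing a one-letter word, which forces $\Box\in d(a,z)$ or $\Box\in d(z,b)$, hence $a=z$ or $b=z$. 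Conversely, a cut vertex $z$ separating $x$ from $y$ yields, via convexity (Lemma \ref{lem:metric-transition}) and the uniqueness of injective envelopes, a splitting $\mathcal S_F\cong\mathcal S_A\oplus\mathcal S_B$ with $A:=d(x,z)$, $B:=d(z,y)$ and $A\oplus B=F$ by the geodesic identity. Combined with (\ref{eq:sum injective}) this establishes the triple equivalence of the first sentence.

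The hard part, and what I expect to be the main obstacle, is proving that the block--cut structure of $\mathbf G_F$ is a \emph{path} rather than an arbitrary tree. The argument I have in mind uses the geodesic identity together with minimal words (which exist by Higman's theorem, $\Lambda^{\ast}$ being well-quasi-ordered). Suppose the block containing a vertex $u$ were off the $x$--$y$ axis; then a single cut vertex $c$ would separate $u$ from both $x$ and $y$. Taking a minimal $w\in F=d(x,u)\oplus d(u,y)$ and factoring $w=w_1w_2$ with $w_1\in d(x,u)$, $w_2\in d(u,y)$, the separating vertex $c$ splits both walks, so $w=w_1'\,w_1''\,w_2'\,w_2''$ along $x\to c\to u\to c\to y$. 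But $w_1'w_2''$ labels a walk $x\to c\to y$, hence lies in $F$ and is a subword of $w$; minimality of $w$ forces $w_1''=w_2'=\Box$, so $\Box\in d(c,u)$, whence $d(c,u)=\Lambda^{\ast}$ is the least element and $u=c$, a contradiction. Therefore every block and cut vertex lies on the axis from the block of $x$ to the block of $y$: the cut vertices are linearly ordered as $x=x_0,x_1,\dots,x_{n-1},x_n=y$, the blocks line up as $C_0,\dots,C_{n-1}$ with $C_i\cap C_{i+1}=\{x_{i+1}\}$, and $x\in C_0$, $y\in C_{n-1}$, with $n\geq 2$ precisely when a cut vertex exists.

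Finally I would read off the factorization. Each block $C_i$ carries no cut vertex, so by the dictionary the restriction $(\mathcal M_F\restriction C_i,\{x_i\},\{x_{i+1}\})$ is the automaton $\mathcal A_{F_i}$ of an \emph{irreducible} final segment $F_i:=d(x_i,x_{i+1})$, and iterating the gluing identity (\ref{eq:sum injective}) along the path gives $\mathcal S_F\cong\mathcal S_{F_0}\oplus\cdots\oplus\mathcal S_{F_{n-1}}$. The concatenation $F=F_0\cdots F_{n-1}$ then follows by telescoping the geodesic identity along the consecutive cut vertices, $d(x_0,x_n)=d(x_0,x_1)\oplus\cdots\oplus d(x_{n-1},x_n)$. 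Since the block decomposition of a graph is canonical, this construction is unique, which is exactly the input needed to conclude the freeness of $\F^{\circ}(\Lambda^{\ast})$ in Theorem \ref{prop1}.
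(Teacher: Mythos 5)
Your proof collapses at its declared foundation, Step 1. The claim that every point $u$ of $\mathcal S_F$ satisfies the geodesic identity $d(x,u)\oplus d(u,y)=F$ is false, and the justification offered --- ``minimality turns the inequality into an equality'' --- is precisely the step that does not survive the passage from $\mathbb{R}^{+}$ to $\mathcal H_{\Lambda}=\F(\Lambda^{\ast})$. In $\mathbb{R}^{+}$ one can shrink a coordinate of a non-tight pair and remain admissible, but in $\F(\Lambda^{\ast})$ shrinking a coordinate in the order means \emph{enlarging} a final segment, and that can be blocked even when the concatenation is strictly contained in $F$. Concretely, take $\Lambda=\{+,-\}$ with the usual involution and $F=\,\uparrow\!\{++,--\}$. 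Put $u_1:=\,\uparrow\!\{+,--\}$ and $u_2:=\,\uparrow\!\{-,++\}$, so that $\overline{u_2}=\,\uparrow\!\{+,--\}$. Then $u_1\oplus\overline{u_2}\subseteq F$ (each of the four products of generators $++,\ +--,\ --+,\ ----$ has two $+$'s or two $-$'s), and neither coordinate can be enlarged: the only words missing from $u_1$ are $\Box$ and $-$, and concatenating either of them with the letter $+\in\overline{u_2}$ gives a word outside $F$. Hence $(u_1,u_2)$ is a minimal pair, i.e.\ a genuine point $u$ of $\mathcal S_F$ (adjacent to $x$, since $+\in d(x,u)$); yet $--\in F$ while $--\notin u_1\oplus\overline{u_2}$ (in any factorization $--=ac$, one of $a,c$ lies in $\{\Box,-\}$), so $d(x,u)\oplus d(u,y)\subsetneq F$. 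Note that this $F$ is irreducible, so the failure is not an artifact of reducibility.

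The damage is fatal exactly where you said the difficulty lies. Step 2(a) is fine, and Step 2(b) is repairable without Step 1: for a cut vertex $z$ separating $x$ from $y$, hyperconvexity makes $d$ equal to the distance $d_{\mathcal M_F}$ of the associated transition system (Lemma \ref{lem:metric-transition}), so every $w\in F$ labels a walk from $x$ to $y$, which must pass through $z$, giving $F\subseteq d(x,z)\oplus d(z,y)$ and hence equality. But Step 3 --- ruling out blocks off the $x$--$y$ axis --- applies the false identity to the off-axis vertex $u$, and it cannot be patched with minimal words: one only knows $d(x,u)\oplus d(u,y)\subseteq F$, and a minimal word of $F$ need not factor through $u$ at all, so the subword argument never gets started. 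A correct route to the path structure avoids geodesics entirely and uses essentiality of the envelope (Lemma \ref{fact:fixe}): if some connected component $K$ of $\mathbf G_F - c$ contained neither $x$ nor $y$, then the map collapsing $K$ onto $c$ and fixing all other points is nonexpansive (for $a\in K$ and $b\notin K\cup\{c\}$ every walk passes through $c$, so $d(a,b)=d(a,c)\oplus d(c,b)\subseteq \Lambda^{\ast}\oplus d(c,b)=d(c,b)$, i.e.\ $d(c,b)\leq d(a,b)$) and fixes $\{x,y\}$ pointwise, yet is not the identity, contradicting Lemma \ref{fact:fixe}. With that lemma in place of your Steps 1 and 3, the rest of your outline (dictionary with gluings, telescoping along the cut vertices, uniqueness from the canonicity of the block decomposition) goes through.
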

From this result, the freeness of $\F^{\circ}(\Lambda^{\ast})$ follows.

This result does not yield a concrete test for irreducibility, the size of the injective envelope $\mathcal S_F$ in terms of the length of  words generating $F$ can be  a double exponential (see Subsection 4.5 of \cite{KPR}). But it suggests a similar result for the minimal automaton recognizing $F$. In \cite{KPR}, we prove
\begin{theorem} \label{thm:minimal}
If  $\mathcal A$ is the minimal deterministic automaton  recognizing a final segment $F\in \F^{\circ}(\Lambda^{\ast})$  then $F$ is irreducible iff there is no vertex $z$ distinct from the initial  state $x$ and the final state $y$ which lies on all directed paths going from $x$ to $y$.
\end{theorem}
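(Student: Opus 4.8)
The plan is to compute the minimal deterministic automaton explicitly through the Myhill--Nerode congruence and then translate the graph-theoretic separation condition into a statement about factorizations of $F$, which we control through the freeness of $\F^{\circ}(\Lambda^{\ast})$ (Theorem \ref{prop1}).

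First I would describe the states. Since $F$ is a final segment for the subword ordering, it is upward closed, so each left quotient $w^{-1}F=\{v:wv\in F\}$ is again a final segment and satisfies $F\subseteq w^{-1}F\subseteq\Lambda^{\ast}$; moreover $w\leq w'$ forces $w^{-1}F\subseteq w'^{-1}F$. The Nerode states are exactly these left quotients, the initial state is $x=\Box^{-1}F=F$, and the unique accepting state is the absorbing sink $y=\Lambda^{\ast}$, reached precisely by the words of $F$. Because $F\in\F^{\circ}(\Lambda^{\ast})$ is nonempty, every word extends into $F$, so there is no dead state and the automaton is trim; and since every transition moves $Z$ to the larger quotient $a^{-1}Z\supseteq Z$, the accepted words correspond to the monotone runs from $F$ up to $\Lambda^{\ast}$. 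In these terms, a state $Z\neq x,y$ lies on all directed paths from $x$ to $y$ exactly when it is a \emph{checkpoint}: every $\omega\in F$ has a prefix $p$ with $p^{-1}F=Z$.

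The forward direction is then clean. Given a checkpoint $Z$ with $F\subsetneq Z\subsetneq\Lambda^{\ast}$, put $H:=Z$ and $G:=\{w:Z\subseteq w^{-1}F\}$. Both are final segments, and the checkpoint property gives $F=GH$: the inclusion $GH\subseteq F$ is immediate from the definition of $G$, while for $\omega\in F$ the prefix $p$ with $p^{-1}F=Z$ yields $\omega=p\,s$ with $p\in G$ and $s\in p^{-1}F=Z=H$. Here $H\neq\Lambda^{\ast},\emptyset$ and $G\neq\Lambda^{\ast}$ because $Z\neq x,y$, and $G\neq F$ because otherwise $F=FZ$ would force $Z=\Lambda^{\ast}$ by cancellativity of the free monoid $\F^{\circ}(\Lambda^{\ast})$. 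Thus $F$ is reducible, i.e. irreducibility implies that no such $z$ exists.

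For the converse I would start from a proper factorization $F=GH$ and form the candidate $Z:=\bigcap_{g\in G}g^{-1}F$; one checks directly that $F=GZ$ and $F\subsetneq Z\subsetneq\Lambda^{\ast}$, and by Higman's lemma the intersection reduces to the finitely many minimal generators of $G$. The hard part, and the main obstacle of the proof, is to show that this $Z$ is genuinely realized as a single Nerode class $w_{0}^{-1}F$ and that every accepted word actually passes through it, since the factorization point of an individual word $\omega=gh$ is a priori not canonical. I expect to resolve this by passing through Theorem \ref{thm:blockdecomposition}: the cut vertices of the graph $\mathbf{G}_{F}$ of the injective envelope arrange the blocks along a path and read off the unique factorization $F=F_{0}\cdots F_{n-1}$ into irreducibles, and the corresponding tail products are reachable states of the minimal automaton that every run must meet, hence checkpoints. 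Since freeness makes \emph{irreducible} coincide with $n=1$, ``some checkpoint exists'' matches ``$F$ is reducible''. Finally I would dispose of the degenerate case $F=\Lambda^{\ast}$, where $x=y$ and there is nothing to prove, separately, completing the equivalence.
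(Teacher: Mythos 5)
Since the paper itself gives no proof of this theorem (it only defers to \cite{KPR}), your argument has to stand on its own; here is how it fares. Your Nerode setup and your forward direction are correct: the states are the left quotients $w^{-1}F$, the unique accepting state is the sink $\Lambda^{\ast}$, ``lies on all directed paths from $x$ to $y$'' is equivalent to your checkpoint property, and a checkpoint $Z$ yields the proper factorization $F=GZ$ with $G:=\{w:Z\subseteq w^{-1}F\}$, cancellativity (Theorem \ref{prop1}) ruling out $G=F$. The genuine gap is in the converse, and it sits exactly where you flag it. After constructing $Z=\bigcap_{g\in G}g^{-1}F$ with $F=GZ$ and $F\subsetneq Z\subsetneq\Lambda^{\ast}$, everything still hinges on showing that some such set is a left quotient through which every accepted word passes, and your proposed resolution via Theorem \ref{thm:blockdecomposition} cannot supply this: the cut vertices in that theorem are vertices of $\mathbf{G}_F$, i.e., states of the transition system $\mathcal{M}_F$ attached to the injective envelope, which is a different and highly non-deterministic automaton; the property ``lies on every successful path'' does not transfer through determinization and minimization in general, and neither the paper nor your proposal establishes any dictionary between cut vertices of $\mathbf{G}_F$ and Nerode states (the envelope can even be doubly exponentially larger than the minimal automaton). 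So the sentence ``the corresponding tail products are reachable states of the minimal automaton that every run must meet, hence checkpoints'' asserts precisely the missing step rather than deriving it. Two smaller defects: Higman's lemma yields finitely many minimal generators of $G$ only when $\Lambda$ is well-quasi-ordered, whereas the paper assumes only an ordered alphabet; and the case $F=\Lambda^{\ast}$ is not ``nothing to prove'' but a case where the stated equivalence fails literally ($\Lambda^{\ast}$ is not irreducible, yet its one-state automaton has no vertex distinct from $x$ and $y$), so it must be excluded by convention.

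The missing step is true and can be closed far more cheaply than by your route. Lemma: if $F=GH$ with $G,H$ final segments and $G\neq F\neq H$, then for every $\omega\in F$ the shortest prefix $p$ of $\omega$ belonging to $G$ (it exists because $\omega\in GH$) satisfies $p^{-1}F=H$. Indeed $H\subseteq p^{-1}F$ since $pH\subseteq GH=F$; conversely, if $pv\in F$, write $pv=g'h'$ with $g'\in G$, $h'\in H$: if $\vert g'\vert\leq\vert p\vert$ then $g'$ is a prefix of $p$, hence of $\omega$, so minimality of $p$ forces $g'=p$ and $v=h'\in H$; if $\vert g'\vert>\vert p\vert$ then $v=wh'$ for some word $w$, and $h'\leq v$ gives $v\in H$ because $H$ is a final segment. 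Hence $H$ itself is a Nerode state, distinct from $x$ (since $H\neq F$) and from $y$ (else $F=G\Lambda^{\ast}=G$, using that $G\Lambda^{\ast}=G$ for a final segment $G$), and every successful run visits it at time $\vert p\vert$. This single lemma finishes the converse and makes the whole detour through freeness, the irreducible factorization and the injective envelope unnecessary: your instinct that the cut state should be the ``tail'' of a factorization was right, but it is this shortest-prefix argument, not Theorem \ref{thm:blockdecomposition}, that realizes the tail inside the minimal automaton.
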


\section{Further developments}\label{section:further development}
There are  several interesting examples of generalized metric spaces for which the set of values is not a Heyting algebra.

This is the case for metric spaces over a Boolean algebra (except if the Boolean algebra is the power set of a set).  If $B$ is a Boolean algebra, not necessarily complete, or not satisfying the distributivity condition, residuation holds (i.e., for every $x, y\in B$,  $y\setminus x$ is the least element $r$ of $B$ such that $x\leq y\vee r$); hence, one may define a distance $d$ over $B$: the distance $d(p, q)$ between two elements $p,q$  of $B$ is the symmetric difference  $p\Delta q:= p\setminus q \cup q\setminus p$. If 
$B$ is complete, Theorem  \ref{thm:embedding} holds.  

An other example comes from arithmetic. The Chinese remainder theorem  can be viewed as a property of balls in a metric space. Indeed, if $a_i, r_i$ $(i\in I)$ is a family of pairs of  integers
we may view each congruence class of $a_i$ modulo $r_i$ in $\Z$ as a (closed) ball $B(a_i, r_i):= \{x\in \Z: d(a_i, x)\preceq  r_i\}$ for a suitable distance $d$ on $\Z$ and order $\preceq$ on the set of values of the distance. The Chinese remainder theorem expresses when these balls have a non-empty intersection.  As we have seen, Helly property and convexity are  the keywords to ensure  a non-empty intersection of balls. In our case, $\Z$ has a structure of ultrametric space with values in $\N$ provided that  we order $\N$ by the reverse of divisibility  setting $n\preceq m$ if $n$ is a multiple of $m$. Doing so, $(\N, \preceq)$ is  a  distributive  complete lattice, the least element being $0$, the largest $1$, the  join $n\vee m$ of $n$ and $m$ being the largest common divisor.   Replacing  the addition by the join and for two elements $a,b\in \Z$, setting $d(a,b):= \vert a- b\vert$, we have   $d(a,b)=0$ iff $a=b$; $d(a,b)=d(b,a)$ and $d(a,b)\preceq  d(a,c)\vee d(c,b)$ for all $a,b, c\in \Z$. With this definition, closed balls in $\Z$ are congruence classes of the additive group ($\Z, +)$.  In  an ordinary metric space $\mathscr V:= (V, \preceq)$, the  necessary condition for the non-emptiness of the intersection of two balls $B(a_i, r_i)$ and $B(a_j, r_j)$ is the \emph{convexity property}:   the distance between centers is at most the sum of the radii. Here this yields  $d(a_i,a_j)\preceq r_i\vee r_j$, i.e.   $a_i$ and $a_j$ are congruent modulo $lcd(r_i, r_j)$. The Chinese remainder theorem  expresses that the  intersection of \emph{finitely many} balls  $B(a_i, r_i)$ is non-empty iff this  family of balls $B(a_i, r_i)$ satisfies the convexity property  and the finite $2$-Helly property.  This property does not extend to infinite families:  the space $\Z$ is not hyperconvex (and $\N$ equipped with the join as a monoid operation is not a Heyting algebra); we may say that  it is \emph{finitely hyperconvex}. 

Metric spaces over $(\N, \preceq)$,  like $\Z$, are example of metric spaces over a join-semilattice $\mathscr V:= (V, \preceq)$ with a $0$. They fit in the category of ultrametric spaces.  If $\mathbf E:=(E, d)$ is such a metric space,  set $\equiv_r:= \{(x,y)\in E: d(x,y)\preceq r\}$ for each $r$;  this relation is an   equivalence relation  on $E$. Let $\Eqv(E)$ be  the set of equivalence relations on $E$ and $\Eqv_{d}(E):= \{\equiv_r: r\in V\}$. Then, as it is easy to see, any two members of $\Eqv_d(E)$ commute and $\equiv_r\circ \equiv_s=\equiv_s\circ \equiv_r= \equiv_{r\vee s}$ for every $r,s\in  V$  iff $(E,d)$ is convex. If the meet of every non-empty subset of $V$ exists, then $\Eqv_d(E)$ is an intersection closed subset of $\Eqv(E)$. Furthermore, $(E, d)$ is hyperconvex iff $\Eqv_d(E)$ is a completely meet-distributive lattice of $Eq(E)$ (Proposition 3.12 of \cite{pouzet}). A sublattice $L$ of the lattice $\Eqv(E)$ of equivalence  relations is \emph{arithmetical} (see \cite {kaarli-pixley})  if it is distributive and pairs of members of $L$ commute with respect to composition. As it is well known (see \cite{kaarli-pixley}), arithmetic lattices can be characterized in terms of the Chinese remainder conditions (expressed as in the theorem mentionned above). This property amounting to finite hyperconvexity, it yields  the one-extension property for maps with finite domain and, if $E$ is countable, the fact that every partial nonexpansive map from a finite subset of $E$ extends to $E$  \cite{kaarli}. The study of maps preserving congruences, nonexpansive maps in our setting, is a very basic subject of universal algebra (for a beautiful recent result, see \cite{cgg2}). Some results about metric spaces over meet-distributive lattices and their nonexpansive maps were obtained in \cite{PR,  pouzet2}. The relation with universal algebra (and arithmetic) suggests to look at  possible extensions.

\section*{acknowledgement}This paper is a preliminary version of a book chapter of "New Trends in Analysis and Geometry". We thank  Mohamed Amine Khamsi for his encouragements.


\begin{thebibliography}{99}

\bibitem {abian-brown} S.~Abian, A.~Brown, {\em A theorem on partially ordered sets, with applications to fixed point theorems}, Canad. J. Math. 13 (1961) 78- 82.

   
\bibitem{ackerman} N. L.~Ackerman,
{\em Completeness in generalized ultrametric spaces,} p-Adic Numbers Ultrametric Anal. Appl. 5 (2013), no. 2, 89-105.


\bibitem{ArPa} N.~Aronszajn, P.~Panitchpakdi, {\em Extension of uniformly continuous transformations and hyperconvex metric spaces}, Pacific J.Math. 6 (1956), 405-439.

\bibitem {baclawski-bjorner} K.~Baclawski, A.~Bj\"orner, {\em Fixed points in partially ordered sets}, Advances in Math. 31,(1979)  263-287.

\bibitem {baillon} J. B.~Baillon, {\em Non expansive mapping and hyperconvex spaces}, Fixed point theory and its applications (Berkeley, CA, 1986), 11-19, Contemp. Math., 72, Amer. Math. Soc., Providence, RI, (1988).

\bibitem{banaschewski-bruns}B.~Banaschewski, G.~Bruns, {\em Categorical characterization of the MacNeille completion}, Arch. Math. (Basel) 18 (1967) 369--377.

\bibitem{bandelt-farber-hell} H-J. Bandelt, M.~Farber, P.~Hell, {\em Absolute reflexive retracts and absolute bipartite retracts}, Discrete Applied Math.,  44, 35 (1983), 9-20.

\bibitem{bandelt-pesh} H-J. Bandelt, E.~Pesch, {\em Dismantling absolute retracts of reflexive graphs}, European J. Combin. 10 (1989), no. 3, 211--220. 

\bibitem {bandelt-dress} H-J.~Bandelt,  A. W. M.~Dress, {\em A canonical decomposition theory for metrics on a finite set}, Adv. Math. 92 (1992), 47--105. 


\bibitem{bandelt-pouzet} H-J.~Bandelt, M.~Pouzet, {\em  A syntactic  approach to the MacNeille completion of $\Lambda^{\ast}$, the free monoid over an ordered alphabet $\Lambda$},  Order, 16pp https://doi.org/10.1007/s11083-018-9462-7, published on line June 25, (2018).

\bibitem{bandelt-pouzet-saidane} H-J.~Bandelt, M.~Pouzet, F. Sa\"{\i}dane, {\em Absolute retracts of reflexive oriented graphs}, circulating manuscrit, 20 pp. April 2018.

\bibitem{bayod-martinez}J.M.~Bayod, J.~Mart\'{\i}nez-Maurica, {\em Ultrametrically injective spaces,} Proc. Amer. Math. Soc. 101 (3), (1987) 571-576.

\bibitem{birkhoff}  G.~Birkhoff,  {\em Lattice theory}. Corrected reprint of the 1967 third edition. American Mathematical Society Colloquium Publications, 25. American Mathematical Society, Providence, R.I., (1979). vi+418 pp.

\bibitem{blumenthal} L.M.~Blumenthal, {\em Boolean geometry,} I. Rend. Circ. Mat. Palermo (2) 1 (1952), 343--360.

\bibitem{blumenthal1} L.M.~Blumenthal,  {\em Theory and applications of distance geometry,}  Second edition Chelsea Publishing Co., New York (1970) xi+347 pp.

\bibitem{blumenthal-menger} L.M.~Blumenthal, K.~Menger, {\em Studies in geometry,} W. H. Freeman and Co., San Francisco, Calif. (1970) xiv+512 pp.

%
\bibitem{blyth-janowitz} T. S.~Blyth, M.F.~Janowitz, {\em Residuation Theory}, Volume 102 in International Series of Monographs on Pure and Applied Mathematics, Oxford, New-York, Pergamon Press, 382 pp.

\bibitem{bondy-murty} J. A.~Bondy,  U. S. R.~Murty, {\em Graph theory}. Graduate Texts in Mathematics, 244. Springer, New York, (2008),  xii+651 pp.

\bibitem{braunfeld} S.~Braunfeld, {\em $\Lambda$-ultrametric spaces and lattices of equivalence relations},  8pp, arXiv:1811.11120v1. 

\bibitem{bruck-all} R.E.~Bruck, {\em A common fixed point theorem for a commuting family of non-expansive mappings}, Pac. J. Math., 53 (1974), 59-71.

\bibitem{cgg2} P.~C\'egielski, S.~Grigorieff, I.~Guessarian, {\em Newton representation of functions over natural integers having integral difference ratios}, Int. J. Number Theory 11 (2015), no. 7, 2109--2139.

\bibitem{Che} V. Chepo\"{\i}, {\em A $T_X$-approach to some results on cuts and metrics},  Adv. in Appl. Math. 19 (1997), no. 4, 453--470. 
\bibitem{cohn}  P. M.~Cohn, {\em Universal algebra}, Harper \& Row, Publishers, New York-London (1965) xv+333 pp.

\bibitem{conant}  G.~Conant, {\em Distance structures for generalized metric spaces}, Annals of Pure and Applied Logic 168 (2017), no. 3, 622--650.


\bibitem{DLPS} C.~Delhomm\'e, C.~Laflamme, M.~Pouzet, N.~Sauer, {\em Divisibility of countable metric spaces}, European Journal of Combinatorics {\bf 28} (2007) 1746-1769. 

\bibitem{DLPS2} C.~Delhomm\'e, C.~Laflamme, M.~Pouzet, N. Sauer, {\em Indivisible ultrametric spaces}, Topology and its Applications {\bf 155} (2008) 1462-1478.

\bibitem{DLPS3} C.~Delhomm\'e, C. Laflamme, M. Pouzet, N. Sauer, {\em On homogeneous ultrametric spaces}, pdf, 23 pp; arXiv:1509.04346, (2015).  

\bibitem{demarr} R.~DeMarr, {\em Common fixed point theorem for commuting contraction mappings}, Pac. J. of Math., 13 (1963), 1139-1141.

\bibitem{deza-deza}M.~Deza,  E.~Deza,  {\em Encyclopedia of distances},  Fourth edition. Springer, Heidelberg, (2016) xxii+756 pp.


\bibitem{DuRi} D.~Duffus, I.~Rival : {\em A structure theory
	of ordered sets}, J. of Discrete Math. 35 (1981), 53-118.


\bibitem{eklund-al} P.~Eklund, J. Guti\'errez Garc\'{\i}a, U. H\"ohle, J. Kortelainen, {\em  Semigroups in Complete Lattices: Quantales, Modules and Related Topics}, Developments in Mathematics, Springer (2018).

\bibitem{espinola-khamsi} R.~Esp\'{\i}nola, M.~A. Khamsi, 
{\em Introduction to hyperconvex spaces}, In Handbook of metric fixed point theory, 391--435, Kluwer Acad. Publ., Dordrecht, (2001). 

\bibitem{Dr1} A. W. N.~Dress : {\em Trees, tight extensions of 
	metric spaces, and the cohomological dimension of certain groups, a note on
	combinatorial properties of metric spaces}, Adv. in Math. 53 (3) (1984),
321-402.

\bibitem{fraisse} R.~Fra\"\i ss\'e, Sur l'extension aux relations de quelques propri\'et\'es des ordres. {\em Annales scientifiques de l'\'Ecole Normale Sup\'erieure}, S\'er. 3, {\bf 71} no. 4 (1954), p. 363-388.   

\bibitem{Fra} R.~Fra\"\i ss\'e, {\em Theory of Relations}, Revised Edition, in~: {\em Studies in Logic and the Foundations of Mathematics}, {\bf 145}, North Holland 2000.

\bibitem{hell16} P.~Hell, {\em Absolute retracts of graphs}, Lecture notes 406 (1974), PP. 291-301.

\bibitem{hell17} P.~Hell, {\em Graph retractions}, Atti dei conveigni lincei 17, teorie combinatorie (1976), PP. 263-268.

\bibitem{hell18}P.~Hell, {\em R\'etractions de graphes}, PhD. Universit\'e de Montr\'eal, Juin 1972, 148 pages.

\bibitem{He1} P.~Hell, {\em Subdirect products of bipartite graphs}, in Infinite and Finite Sets (V.T.Sos et al., eds), Colloq. Math. Soc. J.Bolyai 10 (1975) 857-866.

\bibitem{hell-rival} P.~Hell,  I.~Rival, {\em Absolute retracts and varieties of reflexive graphs}, 
Canadian J. of Math. 39 (1987) 544-567.

\bibitem{Hi} G.~Higman, {\em Ordering by divisibility in abstract
	algebra}, Proc. London Math. Soc (3) 2 (1952), 326-336.

\bibitem{hubicka-all} J.~Hubi\v{c}ka, M.~Kone\v{c}n\'y, J.~Ne\v{s}et\v{r}il, {\em Conant's generalised metric spaces are Ramsey}, available at arXiv:1710.04690 [math.CO] (2017), 22 pages. 

\bibitem{hudry1} A.~Hudry, {\em R\'etractions, cor\'etractions et envelope injective d'une alg\`ebre de transitions, } Discrete Math. 247 (2002), no. 1-3, 117--134.

\bibitem{hudry2} A.~Hudry, {\em Injective envelope and parallel decomposition of a transition system,}  Discrete Math. 289 (2004), no. 1-3, 45-61.

\bibitem{isbell} J.~Isbell, {\em Six theorems about injective metric
	spaces}, Comment. Math. Helv. 39 (1964), 65-76.

\bibitem {jawhari} E.~Jawhari, {\em Les r\'etractions dans les graphes. Applications et g\'en\'eralisations}. Th\`ese de 3i\`eme cycle, Universit\'e  Claude-Bernard, 7 juillet 1983, Lyon. 

\bibitem{JaMiPo} E.~Jawhari, D.~Misane, M.~Pouzet, {\em Retracts : graphs and ordered sets from the metric point of view}, (I.Rival, ed) Contemporary Mathematics, Vol 57, (1986), 175-226.


\bibitem{jonsson}B.~J\'onsson,  {\em Homogeneous universal relational systems}, Mathematica Scandinavica, {\bf 8} (1960), 137-142.

\bibitem{kaarli} K.~Kaarli, {\em A new characterization of arithmeticity},  
Conference on Lattices and Universal Algebra (Szeged, 1998). Algebra Universalis {\bf 45} (2001), no. 2-3, 345--347. 
\bibitem{kaarli-pixley} K.~Kaarli, A.F.~Pixley,  {\em Polynomial completeness in algebraic systems},  Chapman and  Hall/CRC, Boca Raton, FL, 2001. xvi+358 pp. 
\bibitem{kaarli-radeleczki} K.~Kaarli, S.~Radeleczki, {\em Representation of integral quantales by tolerances},  Algebra Universalis 79 (5) (2018), https://doi.org/10.1007/s00012-018-0484-1


\bibitem{Ka} M.~Kabil : {\em  Enveloppe injective de graphes et de
	syst\`emes de transitions et id\'eaux de mots}, Th\`ese de doctorat, Lyon; Juin 1992, $n^{o}$ 149-92.

\bibitem{KP1} M.~Kabil, M.~Pouzet,  {\em Ind\'ecomposabilit\'e et
	irr\'eductibilit\'e dans la vari\'et\'e des r\'etractes absolus
	des graphes r\'eflexifs,} C.R.Acad.Sci. Paris  S\'erie A  321
(1995), 499-504.



\bibitem {kabil} M.~Kabil, {\em Une approche m\'etrique de l'ind\'ecomposabilit\'e et de l'irr\'eductibilit\'e dans la vari\'et\'e des r\'etractes absolus des graphes et des syst\`emes de transitions},
Th\`ese de doctorat d'\'Etat, Universit\'e Hassan II A{\"\i}n Chock, Casablanca, 19 D\'ecembre 1996.

\bibitem{KP2} M.~Kabil, M.~Pouzet, {\em Injective envelope of graphs and
	transition systems,} Discrete Math.192 (1998), 145-186.

\bibitem{KPR} M.~Kabil, M.~Pouzet, I. G.~Rosenberg,  {\em Free monoids and metric spaces}, To the memory of Michel Deza,  Europ. J. of Combinatorics, Online publication complete: April 2, (2018), https://doi.org/10.1016/j.ejc.2018.02.008. arXiv:   1705.09750v1, 27 May 2017.

\bibitem {kabil-pouzet} M.~Kabil, M.~Pouzet, {\em Injective envelopes of transition systems and Ferrers languages}, 23pp.  July 4,  (2019), arXiv:1907.02231


\bibitem{katetov} M.~Kat\v{e}tov, {\em On universal metric spaces}, In: General topology and its relations to modern analysis and algebra, VI (Prague, 1986), Res. Exp. Math., 16, Heldermann, Berlin, (1988).

\bibitem{KPT} A.S.~Kechris, V.~G.~Pestov, S.~Todorcevic, {\em Fra\"\i ss\'e limits, Ramsey theory, and topological
	dynamics of automorphism groups},  Geom. Funct. Anal.,  {\bf 15} (2005)106-189.  

\bibitem {khamsi} M. A.~Khamsi, {\em One-local retract and common fixed point for commuting mappings in metric spaces},  Nonlinear Anal. 27 (1996), no. 11, 1307--1313.


\bibitem{khamsi-pouzet} M. A.~Khamsi, M.~Pouzet, {\em A fixed point theorem for commuting families of relational homomorphisms. Applications to metric spaces, oriented graphs and ordered sets},  Topology and Appl.,  special volume dedicated to Ales Pultr, 22 p., https://doi.org/10.1016/j.topol.2019.106970, arXiv:1805.02594.


\bibitem {kirk} W. A.~Kirk, {\em A fixed point for mappings which do not increase distances}, Amer. Math. Monthly, 72 (1965), 1004-1006.

\bibitem {kiss-marki-prohle-tholen}E.W.~Kiss, L.~M\'arki,  P.~Pr\"ohle, W.~Tholen, {\em algebraic properties. A compendium on amalgamation, congruence extension, epimorphisms, residual smallness, and injectivity}
Studia Sci. Math. Hungar. 18 (1982), no. 1, 79--140. 

\bibitem{lawvere}F. W.~Lawvere, {\em Metric spaces, generalized logic, and closed categories}, Rend. Sem. Mat. Fis. Milano 43 (1973), 135--166 (1974).

\bibitem{lim} T. C.~Lim, {\em A fixed point theorem for families of non-expansive mappings}, Pacific J. of Math., 53 (1974), 487-493.

\bibitem{misane} D.~ Misane, 	{\em R\'etracts absolus d'ensembles ordonn\'ees et de graphes. Propri\'et\'es du point fixe},  Th\`ese de 3\`eme cycle, Universit\'e Claude-Bernard, 14 septembre 1984.
	
\bibitem{mulvey} C. J.~Mulvey,  $\And$,   {\em Second topology conference  (Taormina, 1984)}.
Rend. Circ. Mat. Palermo (2) Suppl. No. 12 (1986), 99-104.

\bibitem{nevermann-rival} P.~Nevermann, I.~Rival,  {\em Holes in ordered sets}, Graphs Combin. 1 (1985), no. 4, 339-350. 

\bibitem{nevermann-wille} P.~Nevermann, R.~Wille, {\em  The strong selection property and ordered sets of finite length}, Algebra Universalis, 18 (1984), 18-28.

\bibitem{vanthe} L.~Nguyen Van Th\'e,  {\em Structural Ramsey theory of metric spaces and topological dynamics of isometry groups}, Mem. Amer. Math. Soc. 206 (2010), no. 968, x+140 pp. ISBN: 978-0-8218-4711-4 

\bibitem{NoRi} R.~Nowakowski, I.~Rival,  {\em The smallest
	graph variety containing all paths}, J. of Discrete Math. 43 (1983),
185-198.

\bibitem{penot1} J. P.~Penot, {\em Fixed point theorems without convexity}, in Analyse non convexe (1977, PAU), Bull. Soc. Math. France, M\'emoire 60, (1979), 129-152.

\bibitem{penot}  J. P.~Penot, {\em Une vue simplifi\'ee de la th\'eorie de la complexit\'e}, Gaz. Math. No. 34 (1987), 61-77.

\bibitem{pesch} E.~Pesch, {\em Retracts of Graphs}, Athenaeum, Frankfurt, 1988.

\bibitem{PR} M.~Pouzet, I.G.~Rosenberg, {\em General metrics and contracting operations}, J. of Discrete
Math. 130 (1994), 103-169.

\bibitem{pouzet} M.~Pouzet, {\em Une approche m\'etrique de la r\'etraction dans les ensembles ordonn\'es et les graphes}, Proceedings of the conference on infinitistic mathematics (Lyon, 1984), 59--89, Publ. D\'ep. Math. Nouvelle S\'er. B, 85-2, Univ. Claude-Bernard, Lyon, 1985.

\bibitem{pouzet2} M.~Pouzet,  {\em Operations preserving equivalence relations},  May 30, (2017), 14pp. arXiv:1705.10890v1. 


\bibitem{priess-crampe-ribenboim1} S.~Priess-Crampe, P.~Ribenboim, {\em Equivalence relations and spherically complete ultrametric spaces}, Comptes rendus de l'Acad\'emie des sciences. S\'erie 1,
Math\'ematique 320 (1995), no. 10, 1187)--1192.
\bibitem{priess-crampe-ribenboim2} S.~Priess-Crampe, P.~Ribenboim, {\em Generalized ultrametric spaces} I, Abhandlungen aus dem Mathematischen Seminar der Universit\"at Hamburg, (1996), pp. 55.

\bibitem{Qu1} A.~Quilliot,  {\em Homomorphismes, points fixes,
	r\'etractions et jeux de poursuite dans les graphes, les ensembles
	ordonn\'es et les espaces m\'etriques}, Th\`ese de doctorat d'Etat, Univ Paris VI (1983).

\bibitem{Qu2} A.~Quilliot, {\em An application of the Helly property to the partially ordered sets}, J. Combin. Theory, serie $A$, 35 (1983), 185-198.

\bibitem{Rival}  I.~Rival,  {\em A fixed point theorem for finite partially ordered sets}, J. of Comb. theory (1976), 309--318.

\bibitem{rival-wille} I.~Rival, R.~Wille, {\em The smallest order variety containing all chains}, Discrete Math. 35 (1981), 203--212.

\bibitem{rosenthal} K.I.~Rosenthal, {\em Quantales and their applications,} Pitman Research Notes in Mathematics Series, 234. Longman Scientific $\And$ Technical, Harlow; copublished in the United States with John Wiley $\And$ Sons, Inc., New York, 1990. x+165 pp.

\bibitem{Sa} F.~Sa\"{\i}dane,
{\em Graphes et langages : une approche m\'etrique}, Th\`ese de doctorat, Lyon : Novembre 1991, $n^{o}$ 206-91.

\bibitem {sakarovitch}  J.~Sakarovitch, {\em Elements of automata theory}, Translated from the 2003 French original by Reuben Thomas. Cambridge University Press, Cambridge, (2009).


\bibitem{sauer}  N.~Sauer, {\em Distance sets of Urysohn metric spaces}, Canad. J. Math. 65 (2013), no. 1, 222-240. 

\bibitem {sine} R. C.~Sine, {\em On nonlinear contractions in sup norm spaces}, Nonlinear Analysis, 3 (1979), 885-890.

\bibitem{snow} J.W.~Snow,  {\em A constructive approach to the finite congruence lattice representation problem}. Algebra Universalis 43 (2000), no. 2-3, 279--293.

\bibitem{soardi} P.~Soardi,  {\em  Existence of fixed points of non-expansive mappings in certain Banach lattices}, Proc. A.M.S., 73 (1979), 25-29.

\bibitem{tarski} A.~Tarski,  {\em A lattice theoretical fixed point theorem and its applications}, Pac. J. of Math. 5 (1955), 285-309.

\bibitem{urysohn} P.~Urysohn, {\em Sur un espace m\'etrique universel}, 
Bull. Sci. Math. (2)  51 (1927), 43-64.

\bibitem{ward-dilworth} W.~Ward, R. P.~Dilworth, {\em Residuated lattices},  Trans. Amer. Math. Soc. 45 (1939), no. 3, 335-354.
\end{thebibliography}
\end{document}